\definecolor{darkblue}{rgb}{0,0,0.4} \usepackage[colorlinks=true,
\numberwithin{equation}{section}
\newtheorem{thm}{Theorem}
\newtheorem{theorem}[thm]{Theorem}
\newtheorem{lem}{Lemma}[section]               
\newtheorem{lemma}[lem]{Lemma}               
\newtheorem{cor}[lem]{Corollary}
\newtheorem{corollary}[lem]{Corollary}               
\newtheorem{prop}[lem]{Proposition}
\newtheorem{proposition}[lem]{Proposition}
\theoremstyle{definition}     
\newtheorem{question}[lem]{Question}
\newtheorem{defn}[lem]{Definition} 
\newtheorem{definition}[lem]{Definition}
\theoremstyle{remark}     
\newtheorem{rem}{Remark}[section]
\newtheorem{remark}[rem]{Remark}
\newtheorem{example}[rem]{Example}
\numberwithin{figure}{section}
\newcommand{\Appendix}[1]{\hyperref[app:#1]{Appendix~\ref*{app:#1}}}
\newcommand{\Section}[1]{\hyperref[sec:#1]{Section~\ref*{sec:#1}}}
\newcommand{\Subsection}[1]{\hyperref[subsec:#1]{Subsection~\ref*{subsec:#1}}}
\newcommand{\Lemma}[1]{\hyperref[lem:#1]{Lemma~\ref*{lem:#1}}}
\newcommand{\Theorem}[1]{\hyperref[thm:#1]{Theorem~\ref*{thm:#1}}}
\newcommand{\Definition}[1]{\hyperref[def:#1]{Definition~\ref*{def:#1}}}
\newcommand{\Remark}[1]{\hyperref[rem:#1]{Remark~\ref*{rem:#1}}}
\newcommand{\Figure}[1]{\hyperref[fig:#1]{Figure~\ref*{fig:#1}}}
\newcommand{\Conjecture}[1]{\hyperref[conj:#1]{Conjecture~\ref*{conj:#1}}}
\newcommand{\Corollary}[1]{\hyperref[cor:#1]{Corollary~\ref*{cor:#1}}}
\newcommand{\Proposition}[1]{\hyperref[prop:#1]{Proposition~\ref*{prop:#1}}}
\newcommand{\Question}[1]{\hyperref[ques:#1]{Question~\ref*{ques:#1}}}
\newcommand{\Example}[1]{\hyperref[exam:#1]{Example~\ref*{exam:#1}}}
\newcommand{\Table}[1]{\hyperref[table:#1]{Table~\ref*{table:#1}}}
\newcommand{\Restric}[1]{\hyperref[restric:#1]{Restriction~\ref*{restric:#1}}}
\newcommand{\Equation}[2][{}]{Equation#1~(\ref{eq:#2})}
\newcommand{\Case}[2][{}]{Case#1~(\ref{case:#2})}
\newcommand{\Item}[1]{(\ref{item:#1})}
\newcommand{\R}{\mathbb{R}}
\newcommand{\Z}{\mathbb{Z}}
\newcommand{\F}{\mathbb{F}}
\newcommand{\N}{\mathbb{N}}
\newcommand{\mf}{\mathfrak}
\newcommand{\wt}{\widetilde}
\newcommand{\ol}{\overline}
\newcommand{\del}{\partial}
\newcommand{\sbs}{\subset}
\newcommand{\sm}{\setminus}
\newcommand{\Si}{\Sigma}
\newcommand{\si}{\sigma}
\newcommand{\al}{\alpha}
\newcommand{\be}{\beta}
\newcommand{\ep}{\epsilon}
\newcommand{\from}{\colon}
\newcommand{\into}{\hookrightarrow}
\newcommand{\set}[2]{\{#1\mid#2\}}
\newcommand{\restrict}[2]{{#1}|_{#2}}
\renewcommand{\th}{^{\text{th}}}
\DeclareMathOperator{\nbd}{nbd}
\DeclareMathOperator{\Id}{Id}
\DeclareMathOperator{\Sq}{Sq}
\DeclareMathOperator{\Hom}{Hom}
\DeclareMathOperator{\sq}{sq}
\newcommand{\Kh}{\mathit{Kh}}
\newcommand{\KhCx}{\mathit{KC}}
\newcommand{\Funky}{\mathscr{F}}
\newcommand{\Realize}[2][{}]{|#2|_{#1}}
\newcommand{\Codim}[1]{\langle#1\rangle}
\newcommand{\CubeFlowCat}{\mathscr{C}_C}
\newcommand{\KhFlowCat}{\mathscr{C}_K}
\newcommand{\Moduli}{\mathcal{M}}
\newcommand{\ob}[1]{\mathbf{#1}}
\newcommand{\gr}{\mathrm{gr}}
\newcommand{\intgr}{\gr_{q}}
\newcommand{\homgr}{\gr_{h}}
\newcommand{\KhSpace}{\mathcal{X}_\mathit{Kh}}
\newcommand{\Cell}[2][{}]{\mathcal{C}_{#1}(#2)}
\newcommand{\Frame}{\varphi}
\newcommand{\Cube}{\mathcal{C}}
\newcommand{\imprec}{\leq_1}
\newcommand{\AssRes}[2]{D_{#1}(#2)}
\newcommand{\diff}{\delta}
\newcommand{\vect}{\overline}
\newcommand{\matching}{\leftrightarrow}
\newcommand{\MatchBdy}{\mathfrak{m}}
\newcommand{\KhGen}{\mathit{KG}}
\newcommand{\PreBdy}[2][{}]{\mathcal{G}_{#1}(#2)}
\newcommand{\BijecMatch}[1][{}]{\mathfrak{b}_{#1}}
\newcommand{\SignMatch}[1][{}]{\mathfrak{s}_{#1}}
\newcommand{\homology}[1]{[#1]}
\newcommand{\InBetween}[2]{\mathcal{G}_{{#1},{#2}}}
\newcommand{\MatchLady}[1][{}]{\mathfrak{l}_{#1}}
\newcommand{\Graph}[2][{}]{\mathfrak{G}_{#1}(#2)}
\newcommand{\graphnumber}[1]{g(#1)}
\newcommand{\framenumber}[1]{f(#1)}
\newcommand{\Steenrod}{\mathcal{A}}
\newcommand{\NonCommExtend}[1]{\{{#1}\}}
\newcommand{\mirror}[1]{m(#1)}
\newcommand{\co}{\colon}
\newcommand{\bdy}{\partial}
\newcommand{\RR}{\R}
\newcommand{\DD}{\mathbb{D}}
\newcommand{\pt}{\mathrm{pt}}
\newcommand{\ZZ}{\mathbb{Z}}
\DeclareMathOperator{\image}{im}
\newcommand{\respectively}{resp.\ }
\newcommand{\RP}{\mathbb{R}\mathrm{P}}
\newcommand{\CP}{\mathbb{C}\mathrm{P}}
\DeclareMathOperator{\SO}{\mathit{SO}}
\newcommand{\cmap}[2][{}]{{\mathfrak{#2}^{#1}}} 
\newcommand{\Path}[1]{\overline{#1}}
\newcommand{\St}{\mathit{St}}
\newcommand{\bma}{\eta}
\newcommand{\bmat}{\tilde\eta}
\newcommand{\PT}{\zeta}
\newcommand{\fr}{\mathrm{fr}}
\newcommand{\rmap}{\mathfrak{r}}
\newcommand{\smap}{\mathfrak{s}}
\newcommand{\topright}{{}^+_+}
\newcommand{\bottomright}{{}^-_+}
\newcommand{\topleft}{{}^+_-}
\newcommand{\bottomleft}{{}^-_-}
\newcommand*{\defeq}{\mathrel{\vcenter{\baselineskip0.5ex \lineskiplimit0pt
                     \hbox{\scriptsize.}\hbox{\scriptsize.}}}%
                     =}
\newcommand{\rel}{relative\ }
\begin{document}

\title[Steenrod Square on Khovanov Homology]{A Steenrod Square on Khovanov Homology}

\author{Robert Lipshitz}
\thanks{RL was supported by an NSF grant number DMS-0905796 and a Sloan Research Fellowship.}
\email{\href{mailto:lipshitz@math.columbia.edu}{lipshitz@math.columbia.edu}}

\author{Sucharit Sarkar}
\thanks{SS was supported by a Clay Mathematics Institute Research Fellowship}
\email{\href{mailto:sucharit@math.columbia.edu}{sucharit@math.columbia.edu}}

\subjclass[2010]{\href{http://www.ams.org/mathscinet/search/mscdoc.html?code=57M25,55P42}{57M25,
    55P42}}

\address{Department of Mathematics, Columbia University, New York, NY 10027}
\keywords{}

\date{\today}

\begin{abstract}
  In a previous paper, we defined a space-level version $\KhSpace(L)$
  of Khovanov homology. This induces an action of the Steenrod algebra
  on Khovanov homology. In this paper, we describe the first
  interesting operation, $\Sq^2\co \Kh^{i,j}(L)\to\Kh^{i+2,j}(L)$. We
  compute this operation for all links up to $11$ crossings; this, in
  turn, determines the stable homotopy type of $\KhSpace(L)$ for all
  such links.
\end{abstract}

\maketitle

\tableofcontents

\section{Introduction}
Khovanov homology, a categorification of the Jones polynomial,
associates a bigraded abelian group $\Kh^{i,j}_{\Z}(L)$ to each link $L\subset
S^3$ \cite{Kho-kh-categorification}.  In~\cite{RS-khovanov} we gave a
space-level version of Khovanov homology. That is, to each link $L$ we
associated stable spaces (finite suspension spectra) $\KhSpace^j(L)$,
well-defined up to stable homotopy equivalence, so that the reduced
cohomology $\wt{H}^i(\KhSpace^j(L))$ of these spaces is the Khovanov
homology $\Kh^{i,j}_{\Z}(L)$ of $L$. Another construction of such
spaces has been given by~\cite{HKK-Kh-htpy}.

The space $\KhSpace^j(L)$ gives algebraic
structures on Khovanov homology which are not (yet) apparent from
other perspectives. Specifically, while the cohomology of a spectrum
does not have a cup product, it does carry stable cohomology
operations. The bulk of this paper is devoted to giving an explicit
description of the Steenrod square
\[
\Sq^2\co \Kh^{i,j}_{\F_2}(L)\to \Kh^{i+2,j}_{\F_2}(L)
\]
induced by the spectrum $\KhSpace^j(L)$.  First we give a
combinatorial definition of this operation $\Sq^2$ in \Section{answer}
and then prove that it agrees with the Steenrod square coming from
$\KhSpace^j(L)$ in \Section{where-answer}. 

The description is suitable for computer computation, and we have
implemented it in Sage. The results for links with $11$ or fewer
crossings are given in \Section{computations}. In particular, the
operation $\Sq^2$ is nontrivial for many links, such as the torus knot
$T_{3,4}$. This implies a nontriviality result for the
Khovanov space:
\begin{theorem}\label{thm:not-moore}
  The Khovanov homotopy type $\KhSpace^{11}(T_{3,4})$ is not a wedge sum
  of Moore spaces.
\end{theorem}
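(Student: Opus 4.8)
The plan is to produce a nonzero Steenrod square on the mod-$2$ cohomology of $\KhSpace^{11}(T_{3,4})$ and to contrast this with the fact that a wedge sum of Moore spaces supports no such operation.

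First I would record the formal input. If $A$ is a finitely generated abelian group, the Moore spectrum $M(A,n)$ has reduced $\F_2$-cohomology concentrated in the two consecutive degrees $n$ and $n+1$: universal coefficients gives $\wt{H}^n(M(A,n);\F_2)\cong\Hom(A,\F_2)$ and $\wt{H}^{n+1}(M(A,n);\F_2)\cong\operatorname{Ext}(A,\F_2)$, with all other groups zero. Since $\Sq^2$ raises cohomological degree by $2$, it is automatically zero on $M(A,n)$. Steenrod operations are compatible with the identification $\wt{H}^*(\bigvee_\alpha X_\alpha;\F_2)\cong\prod_\alpha\wt{H}^*(X_\alpha;\F_2)$, so $\Sq^2$ vanishes on any (shifted) wedge of Moore spectra $\bigvee_\alpha \Sigma^{n_\alpha}M(A_\alpha)$.

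Next I would invoke the explicit description of the operation. \Section{answer} gives a combinatorial map $\Sq^2\co\Kh^{i,j}_{\F_2}(L)\to\Kh^{i+2,j}_{\F_2}(L)$, and \Section{where-answer} identifies it with the stable cohomology operation induced by $\KhSpace^j(L)$. Evaluating the combinatorial formula on the Khovanov chain complex of $T_{3,4}$ --- the calculation carried out in Sage and tabulated in \Section{computations} --- shows that $\Sq^2$ is nonzero on $\Kh^{i,11}_{\F_2}(T_{3,4})$ for some $i$, hence the topological $\Sq^2$ on $\wt{H}^*(\KhSpace^{11}(T_{3,4});\F_2)$ is nonzero. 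If $\KhSpace^{11}(T_{3,4})$ were stably equivalent to a wedge of Moore spaces, that operation would vanish, a contradiction; so it is not.

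The only nonformal step, and hence the hard part, is the nonvanishing of $\Sq^2$ on $\Kh^{*,11}_{\F_2}(T_{3,4})$: this rests on the combinatorial model of \Section{answer}, the identification theorem of \Section{where-answer}, and a machine-assisted evaluation on a concrete complex. Once those are in hand, \Theorem{not-moore} is immediate; indeed any link in \Section{computations} with nonzero $\Sq^2$ in some quantum grading $j$ would serve equally well in place of $T_{3,4}$ and $j=11$.
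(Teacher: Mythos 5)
Your proposal is correct and is essentially the paper's argument: the paper reads off $\St(2,11)=(0,1,0,0)$ for $8_{19}=T_{3,4}$ from \Table{computations} (equivalently, $\Sq^2$ has rank $1$ on $\Kh^{2,11}_{\F_2}$) and then invokes \Corollary{width-3-determined}, whose ``only if'' direction is exactly your observation that $\Sq^2$ vanishes on any wedge of (suspended) Moore spaces since their $\F_2$-cohomology occupies only two adjacent degrees. The only cosmetic difference is that you prove this easy vanishing statement directly rather than citing the corollary, so you never need the Whitehead--Chang classification that underlies its ``if'' direction.
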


Even simpler than $\Sq^2$ is the operation $\Sq^1\co
\Kh^{i,j}_{\F_2}(L)\to \Kh^{i+1,j}_{\F_2}(L)$. Let
$\be\from\Kh^{i,j}_{\F_2}(L)\to \Kh^{i+1,j}_{\Z}(L)$ be the Bockstein
homomorphism, and $r\from \Kh^{i,j}_{\Z}(L)\to\Kh^{i,j}_{\F_2}(L)$ be
the reduction mod $2$. Then $\Sq^1=r\be$, and is thus determined by
the integral Khovanov homology; see also \Subsection{sq1-describe}. As
we will discuss in \Section{width-two}, the operations $\Sq^1$ and
$\Sq^2$ together determine the Khovanov homotopy type $\KhSpace(L)$
whenever the Khovanov homology of $L$ has a sufficiently simple
form. In particular, they determine $\KhSpace(L)$ for any link $L$ of
$11$ or fewer crossings; these homotopy types are listed in
\Table{computations} (\Section{computations}).

The subalgebra of the Steenrod algebra generated by $\Sq^1$ and $\Sq^2$ is
\[
\Steenrod(1)=\frac{\F_2\NonCommExtend{\Sq^1,\Sq^2}}{(\Sq^1)^2,(\Sq^2)^2+\Sq^1\Sq^2\Sq^1}
\]
where $\F_2\NonCommExtend{\Sq^1,\Sq^2}$ is the non-commuting extension
of $\F_2$ by the variables $\Sq^1$ and $\Sq^2$.  By the Adem
relations, the next Steenrod square $\Sq^3$ is determined by $\Sq^1$
and $\Sq^2$, viz.\ $\Sq^3=\Sq^1\Sq^2$. Therefore, the next interesting
Steenrod square to compute would be $\Sq^4\co \Kh^{i,j}_{\F_2}(L)\to
\Kh^{i+4,j}_{\F_2}(L)$.

The Bockstein $\be$ and the operation $\Sq^2$ are sometimes enough to
compute Khovanov $K$-theory: in the Atiyah-Hirzebruch spectral
sequence for $K$-theory, the $d_2$ differential is zero and the $d_3$
differential is the integral lift $\be\Sq^2 r$ of $\Sq^3$ (see, for
instance~\cite[Proposition 16.6]{Adams-top-book}
or~\cite{Law-top-overflow}). For grading reasons, this operation
vanishes for links with $11$ or fewer crossings; indeed, the
Atiyah-Hirzebruch spectral sequence degenerates in these cases, and
the Khovanov $K$-theory is just the tensor product of the Khovanov
homology and $K^*(\pt)$. In principle, however, the techniques of this
paper could be used to compute Khovanov $K$-theory in some interesting
cases. Similarly, in certain situations, the Adams
spectral sequence may be used to compute the real connective Khovanov
$KO$-theory using merely the module structure of Khovanov homology
$\Kh_{\F_2}(L)$ over $\Steenrod(1)$.

\thinspace\thinspace\thinspace
\noindent
\textbf{Acknowledgements.} We thank C.~Seed and M.~Khovanov for
helpful conversations. We also thank the referee for many helpful
suggestions and corrections.

\section{The answer}\label{sec:answer}

\subsection{Sign and frame assignments on the cube}\label{subsec:cube}
Consider the $n$-dimensional cube $\Cube(n)=[0,1]^n$, equipped with
the natural CW complex structure. For a vertex
$v=(v_1,\dots,v_n)\in\{0,1\}^n$, let $|v|=\sum_i v_i$ denote the
Manhattan norm of $v$. For vertices $u,v$, declare $v\leq u$ if for
all $i$, $v_i\leq u_i$; if $v\leq u$ and $|u-v|=k$, we write $v\leq_k
u$.

For a pair of vertices $v\leq_k u$, let $\Cube_{u,v}=\set{x\in
  [0,1]^n}{\forall i\co v_i\leq x_i\leq u_i}$ denote the corresponding
$k$-cell of $\Cube(n)$. Let $C^*(\Cube(n),\F_2)$ denote the cellular
cochain complex of $\Cube(n)$ over $\F_2$. Let $1_k\in
C^k(\Cube(n),\F_2)$ denote the $k$-cocycle that sends all $k$-cells to $1$.

The \emph{standard sign assignment $s\in C^1(\Cube(n),\F_2)$} (denoted
$s_0$ in \cite[Definition~\ref*{KhSp:def:sign-assignment}]{RS-khovanov}) is the following
$1$-cochain. If 
$u=(\ep_1,\dots,\ep_{i-1},1,\ep_{i+1},\dots,\ep_n)$ and 
$v=(\ep_1,\dots,\ep_{i-1},0,\ep_{i+1},\dots,\ep_n)$, then 
\[
s(\Cube_{u,v})=(\ep_1+\dots+\ep_{i-1})\pmod 2\in\F_2.
\]
It is easy to see that $\diff s=1_2$.

The \emph{standard frame assignment $f\in C^2(\Cube(n),\F_2)$} is the
following $2$-cochain. If
$u=(\ep_1,\dots,\ep_{i-1},1,\ep_{i+1},\dots,\ep_{j-1},1,\ep_{j+1},\dots,\ep_n)$
and
$v=(\ep_1,\dots,\ep_{i-1},0,\ep_{i+1},\dots,\ep_{j-1},0,\ep_{j+1},\dots,\allowbreak\ep_n)$,
then
\[
f(\Cube_{u,v})=(\ep_1+\dots+\ep_{i-1})(\ep_{i+1}+\dots+\ep_{j-1})\pmod
2\in\F_2.
\]
\begin{lem}\label{lem:frame-assignment-sum}
  For any $v\leq_3 u$,
  \[
  (\diff f)(\Cube_{u,v}) = \sum_{w\in\set{w}{v\imprec
      w\leq_2 u}} s(\Cube_{w,v}).
  \]
\end{lem}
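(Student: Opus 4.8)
The plan is to verify the identity by a direct computation on cochains, unwinding the definitions of the cup-coboundary $\diff f$ and the sign cochain $s$ on the relevant cells of a fixed $3$-cell $\Cube_{u,v}$. Since both sides depend only on the three "active" coordinates in which $u$ and $v$ differ (the other coordinates contribute a fixed constant $\ep_1+\dots$ to every term), I would first reduce to the case $n=3$, $u=(1,1,1)$, $v=(0,0,0)$, keeping track of a global offset $a=\ep_1+\dots+\ep_{k-1}$ coming from the coordinates before the first active index. Concretely, if the active indices are $i<j<k$, write $a$, $b$, $c$ for the partial sums of the inactive $\ep$'s lying before $i$, between $i$ and $j$, and between $j$ and $k$ respectively; then relabel so that we are literally on $\Cube(3)$ with a uniform shift.

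**Main computation.** On $\Cube(3)$ with $v=\vec 0$, $u=\vec 1$, the coboundary $(\diff f)(\Cube_{u,v})$ is the sum of $f$ over the six $2$-faces of the $3$-cell. By the formula for $f$, a $2$-face in which coordinate $p<q$ are the active ones and the remaining coordinate is frozen at $\ep$ contributes $(\text{stuff before }p)(\text{stuff between }p,q)$; running over all six faces (three "top" faces $\ep=1$ and three "bottom" faces $\ep=0$ for each of the three choices of which coordinate is frozen) and reducing mod $2$, one collects a polynomial in the frozen bits which, after including the global offsets, simplifies to a specific $\F_2$-valued expression. On the other side, the vertices $w$ with $v\imprec w\le_2 u$ are exactly the three weight-$2$ vertices $(1,1,0)$, $(1,0,1)$, $(0,1,1)$ (here $\imprec$ just records $v\le w$), and $s(\Cube_{w,v})$ is, by the definition of the standard sign assignment, the partial sum of the coordinates of $w$ lying before the unique index where $w$ and $v$ still agree—i.e. before the single $0$-coordinate of $w$. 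Summing these three values and folding in the offsets gives the same $\F_2$ expression. The proof is then finished by observing the two polynomials agree identically over $\F_2$.

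**The obstacle.** The only real subtlety is bookkeeping: one must be careful that the coordinates "before index $i$" in the definitions of $s$ and $f$ are interpreted consistently, so that the inactive bits $\ep_1,\dots$ enter both sides through the *same* linear form and cancel, leaving a genuinely $3$-variable identity. In particular, for each weight-$2$ vertex $w=u-e_p$ (with the $p$-th coordinate dropped), $s(\Cube_{w,v})$ sees the coordinates of $w$ before position $p$, so the three terms on the right are, up to the common offset, $0$, $(\text{bit }i)$, and $(\text{bit }i)+(\text{bit }j)$ in the obvious labelling; matching this against the six-term face sum from $\diff f$ is a short mod-$2$ check. Once the reduction to $n=3$ is set up correctly, the remaining computation is routine, so I would present the reduction carefully and then simply exhibit the two tables of values (or the two polynomials) and note that they coincide. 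An alternative, cleaner route—which I would mention—is to recognize this as the cochain-level statement that $f$ is a "framing" witnessing $\diff s = 1_2$, i.e. that $\Sq^1$ on the cube complex is computed by $s$ and its secondary operation by $f$; but the hands-on verification above is self-contained and is what I would write out.
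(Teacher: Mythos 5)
Your overall strategy---expand both sides of the cochain identity explicitly, parametrizing the inactive coordinates by partial sums, and compare---is exactly the paper's proof, but your description of the right-hand side is wrong, and wrong in a way that would make the deferred ``short mod-2 check'' fail. In the paper $\imprec$ denotes $\leq_1$, so $\set{w}{v\imprec w\leq_2 u}$ consists of the three vertices \emph{covering} $v$ (in your reduced picture $(1,0,0),(0,1,0),(0,0,1)$), and each $\Cube_{w,v}$ is the edge from $v$ to $w$, whose sign is the sum of the coordinates before the \emph{flipped} position. You instead take the three vertices covered by $u$, namely $(1,1,0),(1,0,1),(0,1,1)$---which is inconsistent even with your own reading of $\imprec$ as $\leq$, since $w\leq_2 u$ forces $|u-w|=2$---and your rule ``sum of the coordinates of $w$ before the position where $w$ and $v$ agree'' computes the sign of the edge $\Cube_{u,w}$, not of an edge at $v$. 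These two sums are genuinely different: writing $a,b,c$ for the sums of the inactive $\ep$'s before $i$, between $i$ and $j$, and between $j$ and $k$, the correct right-hand side is $a+(a+b)+(a+b+c)=a+c$, while your version is $a+(a+1+b)+(a+b+c)=a+c+1$. Already for $n=3$, $u=(1,1,1)$, $v=(0,0,0)$ one has $(\diff f)(\Cube_{u,v})=0$, equal to the correct right-hand side, whereas your sum is $1$; so as set up, your two tables of values would not coincide.

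A smaller inaccuracy: the inactive coordinates do not ``contribute a fixed constant to every term'' and cancel. They enter through the three distinct partial sums $a,b,c$, they appear nonlinearly (as products such as $a\cdot b$ and $(a+b)\cdot c$) among the six face-contributions to $(\diff f)(\Cube_{u,v})$, and they survive in the final common value $a+c$. So there is no literal reduction to the $n=3$ cube, only a reparametrization by $a,b,c$ that must be carried through both computations---which is what the paper does. With the index set and the sign rule corrected, your plan collapses to the same three-term-versus-six-term verification as in the paper's proof.
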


\begin{proof}
  Let
  $u=(\ep_1,\dots,\ep_{i-1},1,\ep_{i+1},\dots,\ep_{j-1},1,\ep_{j+1},\dots,\ep_{k-1},1,\ep_{k+1},\dots,\ep_n)$
  and $v=(\ep_1,\dots,\allowbreak\ep_{i-1},\allowbreak
  0,\allowbreak\ep_{i+1},\dots,\ep_{j-1},0,\ep_{j+1},\dots,\ep_{k-1},0,\ep_{k+1},\dots,\ep_n)$. Then,
  \begin{align*}
    \sum_{w\in\set{w}{v\imprec
        w\leq_2 u}} s(\Cube_{w,v})&=
    (\ep_1+\dots+\ep_{i-1})+(\ep_1+\dots+\ep_{j-1})+(\ep_1+\dots+\ep_{k-1})\\
    &=(\ep_1+\dots+\ep_{i-1})+(\ep_{j+1}+\dots+\ep_{k-1}).
  \end{align*}
  On the other hand,
  \begin{align*}
    (\diff
    f)(\Cube_{u,v})&=(\ep_{1}+\dots+\ep_{i-1})(\ep_{i+1}+\dots+\ep_{j-1})+(\ep_{1}+\dots+\ep_{i-1})(\ep_{i+1}+\dots+\ep_{j-1})\\
   &\qquad{}+
    (\ep_{1}+\dots+\ep_{i-1})(\ep_{i+1}+\dots+\ep_{j-1}+0+\ep_{j+1}+\dots+\ep_{k-1})\\
    &\qquad{}+
    (\ep_{1}+\dots+\ep_{i-1})(\ep_{i+1}+\dots+\ep_{j-1}+1+\ep_{j+1}+\dots+\ep_{k-1})\\
    &\qquad{}+
    (\ep_{1}+\dots+\ep_{i-1}+0+\ep_{i+1}+\dots+\ep_{j-1})(\ep_{j+1}+\dots+\ep_{k-1})\\
    &\qquad{}+(\ep_{1}+\dots+\ep_{i-1}+1+\ep_{i+1}+\dots+\ep_{j-1})(\ep_{j+1}+\dots+\ep_{k-1})\\
    &=(\ep_1+\dots+\ep_{i-1})+(\ep_{j+1}+\dots+\ep_{k-1}),
  \end{align*}
  thus completing the proof.
\end{proof}

\subsection{The Khovanov setup}

In this subsection, we recall the definition of the Khovanov chain
complex associated to an oriented link diagram $L$. Assume $L$ has $n$
crossings that have been ordered, and let $n_-$ denote the number of
negative crossings in $L$. In what follows, we will usually work over
$\F_2$, and we will always have a fixed $n$-crossing link diagram $L$
in the background. Hence, we will typically drop both $\F_2$ and $L$
from the notation, writing $\KhCx=\KhCx_{\F_2}(L)$ for the Khovanov
complex of $L$ with $\F_2$-coefficients and $\KhCx_{\Z}$ for the
Khovanov complex of $L$ with $\Z$-coefficients.

Given a vertex $u\in\{0,1\}^n$, let $\AssRes{}{u}$ be the
corresponding complete resolution of the link diagram $L$, where we
take the $0$ resolution at $i\th$ crossing if $u_i=0$, and the
$1$-resolution otherwise. We usually view $\AssRes{}{u}$ as a
\emph{resolution configuration} in the sense of \cite[Definition~\ref*{KhSp:def:res-config}]{RS-khovanov}; that is, we add arcs at the $0$-resolutions to
record the crossings. The
set of circles (\respectively arcs) that appear in $\AssRes{}{u}$ is
denoted $Z(\AssRes{}{u})$ (\respectively $A(\AssRes{}{u})$).

The \emph{Khovanov generators} are of the form $\ob{x}=(\AssRes{}{u},x)$,
where $x$ is a labeling of the circles in $Z(\AssRes{}{u})$ by
elements of $\{x_+,x_-\}$. Each Khovanov generator carries a
\emph{bigrading $(\homgr,\intgr)$}; $\homgr$ is called the homological
grading and $\intgr$ is called the quantum grading. The bigrading is defined by:
\begin{align*}
\homgr(\AssRes{}{u},x)&=-n_-+|u|\\
\intgr(\AssRes{}{u},x)&=n-3n_-+|u|+\#\set{Z\in
  Z(\AssRes{}{u})}{x(Z)=x_+}\\
&\qquad\qquad{}-\#\set{Z\in
  Z(\AssRes{}{u})}{x(Z)=x_-}.
\end{align*}
The set of all Khovanov generators in bigrading $(i,j)$ is denoted
$\KhGen^{i,j}$. There is an obvious map $\Funky\from\KhGen\to\{0,1\}^n$
that sends $(\AssRes{}{u},x)$ to $u$. It is clear that if
$\ob{x}\in\KhGen^{i,j}$, then $|\Funky(\ob{x})|=n_-+i$.

The \emph{Khovanov chain group} in bigrading $(i,j)$, $\KhCx^{i,j}$, is the
$\F_2$ vector space with basis $\KhGen^{i,j}$; for
$\ob{x}\in\KhGen^{i,j}$, and $\ob{c}\in\KhCx^{i,j}$, we say
$\ob{x}\in\ob{c}$ if the coefficient of $\ob{x}$ in $\ob{c}$ is $1$,
and $\ob{x}\notin\ob{c}$ otherwise. 

The \emph{Khovanov differential} $\diff$ maps $\KhCx^{i,j}\to\KhCx^{i+1,j}$,
and is defined as follows. If $\ob{y}=(\AssRes{}{v},y)\in\KhGen^{i,j}$
and $\ob{x}=(\AssRes{}{u},x)\in\KhGen^{i+1,j}$, then
$\ob{x}\in\diff\ob{y}$ if the following hold:
\begin{enumerate}
\item $v\imprec u$, that is, $\AssRes{}{u}$ is obtained from
  $\AssRes{}{v}$ by performing an embedded $1$-surgery along some arc
  $A_1\in A(\AssRes{}{v})$. In particular, either,
  \begin{enumerate}
  \item\label{case:split} the endpoints of $A_1$ lie on the same
    circle, say $Z_1\in\AssRes{}{v}$, which corresponds to two
    circles, say $Z_2,Z_3\in\AssRes{}{u}$; or,
  \item\label{case:merge} The endpoints of $A_1$ lie on two different
    circles, say $Z_1,Z_2\in\AssRes{}{v}$, which correspond to a
    single circle, say $Z_3\in\AssRes{}{u}$.
  \end{enumerate}
\item In \Case{split}, $x$ and $y$ induce the same labeling on
  $\AssRes{}{u}\sm\{Z_2,Z_3\}= \AssRes{}{v}\sm\{Z_1\}$; in
  \Case{merge}, $x$ and $y$ induce the same labeling on
  $\AssRes{}{u}\sm\{Z_3\}= \AssRes{}{v}\sm\{Z_1,Z_2\}$;
\item In \Case{split}, either $y(Z_1)=x(Z_2)=x(Z_3)=x_-$ or
  $y(Z_1)=x_+$ and $\{x(Z_2),x(Z_3)\}=\{x_+,x_-\}$; in \Case{merge},
  either $y(Z_1)=y(Z_2)=x(Z_3)=x_+$ or $\{y(Z_1),y(Z_2)\}=\{x_+,x_-\}$
  and $x(Z_3)=x_-$.
\end{enumerate}
It is clear that if $\ob{x}\in\diff{\ob{y}}$, then
$\Funky(\ob{y})\imprec\Funky(\ob{x})$.  The \emph{Khovanov homology}
is the homology of $(\KhCx,\diff)$; the Khovanov homology
in bigrading $(i,j)$ is denoted $\Kh^{i,j}$. For a cycle
$\ob{c}\in\KhCx^{i,j}$, let $\homology{\ob{c}}\in\Kh^{i,j}$ denote the
corresponding homology element.

\subsection{A first look at the Khovanov space}

The Khovanov chain complex is actually defined over $\Z$, and the
$\F_2$ versions is its mod $2$ reduction. The Khovanov chain group
over $\Z$ in bigrading $(i,j)$, $\KhCx_{\Z}$, is the free $\Z$-module
with basis $\KhGen^{i,j}$. The differential
$\diff_{\Z}\from\KhCx_{\Z}^{i,j}\to \KhCx_{\Z}^{i+1,j}$ is defined by
\begin{equation}\label{eq:integer-kh-diff}
\diff_{\Z}\ob{y}=\sum_{\ob{x}\in\diff\ob{y}}
(-1)^{s(\Cube_{\Funky(\ob{x}),\Funky(\ob{y})})}\ob{x}.
\end{equation}

In \cite[Theorem~\ref*{KhSp:thm:kh-space}]{RS-khovanov}, we construct Khovanov spectra
$\KhSpace^j$ satisfying
$\wt{H}^i(\KhSpace^j)=\Kh^{i,j}_{\Z}$. Moreover, the spectrum
$\KhSpace=\bigvee_j\KhSpace^j$ is defined as the suspension spectrum of a
CW complex $\Realize{\KhFlowCat}$, formally desuspended a few times
\cite[Definition~\ref*{KhSp:def:Kh-space}]{RS-khovanov} (this space is
denoted $Y=\bigvee_jY_j$ in \Section{where-answer}). Furthermore, there is a
bijection between the cells (except the basepoint) of
$\Realize{\KhFlowCat}$ and the Khovanov generators in $\KhGen$, which
induces an isomorphism between $\wt{C}^*(\Realize{\KhFlowCat})$, the
reduced cellular cochain complex, and
$(\KhCx_{\Z},\diff_{\Z})$.

This allows us to associate homotopy invariants to Khovanov
homology. Let $\Steenrod$ be the (graded) Steenrod algebra over
$\F_2$, and let $\Steenrod(1)$ be the subalgebra generated by $\Sq^1$
and $\Sq^2$. The Steenrod algebra $\Steenrod$ acts on the Khovanov
homology $\Kh$, viewed as the (reduced) cohomology of the spectrum
$\KhSpace$. The (stable) homotopy type of $\KhSpace$ is a knot
invariant, and therefore, the action of $\Steenrod$ on $\Kh$ is a knot
invariant as well.

\subsection{The ladybug matching}\label{subsec:ladybug-matching}

Let $\ob{x}\in\KhGen^{i+2,j}$ and $\ob{y}\in\KhGen^{i,j}$ be
Khovanov generators. Consider the set of Khovanov generators between
$\ob{x}$ and $\ob{y}$:
\[
\InBetween{\ob{x}}{\ob{y}}=\set{\ob{z}\in\KhGen^{i+1,j}}{\ob{x}\in\diff\ob{z},\ob{z}\in\diff\ob{y}}.
\]
Since $\diff$ is a differential, for all $\ob{x},\ob{y}$, there are an
even number of elements in $\InBetween{\ob{x}}{\ob{y}}$. It is
well-known that this even number is $0$, $2$ or $4$. Indeed:

\begin{lem}{\cite[Lemma~\ref*{KhSp:lem:ind-2-res-config}]{RS-khovanov}}\label{lem:ladybug-config}
  Let $\ob{x}=(\AssRes{}{u},x)$ and $\ob{y}=(\AssRes{}{v},y)$. The set
  $\InBetween{\ob{x}}{\ob{y}}$ has $4$ elements if and only if the
  following hold.
  \begin{enumerate}
  \item $v\leq_2 u$, that is, $\AssRes{}{u}$ is obtained from
    $\AssRes{}{v}$ by doing embedded $1$-surgeries along two arcs,
    say $A_1,A_2\in A(\AssRes{}{v})$.
  \item The endpoints of $A_1$ and $A_2$ all lie on the same circle,
    say $Z_1\in Z(\AssRes{}{v})$. Furthermore, their endpoints are
    linked on $Z_1$, so $Z_1$ gives rise to a single circle,
    say $Z_2$, in $Z(\AssRes{}{u})$.
  \item $x$ and $y$ agree on
    $Z(\AssRes{}{u})\sm\{Z_2\}=Z(\AssRes{}{v})\sm\{Z_1\}$. 
  \item $y(Z_1)=x_+$ and $x(Z_2)=x_-$.
  \end{enumerate}
\end{lem}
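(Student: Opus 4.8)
The plan is to reduce the statement to a local question about the two surgery arcs and then to go through the finitely many ways these arcs can meet the circles of $\AssRes{}{v}$. First I would observe that if $\ob{z}\in\InBetween{\ob{x}}{\ob{y}}$ then $\Funky(\ob{y})\imprec\Funky(\ob{z})\imprec\Funky(\ob{x})$, so that $v\leq_2 u$ and $\Funky(\ob{z})$ is one of the two vertices lying strictly between $v$ and $u$. Call these $w_1$ and $w_2$, where $\AssRes{}{w_1}$ (\respectively $\AssRes{}{w_2}$) is obtained from $\AssRes{}{v}$ by surgering the first (\respectively second) of the two arcs $A_1,A_2$ along which $\AssRes{}{u}$ and $\AssRes{}{v}$ differ. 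Since the Khovanov differential never changes the label of a circle not met by its surgery arc, the existence of any such $\ob{z}$ already forces $x$ and $y$ to agree on every circle disjoint from $A_1\cup A_2$. This gives the necessity of condition (1) and reduces the whole question to the circles meeting $A_1\cup A_2$.

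Next I would classify the incidence of $A_1,A_2$ with the circles of $\AssRes{}{v}$, recalling that surgering an arc with both endpoints on one circle splits that circle in two, while surgering an arc joining two distinct circles merges them. Up to relabelling there is exactly one incidence pattern, which I will call \emph{linked}, in which $A_1$ and $A_2$ are both chords of a single circle $Z_1$ whose four endpoints occur in interleaved cyclic order; in this pattern, surgering either arc first splits $Z_1$ into two circles $Z_1',Z_1''$ and surgering the other then merges $Z_1'$ and $Z_1''$ into one circle $Z_2$. For each of the (finitely many) incidence patterns I would write down $\AssRes{}{w_1}$ and $\AssRes{}{w_2}$, record which of the two surgeries is a split and which a merge along each path $v\to w_i\to u$, and count the generators $\ob{z}$ over $w_1$ and over $w_2$ directly from the labelling conditions in the definition of $\diff$.

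The count then runs as follows. Fixing $\ob{x}$ and $\ob{y}$, a generator $\ob{z}$ over $w_i$ is determined on every circle of $\AssRes{}{w_i}$ that survives into $\AssRes{}{u}$ (its label is dictated by $\ob{x}$) or already appears in $\AssRes{}{v}$ (its label is dictated by $\ob{y}$); any remaining circle is either the output of a merge, hence determined by its two inputs, or one of the two circles created by a split the other of which is already pinned by $\ob{x}$, hence again determined. In every non-linked pattern each circle of $\AssRes{}{w_i}$ is of one of these kinds, so $w_i$ contributes at most one generator and $|\InBetween{\ob{x}}{\ob{y}}|\le 2$. The linked pattern is the unique one in which a split produces two circles that are then merged into each other; there the pair $(z(Z_1'),z(Z_1''))$ is genuinely free, and applying the split rule at $\ob{y}$ together with the merge rule at $\ob{x}$ yields exactly two such generators over $w_1$ — the two ways to make $\{z(Z_1'),z(Z_1'')\}=\{x_+,x_-\}$ — precisely when $y(Z_1)=x_+$ and $x(Z_2)=x_-$, and none otherwise. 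The symmetry exchanging $A_1$ and $A_2$ gives another two over $w_2$ under the same hypothesis. Thus $|\InBetween{\ob{x}}{\ob{y}}|=4$ exactly when we are in the linked pattern with $y(Z_1)=x_+$ and $x(Z_2)=x_-$, i.e.\ under conditions (2) and (4), condition (3) being the agreement obtained in the reduction step.

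The step I expect to be the main obstacle is the topological fact driving the classification: among chords of a single circle $Z_1$, only the interleaved cyclic order of the four endpoints makes both surgeries together produce a single circle (the non-interleaved order produces three circles, with both paths of type split-then-split), and this is the unique incidence pattern in which two circles arise that lie in neither $\AssRes{}{v}$ nor $\AssRes{}{u}$ and are not pinned by $\ob{x}$ or $\ob{y}$. This is the one place where planarity of the link diagram is used, and it is precisely what pushes the count from $2$ up to $4$; the rest is a finite enumeration of incidence patterns together with routine applications of the merge and split rules.
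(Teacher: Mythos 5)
Your proof is correct and is essentially the argument this lemma rests on: the paper does not prove the statement here but quotes it from \cite{RS-khovanov}, where it is established by exactly your strategy---reduce to the two intermediate vertices between $v\leq_2 u$, enumerate the finitely many incidence patterns of the two surgery arcs with the circles of $\AssRes{}{v}$, and count compatible labelings directly from the merge/split rules, with only the interleaved-chords (ladybug) pattern with $y(Z_1)=x_+$, $x(Z_2)=x_-$ producing two generators over each intermediate vertex. The one quibble is your closing remark: the fact that interleaved chords give split-then-merge while non-interleaved chords give three circles is combinatorics of the cyclic order of the four endpoints on $Z_1$ (valid for embedded surgeries on any $1$-manifold), not planarity of the diagram, but nothing in your count depends on this.
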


In the construction of the Khovanov space, we made a global
choice. This choice furnishes us with a \emph{ladybug matching
  $\MatchLady$}, which is a collection $\{\MatchLady[\ob{x},\ob{y}]\}$,
for $\ob{x},\ob{y}\in\KhGen$ with $|\Funky(\ob{x})|=|\Funky(\ob{y})|+2$, of
fixed point free involutions $\MatchLady[\ob{x},\ob{y}]\co
\InBetween{\ob{x}}{\ob{y}}\to \InBetween{\ob{x}}{\ob{y}}$.
The ladybug matching is defined as follows.

Fix $\ob{x}=(\AssRes{}{u},x)$ and $\ob{y}=(\AssRes{}{v},y)$ in
$\KhGen$ with $|u|=|v|+2$; we will describe a fixed point free
involution $\MatchLady[\ob{x},\ob{y}]$ of
$\InBetween{\ob{x}}{\ob{y}}$. The only case of interest is when
$\InBetween{\ob{x}}{\ob{y}}$ has $4$ elements; hence assume that we
are in the case described in \Lemma{ladybug-config}. Do an isotopy in
$S^2$ so that $\AssRes{}{v}$ looks like \Figure{ladybug-config}. (In the
figure, we have not shown the circles in $Z(\AssRes{}{v})\sm\{Z_1\}$
and the arcs in $A(\AssRes{}{v})\sm\{A_1,A_2\}$.) \Figure{ladybug-matching}
shows the four generators in $\InBetween{\ob{x}}{\ob{y}}$ and the
ladybug matching $\MatchLady[\ob{x},\ob{y}]$. (Once again, we have not
shown the extra circles and arcs.) It is easy to check (cf.\
\cite[Lemma~\ref*{KhSp:lem:in-out-preserved}]{RS-khovanov}) that this matching is well-defined,
i.e., it is independent of the choice of isotopy and the numbering of
the two arcs in $A(\AssRes{}{v})\sm A(\AssRes{}{u})$ as $\{A_1,A_2\}$.

\captionsetup[subfloat]{width={0.3\textwidth}}
\begin{figure}
  \subfloat[The configuration from \Lemma{ladybug-config}.]{
    \label{fig:ladybug-config}
    \makebox[0.3\textwidth][c]{
      \xymatrix{
        \vcenter{\hbox{
            \psfrag{z}{$x_+$} \psfrag{a1}{$A_1$} \psfrag{a2}{$A_2$}
            \includegraphics[width=0.15\textwidth]{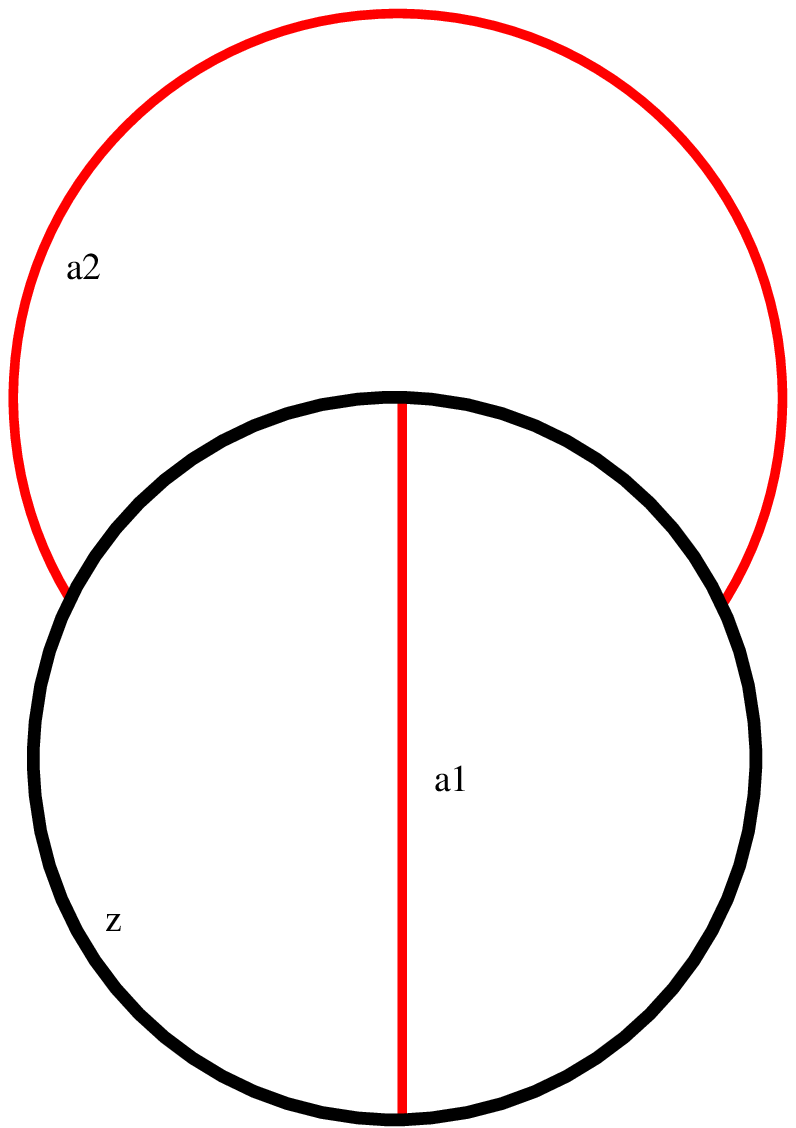}
          }}
      }
    }
  }
  \captionsetup[subfloat]{width={0.5\textwidth}}
  \hspace{0.01\textwidth} 
  \subfloat[{The ladybug matching
    $\MatchLady[\ob{x},\ob{y}]$ on $\InBetween{\ob{x}}{\ob{y}}$.}]{
    \label{fig:ladybug-matching}
    \makebox[0.5\textwidth][c]{
      \xymatrix{
        \vcenter{\hbox{
            \psfrag{a2}{$A_2$} \psfrag{z}{$x_+$}\psfrag{z1}{$x_-$}
            \includegraphics[width=0.15\textwidth]{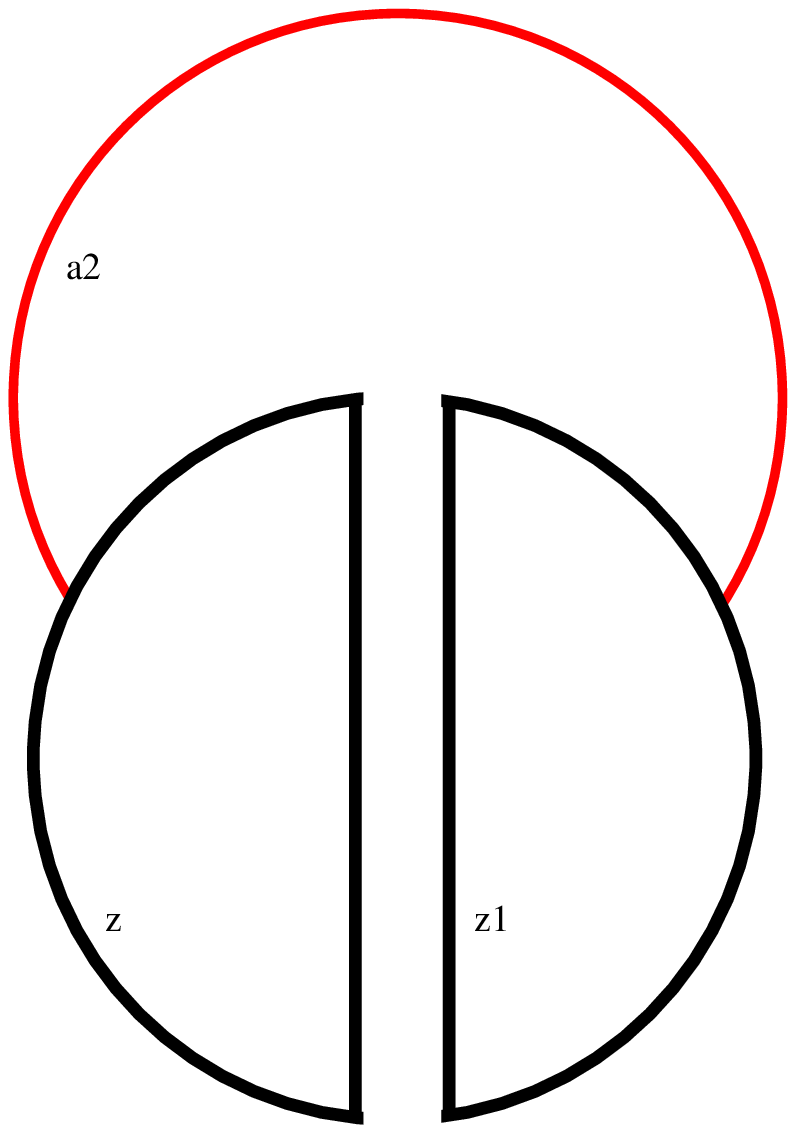}
          }}
        \ar@{<->}[d]&
        \vcenter{\hbox{
            \psfrag{a2}{$A_2$} \psfrag{z}{$x_-$}\psfrag{z1}{$x_+$}
            \includegraphics[width=0.15\textwidth]{LadyBug2}
          }}
        \ar@{<->}[d]\\
        \vcenter{\hbox{
            \psfrag{a1}{$A_1$} \psfrag{z}{$x_+$}\psfrag{z1}{$x_-$}
            \includegraphics[width=0.15\textwidth]{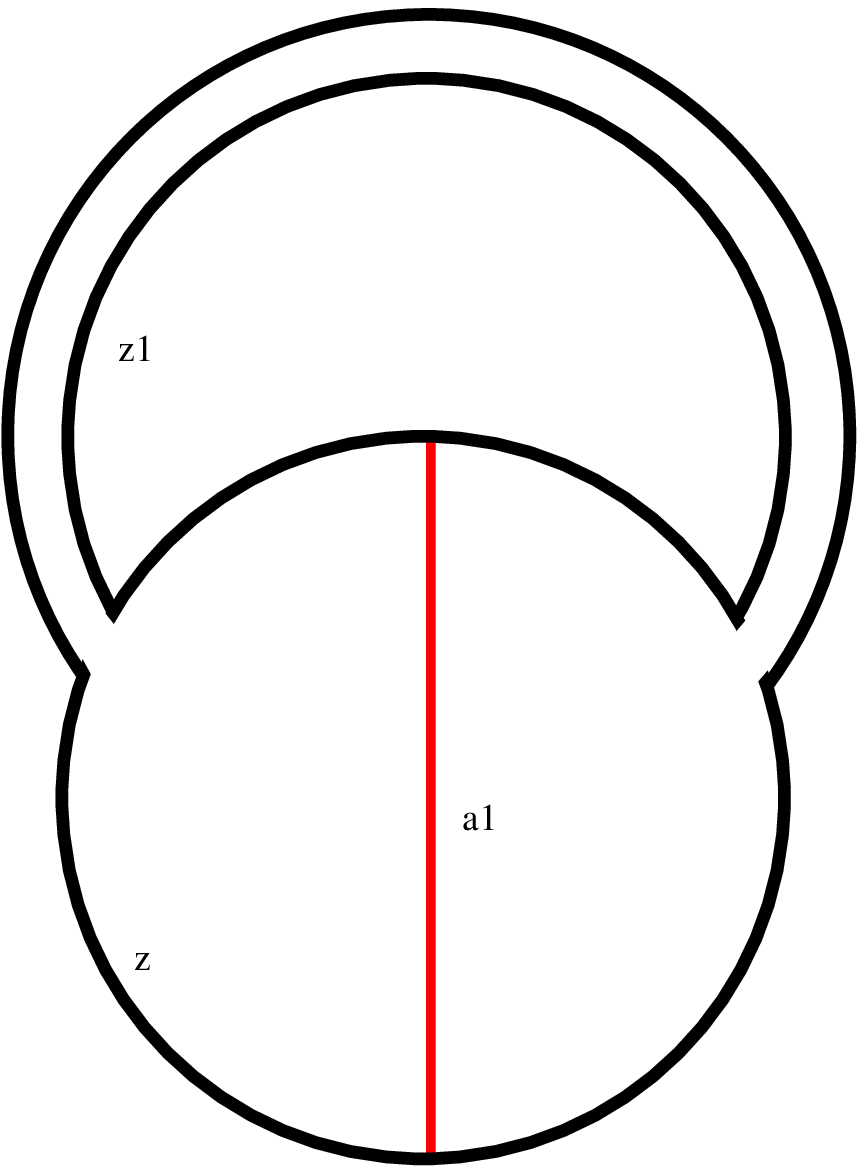}
          }}
        &
        \vcenter{\hbox{
            \psfrag{a1}{$A_1$} \psfrag{z}{$x_-$}\psfrag{z1}{$x_+$}
            \includegraphics[width=0.15\textwidth]{LadyBug3}
          }}
      }
    }
  }
  \caption{\textbf{The ladybug matching.} We have shown the case when
    $\InBetween{\ob{x}}{\ob{y}}$ has $4$ elements.}
\end{figure}

\begin{lem}\label{lem:ladybug-changes-sign}
  Let $\ob{x},\ob{y},\ob{z}$ be Khovanov generators with
  $\ob{z}\in\InBetween{\ob{x}}{\ob{y}}$. Let
  $\ob{z}'=\MatchLady[\ob{x},\ob{y}](\ob{z})$. Then
  \[
  s(\Cube_{\Funky(\ob{x}),\Funky(\ob{z})}) +
  s(\Cube_{\Funky(\ob{z}),\Funky(\ob{y})}) +
  s(\Cube_{\Funky(\ob{x}),\Funky(\ob{z}')}) +
  s(\Cube_{\Funky(\ob{z}'),\Funky(\ob{y})}) =1.
  \]
\end{lem}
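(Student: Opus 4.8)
The plan is to deduce the claimed identity, in every case, from the relation $\diff s=1_2$ recorded just after the definition of the standard sign assignment. Set $u=\Funky(\ob{x})$, $v=\Funky(\ob{y})$, $w=\Funky(\ob{z})$ and $w'=\Funky(\ob{z}')$. Since $\ob{z},\ob{z}'\in\InBetween{\ob{x}}{\ob{y}}$ we have $v\imprec w\imprec u$ and $v\imprec w'\imprec u$, so $u,v,w,w'$ span a $2$-face of the cube and $\Cube_{u,v}$ is a $2$-cell. If $w\neq w'$, the four edges appearing in $\bdy\Cube_{u,v}$ are exactly $\Cube_{w,v}$, $\Cube_{u,w}$, $\Cube_{w',v}$, $\Cube_{u,w'}$, so the left-hand side of the asserted equation is $(\diff s)(\Cube_{u,v})=1_2(\Cube_{u,v})=1$. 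Thus the entire content of the lemma is the statement that the involution $\MatchLady[\ob{x},\ob{y}]$ never pairs two generators supported over the same cube vertex, i.e.\ $\Funky(\ob{z})\neq\Funky(\MatchLady[\ob{x},\ob{y}](\ob{z}))$.

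I would verify this according to the size of $\InBetween{\ob{x}}{\ob{y}}$, which is nonempty (it contains $\ob{z}$) and even, hence has $2$ or $4$ elements. When $\InBetween{\ob{x}}{\ob{y}}$ has $4$ elements we are in the configuration of \Lemma{ladybug-config}, and the claim is immediate from the definition of the ladybug matching: in the normal form of \Figure{ladybug-config}, the two generators drawn in the top row of \Figure{ladybug-matching} are supported over the vertex obtained from $v$ by surgering along $A_1$, and the two in the bottom row over the vertex obtained by surgering along $A_2$; since each intermediate vertex thus carries two of the four generators and $\MatchLady[\ob{x},\ob{y}]$ pairs a top-row generator with a bottom-row generator, it interchanges these two fibers of $\Funky$.

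The one case requiring a genuine argument is $|\InBetween{\ob{x}}{\ob{y}}|=2$: I claim the two elements $\ob{z}_1\neq\ob{z}_2$ cannot lie over a common vertex $w$. Suppose they did, say $v\imprec w$ by surgery on an arc $A$ and $w\imprec u$ by surgery on an arc $B$. Reading off the definition of $\diff$, a merge step sends a fixed generator to at most one generator, so in order to produce two distinct $\ob{z}_i\in\diff\ob{y}$ over $w$ the step $v\to w$ must be a split, of some circle $Z_1$ into circles $Z_2,Z_3$, with $y(Z_1)=x_+$ and the two $\ob{z}_i$ carrying the two labelings of $\{Z_2,Z_3\}$ by $\{x_+,x_-\}$; in particular $\ob{z}_1$ and $\ob{z}_2$ agree off $\{Z_2,Z_3\}$. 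For $\ob{x}$ to lie in both $\diff\ob{z}_1$ and $\diff\ob{z}_2$, the arc $B$ must have one endpoint on $Z_2$ and the other on $Z_3$ — any other placement of its endpoints leaves the labelings of the circles of $\AssRes{}{u}$ descended from $Z_2\cup Z_3$ different for $\ob{z}_1$ and $\ob{z}_2$ — so $B$ merges $Z_2$ and $Z_3$. But then $A$ and $B$ have all four endpoints on $Z_1$, are linked there, $\ob{x}$ and $\ob{y}$ agree off $Z_1$, the circle of $\AssRes{}{u}$ into which $Z_1$ merges carries the label $x_-$, and $y(Z_1)=x_+$: this is precisely the hypothesis of \Lemma{ladybug-config}, forcing $|\InBetween{\ob{x}}{\ob{y}}|=4$, a contradiction. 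Hence in the $2$-element case the two generators lie over distinct vertices, $\MatchLady[\ob{x},\ob{y}]$ (which merely transposes them) crosses fibers of $\Funky$, and the reduction of the first paragraph finishes the proof. This last combinatorial verification is the only real obstacle; everything else is bookkeeping with $\diff s=1_2$ on the square $\Cube_{u,v}$.
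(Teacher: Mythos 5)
Your proof is correct and takes essentially the same route as the paper: reduce the identity to $\diff s=1_2$ evaluated on the $2$-cell $\Cube_{\Funky(\ob{x}),\Funky(\ob{y})}$, which requires only that $\Funky(\ob{z})\neq\Funky(\ob{z}')$. The paper settles that point with a one-line appeal to the definition of the ladybug matching (its figure), while you additionally verify that in the $2$-element case the two generators of $\InBetween{\ob{x}}{\ob{y}}$ lie over distinct vertices; that verification is correct and simply makes explicit a step the paper leaves implicit.
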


\begin{proof}
  Let $u=\Funky(\ob{x})$, $v=\Funky(\ob{y})$, $w=\Funky(\ob{z})$ and
  $w'=\Funky(\ob{z}')$. We have $v\imprec w,w'\imprec u$. 

  It follows from the definition of ladybug matching
  (\Figure{ladybug-matching})
  that $w\neq w'$. Therefore, $u,v,w,w'$ are precisely the four
  vertices that appear in the $2$-cell $\Cube_{u,v}$. Since $\diff
  s=1_2$,
\[
\diff s(\Cube_{u,v}) =
s(\Cube_{u,w})+s(\Cube_{w,v})+s(\Cube_{u,w'}) + s(\Cube_{w',v})=1.\qedhere
\]
\end{proof}

\subsection{The operation \texorpdfstring{$\Sq^1$}{Sq1}}\label{subsec:sq1-describe}

Let $\ob{c}\in\KhCx^{i,j}$ be a cycle in the Khovanov chain complex.
For $\ob{x}\in\KhGen^{i+1,j}$, let
$\PreBdy[\ob{c}]{\ob{x}}=\set{\ob{y}\in\KhGen^{i,j}}{\ob{x}\in\diff\ob{y},
  \ob{y}\in\ob{c}}$. 
\begin{defn}\label{def:boundary-matching}
  A \emph{boundary matching $\MatchBdy$ for $\ob{c}$} is a collection
  of pairs $(\BijecMatch[\ob{x}],\SignMatch[\ob{x}])$, one for each
  $\ob{x}\in\KhGen^{i+1,j}$, where:
  \begin{itemize}
  \item $\BijecMatch[\ob{x}]$ is a fixed point free involution of
    $\PreBdy[\ob{c}]{\ob{x}}$, and
  \item $\SignMatch[\ob{x}]$ is a map
    $\PreBdy[\ob{c}]{\ob{x}}\to\F_2$, such that for all
    $\ob{y}\in\PreBdy[\ob{c}]{\ob{x}}$,
    \[
    \{\SignMatch[\ob{x}](\ob{y}),\SignMatch[\ob{x}](\BijecMatch[\ob{x}](\ob{y}))\}=
    \begin{cases}
      \{0,1\}&\text{if
      }s(\Cube_{\Funky(\ob{x}),\Funky(\ob{y})})=s(\Cube_{\Funky(\ob{x}),\Funky(\BijecMatch[\ob{x}](\ob{y}))})\\
      \{0\}&\text{otherwise.}
    \end{cases}
    \]
  \end{itemize}
\end{defn}
Since $\ob{c}$ is a cycle, for any $\ob{x}$ there are an even number
of elements in $\PreBdy[\ob{c}]{\ob{x}}$. Hence, there exists a
boundary matching $\MatchBdy$ for $\ob{c}$.

\begin{defn}\label{def:sq1}
Let $\ob{c}\in\KhCx^{i,j}$ be a cycle. For any boundary matching
$\MatchBdy=\{(\BijecMatch[\ob{x}],\SignMatch[\ob{x}])\}$ for $\ob{c}$,
define the chain $\sq_{\MatchBdy}^1(\ob{c})\in\KhCx^{i+1,j}$ as
\begin{equation}\label{eq:sq1}
\sq_{\MatchBdy}^1(\ob{c})=\sum_{\ob{x}\in\KhGen^{i+1,j}}\biggl(\sum_{\ob{y}\in\PreBdy[\ob{c}]{\ob{x}}}\SignMatch[\ob{x}](\ob{y})\biggr)\ob{x}.
\end{equation}
\end{defn}

\begin{prop}\label{prop:sq1-agrees}
For any cycle $\ob{c}\in\KhCx^{i,j}$ and any boundary matching $\MatchBdy$ for
$\ob{c}$, $\sq_{\MatchBdy}^1(\ob{c})$ is a cycle. Furthermore,
\[
\homology{\sq_{\MatchBdy}^1(\ob{c})}=\Sq^1(\homology{\ob{c}}).
\]
\end{prop}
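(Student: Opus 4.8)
The plan is to identify $\Sq^1$ on the Khovanov spectrum $\KhSpace^j$ with the classical algebraic Bockstein of the cellular cochain complex $(\KhCx_\Z, \diff_\Z)$, and then check that $\sq^1_\MatchBdy$ is exactly a cocycle-level formula for that Bockstein. Since $\wt{C}^*(\Realize{\KhFlowCat};\Z) \cong (\KhCx_\Z,\diff_\Z)$ as cochain complexes, $\Sq^1$ acting on $\wt{H}^*(\KhSpace^j;\F_2) = \Kh^{i,j}$ is $r\be$ where $\be$ is the Bockstein associated to the coefficient sequence $0\to\Z\xrightarrow{2}\Z\to\F_2\to 0$ and $r$ is reduction mod $2$ (this is the standard fact $\Sq^1 = $ mod-$2$ Bockstein, e.g.\ via the Adem/Steenrod axioms, combined with the naturality that makes it computable on any CW model). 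So it suffices to prove that for a cycle $\ob{c}\in\KhCx^{i,j}$, the chain $\sq^1_\MatchBdy(\ob{c})$ is a cocycle representing $r\be[\ob{c}]$.

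Next I would run the usual zig-zag. Pick an integral lift: let $\tilde{\ob{c}}\in\KhCx_\Z^{i,j}$ be the $\{0,1\}$-chain with the same support as $\ob{c}$. Then $\diff_\Z\tilde{\ob{c}} = 2\ob{e}$ for some integral chain $\ob{e}\in\KhCx_\Z^{i+1,j}$, because $\ob{c}$ is a mod-$2$ cocycle, and $\be[\ob{c}]$ is represented by $\ob{e} \bmod 2$. Now I compute $\ob{e}$ explicitly. By \Equation{integer-kh-diff}, the coefficient of $\ob{x}$ in $\diff_\Z\tilde{\ob{c}}$ is $\sum_{\ob{y}\in\PreBdy[\ob{c}]{\ob{x}}} (-1)^{s(\Cube_{\Funky(\ob{x}),\Funky(\ob{y})})}$; dividing by $2$, the coefficient of $\ob{x}$ in $\ob{e}$ is $\tfrac12\bigl(\#\{\ob{y} : s=0\} - \#\{\ob{y} : s=1\}\bigr)$, which mod $2$ is congruent to $\#\{\ob{y}\in\PreBdy[\ob{c}]{\ob{x}} : s(\Cube_{\Funky(\ob{x}),\Funky(\ob{y})})=1\} \bmod 2$ (since $a-b \equiv a+b \pmod 2$ and $|\PreBdy[\ob{c}]{\ob{x}}|$ is even, one checks $\tfrac12(a-b)\equiv b \pmod 2$ when $a+b$ is even). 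So I must show the coefficient of $\ob{x}$ in $\sq^1_\MatchBdy(\ob{c})$, namely $\sum_{\ob{y}\in\PreBdy[\ob{c}]{\ob{x}}}\SignMatch[\ob{x}](\ob{y})$, agrees with $\#\{\ob{y} : s(\Cube_{\Funky(\ob{x}),\Funky(\ob{y})})=1\}$ mod $2$. Group $\PreBdy[\ob{c}]{\ob{x}}$ into $\BijecMatch[\ob{x}]$-orbits $\{\ob{y},\ob{y}'\}$; by \Definition{boundary-matching}, each orbit contributes $1$ to $\sum\SignMatch[\ob{x}]$ exactly when $s(\Cube_{\Funky(\ob{x}),\Funky(\ob{y})}) = s(\Cube_{\Funky(\ob{x}),\Funky(\ob{y}')})$, i.e.\ when the orbit contributes either $0$ or $2$ to $\#\{\ob{y} : s = 1\}$ — in which case, mod $2$, that orbit's contribution to $\#\{s=1\}$ is $0$, so... wait: I need the parity to match, so I should instead observe each orbit contributes $s(\ldots\ob{y}) + s(\ldots\ob{y}') \pmod 2$ to $\#\{s=1\}$, which is $0$ iff the two signs are equal, i.e.\ iff $\SignMatch[\ob{x}]$ sums to $1$ on the orbit, and is $1$ iff the signs differ, i.e.\ iff $\SignMatch[\ob{x}]$ sums to $0$. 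So the two totals are \emph{opposite} parity on each orbit, hence $\sum\SignMatch[\ob{x}](\ob{y}) \equiv \#\{s=1\} + \#\{\text{orbits}\} = \#\{s=1\} + \tfrac12|\PreBdy[\ob{c}]{\ob{x}}| \pmod 2$. This extra term $\tfrac12|\PreBdy[\ob{c}]{\ob{x}}|$ is precisely the discrepancy I glossed over in computing $\tfrac12(a-b) \bmod 2$: with $a = \#\{s=0\}$, $b=\#\{s=1\}$, $a+b = |\PreBdy[\ob{c}]{\ob{x}}|$, one has $\tfrac12(a-b) = \tfrac12(a+b) - b$, so mod $2$ the coefficient of $\ob{x}$ in $\ob{e}$ is $\tfrac12|\PreBdy[\ob{c}]{\ob{x}}| + \#\{s=1\}$, matching $\sum\SignMatch[\ob{x}](\ob{y})$ on the nose. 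That $\sq^1_\MatchBdy(\ob{c})$ is a cocycle then follows either because it equals $\ob{e}\bmod 2$ and $\diff_\Z\ob{e} = \tfrac12\diff_\Z\diff_\Z\tilde{\ob{c}} = 0$, or can be verified directly.

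The step I expect to be the real obstacle — or at least the one requiring care — is pinning down $\Sq^1$ on $\KhSpace^j$ as the algebraic mod-$2$ Bockstein of $(\KhCx_\Z,\diff_\Z)$. One must invoke that the CW structure on $\Realize{\KhFlowCat}$ from \cite{RS-khovanov} has its integral cellular cochain complex isomorphic to $(\KhCx_\Z,\diff_\Z)$ (stated in the excerpt), and that $\Sq^1$ on the $\F_2$-cohomology of \emph{any} space or spectrum is the Bockstein of that coefficient sequence — a general topological fact, but one that must be cited precisely and matched to the desuspension conventions of $\KhSpace = \bigvee_j\KhSpace^j$. Everything after that is the linear-algebra bookkeeping with the $\BijecMatch[\ob{x}]$-orbits sketched above, including the sign-counting identity $\tfrac12(a-b) \equiv \tfrac12(a+b) + b \pmod 2$, which is where the definition of $\SignMatch[\ob{x}]$ was engineered to make the parities line up. A secondary point worth a sentence: independence of $\sq^1_\MatchBdy(\ob{c})$ from the choice of boundary matching $\MatchBdy$ is automatic once we know it equals $\ob{e}\bmod 2$, which is determined by $\tilde{\ob{c}}$ alone.
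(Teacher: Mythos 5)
Your proposal is correct and follows essentially the same route as the paper: identify $\Sq^1$ with the mod-$2$ Bockstein (the paper uses the $\Z/2\to\Z/4\to\Z/2$ Bockstein, which coincides with your $r\be$), represent it at the chain level by halving $\diff_{\Z}$ of an integral lift of $\ob{c}$, and then verify the parity identity showing that the boundary-matching sum $\sum_{\ob{y}}\SignMatch[\ob{x}](\ob{y})$ agrees with $\tfrac12\bigl(\#\{s=0\}-\#\{s=1\}\bigr)$ mod $2$. Your orbit-by-orbit bookkeeping is exactly the ``easy to see'' step in the paper's proof, so no gap remains.
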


\begin{proof}
The first Steenrod square $\Sq^1$ is the Bockstein associated to the
short exact sequence
\[
0\to\Z/2\to\Z/4\to\Z/2\to 0.
\]
Since the differential in $\KhCx_{\Z}$ is given by
\Equation{integer-kh-diff}, a chain representative for
$\Sq^1(\homology{\ob{c}})$ is the following:
\[
\ob{b}=\sum_{\ob{x}\in\KhGen^{i+1,j}}\biggl(\frac{\#\set{\ob{y}\in\PreBdy[\ob{c}]{\ob{x}}}{s(\Cube_{\Funky(\ob{x}),\Funky(\ob{y})})=0}-
\#\set{\ob{y}\in\PreBdy[\ob{c}]{\ob{x}}}{s(\Cube_{\Funky(\ob{x}),\Funky(\ob{y})})=1}}{2}\biggr)\ob{x}.
\]
It is easy to see that
\[
\sum_{\ob{y}\in\PreBdy[\ob{c}]{\ob{x}}}\SignMatch[\ob{x}](\ob{y})=
\biggl(\frac{\#\set{\ob{y}}{s(\Cube_{\Funky(\ob{x}),\Funky(\ob{y})})=0}-
  \#\set{\ob{y}}{s(\Cube_{\Funky(\ob{x}),\Funky(\ob{y})})=1}}{2}\biggr)
\pmod 2,
\]
and hence $\ob{b}=\sq_{\MatchBdy}^1(\ob{c})$.
\end{proof}

\subsection{The operation \texorpdfstring{$\Sq^2$}{Sq2}}

Let $\ob{c}\in\KhCx^{i,j}$ be a cycle. Choose a
boundary matching $\MatchBdy=\{(\BijecMatch[\ob{z}],\SignMatch[\ob{z}])\}$ for
$\ob{c}$. For $\ob{x}\in\KhGen^{i+2,j}$, define
\[
\PreBdy[\ob{c}]{\ob{x}}=\set{(\ob{z},\ob{y})\in\KhGen^{i+1,j}\times\KhGen^{i,j}}{\ob{x}\in\diff\ob{z},
  \ob{z}\in\diff\ob{y},\ob{y}\in\ob{c}}.
\]

Consider the edge-labeled graph $\Graph[\ob{c}]{\ob{x}}$, whose
vertices are the elements of $\PreBdy[\ob{c}]{\ob{x}}$ and whose edges are
the following.
\begin{enumerate}[label=(e-\arabic*),ref=e-\arabic*]
\item \label{item:edge-1}There is an unoriented edge between $(\ob{z},\ob{y})$ and
  $(\ob{z}',\ob{y})$, if the ladybug matching
  $\MatchLady[\ob{x},\ob{y}]$ matches $\ob{z}$ and $\ob{z}'$. This
  edge is labeled by $f(\Cube_{\Funky(\ob{x}),\Funky(\ob{y})})
  \in\F_2$, where $f$ denotes the standard frame assignment (\Subsection{cube}).
\item \label{item:edge-2}There is an edge between $(\ob{z},\ob{y})$ and
  $(\ob{z},\ob{y}')$ if the matching $\BijecMatch[\ob{z}]$ matches
  $\ob{y}$ with $\ob{y}'$. This edge is labeled by $0$. Furthermore,
  if $\SignMatch[\ob{z}](\ob{y})=0$ and
  $\SignMatch[\ob{z}](\ob{y}')=1$, then this edge is oriented from
  $(\ob{z},\ob{y})$ to $(\ob{z},\ob{y}')$; if
  $\SignMatch[\ob{z}](\ob{y})=1$ and $\SignMatch[\ob{z}](\ob{y}')=0$,
  then this edge is oriented from $(\ob{z},\ob{y}')$ to
  $(\ob{z},\ob{y})$; and if
  $\SignMatch[\ob{z}](\ob{y})=\SignMatch[\ob{z}](\ob{y}')$, then the
  edge is unoriented.
\end{enumerate}
\begin{definition}\label{def:graph-f}
  Let $\framenumber{\Graph[\ob{c}]{\ob{x}}}\in\F_2$ be the sum of all
  the edge-labels (of the Type \Item{edge-1} edges) in the graph
  $\Graph[\ob{c}]{\ob{x}}$.
\end{definition}

\begin{lem}\label{lem:even-cyc}
  Each component of $\Graph[\ob{c}]{\ob{x}}$ is an even
  cycle. Furthermore, in each component, the number of oriented edges
  is even.
\end{lem}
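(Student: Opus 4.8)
The plan is to analyze the local structure of $\Graph[\ob{c}]{\ob{x}}$ vertex by vertex, showing that it is a disjoint union of cycles, and then to control the parities. First I would verify that every vertex $(\ob{z},\ob{y})\in\PreBdy[\ob{c}]{\ob{x}}$ has exactly one incident edge of Type \Item{edge-1} and exactly one incident edge of Type \Item{edge-2}. For the Type \Item{edge-1} edges: given $(\ob{z},\ob{y})$, the ladybug matching $\MatchLady[\ob{x},\ob{y}]$ is a fixed-point-free involution on $\InBetween{\ob{x}}{\ob{y}}$, so there is a unique $\ob{z}'=\MatchLady[\ob{x},\ob{y}](\ob{z})$, and $(\ob{z}',\ob{y})$ lies in $\PreBdy[\ob{c}]{\ob{x}}$ since $\ob{y}\in\ob{c}$ and $\ob{x}\in\diff\ob{z}'$, $\ob{z}'\in\diff\ob{y}$; this yields exactly one Type \Item{edge-1} edge at each vertex. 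For the Type \Item{edge-2} edges: fixing $\ob{z}$, the set $\PreBdy[\ob{c}]{\ob{z}}$ (in the sense of \Subsection{sq1-describe}) carries the fixed-point-free involution $\BijecMatch[\ob{z}]$, and $\ob{y}$ belongs to it; the partner $\ob{y}'=\BijecMatch[\ob{z}](\ob{y})$ satisfies $\ob{x}\in\diff\ob{z}$, $\ob{z}\in\diff\ob{y}'$, $\ob{y}'\in\ob{c}$, so $(\ob{z},\ob{y}')\in\PreBdy[\ob{c}]{\ob{x}}$ — again exactly one Type \Item{edge-2} edge at each vertex. Hence every vertex has degree exactly $2$, with one edge of each type, so the graph is a disjoint union of cycles in which the edge types strictly alternate; consequently every cycle has even length (equal to twice the number of its Type \Item{edge-1} edges).

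Next I would prove that in each component the number of oriented edges is even; since only Type \Item{edge-2} edges can be oriented, this amounts to a parity statement about orientations. Here I would use the sign data. Travel once around a cycle $\gamma$ and track the value of $s(\Cube_{\Funky(\ob{x}),\Funky(\ob{y})})$ at the successive vertices. Crossing a Type \Item{edge-1} edge from $(\ob{z},\ob{y})$ to $(\ob{z}',\ob{y})$ keeps $\ob{y}$ fixed, hence keeps $s(\Cube_{\Funky(\ob{x}),\Funky(\ob{y})})$ unchanged. Crossing a Type \Item{edge-2} edge from $(\ob{z},\ob{y})$ to $(\ob{z},\ob{y}')$ changes $\ob{y}$ to $\ob{y}'=\BijecMatch[\ob{z}](\ob{y})$: by the defining property of the boundary matching in \Definition{boundary-matching}, this value flips precisely when the edge is oriented (i.e.\ when $\SignMatch[\ob{z}](\ob{y})\ne\SignMatch[\ob{z}](\ob{y}')$, which happens exactly when $s(\Cube_{\Funky(\ob{x}),\Funky(\ob{y})})=s(\Cube_{\Funky(\ob{x}),\Funky(\ob{y}')})$ — wait, I should double-check the bookkeeping: when the two $s$-values are \emph{equal} the signs form $\{0,1\}$ and the edge is oriented, and when they differ the signs are both $0$ and the edge is unoriented). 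So crossing an oriented Type \Item{edge-2} edge leaves $s$ unchanged, while crossing an unoriented one flips it. Since after going all the way around $\gamma$ we return to the same vertex, the total number of flips is even; that is, the number of unoriented Type \Item{edge-2} edges in $\gamma$ is even. Combined with the fact that the total number of Type \Item{edge-2} edges in $\gamma$ is even (half the length of the even cycle $\gamma$), we conclude that the number of oriented edges is even as well.

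The main obstacle, and the step requiring the most care, is the sign bookkeeping in the second part: I must get exactly right which of the two cases in \Definition{boundary-matching} produces an oriented edge and how that interacts with the change in $s(\Cube_{\Funky(\ob{x}),\Funky(\ob{y})})$ as $\ob{y}$ varies. It is worth noting that \Lemma{ladybug-changes-sign} is the analogous sign statement for the Type \Item{edge-1} edges — it shows $s(\Cube_{\Funky(\ob{x}),\Funky(\ob{z})})+s(\Cube_{\Funky(\ob{z}),\Funky(\ob{y})})$ flips across a ladybug edge — and while that particular combination is not literally what I track above, having both $s$-identities in hand makes the traversal argument robust; if the cleanest invariant to propagate around $\gamma$ turns out to be $s(\Cube_{\Funky(\ob{x}),\Funky(\ob{z})})+s(\Cube_{\Funky(\ob{z}),\Funky(\ob{y})})$ rather than $s(\Cube_{\Funky(\ob{x}),\Funky(\ob{y})})$, I would switch to that, using \Lemma{ladybug-changes-sign} for the Type \Item{edge-1} steps and \Definition{boundary-matching} for the Type \Item{edge-2} steps; either way the conclusion is that a closed loop forces an even number of orientation-relevant flips.
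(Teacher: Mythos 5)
The first half of your proposal (each vertex meets exactly one edge of each type, so components are cycles of even length with alternating edge types) is correct and is exactly the paper's argument. The second half, however, has a genuine gap. The quantity you propose to carry around the cycle, $s(\Cube_{\Funky(\ob{x}),\Funky(\ob{y})})$ with $\ob{x}$ the fixed top generator, is not defined: $s$ is a $1$-cochain and $\Cube_{\Funky(\ob{x}),\Funky(\ob{y})}$ is a $2$-cell, since here $\Funky(\ob{y})\leq_2\Funky(\ob{x})$. (You have conflated the ``$\ob{x}$'' of \Definition{boundary-matching}, which sits one level above $\ob{y}$, with the top generator of the graph, which sits two levels above.) If one reads your invariant as $s(\Cube_{\Funky(\ob{z}),\Funky(\ob{y})})$ -- the quantity actually appearing in \Definition{boundary-matching} -- then your analysis of the Type \Item{edge-2} steps is right, but the claim that the invariant is unchanged across a Type \Item{edge-1} edge fails: there $\ob{z}$ changes to $\ob{z}'=\MatchLady[\ob{x},\ob{y}](\ob{z})$, and by \Lemma{ladybug-changes-sign} the change in $s(\Cube_{\Funky(\ob{z}),\Funky(\ob{y})})$ equals $1+s(\Cube_{\Funky(\ob{x}),\Funky(\ob{z})})+s(\Cube_{\Funky(\ob{x}),\Funky(\ob{z}')})$, which can be $0$ or $1$. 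Moreover, your final step needs the number of Type \Item{edge-2} edges in a component to be even, and that is false: it equals half the cycle length, which can be odd. The hexagonal component of $\Graph[\ob{c}]{\ob{x}_2}$ in the paper's own worked example has three Type \Item{edge-2} edges, exactly one of them unoriented, so both of your intermediate parity claims fail there (though the final count of oriented edges happens to be even, as it must).

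The repair is the alternative you gesture at in your last sentence, but the bookkeeping after the switch is not ``the same either way.'' Track the label $s(\Cube_{\Funky(\ob{x}),\Funky(\ob{z})})+s(\Cube_{\Funky(\ob{z}),\Funky(\ob{y})})$ at each vertex, as the paper does. By \Lemma{ladybug-changes-sign} it flips across \emph{every} Type \Item{edge-1} edge, and by \Definition{boundary-matching} it flips across exactly the unoriented Type \Item{edge-2} edges (and is constant across the oriented ones). Going once around a component, the total number of flips is even, so $\#\{\text{Type \Item{edge-1} edges}\}+\#\{\text{unoriented Type \Item{edge-2} edges}\}\equiv 0 \pmod 2$; since the edge types alternate, the two types occur in equal numbers, and subtracting gives that the number of oriented edges is even. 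So the conclusion you want follows not from ``unoriented edges are even in number and Type \Item{edge-2} edges are even in number,'' but from the congruence $\#\{\text{unoriented}\}\equiv\#\{\text{Type \Item{edge-2}}\}$.
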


\begin{proof}
  Each vertex $(\ob{z},\ob{y})$ of $\Graph[\ob{c}]{\ob{x}}$ belongs to
  exactly two edges: the Type \Item{edge-1} edge joining
  $(\ob{z},\ob{y})$ and $(\MatchLady[\ob{x},\ob{y}](\ob{z}),\ob{y})$;
  and the Type \Item{edge-2} edge joining $(\ob{z},\ob{y})$ and
  $(\ob{z},\BijecMatch[\ob{z}](\ob{y}))$. This implies that each
  component of $\Graph[\ob{c}]{\ob{x}}$ is an even cycle.

  In order to prove the second part, vertex-label the graph as
  follows: To a vertex $(\ob{z},\ob{y})$, assign the number
  $s(\Cube_{\Funky(\ob{x}),\Funky(\ob{z})})+
  s(\Cube_{\Funky(\ob{z}),\Funky(\ob{y})})\in\F_2$.
  \Lemma{ladybug-changes-sign} implies that the Type \Item{edge-1}
  edges join vertices carrying opposite labels; and among the
  Type \Item{edge-2} edges, it is clear from the definition of
  boundary matching (\Subsection{sq1-describe}) that the oriented
  edges join vertices carrying the same label, and the unoriented
  edges join vertices carrying opposite labels. Therefore, each cycle
  must contain an even number of unoriented edges; since there are an
  even number of vertices in each cycle, we are done.
\end{proof}

This observation allows us to associate the following number
$\graphnumber{\Graph[\ob{c}]{\ob{x}}}\in\F_2$ to the graph.
\begin{definition}\label{def:graph-g}Partition
  the oriented edges of $\Graph[\ob{c}]{\ob{x}}$ into two sets, such
  that if two edges from the same cycle are in the same set, they are
  oriented in the same direction. Let
  $\graphnumber{\Graph[\ob{c}]{\ob{x}}}$ be the number modulo $2$ of
  the elements in either set.
\end{definition}

\begin{defn}\label{def:sq2}
  Let $\ob{c}\in\KhCx^{i,j}$ be a cycle. For any boundary matching $\MatchBdy$
  for $\ob{c}$, define the chain
  $\sq_{\MatchBdy}^2(\ob{c})\in\KhCx^{i+1,j}$ as 
  \begin{equation}\label{eq:sq2}
  \sq^2_{\MatchBdy}(\ob{c})=\sum_{\ob{x}\in\KhGen^{i+2,j}}
  \biggl(\#|\Graph[\ob{c}]{\ob{x}}| +
  \framenumber{\Graph[\ob{c}]{\ob{x}}}+\graphnumber{\Graph[\ob{c}]{\ob{x}}}\biggr)\ob{x}.
  \end{equation}
(Here, $\#|\Graph[\ob{c}]{\ob{x}}|$ is the number of components of the
graph, $\framenumber{\Graph[\ob{c}]{\ob{x}}}$ is defined in
\Definition{graph-f}, and $\graphnumber{\Graph[\ob{c}]{\ob{x}}}$ is
defined in \Definition{graph-g}.)
\end{defn}

We devote \Section{where-answer} to proving the following.
\begin{thm}\label{thm:sq2-agrees}
For any cycle $\ob{c}\in\KhCx^{i,j}$ and any boundary matching $\MatchBdy$ for
$\ob{c}$, $\sq_{\MatchBdy}^2(\ob{c})$ is a cycle. Furthermore,
\[
\homology{\sq_{\MatchBdy}^2(\ob{c})}=\Sq^2(\homology{\ob{c}}).
\]
\end{thm}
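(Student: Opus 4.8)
The plan is to identify $\Sq^2$ as a secondary cohomology operation coming from the cell structure on the Khovanov space $\Realize{\KhFlowCat}$, and to match the combinatorial formula \Equation{sq2} against the standard cochain-level description of $\Sq^2$ via the chain complex of the skeleton filtration. Recall from \cite[Definition~\ref*{KhSp:def:Kh-space}]{RS-khovanov} that $\KhSpace^j$ is (a desuspension of) a CW complex whose cells in degree $i$ correspond to the Khovanov generators in $\KhGen^{i,j}$, with attaching data controlled by the framed flow category $\KhFlowCat$. Over $\F_2$, the Steenrod square $\Sq^2$ on a degree-$i$ cohomology class $\homology{\ob{c}}$ is computed by the classical recipe: pick a cocycle representative $\ob{c}$, choose a cochain $\ob{c}'$ witnessing a nullhomotopy of the relevant obstruction over the $(i+1)$-skeleton, and then $\Sq^2(\homology{\ob{c}})$ is represented by the obstruction to extending this over the $(i+2)$-skeleton; equivalently one works with the mapping telescope / quotient spaces of consecutive skeleta and uses the functional/secondary description of $\Sq^2$ in terms of the attaching maps of $(i+2)$-cells onto the $(i+1)$- and $i$-cells. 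The key geometric input is that these attaching maps are encoded by the moduli spaces $\Moduli$ of the flow category: the relevant framed $0$- and $1$-dimensional moduli spaces between generators two apart are exactly governed by the ladybug matching (for the $Z_1\to Z_2$ configurations) and by the sign assignment $s$ and frame assignment $f$ on the cube $\Cube(n)$.

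First I would set up the computation of $\Sq^2$ purely in terms of the three-step filtration $\KhSpace^j_{\leq i}\subset\KhSpace^j_{\leq i+1}\subset\KhSpace^j_{\leq i+2}$, reducing to a statement about a CW complex with cells only in three consecutive dimensions, exactly as in the standard treatment of $\Sq^2$ on such ``short'' complexes (e.g. via the quotient $\KhSpace^j_{\leq i+2}/\KhSpace^j_{\leq i-1}$ and the cofiber sequences relating it to wedges of spheres). On such a complex $\Sq^2$ is computed by a cochain-level formula whose three terms are: a ``primary'' contribution from $2$-cell attaching data onto $0$-cells, a contribution from the composite of the two one-step attaching maps, and a correction term measuring the failure of a chosen nullhomotopy to be canonical. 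Next I would translate each of these three contributions into the flow-category language: the attaching maps of $(i+2)$-cells onto $i$-cells factor through a space built from $\InBetween{\ob{x}}{\ob{y}}$, and the framing data (the $\Moduli$-framings constructed in \cite{RS-khovanov}) are precisely what the frame assignment $f$ records, by \Lemma{frame-assignment-sum} and the compatibility of $f$ with $s$; the ladybug matching enters through the identification of the relevant $1$-manifold $\ParaModuli$ (the moduli of flows between generators two apart passing through the ladybug configuration) as a disjoint union of intervals whose endpoints are matched by $\MatchLady[\ob{x},\ob{y}]$.

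Having made that dictionary, the heart of the argument is to check that the three combinatorial quantities $\#|\Graph[\ob{c}]{\ob{x}}|$, $\framenumber{\Graph[\ob{c}]{\ob{x}}}$, and $\graphnumber{\Graph[\ob{c}]{\ob{x}}}$ reproduce, term by term, the three contributions in the topological formula for the coefficient of $\ob{x}$ in a cocycle representative of $\Sq^2(\homology{\ob{c}})$. The graph $\Graph[\ob{c}]{\ob{x}}$ is literally the $1$-manifold obtained by gluing the ladybug intervals (Type~\Item{edge-1} edges, carrying the $f$-labels) to the ``boundary-matching'' intervals (Type~\Item{edge-2} edges, carrying the sign data $\SignMatch$ encoding the chosen nullhomotopy $\ob{c}'$ from \Proposition{sq1-agrees}); \Lemma{even-cyc} says this is a closed $1$-manifold with a consistent structure, so counting its components gives the ``primary'' term, summing the $f$-labels gives the framing-correction term, and $\graphnumber{}$ (the well-definedness of which is exactly \Lemma{even-cyc}) gives the nullhomotopy-correction term. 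That $\sq^2_{\MatchBdy}(\ob{c})$ is a cycle, and that the answer is independent of the boundary matching $\MatchBdy$ (hence well-defined on homology), should follow formally from the fact that it equals the genuine cohomology operation $\Sq^2(\homology{\ob{c}})$ once the identification is established, together with the observation that different choices of $\MatchBdy$ correspond to different cochain nullhomotopies, which change $\sq^2_{\MatchBdy}(\ob{c})$ only by a coboundary. The main obstacle I anticipate is the bookkeeping in the second paragraph: pinning down the framings on the flow-category moduli spaces precisely enough to see that they match $f$ (rather than $f$ plus some other cocycle), and correctly handling the desuspensions relating $\KhSpace^j$ to $\Realize{\KhFlowCat}$, so that the standard ``three-dimensional'' $\Sq^2$ formula applies on the nose with the orientation/sign conventions of \cite{RS-khovanov}.
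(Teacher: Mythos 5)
Your overall architecture is the same as the paper's: pass to the three-consecutive-dimension sub-quotient $Y^{(m+2)}/Y^{(m-1)}$, view $\Sq^2(\homology{\ob{c}})$ as the obstruction read off cell-by-cell from the attaching maps of the $(m+2)$-cells, and identify the graph $\Graph[\ob{c}]{\ob{x}}$ with the $1$-manifold formed by the ladybug (Pontrjagin--Thom) arcs and the boundary-matching arcs. The problem is that the engine you invoke --- ``a standard cochain-level formula whose three terms are a primary contribution from $2$-cell attaching data onto $0$-cells, a contribution from the composite of the two one-step attaching maps, and a nullhomotopy correction'' --- does not exist off the shelf for cellular cochains of a CW complex; producing such a formula, with the correct three terms, is precisely the content of \Section{where-answer}, so your term-by-term matching has nothing yet to match against.

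Concretely, two steps are missing, and they are the substance of the proof. First, the framings: the Khovanov spectrum of~\cite{RS-khovanov} is built from a coherent framing of the cube flow category obtained by an obstruction-theory argument, and you must show that the framings of the $0$- and $1$-dimensional moduli spaces may be taken to be the ones prescribed by the standard sign assignment $s$ and the standard frame assignment $f$. This is \Lemma{extend-framing}, whose proof is a genuine computation on the hexagonal $2$-dimensional moduli spaces using \Lemma{frame-assignment-sum}; without it your dictionary ``framing data $=f$'' could be off by an arbitrary $2$-cochain, which you acknowledge as ``the main obstacle'' but do not address. Second, the evaluation of each attaching-map element of $\pi_{m+1}(K_m^{(m+1)})\cong\Z/2$: one must show that near each component $K$ the classifying map is determined by the induced framing of $K$ (\Proposition{PsiPsiprime}, which rests on \Lemma{frame-cancel}, i.e., on the vanishing of the suspension of the nontrivial element of $\pi_2(\RP^2)$), and then compute the framed bordism class $[K,\Phi]\in\Omega_1^{\fr}$ (\Proposition{KPhi}). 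In particular the term $\#|\Graph[\ob{c}]{\ob{x}}|$ is not a ``primary contribution from $2$-cell attaching data'': each circle contributes $1$ because the constant loop of frames represents the nonbounding (Lie) framing of $S^1$ in $\Omega_1^{\fr}$, a phenomenon your proposed three-term template has no mechanism to produce. So the setup and the graph-to-$1$-manifold dictionary are right, but the proposal defers exactly the two arguments that constitute the proof.
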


\begin{cor}\label{cor:commute-isom}
  The operations $\sq_{\MatchBdy}^1$ and $\sq_{\MatchBdy}^2$ induce
  well-defined maps
  \[
  \Sq^1\co \Kh^{i,j}\to \Kh^{i+1,j}\qquad\text{and}\qquad
  \Sq^2\co \Kh^{i,j}\to \Kh^{i+2,j}
  \]
  that are independent of the choices of boundary matchings
  $\MatchBdy$. Furthermore, these maps are link invariants, in the
  following sense: given any two diagrams $L$ and $L'$ representing
  the same link, there are isomorphisms $\phi_{i,j}\co
  \Kh^{i,j}(L)\to\Kh^{i,j}(L')$ making the following diagrams commute:
  \[
  \xymatrix{ \Kh^{i+1,j}(L)\ar[r]^{\phi_{i+1,j}} & \Kh^{i+1,j}(L') &
    \qquad &
    \Kh^{i+2,j}(L)\ar[r]^{\phi_{i+2,j}} & \Kh^{i+2,j}(L')\\
    \Kh^{i,j}(L)\ar[u]_{\Sq^1}\ar[r]_{\phi_{i,j}} &
    \Kh^{i,j}(L')\ar[u]_{\Sq^1} &\qquad &
    \Kh^{i,j}(L)\ar[u]_{\Sq^2}\ar[r]_{\phi_{i,j}} &
    \Kh^{i,j}(L')\ar[u]_{\Sq^2}.  }
  \]
\end{cor}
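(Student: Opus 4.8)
The plan is to deduce Corollary~\ref{cor:commute-isom} from the space-level picture: by \cite[Theorem~\ref*{KhSp:thm:kh-space}]{RS-khovanov} the stable homotopy type of $\KhSpace^j(L)$ is a link invariant, and by \Proposition{sq1-agrees} and \Theorem{sq2-agrees} the combinatorial operations $\sq^1_{\MatchBdy}$ and $\sq^2_{\MatchBdy}$ represent the topologically-defined Steenrod operations $\Sq^1,\Sq^2$ on $\wt{H}^*(\KhSpace^j(L);\F_2)\cong\Kh^{*,j}(L)$.

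First I would observe that since $\Sq^1=r\be$ and $\Sq^2$ are stable cohomology operations, they commute with every map of spectra up to homotopy; in particular, given diagrams $L,L'$ for the same link, a stable homotopy equivalence $\KhSpace^j(L)\simeq\KhSpace^j(L')$ induces, on $\wt{H}^*(-;\F_2)$, an isomorphism $\phi_{*,j}$ that commutes with $\Sq^1$ and $\Sq^2$; assembling these over $j$ gives the claimed commuting squares. This already shows the maps are link invariants \emph{once we know they are well-defined}.

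Next I would establish well-definedness, i.e.\ independence of the boundary matching $\MatchBdy$. For $\Sq^1$ this is immediate from \Proposition{sq1-agrees}: the right-hand side $\homology{\sq^1_{\MatchBdy}(\ob{c})}=\Sq^1(\homology{\ob{c}})$ does not mention $\MatchBdy$, so for two matchings $\MatchBdy,\MatchBdy'$ the chains $\sq^1_{\MatchBdy}(\ob{c})$ and $\sq^1_{\MatchBdy'}(\ob{c})$ are both cycles representing the same class, and moreover if $\ob{c}=\diff\ob{b}$ is a boundary then $\Sq^1(\homology{\ob{c}})=\Sq^1(0)=0$, so $\sq^1_{\MatchBdy}$ descends to a well-defined map $\Sq^1\co\Kh^{i,j}\to\Kh^{i+1,j}$. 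Identically, \Theorem{sq2-agrees} gives that $\homology{\sq^2_{\MatchBdy}(\ob{c})}=\Sq^2(\homology{\ob{c}})$ is independent of $\MatchBdy$ and vanishes on boundaries, so $\sq^2_{\MatchBdy}$ descends to a well-defined map $\Sq^2\co\Kh^{i,j}\to\Kh^{i+2,j}$. (One should note that the existence of some boundary matching $\MatchBdy$ for any cycle $\ob{c}$, needed for these operations to be defined at all, was already recorded after \Definition{boundary-matching}.)

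Thus the corollary reduces entirely to \Proposition{sq1-agrees} and \Theorem{sq2-agrees}, the latter being the content of \Section{where-answer}; the present argument is only the bookkeeping that packages those identification statements into the stated invariance. The only mild subtlety — and the one step I would be careful about — is to make sure the isomorphism $\phi_{i,j}$ can be chosen \emph{simultaneously} compatible with both $\Sq^1$ and $\Sq^2$, i.e.\ as the map on cohomology induced by a single stable equivalence $\KhSpace^j(L)\simeq\KhSpace^j(L')$ rather than separately for each operation; since both squares are then induced by one map of spectra and every stable cohomology operation is natural, this is automatic, but it is the point worth stating explicitly.
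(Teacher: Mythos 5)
Your proposal is correct and follows exactly the paper's route: the paper's own proof simply cites \Proposition{sq1-agrees}, \Theorem{sq2-agrees}, and the invariance of the Khovanov spectrum from~\cite{RS-khovanov}, and your argument is just that citation with the routine bookkeeping (independence of $\MatchBdy$, vanishing on boundaries, and naturality of stable cohomology operations under a single stable equivalence) written out. Nothing further is needed.
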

\begin{proof}
  This is immediate from \Proposition{sq1-agrees},
  \Theorem{sq2-agrees}, and invariance of the Khovanov
  spectrum~\cite[Theorem~\ref*{KhSp:thm:kh-space}]{RS-khovanov}. 
\end{proof}

Indeed, we show in \cite[Theorem~4]{RS-rasmussen} that we can choose
the isomorphisms in \Corollary{commute-isom} to be the canonical ones
induced from an isotopy from $L$ to $L'$.

\subsection{An example}\label{subsec:summary}
For the reader's convenience, we present an artificial example to illustrate \Definition{sq1}
and \Definition{sq2}.
\begin{example}
  Assume $\KhGen^{i,j}=\{\ob{y}_1,\dots,\ob{y}_5\}$,
  $\KhGen^{i+1,j}=\{\ob{z}_1,\dots,\ob{z}_6\}$ and
  $\KhGen^{i+2,j}=\{\ob{x}_1,\ob{x}_2\}$, and the Khovanov
  differential $\diff$ has the following form.
  \[
  \xymatrix{
    &\ob{x}_1&&\ob{x}_2&&\\
    \ob{z}_1\ar[ur]\ar[urrr]&\ob{z}_2\ar[u]&\ob{z}_3\ar[ul]&\ob{z}_4
    \ar[ull]\ar[u] &\ob{z}_5\ar[ul]&\ob{z}_6\ar[ull]\\
    &\ob{y}_1\ar[ul]\ar[u]\ar[ur]\ar[urr]&\ob{y}_2\ar[ull]\ar[u]\ar[urr]
    &\ob{y}_3\ar[ull]\ar[u]\ar[ur]
    &\ob{y}_4\ar[u]\ar[ur]&\ob{y}_5\ar[u]\ar[ul]\\
  }
  \]
  Assume that the sign assignment and the frame assignment are as
  follows.
  \begin{center}
    \begin{tabular}{cc|cc|cc|cc}
      \toprule
      $(\ob{x},\ob{y})$&$s(\Cube_{\Funky(\ob{x}),\Funky(\ob{y})})$&
      $(\ob{x},\ob{y})$&$s(\Cube_{\Funky(\ob{x}),\Funky(\ob{y})})$&
      $(\ob{x},\ob{y})$&$s(\Cube_{\Funky(\ob{x}),\Funky(\ob{y})})$&
      $(\ob{x},\ob{y})$&$f(\Cube_{\Funky(\ob{x}),\Funky(\ob{y})})$\\
      \midrule
      $(\ob{z}_1,\ob{y}_1)$&1&$(\ob{z}_4,\ob{y}_3)$ &1&$(\ob{x}_1,\ob{z}_3)$ &0&$(\ob{x}_1,\ob{y}_1)$ &1\\
      $(\ob{z}_2,\ob{y}_1)$&1&$(\ob{z}_5,\ob{y}_3)$ &0&$(\ob{x}_2,\ob{z}_3)$ &0&$(\ob{x}_2,\ob{y}_1)$ &0\\
      $(\ob{z}_3,\ob{y}_1)$&0&$(\ob{z}_5,\ob{y}_4)$ &0&$(\ob{x}_1,\ob{z}_4)$ &0&$(\ob{x}_1,\ob{y}_2)$ &0\\
      $(\ob{z}_4,\ob{y}_1)$&0&$(\ob{z}_6,\ob{y}_4)$ &0&$(\ob{x}_2,\ob{z}_4)$ &0&$(\ob{x}_2,\ob{y}_2)$ &1\\
      $(\ob{z}_1,\ob{y}_2)$&1&$(\ob{z}_5,\ob{y}_5)$ &0&$(\ob{x}_2,\ob{z}_5)$ &0&$(\ob{x}_1,\ob{y}_3)$ &1\\
      $(\ob{z}_3,\ob{y}_2)$&0&$(\ob{z}_6,\ob{y}_5)$ &0&$(\ob{x}_2,\ob{z}_6)$ &1&$(\ob{x}_2,\ob{y}_3)$ &0\\
      $(\ob{z}_5,\ob{y}_2)$&0&$(\ob{x}_1,\ob{z}_1)$ &0& & &$(\ob{x}_2,\ob{y}_4)$ &1\\
      $(\ob{z}_2,\ob{y}_3)$&0&$(\ob{x}_1,\ob{z}_2)$ &0& & &$(\ob{x}_2,\ob{y}_5)$ &1\\
      \bottomrule
    \end{tabular}
  \end{center}
  Finally, assume that the ladybug matching
  $\MatchLady[\ob{x}_1,\ob{y}_1]$ matches $\ob{z}_1$ with $\ob{z}_4$
  and $\ob{z}_2$ with $\ob{z}_3$.

  Let us start with the cycle $\ob{c}\in\KhCx^{i,j}$ given by
  $\ob{c}=\sum_{i=1}^5 \ob{y}_i$. In order to compute $\Sq^1(\ob{c})$
  and $\Sq^2(\ob{c})$, we need to choose a boundary matching
  $\MatchBdy=\{(\BijecMatch[\ob{z}_j],\SignMatch[\ob{z}_j])\}$ for
  $\ob{c}$. Let us choose the following boundary matching.
  \begin{center}
    \begin{tabular}{ccc|ccc}
      \toprule
      $j$ & $\BijecMatch[\ob{z}_j]$ & $\SignMatch[\ob{z}_j]$ & $j$ &
      $\BijecMatch[\ob{z}_j]$ & $\SignMatch[\ob{z}_j]$ \\
      \midrule
      $1$ & $\ob{y}_1\matching \ob{y}_2$ & $\ob{y}_1\to 0,\ob{y}_2\to 1$ &
      $4$ &$\ob{y}_1\matching \ob{y}_3$ & $\ob{y}_1,\ob{y}_3\to 0$\\ 
      $2$ & $\ob{y}_1\matching \ob{y}_3$ & $\ob{y}_1,\ob{y}_3\to 0$ & $5$ &$\ob{y}_2\matching
      \ob{y}_3, \ob{y}_4\matching \ob{y}_5$ &
      $\ob{y}_2,\ob{y}_4\to 0,\ob{y}_3,\ob{y}_5\to 1$\\ 
      $3$ & $\ob{y}_1\matching \ob{y}_2$ & $\ob{y}_1\to 0,\ob{y}_2\to 1$ &
      $6$ &$\ob{y}_4\matching \ob{y}_5$ & $\ob{y}_4\to 0,\ob{y}_5\to 1$\\       
      \bottomrule
    \end{tabular}
  \end{center}

  Then, the cycle $\sq^1_{\MatchBdy}(\ob{c})$ is given by 
  \begin{align*}
    \sq^1_{\MatchBdy}(\ob{c})&=\sum_{j=1}^6
    \biggl(\sum_{\ob{y}\in\PreBdy[\ob{c}]{\ob{z}_j}}\SignMatch[\ob{z}_j](\ob{y})\biggr)
    \ob{z}_j\\
    &=\bigl(\SignMatch[\ob{z}_1](\ob{y}_1)+\SignMatch[\ob{z}_1](\ob{y}_2)\bigr)
    \ob{z}_1+ \bigl(\SignMatch[\ob{z}_2](\ob{y}_1)+\SignMatch[\ob{z}_2](\ob{y}_3)\bigr)
    \ob{z}_2+ \bigl(\SignMatch[\ob{z}_3](\ob{y}_1)+\SignMatch[\ob{z}_3](\ob{y}_2)\bigr)
    \ob{z}_3\\
    &\qquad{}+ \bigl(\SignMatch[\ob{z}_4](\ob{y}_1)+\SignMatch[\ob{z}_4](\ob{y}_3)\bigr)
    \ob{z}_4+
    \bigl(\SignMatch[\ob{z}_5](\ob{y}_2)+\SignMatch[\ob{z}_5](\ob{y}_3)+
    \SignMatch[\ob{z}_5](\ob{y}_4)+\SignMatch[\ob{z}_5](\ob{y}_5)\bigr)
    \ob{z}_5\\
    &\qquad{}+ \bigl(\SignMatch[\ob{z}_6](\ob{y}_4)+\SignMatch[\ob{z}_6](\ob{y}_5)\bigr)
    \ob{z}_6\\
    &=\ob{z}_1+\ob{z}_3+\ob{z}_6.
  \end{align*}

  In order to compute $\sq^2_{\MatchBdy}(\ob{c})$, we need to study
  the graphs $\Graph[\ob{c}]{\ob{x}_1}$ and
  $\Graph[\ob{c}]{\ob{x}_2}$, which are the following:
  \[
  \xymatrix@C=0pt{
    &(\ob{z}_1,\ob{y}_1)\ar@{->}[rr]\ar@{.}[dl]^-1&&(\ob{z}_1,\ob{y}_2)&&
    &&(\ob{z}_1,\ob{y}_1)\ar@{->}[dr]\ar@{.}[dl]^-0&  &&\\
    (\ob{z}_4,\ob{y}_1)&&&&(\ob{z}_3,\ob{y}_2) \ar@{<-}[d]\ar@{.}[ul]^-0 &          &(\ob{z}_4,\ob{y}_1)&&(\ob{z}_1,\ob{y}_2) &(\ob{z}_6,\ob{y}_5)\ar@{<-}[rr]\ar@{.}[d]^-1&&(\ob{z}_6,\ob{y}_4)\\
    (\ob{z}_4,\ob{y}_3)\ar@{-}[u]\ar@{.}[dr]^-1&&&&(\ob{z}_3,\ob{y}_1)&\text{and}&(\ob{z}_4,\ob{y}_3)\ar@{-}[u]\ar@{.}[dr]^-0
    &&(\ob{z}_5,\ob{y}_2)\ar@{->}[dl]\ar@{.}[u]^-1 &(\ob{z}_5,\ob{y}_5)&&(\ob{z}_5,\ob{y}_4).\ar@{->}[ll]\ar@{.}[u]^-1\\
    &(\ob{z}_2,\ob{y}_3)&&(\ob{z}_2,\ob{y}_1)\ar@{-}[ll]\ar@{.}[ur]^-1&&
    &&(\ob{z}_5,\ob{y}_3)& && }
  \]
  The Type \Item{edge-1} edges are represented by the dotted lines; they are
  unoriented and are labeled by elements of $\F_2$. The
  Type \Item{edge-2} edges are represented by the solid lines; they
  are labeled by $0$ and are sometimes oriented. Therefore, the cycle
  $\sq^2_{\MatchBdy}(\ob{c})$ is given by
  \begin{align*}
    \sq^2_{\MatchBdy}(\ob{c})&=\sum_{j=1}^2
    \biggl(\#|\Graph[\ob{c}]{\ob{x}_j}| +
    \framenumber{\Graph[\ob{c}]{\ob{x}_j}}+
    \graphnumber{\Graph[\ob{c}]{\ob{x}_j}}\biggr)\ob{x}_j\\
    &=(1+1+1)\ob{x}_1+(0+1+1)\ob{x}_2\\
    &=\ob{x}_1.
  \end{align*}
\end{example}

\section{Where the answer comes from}\label{sec:where-answer}
This section is devoted to proving \Theorem{sq2-agrees}. The operation
$\Sq^2$ on a CW complex $Y$ is determined by the sub-quotients
$Y^{(m+2)}/Y^{(m-1)}$. (In \Subsection{sq2-cw-complex} we review an
explicit description in these terms, due to Steenrod.) The space
$\KhSpace(L)$ is a formal de-suspension of a CW complex $Y=Y(L)$. So,
most of the work is in understanding combinatorially how the $m$-,
$(m+1)$- and $(m+2)$-cells of $Y(L)$ are glued together.

The description of $Y(L)$ from~\cite{RS-khovanov} is in terms of a
Pontrjagin-Thom type construction. To understand just
$Y^{(m+2)}/Y^{(m-1)}$ involves studying certain framed points in
$\RR^m$ and framed paths in $\R\times\RR^{m}$. We will draw these framings
from a particular set of choices, described
in \Subsection{frames-R3}. \Subsection{frame-cube-flow-cat} explains
exactly how we assign framings from this set, and shows that these
framings are consistent with the construction
in~\cite{RS-khovanov}. Finally, \Subsection{sq2-on-khspace} discusses
how to go from these choices to $Y^{(m+2)}/Y^{(m-1)}$, and why the
resulting operation $\Sq^2$ agrees with the operation from
\Definition{sq2}.

\subsection{\texorpdfstring{$\Sq^2$}{Sq2} for a CW
  complex}\label{subsec:sq2-cw-complex}
We start by recalling a definition of $\Sq^2$. The discussion in this
section is heavily inspired by~\cite[Section 12]{Ste-top-operations}.

Let $K_m=K(\ZZ/2,m)$ denote the $m\th$ Eilenberg-MacLane space for the
group $\ZZ/2$, so $\pi_m(K_m)=\ZZ/2$ and $\pi_i(K_m)=0$ for $i\neq
m$. Assume that $m$ is sufficiently large, say $m\geq 3$. We start by
discussing a CW
structure for $K_m$. Since $\pi_i(K_m)=0$ for $i<m$, we can choose the
$m$-skeleton $K_m^{(m)}$ to be a single $m$-cell $e^m$ with the entire
boundary $\del e^m$ attached to the basepoint. To arrange that
$\pi_m(K_m)=\ZZ/2$ it suffices to attach a single $(m+1)$-cell via a
degree $2$ map $\bdy e^{m+1}\to K^{(m)}_m=S^m$.

We show that the resulting
$(m+1)$-skeleton $K_m^{(m+1)}$ has $\pi_{m+1}(K_m^{(m+1)})\cong
\ZZ/2$. From the long exact sequence for the pair
$(K_m^{m+1},S^m)$,
\[
\pi_{m+2}(K_m^{(m+1)},S^m)\to \pi_{m+1}(S^m)\to \pi_{m+1}(K_m^{(m+1)})\to
\pi_{m+1}(K_m^{(m+1)},S^m)\to \pi_m(S^m). 
\]
By excision (since $m$ is large),
$\pi_{m+1}(K_m^{(m+1)},S^m)\cong\pi_{m+1}(K_m^{(m+1)}/S^m)=\pi_{m+1}(S^{m+1})=\ZZ$
and $\pi_{m+2}(K_m^{(m+1)},S^m)\cong \pi_{m+2}(S^{m+1})=\ZZ/2$.  The
maps $\pi_{i+1}(K_m^{(m+1)},S^m)=\pi_{i+1}(S^{m+1})\to \pi_i(S^m)$ are
twice the Freudenthal isomorphisms. So, this sequence becomes
\[
\ZZ/2\stackrel{2}{\to} \ZZ/2\to \pi_{m+1}(K_m^{(m+1)})\to
\ZZ\stackrel{2}{\to}\ZZ.
\]
Thus, $\pi_{m+1}(K_m^{(m+1)})\cong\ZZ/2$, represented by the Hopf map
$S^{m+1}\to S^m=K_m^{(m)}\into K_m^{(m+1)}$. 

Let $K_m^{(m+2)}$ be the
result of attaching an $(m+2)$-cell $e^{m+2}$ to kill this
$\ZZ/2$. This attaching map has degree $0$ as a map to the
$(m+1)$-cell $e^{m+1}$ in $K_m^{(m+1)}$, so
the $(m+2)$-skeleton of $K_m$ has cohomology:
\begin{align*}
H^m(K_m^{(m+2)};\ZZ)&=0 & H^{m+1}(K_m^{(m+2)};\Z)&=\F_2
& H^{m+2}(K_m^{(m+2)};\Z)&=\ZZ\\
H^m(K_m^{(m+2)};\F_2)&=\F_2 & H^{m+1}(K_m^{(m+2)};\F_2)&=\F_2
& H^{m+2}(K_m^{(m+2)};\F_2)&=\F_2.
\end{align*}

Therefore, there are fundamental cohomology classes $\iota\in
H^m(K_m;\F_2)$ and $\Sq^2(\iota)\in
H^{m+2}(K_m^{(m+2)};\F_2)$. It turns out that the element
$\Sq^2(\iota)$ survives to $H^{m+2}(K_m;\F_2)$.

Now, consider a CW complex $Y$ and a cohomology class $c\in
H^m(Y;\F_2)$. The element $c$ is classified by a map $\cmap{c}\co Y\to
K_m$, so
that $\cmap{c}^*\iota=c$. We can arrange that the map $\cmap{c}$ is
cellular. So, we have an element $\Sq^2(c)=\cmap{c}^*\Sq^2(\iota)\in
H^{m+2}(Y;\F_2)$.

The element $\Sq^2(c)$ is determined by its restriction to
$H^{m+2}(Y^{(m+2)};\F_2)$. So, to compute $\Sq^2(c)$ it suffices to give a
cellular map $Y^{(m+2)}\to K_m^{(m+2)}$ so that $\iota$ pulls back to
$c$.  Then, $\Sq^2(c)$ is the cochain which sends an $(m+2)$-cell
$f^{m+2}$ of $Y$ to the degree of the map $f^{m+2}/\bdy f^{m+2}\to
e^{m+2}/\bdy e^{m+2}$. Equivalently, $\Sq^2(c)$ sends $f^{m+2}$ to the
element $\cmap{c}|_{\bdy f^{m+2}}\in
\pi_{m+1}(K_m^{(m+1)})=\pi_{m+1}(S^m)=\ZZ/2$. (In other words,
$\Sq^2(c)$ is the obstruction to homotoping $\cmap{c}$ so that it
sends the $(m+2)$-skeleton of $Y$ to the $(m+1)$-skeleton of $K_m$.)
Since $K_m^{(m+2)}$ has no cells of dimension between $0$ and $m$, the
map $Y^{(m+2)}\to K_m^{(m+2)}$ factors through $Y^{(m+2)}/Y^{(m-1)}$.

To understand the operation $\Sq^2$ on Khovanov homology induced by
the Khovanov homotopy type $Y$, 
it remains to explicitly give the map $\cmap{c}$ on
$Y^{(m+2)}/Y^{(m-1)}$. This will be done
in \Subsection{sq2-on-khspace}, after we develop tools to understand
the attaching maps for the $(m+2)$-cells.

\subsection{Frames in
  \texorpdfstring{$\R^3$}{R3}}\label{subsec:frames-R3}
As discussed in \Subsection{sq2-on-khspace}, the sub-quotients
$Y^{(m+2)}/Y^{(m-1)}$ of the Khovanov space $Y=Y(L)$ are defined in
terms of framed points in $\{0\}\times \RR^m$ and framed paths in
$\R\times\RR^{m}$ connecting these points.

A framing of a path $\gamma\co [0,1]\to \RR^{m+1}$ is a tuple
$[v_1(t),\dots, v_m(t)]\in (\RR^{m+1})^m$ of orthonormal vector fields
along $\gamma$, normal to $\gamma$. A collection of $m$ orthonormal
vectors $v_1,\dots, v_m$ in $\RR^{m+1}$ specifies a matrix in $\SO(m+1)$,
whose last $m$ columns are $v_1,\dots,v_m$ and whose first column is
the cross product of $v_1,\dots,v_m$.

Now, suppose that $p,q \in \{0\}\times\RR^m$ and that we are given
trivializations $\Frame_p, \Frame_q$ of the normal bundles in
$\{0\}\times\RR^m$ to $p$, $q$ (i.e., framings of $p$ and $q$). On the
one hand, we can consider the set of isotopy classes of framed paths
from $(p, \Frame_p)$ to $(q,\Frame_q)$. On the other hand, we can
consider the homotopy classes of paths in $\SO(m+1)$ from $\Frame_p$
to $\Frame_q$. There is an obvious map from isotopy classes of framed
paths in $\RR^{m+1}$ to homotopy classes of paths in $\SO(m+1)$, by
considering only the framing. This map is a surjection if $m\geq 2$
and a bijection if $m\geq 3$. In the case that $m\geq 3$, both sets have two
elements.

The upshot is that if we want to specify an isotopy classes of framed
paths with given endpoints, and $m\geq 3$, then it suffices to specify
a homotopy class of paths in $\SO(m+1)$.

The framings on both the endpoints and the paths relevant to
constructing $Y^{(m+2)}/Y^{(m-1)}$ will have a special form: they will
be stabilizations of the $m=2$ case. Specifically, we will write
$m=m_1+m_2$ with $m_i\geq 1$. Let
$[\ol{e},e_{11},\dots,e_{1m_1},e_{21},\dots,e_{2m_2}]$ denote the
standard basis for $\R\times\RR^m$. Then all of the points will have
framings of the form
\[
[v_1,e_{12},\dots, e_{1m_1}, v_2, e_{22},\dots, e_{2m_2}]\in \bigl(\{0\}\times
\RR^{m_1}\times \RR^{m_2}\bigr)^m,
\]
for some $v_1\in\{\pm e_{11}\}$, $v_2\in \{\pm e_{21}\}$. So, to describe isotopy classes of framed paths connecting these
points it suffices to describe paths of the form
\[
[v_1(t),e_{12},\dots, e_{1m_1},v_2(t), e_{22},\dots, e_{2m_2}]\in
\bigl(\RR\times \RR^{m_1}\times \RR^{m_2}\bigr)^{m}.
\]

Therefore, to describe such paths, it suffices to work in $\RR^3$ (i.e., the case
$m=2$). Denote the standard basis for $\RR^3$ by $[\ol{e},e_1,e_2]$.
We will work with the four distinguished frames in $\{0\}\times\RR^2$,
$[e_1,e_2],[-e_1,e_2],[e_1,-e_2],[-e_1,-e_2]$, which we denote by the
symbols $\topright, \topleft, \bottomright, \bottomleft$,
respectively.  By a \emph{coherent system of paths} joining
$\topright, \topleft, \bottomright, \bottomleft$ we mean a choice of a
path $\Path{\Frame_1\Frame_2}$  in
$\SO(3)$ from $\Frame _1$ to $\Frame_2$ for each pair of frames
$\Frame_1,\Frame_2\in\{\topright,
\topleft, \bottomright, \bottomleft\}$, satisfying the following
cocycle conditions: 
\begin{enumerate}
\item For all $\Frame\in\{\topright, \topleft, \bottomright,
  \bottomleft\}$, the loop $\Path{\Frame\Frame}$ is nullhomotopic; and
\item For all $\Frame_1,\Frame_2,\Frame_3\in\{\topright, \topleft,
  \bottomright, \bottomleft\}$, the path
  $\Path{\Frame_1\Frame_2}\cdot\Path{\Frame_2\Frame_3}$ is homotopic
  (\rel endpoints) to the path $\Path{\Frame_1\Frame_3}$.
\end{enumerate}
We make a particular choice of a coherent system of paths, as follows:
\begin{itemize}
\item $\ol{\topright\topleft}, \ol{\topleft\topright},
  \ol{\bottomright\bottomleft}, \ol{\bottomleft\bottomright}$: Rotate
  $180^{\circ}$ around the $e_2$-axis, such that the first vector equals $\ol{e}$
  halfway through.
\item $\ol{\topright\bottomright}, \ol{\bottomright\topright}$: Rotate
  $180^{\circ}$ around the $e_1$-axis, such that the second vector
  equals $\ol{e}$ halfway through.
\item $\ol{\topleft\bottomleft}, \ol{\bottomleft\topleft}$: Rotate
  $180^{\circ}$ around the $e_1$-axis, such that the second vector equals $-\ol{e}$
  halfway through.
\item $\ol{\topright\bottomleft}, \ol{\bottomleft\topright},
  \ol{\topleft\bottomright},\ol{\bottomright\topleft}$: Rotate
  $180^{\circ}$ around the $\ol{e}$-axis, such that the second vector
  equals $-e_1$ halfway through.
\end{itemize}

\begin{lem}
The above choice describes a coherent system of paths.
\end{lem}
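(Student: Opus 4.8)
The plan is to verify the two cocycle conditions directly, exploiting the explicit rotational description of the paths. First I would set up coordinates: the frames $\topright,\topleft,\bottomright,\bottomleft$ correspond to the matrices in $\SO(3)$ whose columns are $[\pm e_1\times(\pm e_2),\pm e_1,\pm e_2]$, with the first column chosen so the determinant is $+1$. Since each path $\Path{\Frame_1\Frame_2}$ is prescribed as a $180^\circ$ rotation about one of the coordinate axes $e_1$, $e_2$, or $\ol e$, each such path is a specific geodesic in $\SO(3)$, and its homotopy class rel endpoints is determined by which axis it rotates about: two $180^\circ$ rotations about distinct axes from the same frame $\Frame_1$ to the same frame $\Frame_2$ differ by a full loop in $\SO(3)$, hence are \emph{not} homotopic rel endpoints, whereas the description above always pins down a single choice by specifying the value of one of the vectors at the halfway point. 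So the first task is just to check that the halfway condition indeed selects a well-defined path in each case (i.e., that a $180^\circ$ rotation about the named axis with the named midpoint value exists and is unique up to homotopy rel endpoints); this is routine.

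Condition (1), that $\Path{\Frame\Frame}$ is nullhomotopic, is immediate, since the rotation carrying $\Frame$ to itself along the prescribed axis, parametrized so it starts and ends at $\Frame$, is the constant path (a $180^\circ$ rotation from $\Frame$ back to $\Frame$ would reverse a vector, so the only path the description allows with both endpoints equal to $\Frame$ is the trivial one). The substance is Condition (2): for each ordered triple $(\Frame_1,\Frame_2,\Frame_3)$ of distinct frames, the concatenation $\Path{\Frame_1\Frame_2}\cdot\Path{\Frame_2\Frame_3}$ must be homotopic rel endpoints to $\Path{\Frame_1\Frame_3}$. Equivalently, the loop $\Path{\Frame_1\Frame_2}\cdot\Path{\Frame_2\Frame_3}\cdot\Path{\Frame_1\Frame_3}^{-1}$ must be nullhomotopic in $\SO(3)$, i.e., it must represent $0\in\pi_1(\SO(3))=\Z/2$. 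Since $\pi_1(\SO(3))=\Z/2$, it suffices to compute, for each triple, the class of this triangle loop mod $2$; and since the four frames in our list are the four sign-choices, I would reduce to a manageable number of cases by symmetry (the $e_2$-axis rotations handle the left/right pairs, the $e_1$-axis rotations the top/bottom pairs, and the $\ol e$-axis rotations the diagonal pairs).

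The key computational step is therefore to evaluate, for each triangle, the parity of the composite loop. I would do this by lifting to $SU(2)=S^3$, the double cover of $\SO(3)$: a $180^\circ$ rotation about a unit axis $n$ lifts to the quaternion path $t\mapsto \cos(\pi t/2)+\sin(\pi t/2)(n_x i+n_y j+n_z k)$ (suitably translated to start at the lift of $\Frame_1$), whose endpoint is $\pm$ the pure quaternion along $n$ times the starting quaternion. Concatenating the three lifts and checking whether the result returns to the \emph{same} lift of $\Frame_1$ (parity $0$) or the antipodal one (parity $1$) is then a short quaternion multiplication for each triple; the midpoint conditions in the bullet list are exactly what fixes the sign of each lift so that these products come out to $+1$. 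I expect the main obstacle to be purely bookkeeping: making sure the starting lifts are chosen consistently across the three edges of each triangle and that the finitely many triangle-types are all covered (using symmetry to cut the count down), rather than any conceptual difficulty. Once all triangles are checked to have even parity, Condition (2) follows, and with Condition (1) already in hand the lemma is proved.
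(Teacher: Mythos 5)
Your proposal is correct in substance but proceeds by a genuinely different route from the paper. The paper first observes (using that each reverse path $\Path{\Frame_2\Frame_1}$ is literally the reverse parametrization of $\Path{\Frame_1\Frame_2}$, together with the symmetries of the configuration) that only three specific loops need to be checked, namely $\ol{\topright\topleft}\cdot\ol{\topleft\bottomleft}\cdot\ol{\bottomleft\topright}$, $\ol{\topright\bottomright}\cdot\ol{\bottomright\topleft}\cdot\ol{\topleft\topright}$, and $\ol{\topright\topleft}\cdot\ol{\topleft\bottomleft}\cdot\ol{\bottomleft\bottomright}\cdot\ol{\bottomright\topright}$, and it then verifies null-homotopy of these by an explicit geometric ``hand motion'' (plate-trick) argument illustrated photographically. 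You instead propose to check every triangle relation directly by computing its class in $\pi_1(\SO(3))=\Z/2$ via the double cover: lift each $180^{\circ}$ rotation to the quaternion path $t\mapsto\cos(\pi t/2)+\sin(\pi t/2)\,n$, note that the half-way conditions fix the sign of the axis $n$ and hence the lift, and check whether the product of lifts closes up in $SU(2)$. This is a sound and fully rigorous scheme, and it trades the paper's brevity and geometric insight for a mechanical finite computation; arguably it is the more airtight of the two. Two small points to tidy up: your reduction treats only ordered triples of \emph{distinct} frames, whereas the cocycle condition with $\Frame_1=\Frame_3$ (the ``round-trip'' loops $\Path{\Frame_1\Frame_2}\cdot\Path{\Frame_2\Frame_1}$) is not a formal consequence of the distinct-triple relations, so you should either include these degenerate cases in the quaternion check or note, as is easily seen from the bullet list, that each return path is the exact reverse of the forward path, making these loops trivially null-homotopic; and your parenthetical about ``two $180^{\circ}$ rotations about distinct axes'' should really say that the two homotopy classes of paths between a given pair of these frames are the two senses of rotation about the \emph{same} axis (equivalently $n$ versus $-n$), which is exactly the ambiguity the half-way condition resolves. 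Neither point affects the viability of the argument.
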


\begin{proof}
  We only need to check that each of the loops
  $\ol{\topright\topleft}\cdot\ol{\topleft\bottomleft}\cdot\ol{\bottomleft\topright}$,
  $\ol{\topright\bottomright}\cdot\ol{\bottomright\topleft}\cdot\ol{\topleft\topright}$,
  and
  $\ol{\topright\topleft}\cdot\ol{\topleft\bottomleft}\cdot\ol{\bottomleft\bottomright}\cdot\ol{\bottomright\topright}$
  are null-homotopic. This is best checked with hand
  motions, as we have illustrated for the first loop in
  \Figure{handmotion}.
\end{proof}

\begin{figure}
\includegraphics[width=0.07\textwidth]{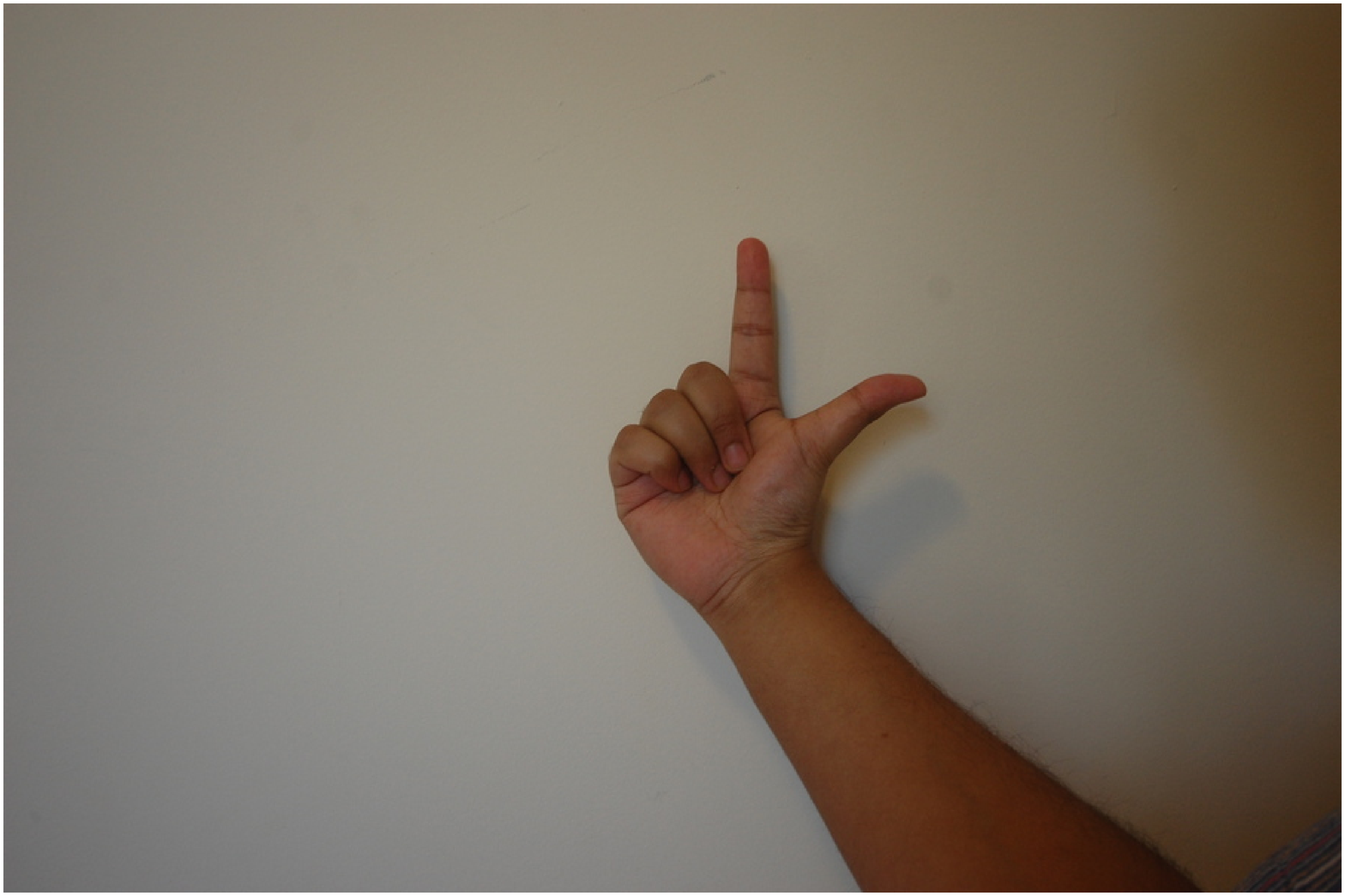}
\includegraphics[width=0.07\textwidth]{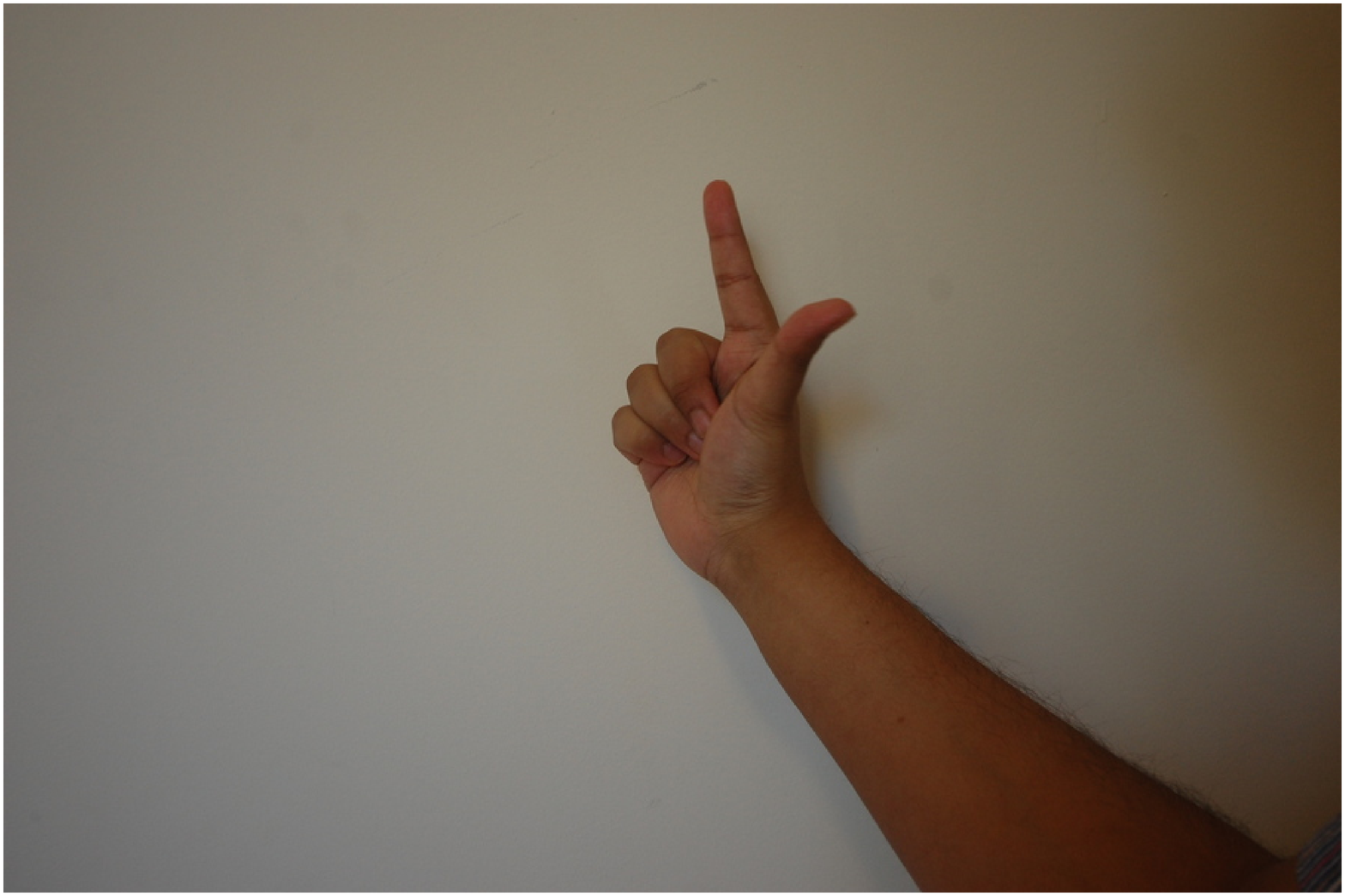}
\includegraphics[width=0.07\textwidth]{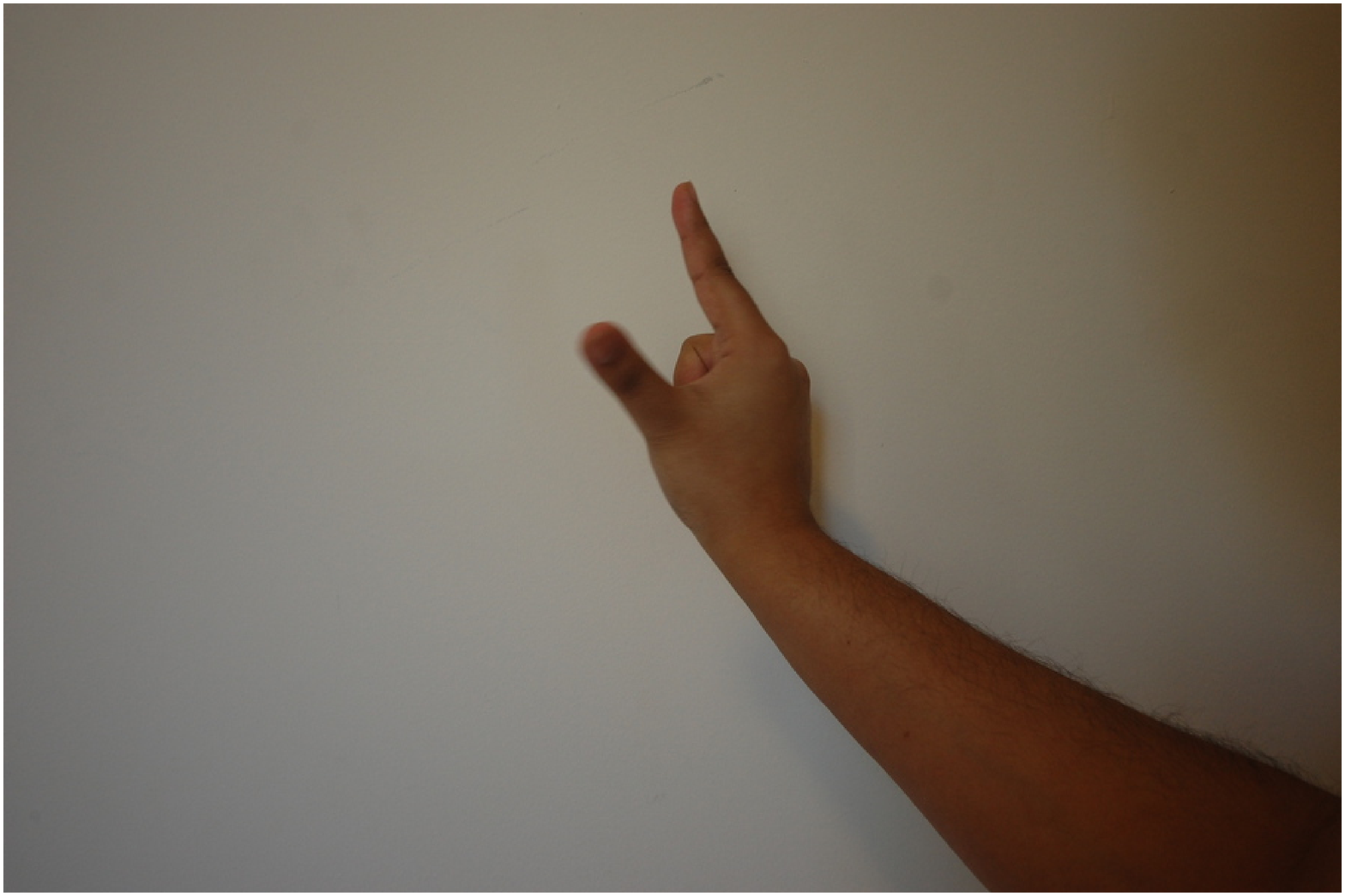}
\includegraphics[width=0.07\textwidth]{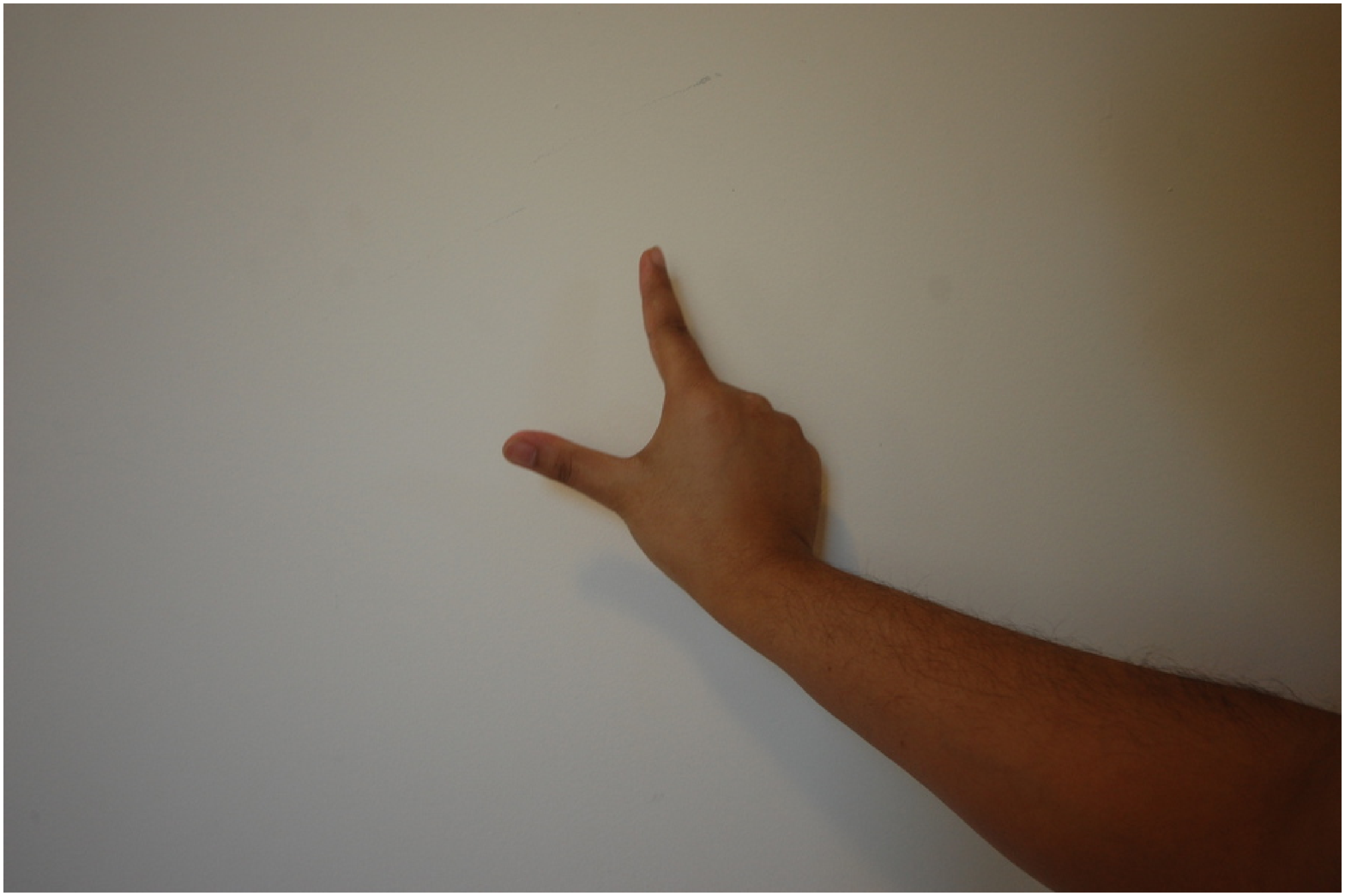}
\includegraphics[width=0.07\textwidth]{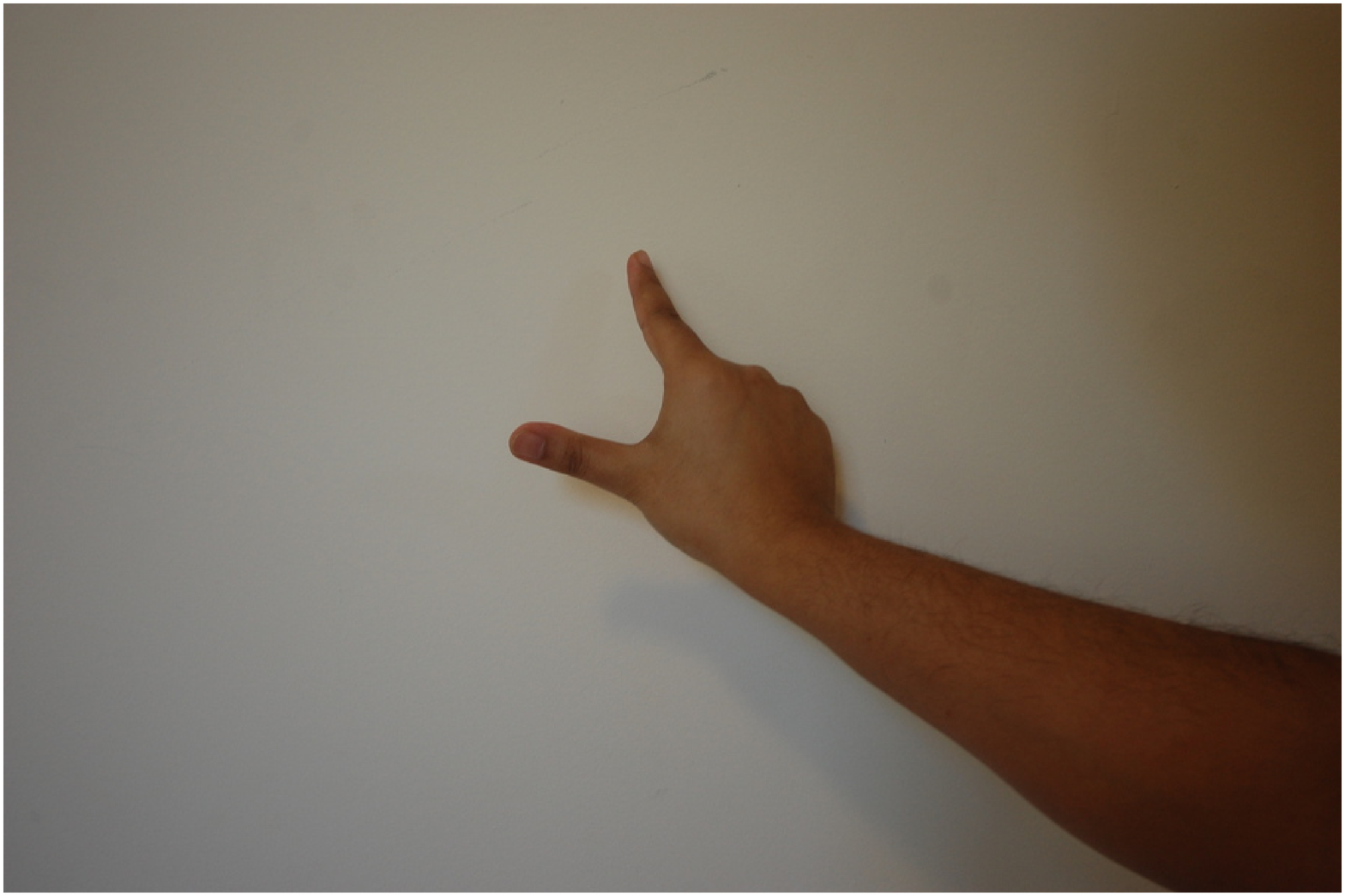}
\includegraphics[width=0.07\textwidth]{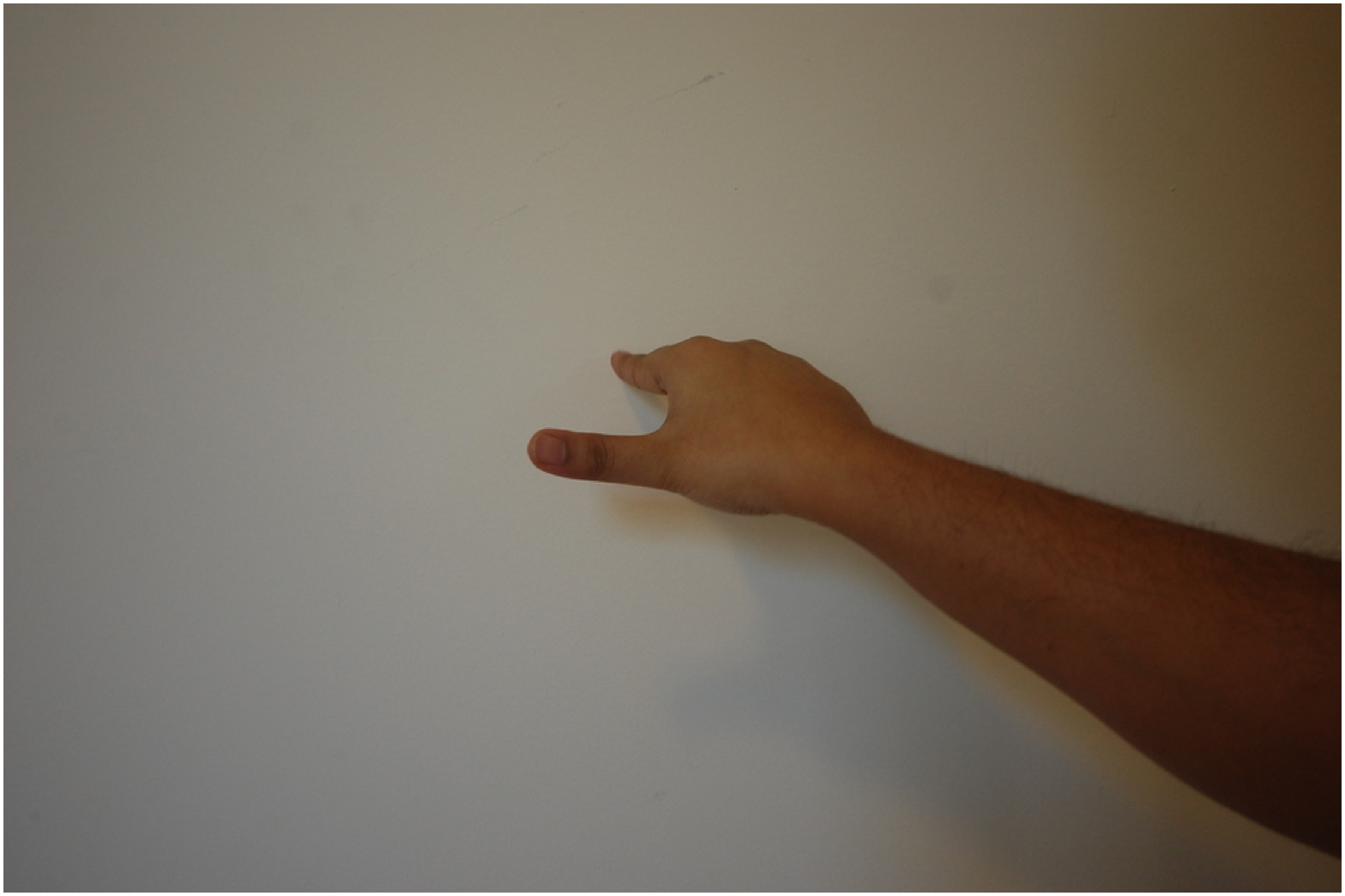}
\includegraphics[width=0.07\textwidth]{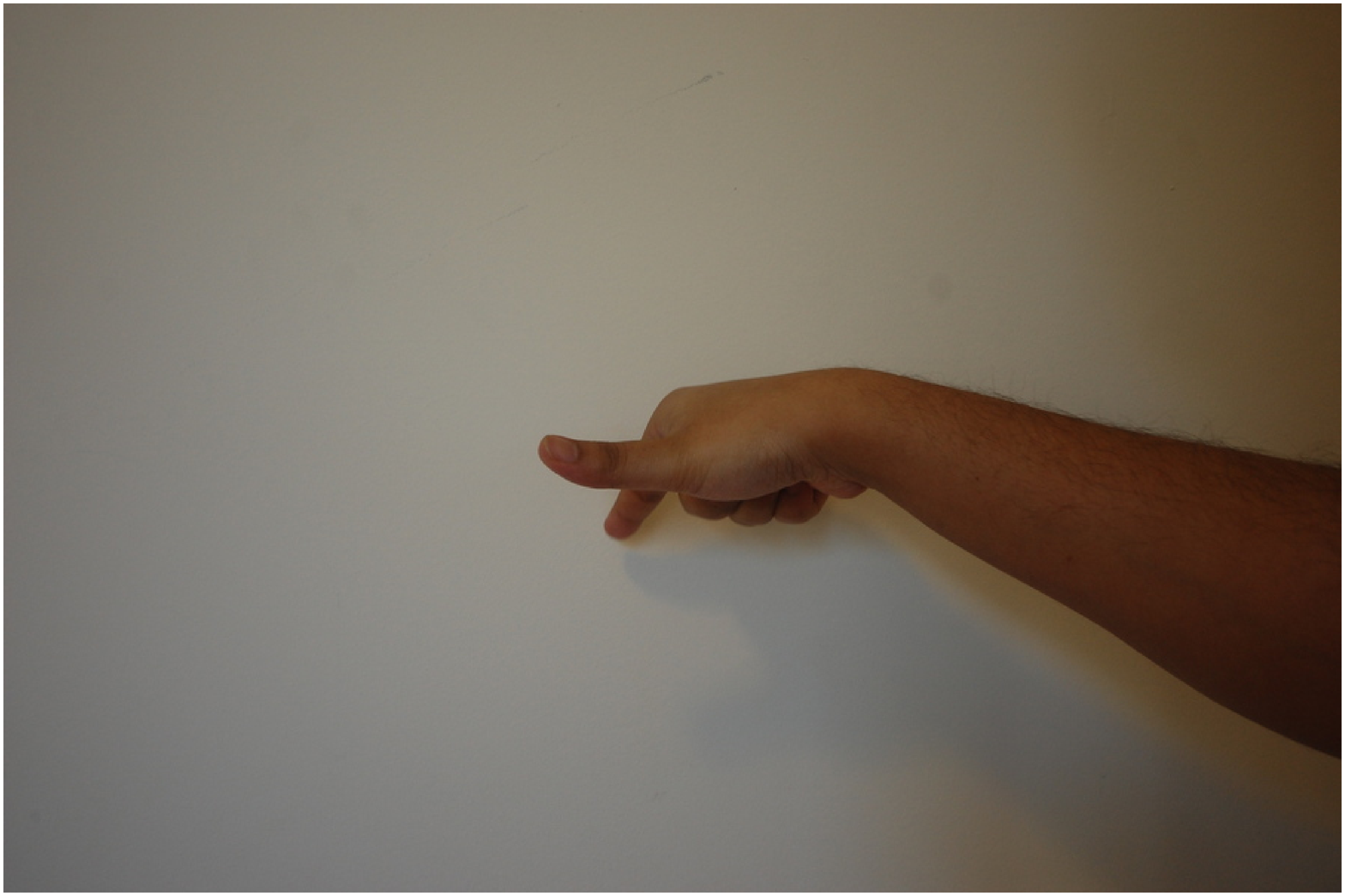}
\includegraphics[width=0.07\textwidth]{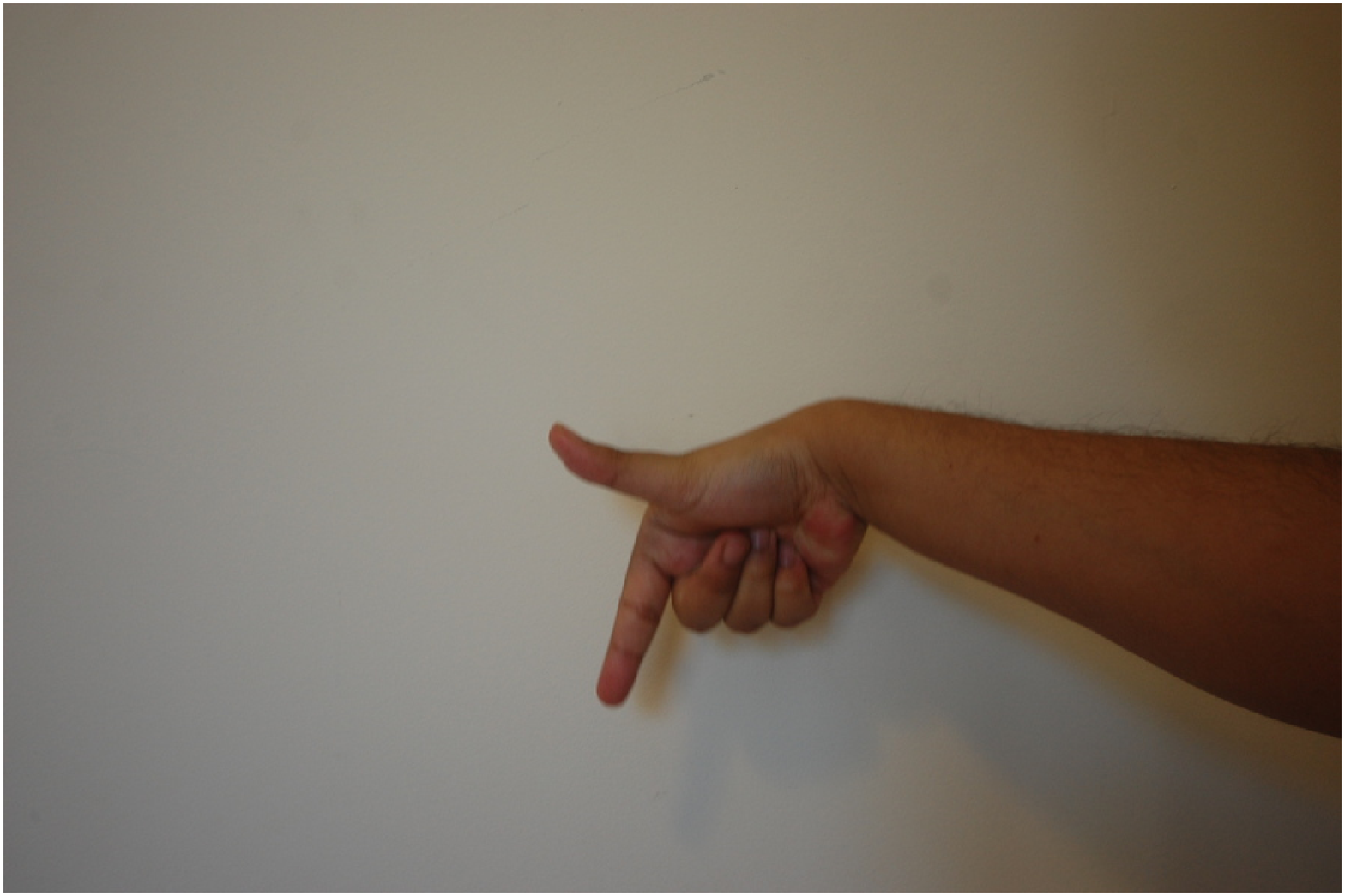}
\includegraphics[width=0.07\textwidth]{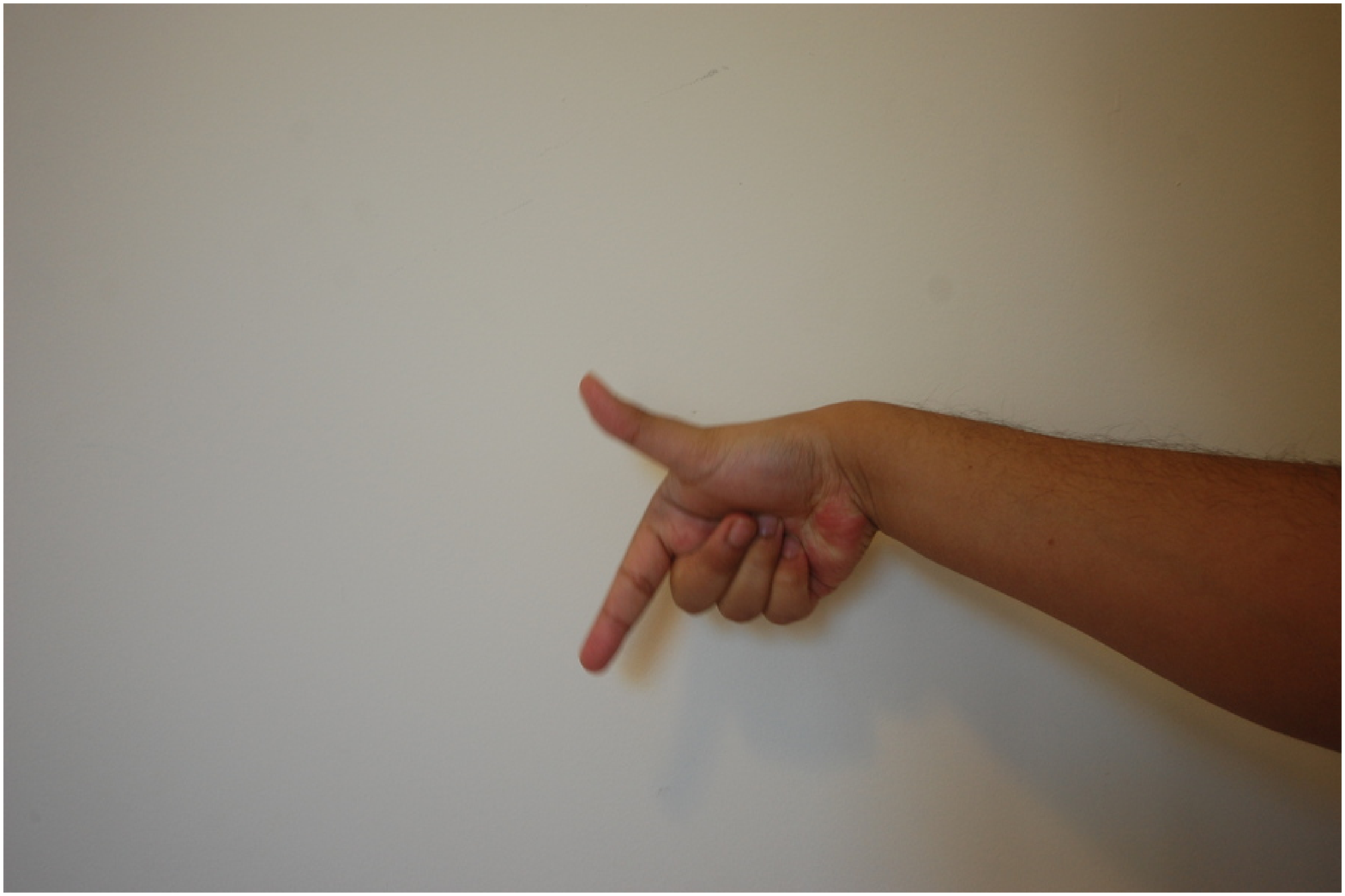}
\includegraphics[width=0.07\textwidth]{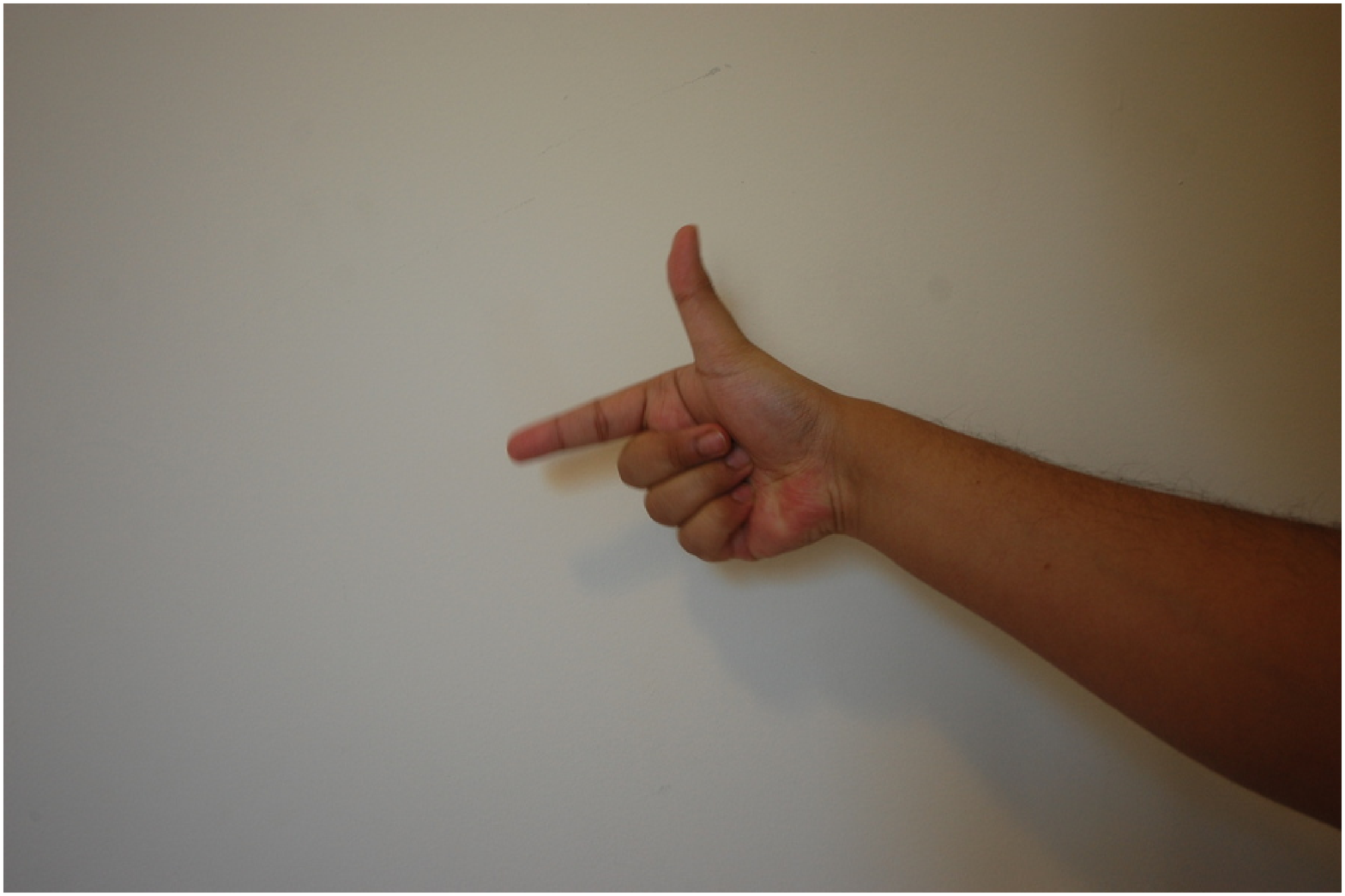}
\includegraphics[width=0.07\textwidth]{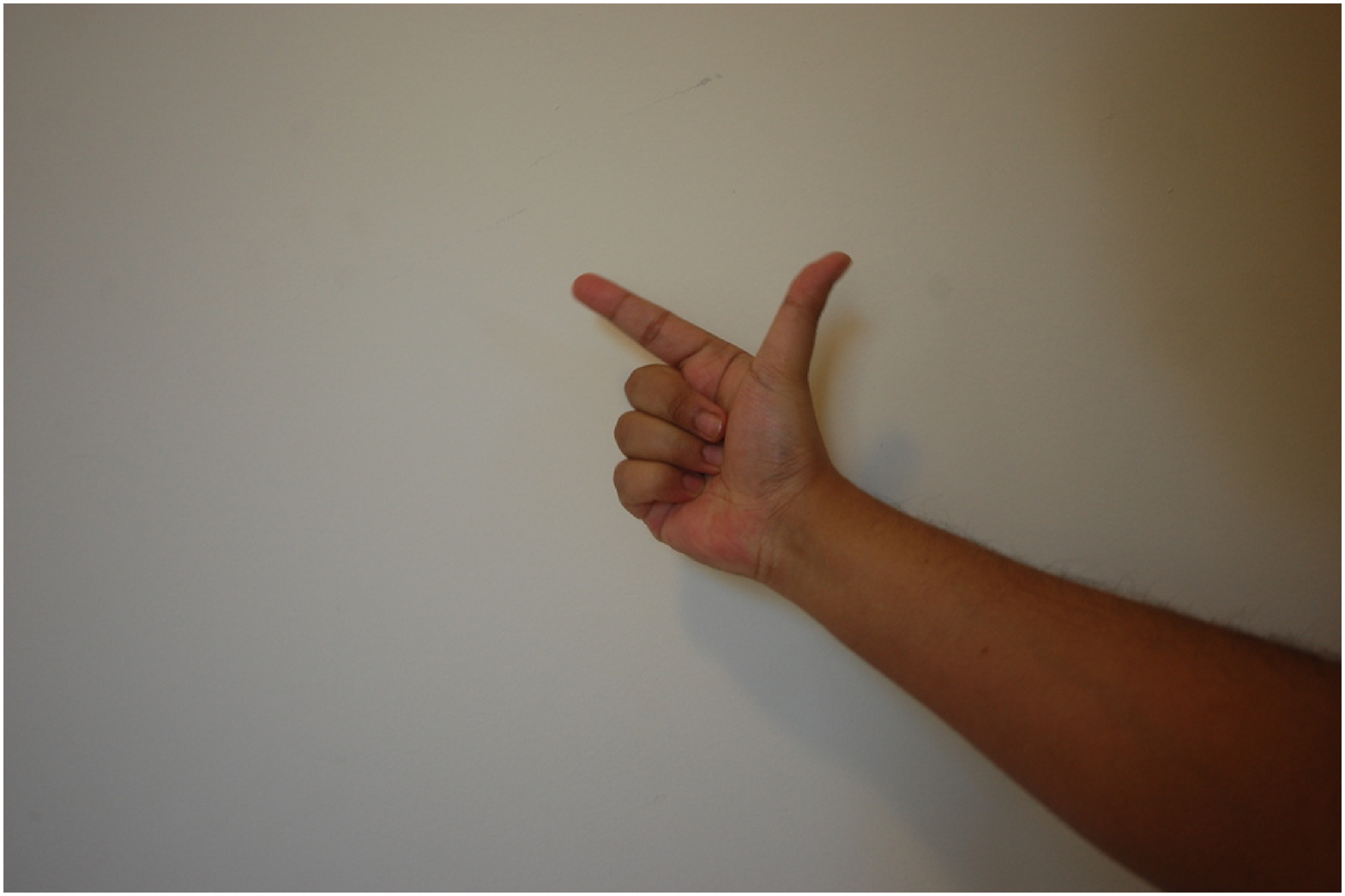}
\includegraphics[width=0.07\textwidth]{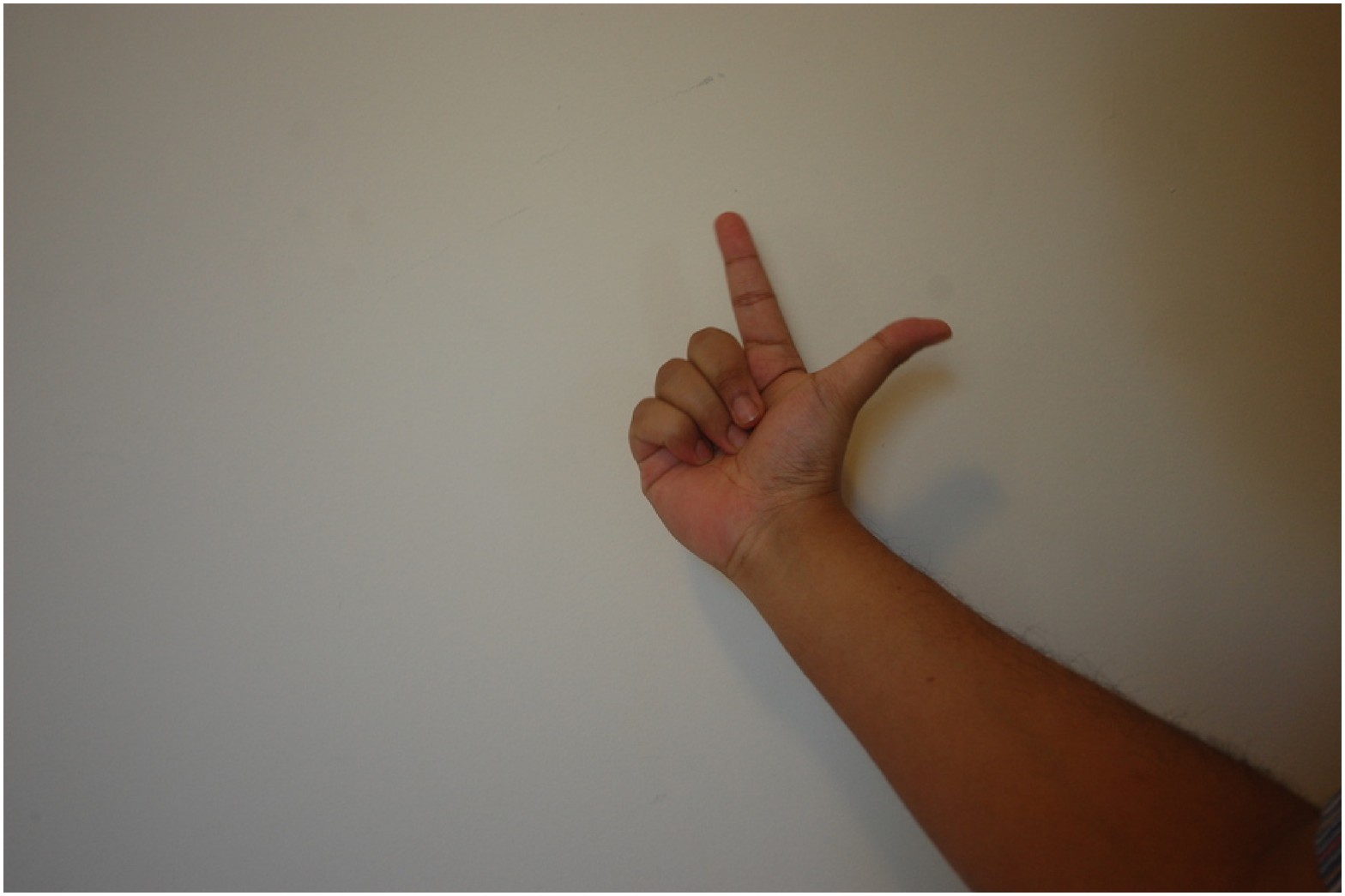}
\caption{\textbf{Null-homotopy of the loop
  $\ol{\topright\topleft}\cdot\ol{\topleft\bottomleft}\cdot\ol{\bottomleft\topright}$
  in $\SO(3)$.} Viewing the arm as $2$-dimensional, spanned by the
tangent vector to the radius and the vector from the radius to the
ulna, it traces out an extension of the map $S^1\to \SO(3)$ to a map
$\DD^2\to \SO(3)$.}\label{fig:handmotion}
\end{figure}

Extending this slightly:
\begin{definition}\label{def:standard-frames}
  Fix $m_1,m_2$, and let $m=m_1+m_2$.  By the four \emph{standard
    frames for $\RR^m=\R^{m_1}\times\R^{m_2}$} we mean the frames
  \[
  [\pm e_{11},e_{12},\dots, e_{1m_1},\pm e_{21}, e_{22},\dots,e_{2m_2}]\in
  \bigl(\{0\}\times\RR^{m_1}\times \RR^{m_2}\bigr)^{m}.
  \]

  Up to homotopy, there are exactly two paths between any pair of
  frames.  By the \emph{standard frame paths in $\R\times\RR^{m}$} we mean
  the one-parameter families of frames obtained by extending the
  coherent system of paths for $SO(3)$ specified above by the identity
  on $\RR^{m-2}=\RR^{m_1-1}\times \RR^{m_2-1}$. Abusing terminology, 
  we will sometimes say that any frame path homotopic (\rel endpoints)
  to a standard frame path is itself a standard frame path. By a
  \emph{non-standard frame path} we mean a frame path which is not
  homotopic (\rel endpoints) to one of the standard frame paths.
\end{definition}

Define
\begin{align}
  \rmap&\co \RR^m\to \RR^m & \rmap(x_1,\dots, x_m)&=(-x_1,x_2,\dots, x_m)\label{eq:rmap}\\
  \smap&\co \RR^{m_1}\times\R^{m_2}\to \RR^m & \smap(x_1,\dots,x_m)&=(x_1,\dots,
  x_{m_1},-x_{m_1+1}, x_{m_1+2},\dots,x_m).\label{eq:smap}
\end{align}
\begin{lemma}\label{lem:r-of-framing}
  Suppose $\Frame_1,\Frame_2$ are oppositely-oriented standard
  frames. Then $\rmap(\ol{\Frame_1\Frame_2})$ is the non-standard frame
  path between $\rmap(\Frame_1)$ and $\rmap(\Frame_2)$. That is,
  $\rmap$ takes
  standard frame paths between oppositely-oriented frames to
  non-standard frame paths.

  The map $\smap$ satisfies
  \[
  \ol{\topright\bottomright} \stackrel{\smap}{\longleftrightarrow}
  \ol{\bottomright\topright}
  \qquad\qquad
  \ol{\topleft\bottomleft}\stackrel{\smap}{\longleftrightarrow}
  \ol{\bottomleft\topleft}.
  \]
  In other words, $\smap$ takes the standard frame path
  $\overline{\begin{smallmatrix}+ & -\\ * & *\end{smallmatrix}}$ to the standard
  frame path $\overline{\begin{smallmatrix}- & +\\ * & *\end{smallmatrix}}$ 
  for either $*\in\{+,-\}$.
\end{lemma}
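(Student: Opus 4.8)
The plan is to reduce everything to the case $m=2$, where the statement becomes a computation in $\SO(3)$, and I expect the only nontrivial step to be one $\pi_1(\SO(3))$ calculation. By the stabilization discussion preceding \Definition{standard-frames}, a standard frame path for $\RR^m=\RR^{m_1}\times\RR^{m_2}$ is the corresponding $\SO(3)$-path on $\Span(\ol e, e_{11}, e_{21})$ crossed with the identity on the remaining coordinates, and both $\rmap$ and $\smap$ act as the identity on every coordinate except $e_{11}$, respectively $e_{21}$; so the whole statement takes place inside that $\SO(3)$-block and we may take $m_1=m_2=1$. There I identify a frame path with a path $\gamma$ in $\SO(3)$ via the matrix $[\,v_1\times v_2 \mid v_1 \mid v_2\,]$, and extend $\rmap$ to $\R^3$ by the identity on $\R\ol e$, obtaining the reflection $\tilde\rmap=\mathrm{diag}(1,-1,1)$ in the ordered basis $(\ol e, e_1, e_2)$. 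Since $\tilde\rmap$ reverses orientation, applying it to $[v_1,v_2]$ and re-forming the matrix flips the cross-product column, so the induced operation on $\SO(3)$ is $\gamma(t)\mapsto \tilde\rmap\,\gamma(t)\,D$ with $D=\mathrm{diag}(-1,1,1)$; likewise $\smap$ induces $\gamma(t)\mapsto\tilde\smap\,\gamma(t)\,D$ with $\tilde\smap=\mathrm{diag}(1,1,-1)$.

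For the first assertion I would use a coboundary argument. The map $\rmap$ permutes the four standard frames and commutes with concatenation of paths, so $\{\rmap(\ol{\Frame_1\Frame_2})\}$ is again a coherent system of paths joining those four frames and therefore differs from the standard one by a coboundary: writing $\delta(\Frame_1,\Frame_2)\in\F_2$ for the class recording whether $\rmap(\ol{\Frame_1\Frame_2})$ is non-standard, the cocycle conditions for both systems give $\delta(\Frame_1,\Frame_2)+\delta(\Frame_2,\Frame_3)=\delta(\Frame_1,\Frame_3)$, so $\delta(\Frame_1,\Frame_2)=e(\Frame_1)+e(\Frame_2)$ for some function $e$ on the four frames. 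Hence it suffices to evaluate $\delta$ on a handful of pairs. Feeding the explicit rotations into $\gamma\mapsto\tilde\rmap\gamma D$ one checks that $\rmap$ carries $\ol{\topright\topleft}$ and $\ol{\bottomright\bottomleft}$ exactly onto $\ol{\topleft\topright}$ and $\ol{\bottomleft\bottomright}$, so $\delta$ vanishes on those pairs; the one substantive point is that $\rmap(\ol{\topright\bottomright})$ is \emph{not} homotopic rel endpoints to $\ol{\topleft\bottomleft}$. From the resulting values of $\delta$ the function $e$ is pinned down, and one reads off that $\rmap$ turns a standard path into a non-standard one precisely on the oppositely-oriented pairs.

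To see that non-homotopy, note that (writing $R_v(\theta)$ for rotation of $\R^3$ by $\theta$ about the oriented axis $v$) both $\rmap(\ol{\topright\bottomright})$ and $\ol{\topleft\bottomleft}$ are paths from $\topleft$ to $\bottomleft$ lying in the circle $C=\{R_{e_1}(\theta)\,R_{e_2}(\pi)\}\subset\SO(3)$ of $180^{\circ}$ rotations about the axes in the $\ol e$-$e_2$ plane, and a direct look at the two explicit families shows their concatenation runs once around $C$. Lifting $C$ to $\mathrm{Spin}(3)=S^3$ produces an arc between antipodal points, so $[C]$ generates $\pi_1(\SO(3))=\F_2$, the concatenation is essential, and the two paths lie in the two different homotopy classes. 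Since $\ol{\topleft\bottomleft}$ is the standard one, $\rmap(\ol{\topright\bottomright})$ is non-standard, as needed.

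For the second assertion there is no homotopy subtlety: I would substitute the explicit rotation-about-$e_1$ families for $\ol{\topright\bottomright}$ and $\ol{\topleft\bottomleft}$ into $\gamma\mapsto\tilde\smap\gamma D$, and use that $\tilde\smap$ fixes the $e_1$-axis and acts as a reflection on the $\ol e$-$e_2$ plane (so $\tilde\smap R_{e_1}(\theta)\tilde\smap^{-1}=R_{e_1}(-\theta)$) together with $\tilde\smap D=R_{e_1}(\pi)$, to see that the images are literally the explicit families for $\ol{\bottomright\topright}$ and $\ol{\bottomleft\topleft}$; as with the coherence lemma, this is equally clear from hand motions. So the one hard step is recognizing the difference of $\rmap(\ol{\topright\bottomright})$ and $\ol{\topleft\bottomleft}$ as the generator of $\pi_1(\SO(3))$; the rest is the formal coboundary bookkeeping plus finite $3\times3$ matrix computations, and the only real hazard is keeping the sign conventions straight — the cross-product orientation, which coordinate each of $\rmap,\smap$ negates, and the precise meaning of ``oppositely oriented''.
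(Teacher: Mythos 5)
Your treatment of the $\smap$ statement is fine, and the coboundary bookkeeping together with the $\pi_1(\SO(3))$ identification are perfectly good tools (the paper offers no details here, calling the whole lemma a straightforward verification). But the $\rmap$ half of your argument does not prove the stated lemma, and your own computations contradict the conclusion you draw. Under your reading of ``$\rmap$ applied to a frame path'' --- apply the ambient reflection $\tilde\rmap=\mathrm{diag}(1,-1,1)$ to every frame vector --- you correctly find $\rmap(\ol{\topright\topleft})=\ol{\topleft\topright}$ and $\rmap(\ol{\bottomright\bottomleft})=\ol{\bottomleft\bottomright}$, i.e.\ $\delta(\topright,\topleft)=\delta(\bottomright,\bottomleft)=0$. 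But $\topright,\topleft$ (and likewise $\bottomright,\bottomleft$) \emph{are} oppositely-oriented pairs, so these values already falsify what you then ``read off''. Indeed, feeding $\delta(\topright,\topleft)=\delta(\bottomright,\bottomleft)=0$ and $\delta(\topright,\bottomright)=1$ into $\delta(\Frame_1,\Frame_2)=e(\Frame_1)+e(\Frame_2)$ forces $e(\topright)=e(\topleft)\neq e(\bottomright)=e(\bottomleft)$, so your $\delta$ equals $1$ exactly when the two frames' \emph{second} vectors have opposite signs --- a set of pairs that contains the two same-oriented pairs and omits $\{\topright,\topleft\}$ and $\{\bottomright,\bottomleft\}$. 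So with your interpretation the first assertion is simply false, and the last step of your argument is where the proof breaks.

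The resolution is that the operation the first assertion must refer to (this is forced by how the lemma is used in the proof of \Proposition{KPhi}) is not the ambient reflection applied to all the frame vectors, but negation of the \emph{first frame vector} along the path, i.e.\ precomposition of the frame with $\rmap$: there the framing of $K$ along an $\rmap$-colored arc differs from the arc's own framing by $(x,v)\mapsto(x,\rmap(v))$ in frame coordinates, which replaces $[v_1(t),v_2(t),\dots,v_m(t)]$ by $[-v_1(t),v_2(t),\dots,v_m(t)]$. The two operations agree on the four standard frames (where $v_1=\pm e_1$ and $v_2$ has no $e_1$-component), which is why all endpoints come out right either way, but they differ along any path where $v_1$ leaves the $\pm e_1$ axis: negating $v_1$ in $\ol{\topright\topleft}$ makes the first vector sweep through $-\ol{e}$ rather than $+\ol{e}$, so the image differs from $\ol{\topleft\topright}$ by the full rotation about $e_2$ and is non-standard; the pairs of type $\{\topright,\bottomright\}$ work exactly as you computed (there $v_1$ is constant, so the two operations coincide), and the same-oriented pairs are sent to standard paths, consistently with your coboundary structure. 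Note also the asymmetry your uniform treatment glosses over: for $\smap$ the ``reflect every vector'' reading is the correct one, since that framing arises as a pushforward under the reflection in Formula~\eqref{eq:fk-fj}; so the two halves of the lemma implicitly involve different actions, and conflating them is what led you astray on the $\rmap$ half.
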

\begin{proof}
  This is a straightforward verification from the definitions.
\end{proof}

\subsection{The framed cube flow category}\label{subsec:frame-cube-flow-cat}

In this subsection, we describe certain aspects of the \emph{flow
  category $\CubeFlowCat(n)$} associated to the cube $[0,1]^n$. For a
more complete account of the story, see
\cite[Section~\ref*{KhSp:sec:cube-flow}]{RS-khovanov}. The features of
$\CubeFlowCat(n)$ in which we are
interested are the following:

\begin{enumerate}[label=(F-\arabic*),ref=F-\arabic*]
\item To a pair of vertices $u,v\in\{0,1\}^n$ with $v\leq_k u$,
  $\CubeFlowCat(n)$ associates a $(k-1)$-dimensional manifold with
  corners\footnote{It is also a $\Codim{k-1}$-manifold in the sense of \cite{Lau-top-cobordismcorners}.}
called the \emph{moduli
    space} $\Moduli_{\CubeFlowCat(n)}(u,v)$. We drop the subscript if
  it is clear from the context.
\item For vertices $v<w<u$ in $\{0,1\}^n$,
  $\Moduli(w,v)\times\Moduli(u,w)$ is identified with a subspace of
  $\del\Moduli(u,v)$.
\item Fix vertices $v\leq_k u$ in $\{0,1\}^n$; let
  $\vect{0},\vect{1}\in\{0,1\}^k$ be the minimum and the maximum
  vertex, respectively. Then $\Moduli_{\CubeFlowCat(n)}(u,v)$ can be
  identified with $\Moduli_{\CubeFlowCat(k)}(\vect{1},\vect{0})$.
\item $\Moduli_{\CubeFlowCat(n)}(\vect{1},\vect{0})$ is a point, an
  interval and a hexagon, for $n=1,2,3$, respectively, cf.\
  \Figure{hexagon}.
  \begin{figure}
    \begin{center}
      \tiny
      \psfrag{a}{$\Moduli(100,000)\times\Moduli(110,100)\times\Moduli(111,110)$}
      \psfrag{b}{$\Moduli(010,000)\times\Moduli(110,010)\times\Moduli(111,110)$}
      \psfrag{c}{$\Moduli(010,000)\times\Moduli(011,010)\times\Moduli(111,011)$}
      \psfrag{d}{$\Moduli(001,000)\times\Moduli(011,001)\times\Moduli(111,011)$}
      \psfrag{e}{$\Moduli(001,000)\times\Moduli(101,001)\times\Moduli(111,101)$}
      \psfrag{f}{$\Moduli(100,000)\times\Moduli(101,100)\times\Moduli(111,101)$}
      \psfrag{g}{$\Moduli(110,000)\times\Moduli(111,110)$}
      \psfrag{h}{$\Moduli(010,000)\times\Moduli(111,010)$}
      \psfrag{i}{$\Moduli(011,000)\times\Moduli(111,011)$}
      \psfrag{j}{$\Moduli(001,000)\times\Moduli(111,001)$}
      \psfrag{k}{$\Moduli(101,000)\times\Moduli(111,101)$}
      \psfrag{l}{$\Moduli(100,000)\times\Moduli(111,100)$}
      \psfrag{m}{$\Moduli(111,000)$}
      \psfrag{v}{}
      \includegraphics[width=0.9\textwidth]{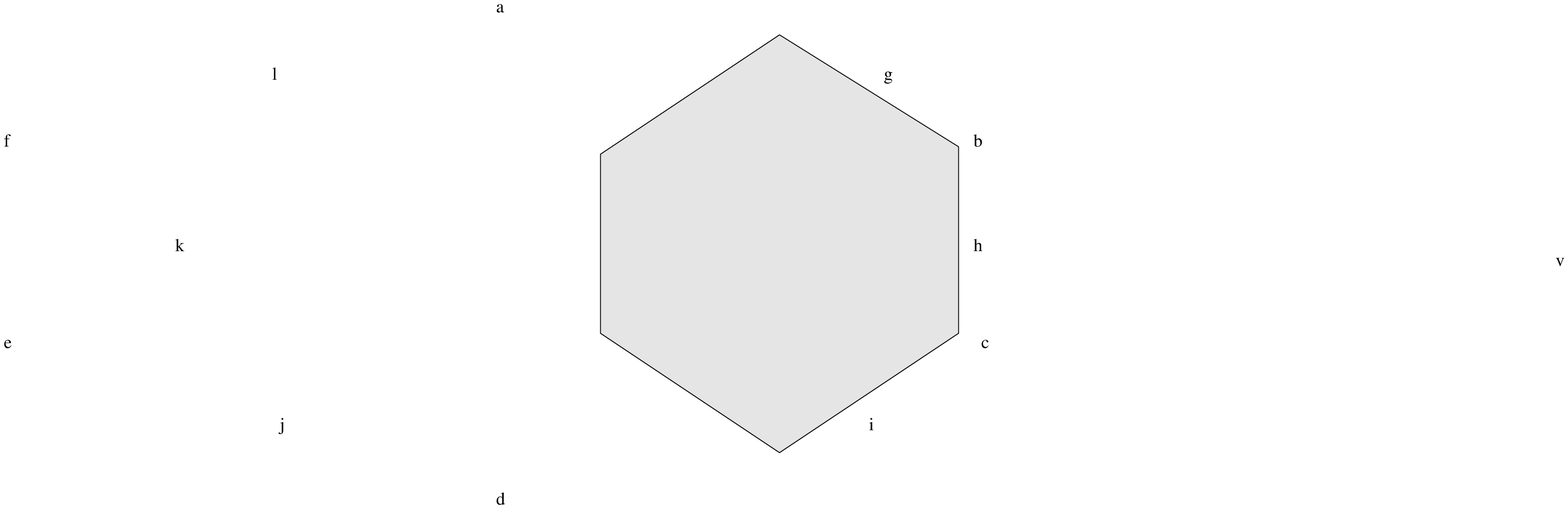}
    \end{center}
    \caption{\textbf{The hexagon
        $\Moduli_{\CubeFlowCat(3)}(111,000)$.} Each face corresponds
      to a product of lower-dimensional moduli spaces, as indicated.}\label{fig:hexagon}
  \end{figure}
\end{enumerate}

The cube flow category is also \emph{framed}.  In order to define
framings, one needs to embed the moduli spaces into Euclidean spaces;
one does so by \emph{neat embeddings} (see~\cite[Definition
2.1.4]{Lau-top-cobordismcorners} or
\cite[Definition~\ref*{KhSp:def:neat-embedding}]{RS-khovanov}). Fix
$d$ sufficiently large; for each $v\leq_k u$, $\Moduli(u,v)$ is neatly
embedded in $\R_+^{k-1}\times\R^{kd}$. These embeddings are coherent
in the sense that for each $v\leq_k w\leq_l u$,
$\Moduli(w,v)\times\Moduli(u,w)\sbs\del\Moduli(u,v)$ is embedded by
the product embedding into
$\R_+^{k-1}\times\R^{kd}\times\R_+^{l-1}\times\R^{ld}=
\R_+^{k-1}\times\{0\}\times\R_+^{l-1}\times\R^{kd+ld}\sbs
\del(\R_+^{k+l-1}\times\R^{(k+l)d})$.  The normal bundle to each of
these moduli spaces is framed. These framings are also coherent in the
sense that the product framing on $\Moduli(w,v)\times\Moduli(u,w)$
agrees with the framing induced from $\Moduli(u,v)$.

The framed cube flow category $\CubeFlowCat(n)$ is needed in the
construction of the Khovanov homotopy type. The cube flow category can
be framed in multiple ways. However, all such framings lead to the
same Khovanov homotopy type \cite[Proposition~\ref*{KhSp:prop:choice-independent}]{RS-khovanov}; hence
it is enough to consider a specific framing. Consider the following
partial framing.

\begin{defn}\label{def:0d-1d-moduli-framing}
  Let $s\in C^1(\Cube(n),\F_2)$ and $f\in C^2(\Cube(n),\F_2)$ be the
  standard sign assignment and the standard frame assignment
  from \Subsection{cube}. Fix $d$ sufficiently large.
  \begin{itemize}
  \item Consider $v\leq_1 u$ in $\{0,1\}^n$. Embed the point
    $\Moduli(u,v)$ in $\R^d$; let $[e_{1},\dots,e_{d}]$ be the
    standard basis in $\R^d$. For framing the point $\Moduli(u,v)$,
    choose the frame $[e_{1},e_{2},\dots,e_{d}]$ if
    $s(\Cube_{u,v})=0$, and choose the frame
    $[-e_{1},e_{2},\dots,e_{d}]$ if $s(\Cube_{u,v})=1$.
  \item Consider $v\leq_2 u$ in $\{0,1\}^n$; let $w_1$ and $w_2$ be
    the two other vertices in $\Cube_{u,v}$. Choose a proper embedding
    of the interval $\Moduli(u,v)$ in $\R_+\times\R^{2d}$; let
    $[\ol{e},e_{11},\dots,e_{1d},e_{21},\dots,e_{2d}]$ be the standard
    basis for $\R\times\R^{2d}$. The two endpoints
    $\Moduli(w_i,v)\times\Moduli(u,w_i)$ of the interval
    $\Moduli(u,v)$ are already framed in $\{0\}\times\R^{2d}$ by the
    product framings, say $\Frame_i$. Since $s$ is a sign assignment,
    the framings of the two endpoints, $\Frame_1$ and $\Frame_2$, are
    opposite, and hence can be extended to a framing on the
    interval. Any such extension can be treated as a path joining
    $\Frame_1$ and $\Frame_2$ in $\SO(2d+1)$,
    cf.\ \Subsection{frames-R3}. If $f(\Cube_{u,v})=0$, choose an
    extension so that the path is a standard frame path; if
    $f(\Cube_{u,v})=1$, choose an extension so that the path is a
    non-standard frame path.
  \end{itemize}
\end{defn}

As we will see in the \Subsection{sq2-on-khspace}, in order to study the
$\Sq^2$ action, one only needs to understand the framings of the
$0$-dimensional and the $1$-dimensional moduli spaces. Therefore, the
information encoded in \Definition{0d-1d-moduli-framing} is all we
need in order to study the $\Sq^2$ action. However, before we proceed
onto the next subsection, we need to check the following.

\begin{lem}\label{lem:extend-framing}
  The partial framing from \Definition{0d-1d-moduli-framing} can be
  extended to a framing of the entire cube flow category
  $\CubeFlowCat(n)$.
\end{lem}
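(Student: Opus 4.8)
The plan is to extend the partial framing by induction on the dimension of the moduli spaces, using obstruction theory. Recall that a framing of $\CubeFlowCat(n)$ amounts to a coherent choice, over all $v\leq_k u$, of a trivialization of the normal bundle of the neatly embedded $\Moduli(u,v)\subset\R_+^{k-1}\times\R^{kd}$, subject to the compatibility that over each boundary stratum $\Moduli(w,v)\times\Moduli(u,w)\subset\del\Moduli(u,v)$ the chosen trivialization agrees with the product of the ones already chosen on the two factors. \Definition{0d-1d-moduli-framing} fixes such trivializations for $k=1$ (the $0$-dimensional moduli spaces) and $k=2$ (the $1$-dimensional moduli spaces). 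So suppose inductively that $k\geq 3$ and that coherent framings have been chosen on all $\Moduli(u,v)$ with $|u-v|<k$; we must extend over every $\Moduli(u,v)$ with $|u-v|=k$. By feature \Item{F-3} (the identification $\Moduli_{\CubeFlowCat(n)}(u,v)\cong\Moduli_{\CubeFlowCat(k)}(\vect{1},\vect{0})$) it is enough to do this for the top moduli space $\Moduli(\vect1,\vect0)$ of $\CubeFlowCat(k)$, and the extensions for different $(u,v)$ will then automatically be mutually compatible because the boundary identifications in \Item{F-2} are built from lower moduli spaces on which the framing is already fixed.

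The extension problem is the following: the framing is already prescribed on $\del\Moduli(\vect1,\vect0)$ (by the product framings, which by the inductive hypothesis are consistent along the codimension-one corners where two boundary faces meet — this consistency is exactly the coherence clause of the inductive hypothesis), and one wants to extend it over the interior. Fixing a neat embedding into $\R_+^{k-1}\times\R^{kd}$ and writing $N$ for the (trivial, since the embedding is neat and the moduli space is a $\Codim{k-1}$-manifold with trivializable normal bundle) normal bundle, the space of trivializations of $N$ is a torsor over $\mathrm{Map}(\Moduli(\vect1,\vect0),\SO(kd))$, and extending a trivialization already given on the boundary is governed by the relative homotopy groups $\pi_i(\SO(kd))$ for $i\leq\dim\Moduli(\vect1,\vect0)=k-1$. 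Since $d$ was chosen sufficiently large, $\SO(kd)$ is as highly connected as we like, so all the relevant obstruction groups $H^{i+1}(\Moduli(\vect1,\vect0),\del\Moduli(\vect1,\vect0);\pi_i(\SO(kd)))$ vanish, and the extension exists. (Concretely one extends cell by cell over a handle decomposition of $\Moduli(\vect1,\vect0)$ rel its boundary, the obstruction to each step living in $\pi_{i}(\SO(kd))$ with $i<kd$, hence trivial.)

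The one point that needs care — and the main obstacle — is not the topology of $\SO(kd)$ but checking that the boundary data we are extending from is genuinely well-defined, i.e.\ that the product framings on the various codimension-one faces $\Moduli(w,v)\times\Moduli(u,w)$ agree on their common codimension-two corners $\Moduli(w',v)\times\Moduli(w,w')\times\Moduli(u,w)$. This is precisely the coherence hypothesis carried along in the induction, so it reduces to checking the base of the induction: the $k=3$ case, where $\Moduli(\vect1,\vect0)$ is the hexagon of \Figure{hexagon}, whose six edges are the $1$-dimensional moduli spaces framed in \Definition{0d-1d-moduli-framing} and whose six vertices are products of three $0$-dimensional moduli spaces. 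At each hexagon vertex the two incoming edge-framings both restrict to the same product of point-framings by construction, so the hexagon boundary carries a well-defined framing, i.e.\ a genuine loop in $\SO(2d+1)$ (after stabilization); and since $\pi_1(\SO(2d+1))=\Z/2$ with $d$ large this loop could a priori be essential. That it is in fact null-homotopic — so that the framing extends over the hexagon — is the content of the coherent-system-of-paths computation already carried out (the hand-motion lemma): the concatenation of the six standard/non-standard frame paths around the hexagon, with the twists recorded by $f$ on each edge, bounds precisely because $\diff f=1_2$ pairs correctly with the hexagon's combinatorics, which is the three-dimensional incarnation of \Lemma{frame-assignment-sum}. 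With the base case secured, the higher-dimensional steps are pure obstruction theory with vanishing obstructions, and the induction goes through.
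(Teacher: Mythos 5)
Your overall architecture (reduce to extending the given framing over the two\--dimensional moduli spaces, then induct upward) matches the paper's, but both halves of your argument have real problems. The inductive step rests on the claim that $\SO(kd)$ is ``as highly connected as we like'' for $d$ large, so that all obstruction groups vanish. That is false: the homotopy groups of $\SO(N)$ stabilize but do not vanish ($\pi_1\cong\Z/2$, $\pi_3\cong\Z$, \dots), so the obstruction to extending a boundary framing over a $(k-1)$-dimensional moduli space lives in a genuinely nonzero group --- already $\pi_1(\SO(3d))\cong\Z/2$ for the hexagon, and $\pi_3$ enters for the higher moduli spaces. The paper never claims these obstructions vanish; it invokes the inductive construction of \cite{RS-khovanov}, whose mechanism is to kill the obstructions by re\-choosing the framings on the \emph{interiors} of the $(k-1)$-dimensional moduli spaces, a modification that leaves the $0$- and $1$-dimensional data of \Definition{0d-1d-moduli-framing} untouched once $k-1\geq 2$. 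That is exactly why the lemma reduces to the single check over the hexagons; nothing in higher dimensions comes for free from connectivity.

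The hexagon check is the actual content of the lemma, and in your write-up it is asserted rather than proved, with an incorrect justification. You cite ``$\diff f=1_2$'', but that identity is the defining property of the sign assignment $s$; the frame assignment satisfies instead $(\diff f)(\Cube_{u,v})=\sum_{w}s(\Cube_{w,v})$ (\Lemma{frame-assignment-sum}). The hand-motion lemma only establishes that the standard frame paths form a coherent system; it does not compute the class of the boundary loop of a hexagon, whose edges carry standard or non\--standard paths according to the values of $f$ and whose vertices carry signs coming from $s$. That computation is the bulk of the paper's proof (the graph of \Figure{HexagonFlower}: pentagons contributing $f_i,g_i$, squares contributing $a_ic_j$, bigons contributing $b_jc_i+b_j$, summing to $1$ using both \Lemma{frame-assignment-sum} and $\diff s=1_2$), and it is where the particular definitions of $s$ and $f$ enter --- for a generic $2$-cochain $f$ the lemma would be false. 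Finally, your parity is backwards, or at least ambiguous about which loop is meant: with the paper's convention of completing a normal frame of the boundary $1$-manifold to an element of $\SO(3d+1)$, the framing extends over the hexagon exactly when that loop is \emph{essential} ($h_{u,v}=1$), since the constant loop corresponds to the Lie framing, the nontrivial element of $\Omega_1^{\fr}$; your claim that the loop must be null-homotopic would, under that convention, be the opposite of what is needed. Without the explicit hexagon computation (and with the connectivity claim as stated) the proof does not go through.
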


\begin{proof}
  We frame the cube flow category in \cite[Proposition~\ref*{KhSp:prop:cube-can-be-framed}]{RS-khovanov} inductively: We start with coherent framings of
  all moduli spaces of dimension less than $k$; after changing the
  framings in the interior of the $(k-1)$-dimensional moduli spaces if
  necessary, we extend this to a framing of all $k$-dimensional moduli
  spaces.

  Therefore, in order to prove this lemma, we merely need to check
  that the framings of the zero- and one-dimensional moduli spaces from
  \Definition{0d-1d-moduli-framing} can be extended to a framing of
  the two-dimensional moduli spaces.

  Fix $v\leq_3 u$, and fix a neat embedding of the hexagon
  $\Moduli(u,v)$ in $\R_+^2\times\R^{3d}$.  Let
  $[\ol{e}_1,\ol{e}_2,e_{11},\dots,e_{1d},e_{21},\dots,e_{2d},e_{31},\dots,e_{3d}]$
  be the standard basis for $\R_+^2\times\R^{3d}$. The boundary $K$ is
  a framed $6$-gon embedded in $\del(\R_+^2\times\R^{3d})$. Let us
  flatten the corner in $\del(\R_+^2\times\R^{3d})$ so that
  $[\ol{e}_1=-\ol{e}_2,e_{11},\dots,e_{3d}]$ is the standard basis in
  the flattened $\R\times\R^{3d}$. After this flattening operation, we
  can treat $K$ as a framed $1$-manifold in $\R\times\R^{3d}$---see
  \Figure{flattening}---which in turn represents some element
  $\eta_{u,v}\in\pi_4(S^3)=\Z/2$ by the Pontrjagin-Thom
  correspondence.  We want to show that $K$ is null-concordant, i.e.,
  that $\eta_{u,v}=0$.

  \begin{figure}
    \[
    \xymatrix{
      \vcenter{\hbox{\psfrag{rp}{$\R_+$}\psfrag{r3}{$\R^{3d}$}\includegraphics[height=0.3\textwidth]{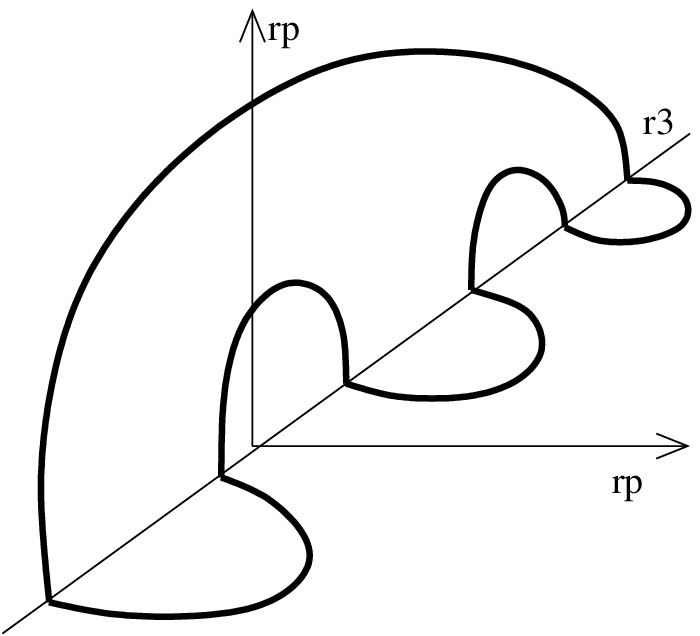}}}\ar[r]&
      \vcenter{\hbox{\psfrag{r}{$\R$}\psfrag{r3}{$\R^{3d}$}\includegraphics[height=0.3\textwidth]{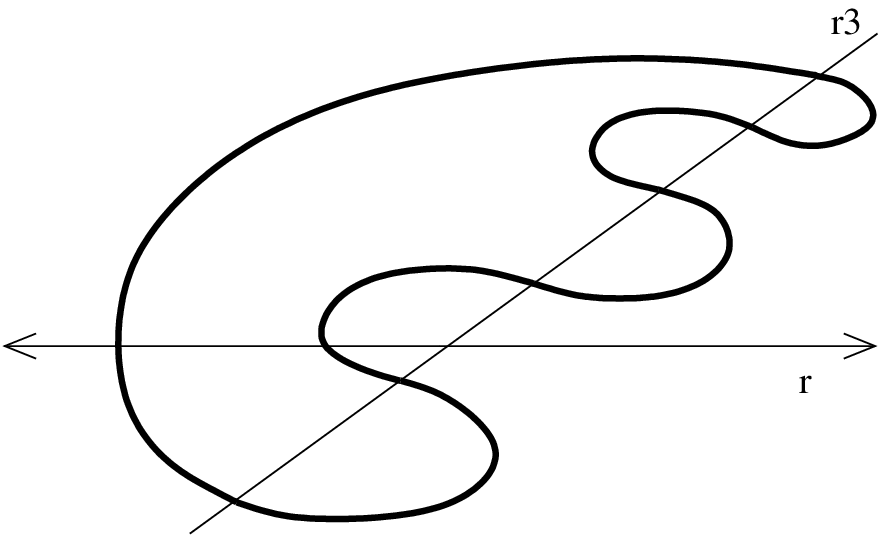}}}
    }
    \]
    \caption{\textbf{The embedding of $\del\Moduli(u,v)$.} Left: the embedding in
      $\del(\R_+^2\times\R^{3d})$. Right: the corresponding embedding in $\R\times\R^3$
      obtained by flattening the corner.}\label{fig:flattening}
  \end{figure}

  As in \Subsection{frames-R3}, $K$ can also be treated as a loop in
  $\SO(3d+1)$,
  and thus represents some element $h_{u,v}\in
  H_1(\SO(3d+1);\Z)=\F_2$. The element $h_{u,v}$ is non-zero if and only
  if $\eta_{u,v}$ is zero; therefore, we want to show $h_{u,v}=1$.

  \begin{figure}
    \makebox[\textwidth][c]{
      \xymatrix@C=10ex@!R=2ex{
        &u\ar@{-}[ddl]|{a_1}\ar@{-}[dd]|{a_2}\ar@{-}[ddr]|{a_3}\ar@{.}[ddddl]|{f_1}\ar@/^2ex/@{.}[dddd]|(0.7){f_2}\ar@{.}[ddddr]|{f_3}&\\
        &&&\\
        t_1\ar@{-}[dd]|{b_1}\ar@{-}[ddr]|(0.4){b_2}
        &t_2\ar@{-}[ddl]|(0.6){b_3}\ar@{-}[ddr]|(0.6){b_4} &
        t_3\ar@{-}[ddl]|(0.4){b_5}\ar@{-}[dd]|{b_6}\\
        &&& \vcenter{\hbox{
            \begin{overpic}[width=0.5\textwidth]{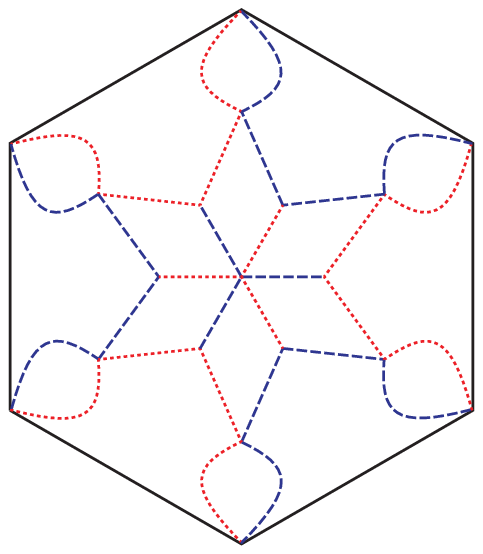}
              \put (41,95) {\tiny $c_1b_1a_1$}
              \put (80,75) {\tiny $c_1b_3a_2$}
              \put (00,75) {\tiny $c_2b_2a_1$}
              \put (00,25) {\tiny $c_2b_5a_3$}
              \put (80,25) {\tiny $c_3b_4a_2$}
              \put (41,05) {\tiny $c_3b_6a_3$}
              \put (47,75) {\tiny $c_10a_1$}
              \put (60,65) {\tiny $c_10a_2$}
              \put (23,65) {\tiny $c_20a_1$}
              \put (23,35) {\tiny $c_20a_3$}
              \put (60,35) {\tiny $c_30a_2$}
              \put (47,25) {\tiny $c_30a_3$}
              \put (52,63) {\tiny $c_100$}
              \put (60,50) {\tiny $00a_2$}
              \put (25,50) {\tiny $c_200$}
              \put (35,37) {\tiny $00a_3$}
              \put (52,37) {\tiny $c_300$}
              \put (35,63) {\tiny $00a_1$}
              \put (43.4,51) {\tiny $000$}
              \put (43,85) {\tiny \txt{$b_1c_1$\\$+b_1$}}
              \put (72,68) {\tiny \txt{$b_3c_1$\\$+b_3$}}
              \put (72,35) {\tiny \txt{$b_4c_3$\\$+b_4$}}
              \put (43,17) {\tiny \txt{$b_6c_3$\\$+b_6$}}
              \put (13,35) {\tiny \txt{$b_5c_2$\\$+b_5$}}
              \put (13,68) {\tiny \txt{$b_2c_2$\\$+b_2$}}
              \put (43,60) {\tiny $a_1c_1$}
              \put (43,40) {\tiny $a_3c_3$}
              \put (53,55) {\tiny $a_2c_1$}
              \put (33,55) {\tiny $a_1c_2$}
              \put (53,45) {\tiny $a_2c_3$}
              \put (33,45) {\tiny $a_3c_2$}
              \put (60,75) {\tiny $f_1$}
              \put (15,50) {\tiny $f_2$}
              \put (60,25) {\tiny $f_3$}
              \put (30,75) {\tiny $g_1$}
              \put (75,50) {\tiny $g_2$}
              \put (30,25) {\tiny $g_3$}
            \end{overpic}
          }}\\
        w_1&w_2&w_3 \\
        &&&\\
        &v\ar@{-}[uul]|{c_1}\ar@{-}[uu]|{c_2}\ar@{-}[uur]|{c_3}\ar@{.}[uuuul]|{g_1}\ar@/^2ex/@{.}[uuuu]|(0.7){g_2}\ar@{.}[uuuur]|{g_3}&
      }}
    \caption{\textbf{Framing on the hexagon.} Left: notation for the
      framings at the vertices and edges of the hexagon. Right: the
      graph $G$.}
    \label{fig:HexagonFlower}
  \end{figure}

  Let $t_1,t_2,t_3,w_1,w_2,w_3$ be the six vertices between $u$ and
  $v$ in the cube, with $w_1\leq_1 t_1,t_2$ and $w_2\leq_1 t_1,t_3$
  and $w_3\leq_1 t_2,t_3$. For $i\in\{1,2,3\}$, let
  $s(\Cube_{u,t_i})=a_i$, $s(\Cube_{w_i,v})=c_i$,
  $f(\Cube_{u,w_i})=f_i$ and $f(\Cube_{t_i,v})=g_i$. Finally let
  $s(\Cube_{t_1,w_1})=b_1$, $s(\Cube_{t_1,w_2})=b_2$,
  $s(\Cube_{t_2,w_1})=b_3$, $s(\Cube_{t_2,w_3})=b_4$,
  $s(\Cube_{t_3,w_2})=b_5$ and $s(\Cube_{t_3,w_3})=b_6$. This
  information is encoded in the first part of \Figure{HexagonFlower}.

  Consider the tri-colored planar graph $G$ in the second part of
  \Figure{HexagonFlower}.  The vertices represent frames in
  $\{0\}\times\R^{3d}$ as follows: if a vertex is labeled $cba$, then
  it represents the frame
  \[
  [(-1)^c e_{11},e_{12},\dots,e_{1d}, (-1)^b
  e_{21},e_{22},\dots,e_{2d},(-1)^a e_{31},e_{32},\dots,e_{3d}].
  \]
  Each edge represents a frame path joining the frames at its
  endpoints as follows.
  \begin{itemize}
  \item If the edge is colored black, i.e., if it is
    at the boundary of the hexagon, then it is one of the edges of the
    framed $6$-cycle $K$.
  \item If the edge is colored blue (dashed), then it represents the image under
    flattening of the following path in
    $\{0\}\times\R_+\times\R^{d}\times\R^d\times\R^d$: it is constant on
    the first $\R^d$ and is a standard frame path on the remaining
    $\R_+\times\R^d\times\R^d$.
  \item If the edge is colored red (dotted), then it represents the image
    under flattening of the following path in
    $\R_+\times\{0\}\times\R^d\times\R^d\times\R^d$: it is constant on
    the last $\R^d$ and is a standard frame path on the remaining
    $\R_+\times\R^d\times\R^d$.
  \end{itemize}
  The element $h_{u,v}\in H_1(\SO(3d+1))$ is represented by the black
  $6$-cycle in $G$. In order to compute $h_{u,v}$, we will
  compute the homology classes of some other cycles in $G$.

  Consider the black-blue $5$-cycle joining $c_1b_1a_1$, $c_10a_1$,
  $c_100$, $c_10a_2$ and $c_1b_3a_2$. Modulo extending by the constant
  map on $\R^d$, the four blue edges represent standard frame paths in
  $\R\times\R^{2d}$ and the black edge is standard if and only if
  $f_1=0$. Therefore, this cycle represents the element $f_1\in
  H_1(\SO(3d+1))$. We denote this by writing $f_1$ in the pentagonal
  region bounded by this $5$-cycle in $G$. The homology classes
  represented by the other $5$-cycles are shown in
  \Figure{HexagonFlower}.

  Next consider the red-blue $4$-cycle connecting $c_10a_1$, $00a_1$,
  $000$ and $c_100$. If $a_1=0$, then the blue edges represent the
  constant paths, and the two red edges represent the same path;
  therefore the cycle is null-homologous. Similarly, if $c_1=0$, the
  cycle is null-homologous as well. Finally, if $a_1=c_1=1$, it is easy
  to check from the definition of standard paths
  (\Subsection{frames-R3}) that the cycle represents the generator of
  $H_1(\SO(3d+1))$. Therefore, the cycle represents the element
  $a_1c_1$. The contributions from such $4$-cycles is also shown in
  \Figure{HexagonFlower}.

  Finally, consider the red-blue $2$-cycle connecting $c_1b_1a_1$ and
  $c_10a_1$. If $b_1=0$, then both the red and the blue edges
  represent the constant paths, and hence the cycle is
  null-homologous. So let us concentrate on the case when $b_1=1$. Let
  \begin{align*}
    \Frame_1&=[(-1)^{c_1}e_{11},\dots,e_{1d},-e_{21},\dots,e_{2d},(-1)^{a_1}e_{31},\dots,e_{3d}]\text{ and}\\
    \Frame_2&=[(-1)^{c_1}e_{11},\dots,e_{1d}, e_{21},\dots,e_{2d},(-1)^{a_1}e_{31},\dots,e_{3d}]
  \end{align*}
  be the two frames in $\{0\}\times\R^{3d}$. The blue edge represents
  the path from $\Frame_1$ to $\Frame_2$ where the $(d+2)\th$ vector
  rotates $180^{\circ}$ in the
  $\langle\ol{e}_2,{e}_{21}\rangle$-plane and equals
  $\ol{e}_2=-\ol{e}_1$ halfway through. The red edge also represents a
  path where the $(d+2)\th$ vector rotates $180^{\circ}$ in the
  $\langle\ol{e}_1,{e}_{21}\rangle$-plane. However, halfway
  through, it equals $\ol{e}_1$ if $c_1=0$, and it equals $-\ol{e}_1$
  if $c_1=1$. Hence, when $b_1=0$, the red-blue $2$-cycle is
  null-homologous if and only if $c_1=1$. Therefore, the cycle
  represents the element $b_1(c_1+1)=b_1c_1+b_1$. These contributions
  are also shown in \Figure{HexagonFlower}.

  We end the proof with a mild exercise in addition.
  \begin{flalign*}
    h_{u,v}&\mathrlap{=(f_1+f_2+f_3+g_1+g_2+g_3)+(b_1+b_2+b_3+b_4+b_5+b_6)}\\
    &\mathrlap{\quad{}+(b_1c_1+b_3c_1+b_4c_3+b_6c_3+b_5c_2+b_2c_2)+(a_1c_1+a_2c_1+a_2c_3+a_3c_3+a_3c_2+a_1c_2)}\\
    &\mathrlap{=(c_1+c_2+c_3)+(b_1+b_2+b_3+b_4+b_5+b_6)}&\text{(by \Lemma{frame-assignment-sum})}\\
    &\mathrlap{\quad{}+(c_1(a_1+a_2+b_1+b_3)+c_2(a_1+a_3+b_2+b_5)+c_3(a_2+a_3+b_4+b_6))}\\
    &\mathrlap{=(c_1+c_2+c_3)+(b_1+b_2+b_3+b_4+b_5+b_6)}\\
    &\mathrlap{\quad{}+(c_1+c_2+c_3)}&\text{(since $s$ is a sign assignment)}\\ 
    &\mathrlap{=(b_1+b_2+c_1+c_2)+(b_3+b_4+c_1+c_3)+(b_5+b_6+c_2+c_3)}\\
    &\mathrlap{=1+1+1}&\text{($s$ is still a sign assignment)}\\
    &\mathrlap{=1.}&\qedhere
  \end{flalign*}
\end{proof}

\subsection{\texorpdfstring{$\Sq^2$}{Sq2} for the Khovanov homotopy type}\label{subsec:sq2-on-khspace}
Fix a link diagram $L$ and an integer $\ell$, and let $\KhSpace^\ell(L)$
denote the Khovanov homotopy type constructed in~\cite{RS-khovanov}.
We want to study the Steenrod square
\begin{align*}
  \Sq^2&\from\wt{H}^{\kappa}(\KhSpace^{\ell}(L))\to\wt{H}^{\kappa+2}(\KhSpace^{\ell}(L)).\\
  \shortintertext{The spectrum $\KhSpace^\ell(L)$ is a formal
    de-suspension $\Sigma^{-N}Y_{\ell}$ of a CW complex $Y_{\ell}$ for some
    sufficiently large $N$. Therefore, we want to understand the
    Steenrod square} \Sq^2&\from H^{N+\kappa}(Y_{\ell};\F_2)\to H^{N+\kappa+2}(Y_{\ell};\F_2).
\end{align*}

Before we get started, we give names to a few maps which will make
regular appearances. Fix $m_1,m_2\geq 2$ and let $m=m_1+m_2$.
First, recall that we have maps $\rmap, \smap\co \RR^m=\R^{m_1}\times\R^{m_2}\to
\RR^m$ given by Formulas~\eqref{eq:rmap} and~\eqref{eq:smap},
respectively. Next, let 
\[
\Pi\co \DD^m\to K_m^{(m+1)}
\]
be the composition of the projection map $\DD^m\to
\DD^m/\bdy\DD^m=S^m$ and the inclusion $S^m\to K_m^{(m+1)}$. Let
\[
\Xi\co [0,1]\times\DD^m \to K_m^{(m+1)}
\]
be the map induced by the identification of $[0,1]\times\DD^m$ with
the $(m+1)$-cell $e^{m+1}$ of $K_m^{(m+1)}$; the map $\Xi$ collapses
$[0,1]\times \bdy \DD^m$ to the basepoint and maps each of
$\{0\}\times\DD^m$ and $\{1\}\times\DD^m$ to the $m$-skeleton $S^m\subset
K_m^{(m+1)}$ by $\Pi$ and $\Pi\circ \rmap$, respectively. The map $\Xi$
factors through a map
\[
\overline{\Xi}\co [0,1]\times S^m\to K_m^{(m+1)}.
\]

We construct a CW complex $X$ as follows.  Choose
real numbers $\epsilon$ and $R$ with $0<\epsilon\ll R$. Then:

\noindent\textbf{Step 1:} Start with a unique $0$-cell $e^0$.

\noindent\textbf{Step 2:} For each Khovanov generator $\ob{x}_i\in \KhGen^{\kappa,\ell}$, $X$ has a
corresponding cell 
\[
f_i^{m}=\{0\}\times\{0\}\times [-\epsilon,\epsilon]^{m_1}\times [-\epsilon,\epsilon]^{m_2}.
\]
The boundary of $f_i^{m}$ is glued to $e^0$.

\noindent\textbf{Step 3:} For each Khovanov generator $\ob{y}_j\in \KhGen^{\kappa+1,\ell}$, $X$
has a corresponding cell
\[
f_j^{m+1}=[0,R]\times\{0\}\times [-R,R]^{m_1}\times [-\epsilon,\epsilon]^{m_2}.
\]

The boundary of $f_j^{m+1}$ is attached to $X^{(m)}$ as follows. 
If $\ob{y}_j$ occurs in $\diff_{\Z} \ob{x}_i$ with sign
$\epsilon_{i,j}\in\{\pm 1\}$ then 
we embed $f_i^m$ in $\bdy f_j^{m+1}$ by a map of the form
\begin{equation}\label{eq:fj-fi}
  \begin{split}
    (0,0,x_1,x_2,\dots,x_{m_1}&,y_1,\dots,y_{m_2})\\
    &\mapsto (0,0,\epsilon_{i,j}x_1+a_1, x_2+a_2,\dots, x_{m_1}+a_{m_1},
    y_1,\dots,y_{m_2})
  \end{split}
\end{equation}
for some vector $(a_1,\dots,a_{m_1})$. Call the
image of this embedding $\Cell[i]{j}$. Then $\Cell[i]{j}$ is mapped
to $f_i^m$ by the specified identification, and $(\bdy
f_j^m)\setminus \bigcup_i \Cell[i]{j}$ is mapped to the basepoint
$e^0$.

We choose the vectors $(a_1,\dots,a_{m_1})$ so that the different
$\Cell[i]{j}$'s are disjoint. Write
$p_{i,j}=(0,a_1,\dots,a_{m_1},0,\dots,0)\in f_j^{m+1}$.

\noindent\textbf{Step 4:} For each Khovanov generator $\ob{z}_k\in \KhGen^{\kappa+2,\ell}$, $X$
has a corresponding cell 
\[
f_k^{m+2}=[0,R]\times[0,R]\times [-R,R]^{m_1}\times [-R,R]^{m_2}.
\]

The boundary of $f_k^{m+2}$ is attached to $X^{(m+1)}$ as
follows. First, we choose some auxiliary data:
\begin{itemize}
\item If $\ob{z}_k$ occurs in $\diff_{\Z} \ob{y}_j$ with sign
  $\epsilon_{j,k}\in\{\pm 1\}$ then we embed $f_j^{m+1}$ in $\bdy
  f_k^{m+2}$ by a map of the form
  \begin{equation}\label{eq:fk-fj}
    \begin{split}
      (x_0,0,x_1,\dots,x_{m_1}&,y_1,\dots,y_{m_2})\\
      &\mapsto (x_0,0,x_1, x_2,\dots, x_{m_1},0,
      \epsilon_{j,k}y_1+b_1,\dots,y_{m_2}+b_{m_2})
    \end{split}
  \end{equation}
  for some vector $(b_1,\dots,b_{m_2})$. Call the image of this
  embedding $\Cell[j]{k}$. Once again, we choose the vectors
  $(b_1,\dots,b_{m_2})$ so that the different $\Cell[j]{k}$'s are
  disjoint.
\item Let $\ob{x}_i\in \KhGen^{\kappa,\ell}$. If the set of generators
  $\InBetween{\ob{z}_k}{\ob{x}_i}$ between $\ob{z}_k$ and
  $\ob{x}_i$ is nonempty then $\InBetween{\ob{z}_k}{\ob{x}_i}$
  consists of $2$ or $4$ points, and these points are identified in
  pairs (via the ladybug matching of \Subsection{ladybug-matching}). Write
  $\InBetween{\ob{z}_k}{\ob{x}_i}=\{\ob{y}_{j_{\be}}\}$. For each
  $j_{\be}$, the cell $\Cell[i]{j_{\be}}$ can be viewed as lying in the
  boundary of $\Cell[j_{\be}]{k}$. Consider the point $p_{i,j_{\be}}$ in the
  interior of $\Cell[i]{j_{\be}}\subset \bdy \Cell[j_{\be}]{k}$. Each of the
  points $p_{i,j_{\be}}$ inherits a framing, i.e., a trivialization of
  the normal bundle to $p_{i,j_{\be}}$ in $\bdy \Cell[j_{\be}]{k}$, from the
  map $f_i^m\to \bdy f_k^{m+2}$,
  \begin{equation*}
    \begin{split}
      \qquad(0&,0,x_1,\dots,x_{m_1},y_1,\dots,y_{m_2})\\
      &\qquad\mapsto (0,0,
      \epsilon_{i,j_{\be}}x_1+a_1, x_2+a_2,\dots, x_{m_1}+a_{m_1},
      \epsilon_{j_{\be},k}y_1+b_1,\dots,y_{m_2}+b_{m_2}).
    \end{split}
  \end{equation*}
  Notice that the framing of $p_{i,j_{\be}}$ is one of the standard
  frames for $\RR^m=\R^{m_1}\times\R^{m_2}$ (\Definition{standard-frames}).
  
  The pair of generators $(\ob{x}_i, \ob{z}_k)$ specifies a
  $2$-dimensional face of the hypercube $\Cube(n)$. Let
  $\Cube_{k,i}$ denote this face. The standard frame assignment $f$
  of \Subsection{cube} assigns an element $f(\Cube_{k,i})\in \F_2$ to
  the face $\Cube_{k,i}$.
  
  The matching of elements of $\InBetween{\ob{z}_k}{\ob{x}_i}$
  matches the points $p_{i,j_a}$ in pairs. Moreover, it follows
  from the definition of the sign assignment that matched pairs of
  points have opposite framings. For each matched pair of points
  choose a properly embedded arc 
  \[
  \PT\subset \{0\}\times[0,R]\times [-R,R]^{m_1}\times
  [-R,R]^{m_2}\sbs \del f_k^{m+2}
  \] 
  connecting the pair of points. The endpoints of $\PT$ are
  framed. Extend this to a framing of the normal bundle to $\PT$ in
  $\bdy f_k^{m+2}$. If $f(\Cube_{k,i})=0$, then choose this framing to
  be isotopic \rel boundary to a standard frame path for
  $\{0\}\times\R\times\RR^m$; if $f(\Cube_{k,i})=1$, then choose this
  framing to be isotopic \rel boundary to a non-standard frame path
  for $\{0\}\times\R\times\RR^m$.
  
  We call these arcs $\PT$ \emph{Pontrjagin-Thom arcs}, and denote
  the set of them by $\{\PT_{i_1,1},\dots,\allowbreak\PT_{i_A,n_A}\}$ where
  the arc $\PT_{i_\alpha,\imath}$ comes from the generator
  $\ob{x}_{i_\alpha}\in\KhGen^{\kappa,\ell}$.
\end{itemize}

The choice of these auxiliary data is illustrated in \Figure{attach}.
Now, the attaching map on $\bdy f_k^{m+2}$ is given as follows:
\begin{itemize}
\item The interior of $\Cell[j]{k}$ is mapped to $f_j^{m+1}$ by
  (the inverse of) the identification in Formula~\eqref{eq:fk-fj}.
\item A tubular neighborhood of each Pontrjagin-Thom arc
  $\nbd(\PT_{i_\alpha,\imath})$ is mapped to $f_{i_\alpha}^m$ as
  follows. The framing identifies 
  \[
  \nbd(\PT_{i_\alpha,\imath})\cong
  \PT_{i_\alpha,\imath}\times [-\epsilon,\epsilon]^{m_1+m_2}\cong \PT_{i_\alpha,\imath}\times f_{i,\alpha}^m.
  \]
  With respect to this identification, the map is the obvious
  projection to $f_{i,\alpha}^m$.
\item The rest of $\bdy f_k^{m+2}$ is mapped to the basepoint $e^0$.
\end{itemize}

\begin{figure}
  \centering
  \begin{overpic}[width=0.9\textwidth]{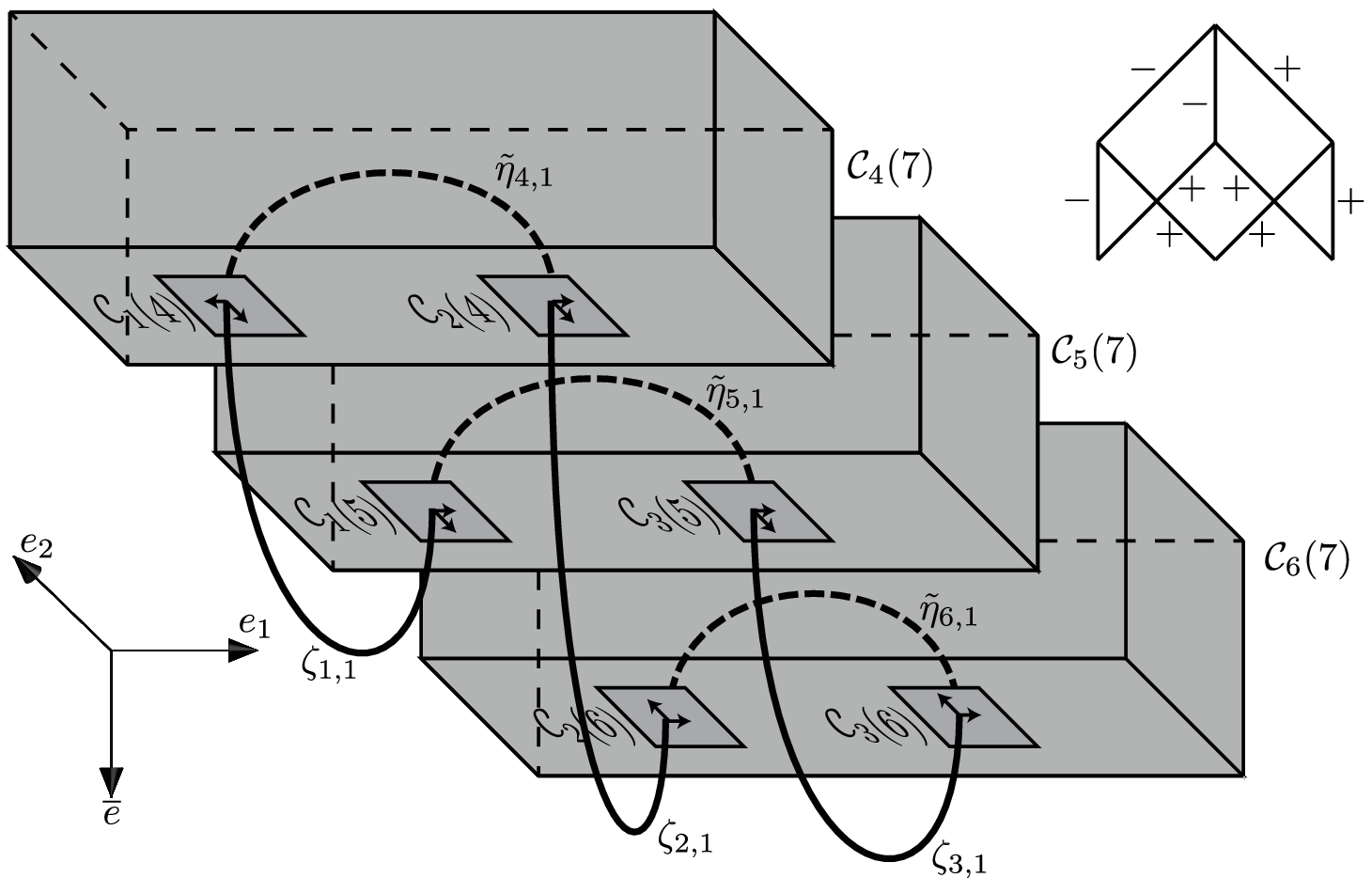}
    \put (88,63) {$\ob{z}_7$}
    \put (98,54) {$\ob{y}_6$}
    \put (89.5,54) {$\ob{y}_5$}
    \put (77,54) {$\ob{y}_4$}
    \put (77.5,44) {$\ob{x}_1$}
    \put (89.5,44) {$\ob{x}_2$}
    \put (98,44) {$\ob{x}_3$}
  \end{overpic}
  \caption{\textbf{The attaching map corresponding to a generator
      $\ob{z}_7$.} The boundary matching arcs are also shown;
    $\bmat_{4,1}$ is boundary-coherent while $\bmat_{5,1}$ and
    $\bmat_{6,1}$ are boundary-incoherent. Here, $m_1=m_2=1$ and we
    have identified 
    $
    \bigl([0,R]\times \{0\}\times [-R,R]\times[-R,R]\bigr)\cup 
    \bigl(\{0\}\times [0,R]\times [-R,R]\times[-R,R]\bigr)\cong 
    [-R,R]\times [-R,R]\times[-R,R]
    $
    via a reflection on the first summand 
    and the identity map on the second summand.}
  \label{fig:attach}
\end{figure}

\begin{prop}
  Let $X$ denote the space constructed above and
  $Y_{\ell}=Y_{\ell}(L)$ the CW complex from~\cite{RS-khovanov}
  associated to $L$ in quantum grading $\ell$. Then
  \[
  \Sigma^{N+\kappa-m} X = Y^{(N+\kappa+2)}_{\ell}/Y^{(N+\kappa-1)}_{\ell}.
  \]
\end{prop}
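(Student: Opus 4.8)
The plan is to unwind the Pontrjagin--Thom (Cohen--Jones--Segal) construction of the CW complex $Y_\ell=Y_\ell(L)$ from~\cite{RS-khovanov} and to verify that, after discarding all homological gradings other than $\kappa,\kappa+1,\kappa+2$, that construction reduces to the construction of $X$ above, so that applying $\Sigma^{N+\kappa-m}$ produces the claimed subquotient. Recall from~\cite[Theorem~\ref*{KhSp:thm:kh-space}]{RS-khovanov} that the cells of $Y_\ell$ other than the basepoint are in bijection with the Khovanov generators in quantum grading $\ell$, a generator in homological grading $i$ contributing a cell of dimension $N+i$. Hence $Y^{(N+\kappa+2)}_\ell/Y^{(N+\kappa-1)}_\ell$ consists of the basepoint together with one cell of dimension $N+\kappa$, $N+\kappa+1$, and $N+\kappa+2$ for each element of $\KhGen^{\kappa,\ell}$, $\KhGen^{\kappa+1,\ell}$, and $\KhGen^{\kappa+2,\ell}$ --- i.e.\ (up to the $(N+\kappa-m)$-fold suspension) exactly the cells of $X$. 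So the proposition is entirely an assertion about attaching maps.

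The first step is to observe that the attaching maps of these cells in $Y_\ell$ are determined by the framed moduli spaces $\Moduli_{\KhFlowCat}(\ob{a},\ob{b})$ with $|\Funky(\ob{a})|-|\Funky(\ob{b})|\in\{1,2\}$: every other moduli space of the Khovanov flow category involves a generator in a grading that has been collapsed into the basepoint or lies above the $(N+\kappa+2)$-skeleton, and therefore does not enter the description of this subquotient. These are the $0$-dimensional moduli spaces $\Moduli(\ob{y}_j,\ob{x}_i)$ and $\Moduli(\ob{z}_k,\ob{y}_j)$ --- nonempty exactly when $\ob{y}_j\in\diff\ob{x}_i$ (resp.\ $\ob{z}_k\in\diff\ob{y}_j$) --- together with the $1$-dimensional moduli spaces $\Moduli(\ob{z}_k,\ob{x}_i)$, whose boundary $\bigsqcup_{\ob{y}_j\in\InBetween{\ob{z}_k}{\ob{x}_i}}\Moduli(\ob{z}_k,\ob{y}_j)\times\Moduli(\ob{y}_j,\ob{x}_i)$ is a $0$-manifold with $0$, $2$, or $4$ points by \Lemma{ladybug-config}.

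The second step is to pin down these framed moduli spaces. By \Lemma{extend-framing} the partial framing of \Definition{0d-1d-moduli-framing} extends to a framing of $\CubeFlowCat(n)$, which induces a framing of $\KhFlowCat$; by~\cite[Proposition~\ref*{KhSp:prop:choice-independent}]{RS-khovanov} the homotopy type of $Y_\ell$ is insensitive to this choice, so I would compute with it. With respect to this framing, $\Moduli(\ob{y}_j,\ob{x}_i)$ carries the standard frame up to the sign $(-1)^{s(\Cube_{\Funky(\ob{y}_j),\Funky(\ob{x}_i)})}=\epsilon_{i,j}$ of $\diff_{\Z}$, and similarly for $\Moduli(\ob{z}_k,\ob{y}_j)$, while each arc component of $\Moduli(\ob{z}_k,\ob{x}_i)$ joins a pair of its boundary points matched by the ladybug matching of \Subsection{ladybug-matching} and carries a standard frame path if $f(\Cube_{k,i})=0$ and a non-standard one if $f(\Cube_{k,i})=1$. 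These are precisely the data used in Steps 1--4: the identification of $f^{m}_i$ inside $\bdy f^{m+1}_j$ in \eqref{eq:fj-fi} reflects the first coordinate exactly when $\epsilon_{i,j}=-1$; the subcells $\Cell[j]{k}$ of \eqref{eq:fk-fj} realize the compositions $\Moduli(\ob{z}_k,\ob{y}_j)\times\Moduli(\ob{y}_j,\,\cdot\,)$; and a Pontrjagin--Thom arc $\PT$ with its chosen framing, together with the collapse of its complement to the basepoint, is exactly the Pontrjagin--Thom map of the framed $1$-manifold $\Moduli(\ob{z}_k,\ob{x}_i)$ laid out on the face $\{0\}\times[0,R]\times[-R,R]^{m_1}\times[-R,R]^{m_2}$ of $\bdy f^{m+2}_k$ (here one uses, as in \Subsection{frames-R3}, that for $m=m_1+m_2\geq 3$ a framing already determines the isotopy class of a framed arc). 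Unwinding the definition of the realization of the framed flow category $\KhFlowCat$ with this moduli data then reproduces $X$ verbatim.

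I expect the main obstacle to be bookkeeping rather than anything conceptual: one must reconcile the conventions of the general construction in~\cite{RS-khovanov} --- the neat embeddings into $\R_+^{k-1}\times\R^{kd}$, the coherence requirements on these embeddings and framings, the ``budget'' $R$, and the interior perturbations used in the inductive framing --- with the explicit product boxes, signs $\epsilon_{i,j},\epsilon_{j,k}$, translation vectors $(a_1,\dots,a_{m_1})$ and $(b_1,\dots,b_{m_2})$, and Pontrjagin--Thom arcs of Steps 1--4, and check that the CW recipe does send everything outside the subcells $\Cell[i]{j}$, $\Cell[j]{k}$ and the tubular neighborhoods $\nbd(\PT)$ to the basepoint. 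Once the $0$- and $1$-dimensional framed moduli spaces are normalized as above, these remaining verifications are direct.
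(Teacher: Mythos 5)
Your proposal is correct and follows essentially the same route as the paper's proof: a direct comparison of the two constructions, noting that the cells of the subquotient match those of $X$ up to the $(N+\kappa-m)$-fold suspension coming from the extra factors $[0,R]^{p_1}\times[-R,R]^{p_2}\times[-\epsilon,\epsilon]^{p_3}$, that only the $0$- and $1$-dimensional framed moduli spaces enter, and that \Lemma{extend-framing} together with the choice-independence result of~\cite{RS-khovanov} justifies computing with the explicit framings of \Definition{0d-1d-moduli-framing}. The paper's own proof is simply a terser enumeration of these same four points of difference.
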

\begin{proof}
  The construction of $X$ above differs from the construction of $Y_{\ell}$
  in~\cite{RS-khovanov} as follows:
  \begin{enumerate}
  \item We have collapsed all cells of dimension less than $m$ to the
    basepoint, and ignored all cells of dimension bigger than $m+2$.
  \item In the construction above, we have suppressed the
    $0$-dimensional framed moduli spaces, instead speaking directly
    about the embeddings of cells that they induce. The
    $0$-dimensional moduli spaces correspond to the points $(a_1,
    \dots, a_{m_1})$ and $(b_1,\dots,b_{m_2})$ in
    Formulas~\eqref{eq:fj-fi} and~\eqref{eq:fk-fj},
    respectively. Their framings are induced by the maps in
    Formulas~\eqref{eq:fj-fi} and~\eqref{eq:fk-fj}.
  \item In the construction
    of~\cite[Definition~\ref*{KhSp:def:flow-gives-space}]{RS-khovanov},
    each of the cells above were multiplied by
    $[0,R]^{p_1}\times[-R,R]^{p_2}\times[-\epsilon,\epsilon]^{p_3}$
    for some $p_1,p_2,p_3\in\N$ with $p_1+p_2+p_3=N+\kappa-m$; and the
    various multiplicands were ordered differently. This has the
    effect of suspending the space $X$ by $(N+\kappa-m)$-many
    times.
  \item The framings in~\cite{RS-khovanov} were given by an
    obstruction-theory
    argument~\cite[Proposition~\ref*{KhSp:prop:cube-can-be-framed}]{RS-khovanov},
    while the framings here are given explicitly by the standard sign
    assignment and standard frame assignment. This is justified by
    \Lemma{extend-framing}.
  \end{enumerate}
  Thus, up to stabilizing, the two constructions give the same space.
\end{proof}

Therefore, it is enough to study the Steenrod square $\Sq^2\from
H^m(X;\F_2)\to H^{m+2}(X;\F_2)$. Fix a cohomology class $[c]\in
H^{m}(X;\F_2)$. Let
\[
c=\sum_{f_i^{m}} c_i(f_i^{m})^*
\]
be a cocycle representing $[c]$. Here, the $c_i$ are elements of
$\{0,1\}$. 

We want to understand the map $\cmap{c}\co X\to K_m^{(m+2)}$
corresponding to $c$. We start with $\cmap[(m)]{c}\co X^{(m)}\to
K_m^{(m)}$: on $f_i^m$, this map is defined as follows:
\begin{itemize}
\item the projection $\Pi$ composed with the identification
  $[-\epsilon,\epsilon]^{m}=\DD^{m}$ if $c_j=1$.
\item the constant map to the basepoint of $K_{m}^{(m)}$ if $c_j=0$.
\end{itemize}

To extend $\cmap{c}$ to $X^{(m+1)}$ we need to make one more
auxiliary choice:
\begin{definition}
  A \emph{topological boundary matching} for $c$ consists of the
  following data
 for each $(m+1)$-cell $f_j^{m+1}$: a collection of
  disjoint, embedded, framed arcs $\bma_{j,\jmath}$ in $f_j^{m+1}$
  connecting the points
  \[
  \coprod_{i\mid c_i=1} p_{i,j}\subset \bdy f_j^{m+1}
  \]
  in pairs, together with framings of the normal bundles to the
  $\bma_{j,\jmath}$.

  The normal bundle in $f_j^{m+1}$ to each of the points $p_{i,j}$
  inherits a framing from Formula~\eqref{eq:fj-fi}.  Call an arc
  $\bma_{j,\jmath}$ \emph{boundary-coherent} if the points $p_{i_1,j}$
  and $p_{i_2,j}$ in $\bdy\bma_{j,\jmath}$ have opposite framings, i.e.,
  if $\epsilon_{i_1,j}=-\epsilon_{i_2,j}$, and
  \emph{boundary-incoherent} otherwise. We require the following
  conditions on the framings for the $\bma_{j,\jmath}$:
  \begin{itemize}
  \item Trivialize $Tf_j^{m+1}$ using the following inclusion
    \begin{align*}
      \qquad f_j^{m+1}=[0,R]\times\{0\}\times[-R,R]^{m_1}\times[-\ep,\ep]^{m_2}&\hookrightarrow
      \RR^{m+1}\\
      (t,0,x_1,\dots,x_{m_1},y_1,\dots,y_{m_2})&\mapsto(-t,x_1,\dots,x_{m_1},y_1,\dots,y_{m_2}).
    \end{align*}
    We require the framing of $\bma_{j,\jmath}$ to be isotopic \rel
    boundary to one of the standard frame paths for $\R\times \R^{m}$.
  \item If $\bma_{j,\jmath}$ is boundary-coherent then the framing
    of $\bma_{j,\jmath}$ is compatible with the framing of its boundary.
  \item If $\bma_{j,\jmath}$ is boundary-incoherent then the framing of
    one end of $\bma_{j,\jmath}$ agrees with the framing of the
    corresponding $p_{i_1,j}$ while the framing of the other end of
    $\bma_{j,\jmath}$ differs from the framing of $p_{i_2,j}$ by the
    reflection $\rmap\co \RR^m\to\RR^m$.
  \end{itemize}
\end{definition}
Each boundary-incoherent arc in a topological boundary matching
inherits an orientation: it is oriented from the endpoint $p_{i,j}$ at which
the framings agree to the endpoint at which the framings disagree.

\begin{lemma}
  A topological boundary matching for $c$ exists.
\end{lemma}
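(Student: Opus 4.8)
The plan is to build a topological boundary matching for $c$ one $(m+1)$-cell at a time: for each cell $f_j^{m+1}$ I would first exhibit an even number of marked points on its boundary, then pair them up by disjoint properly embedded arcs, and finally frame those arcs by choosing among the standard frame paths of \Subsection{frames-R3}.

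First I would check that for each fixed $j$ the set $\coprod_{i\mid c_i=1}p_{i,j}$ has evenly many points. The cellular cochain complex of $X$ is concentrated in degrees $0$, $m$, $m+1$, $m+2$ (recall $m=m_1+m_2\geq 4$), and the coboundary $\diff\colon C^m(X;\F_2)\to C^{m+1}(X;\F_2)$ sends $(f_i^m)^*$ to $\sum_j[f_i^m:f_j^{m+1}](f_j^{m+1})^*$. Since the attaching map identifies $\Cell[i]{j}$ homeomorphically with $f_i^m$ and sends the rest of $\bdy f_j^{m+1}$ to the basepoint, the mod-$2$ incidence number $[f_i^m:f_j^{m+1}]$ is $1$ exactly when $\ob{y}_j$ occurs in $\diff_\Z\ob{x}_i$, i.e.\ exactly when $p_{i,j}$ is defined. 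Hence $\diff c=0$ forces $\sum_{i\,:\,p_{i,j}\text{ defined}}c_i=0$ in $\F_2$ for every $j$, which is precisely the statement that $\#\{i\mid c_i=1,\ p_{i,j}\text{ defined}\}$ is even.

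Next, for each $j$ the cell $f_j^{m+1}$ is a cube of dimension $m+1\geq 5$ and the relevant points $p_{i,j}$ all lie in $\bdy f_j^{m+1}$, so I would pick an arbitrary pairing of these evenly-many points and realize it by disjoint, properly embedded arcs $\bma_{j,\jmath}\subset f_j^{m+1}$; this is possible by general position since the ambient dimension is at least $3$. To frame an arc $\bma_{j,\jmath}$ with endpoints $p_{i_1,j}$, $p_{i_2,j}$, recall that Formula~\eqref{eq:fj-fi} equips $p_{i,j}$ with the standard frame $[\epsilon_{i,j}e_{11},e_{12},\dots,e_{1m_1},e_{21},\dots,e_{2m_2}]$ of $\RR^m$ (\Definition{standard-frames}). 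Using the prescribed trivialization of $Tf_j^{m+1}$, a normal framing of $\bma_{j,\jmath}$ amounts to a path in $\SO(m+1)$ between the two endpoint frames, and up to homotopy \rel endpoints there are exactly two such paths, exactly one of which is a standard frame path (\Subsection{frames-R3}). If $\bma_{j,\jmath}$ is boundary-coherent then $\epsilon_{i_1,j}=-\epsilon_{i_2,j}$, so its endpoint frames are oppositely oriented standard frames and I take its framing to be the standard frame path joining them, which is automatically compatible with the boundary framings. If $\bma_{j,\jmath}$ is boundary-incoherent then $\epsilon_{i_1,j}=\epsilon_{i_2,j}$, and I instead take its framing to be the standard frame path from the frame of $p_{i_1,j}$ to $\rmap$ of the frame of $p_{i_2,j}$; these two frames are again oppositely oriented (since $\rmap$ reverses the orientation of a standard frame, by~\eqref{eq:rmap}), so such a standard frame path exists and it realizes exactly the prescribed endpoint behaviour. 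In either case the framing is, by construction, isotopic \rel boundary to a standard frame path for $\R\times\RR^m$, so all conditions in the definition hold.

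The argument is essentially bookkeeping once these pieces are assembled; the one point requiring care is the verification that the two endpoint frames of every arc are oppositely oriented (so that a standard frame path between them is available in each case), but this is immediate from the explicit form of the frames of the $p_{i,j}$ together with the definition~\eqref{eq:rmap} of $\rmap$, so there is no genuine obstacle.
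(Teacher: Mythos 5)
Your proof is correct and follows essentially the same route as the paper: the evenness of $\#\{i\mid c_i=1,\ p_{i,j}\text{ defined}\}$ comes from the mod-$2$ cocycle condition $c(\bdy f_j^{m+1})\equiv 0$, and the arcs and framings then exist because $m$ is large. You merely spell out the arc/framing construction (general position plus choosing standard frame paths, using $\rmap$ at one end in the boundary-incoherent case) that the paper compresses into the phrase ``together with our condition on $m$.''
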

\begin{proof}
  Since $c$ is a cocycle, for each $(m+1)$-cell $f_j^{m+1}$ we have
  \[
  \sum_i \epsilon_{i,j}c_i = c(\sum_i \epsilon_{i,j}f_i^m) = c(\bdy f_j^{m+1})\equiv
  0\pmod{2}.
  \]
  Together with our condition on $m$, this ensures that a topological
  boundary matching for $c$ exists.
\end{proof}

The map $\cmap[(m+1)]{c}\co X^{(m+1)}\to K_{m}^{(m+1)}$ is defined
using the topological boundary matching as follows. On $f_j^{m+1}$:
\begin{itemize}
\item The map $\cmap[(m+1)]{c}$ sends the complement of a neighborhood
  of the $\bma_{j,\jmath}$ to the basepoint.
\item If $\bma_{j,\jmath}$ is boundary-coherent then the framing of
  $\bma_{j,\jmath}$ identifies a neighborhood of the arc
  $\bma_{j,\jmath}$ with $\bma_{j,\jmath}\times \DD^m$. With respect to
  this identification, the map $\cmap[(m+1)]{c}$ is projection
  $\bma_{j,\jmath}\times\DD^m\to \DD^m\stackrel{\Pi}{\longrightarrow}
  K_m^{(m+1)}.$
  
  Note that $\cmap[(m)]{c}$ induces a map $(\bdy
  \bma_{j,\jmath})\times\DD^m$, and that the compatibility condition
  of the framing of $\bma_{j,\jmath}$ with the framing of $\bdy \bma_{j,\jmath}$
  implies that the map $\cmap[(m+1)]{c}$ extends the map
  $\cmap[(m)]{c}$.
\item If $\bma_{j,\jmath}$ is boundary-incoherent then the orientation
  and framing identify a neighborhood of $\bma_{j,\jmath}$ with
  $[0,1]\times\DD^m$. With respect to this identification,
  $\cmap[(m+1)]{c}$ is given by the map $\Xi$.
  
  Again, $\cmap[(m)]{c}$ induces a map $\{0,1\}\times\DD^m$, and 
  the compatibility condition of the framing of $\bma_{j,\jmath}$
  with the framing of $\bdy \bma_{j,\jmath}$ implies that the map
  $\cmap[(m+1)]{c}$ extends the map $\cmap[(m)]{c}$.
\end{itemize}

Now, fix an $(m+2)$-cell $f^{m+2}$. We want
to compute the element 
\[
\cmap{c}|_{\bdy f^{m+2}}\in \pi_{m+1}(K_{m}^{(m+1)})=\ZZ/2.
\]

As described above, 
\begin{align*}
\bdy f^{m+2}&= 
\bigl(\{0\}\times [0,R]\times [-R,R]^{m}\bigr)
\cup
\bigl(\{R\}\times [0,R]\times [-R,R]^{m}\bigr)\\
&\qquad{}\cup
\bigl([0,R]\times \{0\}\times [-R,R]^{m}\bigr)
\cup
\bigl([0,R]\times \{R\}\times [-R,R]^{m}\bigr)\\
&\qquad{}\cup
\bigl([0,R]\times [0,1]\times \bdy([-R,R]^m)\bigr)
\end{align*}
has corners.  The map $\cmap{c}|_{\bdy f^{m+2}}$ will send 
\[
\bigl(\{R\}\times[0,R]\times [-R,R]^{m}\bigr)\cup 
\bigl([0,R]\times \{R\}\times [-R,R]^{m}\bigr)\cup
\bigl([0,R]\times [0,1]\times \bdy([-R,R]^m)\bigr)
\]
to the basepoint. We straighten the corner between the other two parts
of $\bdy f^{m+2}$ via the map 
\begin{align*}
\bigl(\{0\}\times [0,R]\times [-R,R]^{m}\bigr)\cup
\bigl([0,R]\times \{0\}\times[-R,R]^{m}\bigr)
&\to [-R,R]\times [-R,R]^{m}\\
(0,t,x_1,\dots, x_{m_1},y_1,\dots,y_{m_2})&\mapsto (t, x_1,\dots,
x_{m_1},y_1,\dots, y_{m_2})\\
(t,0,x_1,\dots, x_{m_1},y_1,\dots,y_{m_2})&\mapsto (-t, x_1,\dots,
x_{m_1},y_1,\dots, y_{m_2}).
\end{align*}
We will suppress this straightening from the notation in the rest of
the section.

Let $\PT_1,\dots, \PT_k\subset S^{m+1}=\bdy f^{m+2}$ be the
Pontrjagin-Thom arcs corresponding to $f$. Let $\{\bmat_{j,\jmath}\}$ be the
preimages in $S^{m+1}=\bdy f^{m+2}$ of the topological boundary
matching. The union
\[
\bigcup_{j,\jmath} \bmat_{j,\jmath}\cup \bigcup_i \PT_i
\]
is a one-manifold in $S^{m+1}$. Each of the arcs $\PT_i\subset \bdy
f^{m+2}$ comes with a framing. Each of the arcs $\bmat_{j,\jmath}\subset
\bdy f^{m+2}$ also inherits a framing: the pushforward of the framing
of $\bma_{j,\jmath}$ under the map of Formula~\eqref{eq:fk-fj}. The map
$\cmap{c}|_{\bdy f^{m+2}}\co S^{m+1}\to K_m^{(m+1)}$ is induced from
these framed arcs as follows:
\begin{itemize}
\item A tubular neighborhood of each
  Pontrjagin-Thom arc $\PT_i$ is mapped via
  \[
  \nbd(\PT_i)\cong \PT_i\times\DD^m\to
  \DD^m\stackrel{\Pi}{\longrightarrow} S^m,
  \]
  where the first isomorphism is induced by the framing.
\item A tubular neighborhood of each boundary-coherent
  $\bmat_{j,\jmath}$ is mapped via
  \[
  \nbd(\bmat_{j,\jmath})\cong \bmat_{j,\jmath}\times\DD^m\to \DD^m\stackrel{\Pi}{\longrightarrow}S^m,
  \]
  where the first isomorphism is induced by the framing.
\item A tubular neighborhood of each boundary-incoherent
  $\bmat_{j,\jmath}$ is mapped via
  \[
  \nbd(\bmat_{j,\jmath})\cong [0,1]\times \DD^m\stackrel{\Xi}{\longrightarrow} K_m^{(m+1)}.
  \]
  where the first isomorphism is induced by the orientation and
  framing.
\item The map $\cmap{c}$ takes the rest of $S^{m+1}$ to the basepoint
  of $K_m^{(m+1)}$.
\end{itemize}

Let $K$ be a component of $\bigcup_i \PT_i\cup \bigcup_{j,\jmath}
\bmat_{j,\jmath}$. Relabeling, let $p_1,\dots, p_{2k}$ be the points
$p_{i,j}$ on $K$, $\bmat_1,\dots,\bmat_k$ the sub-arcs of $K$ coming
from the topological boundary matching and $\PT_1,\dots,\PT_k$ the
sub-arcs of $K$ coming from the Pontrjagin-Thom data. Order these so
that $\bdy \PT_i=\{p_{2i-1},p_{2i}\}$ and
$\bdy\bmat_i=\{p_{2i},p_{2i+1}\}$.

We define an isomorphism $\Phi\co \nbd(K)\to K\times \DD^m$ as
follows. First, the framing of $\PT_1$ induces an identification of
the normal bundle $N_{p_1}K$ with $\DD^m$. Second, the framing of each
arc $\gamma\in \{\PT_i, \bmat_i\}$ induces a trivialization of the
normal bundle $N\gamma$. Suppose that the framing of $K$ has already
been defined at the endpoint $p_i$ of $\gamma$. Then the
trivialization of $N\gamma$ allows us to transport the framing of
$p_i$ along $\gamma$. This transported framing is the framing of $K$
along $\gamma$.

Note that the framing of $K$ along $\gamma$ may not agree with the
original framing of $\gamma$; but the two either agree or differ by
the map 
\[
K\times\DD^m\to K\times\DD^m\quad (x,v)\mapsto (x,\rmap(v)),
\]
depending on the parity of the number of boundary-incoherent arcs
traversed from $p_1$.
In particular, it is not \emph{a priori} obvious that the framing we
have defined is continuous at $p_1$; but this follows from the
following lemma:

\begin{lemma}\label{lem:even-cyc-top} 
  An even number of the arcs in $K$ are boundary-incoherent.
\end{lemma}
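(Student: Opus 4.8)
The plan is to recognise $K$ — together with its $\PT$-arcs and $\bmat$-arcs — as a connected component of the edge-labelled graph of \Definition{sq2}, and then to quote \Lemma{even-cyc}. Concretely, let $\ob{z}\in\KhGen^{\kappa+2,\ell}$ be the generator indexing the cell $f^{m+2}$ and let $\ob{c}=\sum_{i\,:\,c_i=1}\ob{x}_i\in\KhCx^{\kappa,\ell}$ be the Khovanov cycle corresponding to the cocycle $c$. I will argue that the relevant graph is $\Graph[\ob{c}]{\ob{z}}$, i.e.\ the construction of \Definition{sq2} with the role of the ``$\KhGen^{i+2,j}$''-generator played by $\ob{z}$, that of ``$\KhGen^{i+1,j}$'' by the $\ob{y}_j$'s, and that of ``$\KhGen^{i,j}$'' by the $\ob{x}_i$'s with $c_i=1$, for a suitable boundary matching $\MatchBdy$. (Components of $\bigcup_i\PT_i\cup\bigcup_{j,\jmath}\bmat_{j,\jmath}$ consisting of a single $\PT$-arc, which can occur when $c_i=0$, contain no $\bmat$-arc, so have zero boundary-incoherent arcs; we may ignore them.)

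First I would set up the dictionary. A vertex $p_{i,j}$ of $K$ records a pair $(\ob{y}_j,\ob{x}_i)$ with $\ob{z}\in\diff\ob{y}_j$, $\ob{y}_j\in\diff\ob{x}_i$ and $c_i=1$ — that is, a point of $\PreBdy[\ob{c}]{\ob{z}}$ — and a $\PT$-arc joins $p_{i,j}$ to $p_{i,j'}$ exactly when $\MatchLady[\ob{z},\ob{x}_i]$ matches $\ob{y}_j$ with $\ob{y}_{j'}$, so the $\PT$-arcs of $K$ are precisely the Type-\Item{edge-1} edges. Next, the $\bmat$-pairing of the points $p_{\bullet,j}$ determines, for each $\ob{y}_j$, a fixed point free involution $\BijecMatch[\ob{y}_j]$ of $\PreBdy[\ob{c}]{\ob{y}_j}$, which I would complete to a boundary matching $\MatchBdy$ for $\ob{c}$ by setting $\SignMatch[\ob{y}_j]$ equal to $0$ on both ends of each boundary-coherent arc and on the ``agreeing'' end of each boundary-incoherent arc, and equal to $1$ on the other end of each boundary-incoherent arc. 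Since $\ob{y}_j$ occurs in $\diff_{\Z}\ob{x}_i$ with sign $\epsilon_{i,j}=(-1)^{s(\Cube_{\Funky(\ob{y}_j),\Funky(\ob{x}_i)})}$ by \Equation{integer-kh-diff}, a $\bmat$-arc joining $p_{i_1,j}$ and $p_{i_2,j}$ is boundary-incoherent iff $\epsilon_{i_1,j}=\epsilon_{i_2,j}$, i.e.\ iff $s(\Cube_{\Funky(\ob{y}_j),\Funky(\ob{x}_{i_1})})=s(\Cube_{\Funky(\ob{y}_j),\Funky(\ob{x}_{i_2})})$ — which is precisely the case in which \Definition{boundary-matching} requires $\{\SignMatch[\ob{y}_j](\ob{x}_{i_1}),\SignMatch[\ob{y}_j](\ob{x}_{i_2})\}=\{0,1\}$. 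Hence $\MatchBdy$ is a legitimate boundary matching, the $\bmat$-arcs are the Type-\Item{edge-2} edges of $\Graph[\ob{c}]{\ob{z}}$, and a $\bmat$-arc is boundary-incoherent exactly when the corresponding Type-\Item{edge-2} edge is oriented.

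Granting this, $K$ is a component of $\Graph[\ob{c}]{\ob{z}}$, and the number of boundary-incoherent arcs in $K$ equals the number of oriented Type-\Item{edge-2} edges in that component, which is even by the second assertion of \Lemma{even-cyc}. The main obstacle is not topological but combinatorial bookkeeping: one must translate ``boundary-incoherent'' correctly into the sign-assignment condition of \Definition{boundary-matching} (this is where \Equation{integer-kh-diff} enters, via $\epsilon_{i,j}=(-1)^{s(\Cube_{\Funky(\ob{y}_j),\Funky(\ob{x}_i)})}$), and check that the orientation of a boundary-incoherent $\bmat$-arc, from its ``agreeing'' end to its ``disagreeing'' end, is compatible with the orientation convention for Type-\Item{edge-2} edges. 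Once these identifications are in place there is no further geometric content, and \Lemma{even-cyc} completes the proof.
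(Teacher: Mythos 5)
Your proof is correct and is essentially the argument the paper intends: the paper dismisses this lemma as ``essentially the same as the proof of the second half of \Lemma{even-cyc},'' and your dictionary (vertices $p_{i,j}\leftrightarrow\PreBdy[\ob{c}]{\ob{z}}$, $\PT$-arcs $\leftrightarrow$ ladybug edges, $\bmat$-arcs $\leftrightarrow$ boundary-matching edges, boundary-incoherent $\leftrightarrow$ oriented, via $\epsilon_{i,j}=(-1)^{s(\Cube_{\Funky(\ob{y}_j),\Funky(\ob{x}_i)})}$) is precisely the identification the paper itself invokes later in the proof of \Theorem{sq2-agrees}, so quoting \Lemma{even-cyc} rather than repeating its parity argument is a legitimate formalization of the same idea. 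Your explicit handling of the components consisting of a single $\PT$-arc (when $c_i=0$) is a detail the paper glosses over, and it is handled correctly.
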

\begin{proof}
  The proof is essentially the same as the proof of the second half of
  \Lemma{even-cyc}, and is left to the reader.
\end{proof}

Call an arc $\gamma$ in $K$ \emph{$\rmap$-colored} if the original framing
of $\gamma$ disagrees with the framing of $K$, and
\emph{$\Id$-colored} if the original framing of $\gamma$ agrees with
the framing of $K$.
Write $\Psi=\cmap{c}\circ \Phi^{-1}\co K\times \DD^m\to
K_m^{(m+1)}$. Explicitly, the map $\Psi$ is given as follows:
\begin{itemize}
\item If $\gamma_i$ is one of the Pontrjagin-Thom arcs or is a
  boundary-coherent topological boundary matching arc then a
  neighborhood of $\gamma_i$ in $K\times\DD^m$ is mapped to
  $S^m\subset K_m^{(m+1)}$ by the map
  \begin{align*}
    \gamma_i\times\DD^m\ni (x,v)&\mapsto \Pi(v)\in K_m^{(m+1)} &
    &\text{if $\gamma_i$ is $\Id$-colored}\\
    \gamma_i\times\DD^m\ni (x,v)&\mapsto \Pi(\rmap(v)) \in K_m^{(m+1)} &
    &\text{if $\gamma_i$ is $\rmap$-colored}.
  \end{align*}
\item If $\bmat_i$ is boundary-incoherent then the framing of $K$
  and the orientation of $\bmat_i$ induce an identification
  $\nbd(\bmat_i)\cong [0,1]\times\DD^m$. With respect to this
  identification, $\nbd(\bmat_i)$ is mapped to $K_m^{(m+1)}$ by the map
  \begin{align*}
    (t,v)&\mapsto \Xi(t,v) &
    &\text{if $\bmat_i$ is $\Id$-colored}\\
    (t,v)&\mapsto \Xi(t,\rmap(v)) &
    &\text{if $\bmat_i$ is $\rmap$-colored}.
  \end{align*}
\end{itemize}

Let $\Psi'$ be the projection $K\times\DD^m\to
\DD^m\stackrel{\Pi}{\longrightarrow} S^m$.
These maps are summarized in the following diagram:
\begin{equation}\label{eq:nbd-K-maps}
\xymatrix{
\nbd(K)\ar[r]^{\Phi}_\cong\ar@/^2pc/[rr]^{\cmap{c}} & K\times \DD^m\ar[r]^\Psi\ar[dr]_{\Psi'} & K_m^{(m+1)}\\
& & S^m\ar@{^(->}[u]_\iota
}
\end{equation}
It is immediate from the definitions that the top triangle commutes. Our
next goal is to show that the other triangle commutes up to homotopy:
\begin{prop}\label{prop:PsiPsiprime}
  The map $\Psi$ is homotopic \rel $(K\times\bdy \DD^m)\cup
  (\{p_1\}\times\DD^m)$ to $\iota\circ\Psi'$, i.e., the bottom
  triangle of Diagram~(\ref{eq:nbd-K-maps}) commutes up to homotopy
  \rel $(K\times\bdy \DD^m)\cup (\{p_1\}\times\DD^m)$.
\end{prop}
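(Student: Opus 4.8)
\textit{Proof proposal.}

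The plan is to split the homotopy into local pieces along the circle $K$. Note first that on a neighborhood of each $\Id$-colored Pontrjagin--Thom arc $\PT_i$ and each $\Id$-colored boundary-coherent matching arc $\bmat_i$, the formula for $\Psi$ is literally $(x,v)\mapsto\Pi(v)=\iota\circ\Psi'(x,v)$; since $\PT_1$ is $\Id$-colored by the construction of $\Phi$, this already holds on a neighborhood of $\{p_1\}\times\DD^m$, and more generally on the interior of every $\Id$-colored subarc of $K$. So it suffices to homotope $\Psi$ to $\iota\circ\Psi'$ on small neighborhoods of the $\rmap$-colored arcs and of the boundary-incoherent arcs, through a homotopy supported away from $\{p_1\}\times\DD^m$ and from $K\times\bdy\DD^m$; these are then automatically kept fixed.

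Decompose $K$ into its maximal $\rmap$-colored subarcs. The coloring changes sign exactly at the non-agreeing end of each boundary-incoherent arc, and by \Lemma{even-cyc-top} there is an even number of these; hence each maximal $\rmap$-colored subarc $R$ is flanked by exactly two boundary-incoherent arcs, and (since a Pontrjagin--Thom arc separates any two matching arcs) the flanking arcs for different $\rmap$-regions are distinct and can be given disjoint neighborhoods. Fix one such piece $J=\nbd(\bmat_a)\cup R\cup\nbd(\bmat_b)\subset K$. Reading off the explicit formulas for $\Psi$ --- using precisely that matched framings agree at the points $p_{i,j}$ --- one checks that over $\nbd(\bmat_a)$ the map $\Psi$ is $\Xi$, over $R$ it is the constant map $\Pi\circ\rmap$, and over $\nbd(\bmat_b)$ it is $\Xi\circ(\Id\times\rmap)$. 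Collapsing the constant stretch, $\Psi|_{J\times\DD^m}$ is, up to reparametrization, the based loop $\Xi\cdot(\Xi\circ(\Id\times\rmap))$ at $\Pi$ in $\mathrm{Map}\bigl((\DD^m,\bdy\DD^m),(K_m^{(m+1)},*)\bigr)$: the path $\Xi$ runs from $\Pi$ to $\Pi\circ\rmap$, and $\Xi\circ(\Id\times\rmap)$ runs back from $\Pi\circ\rmap$ to $\Pi$.

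The key claim is that this loop is null-homotopic rel its basepoint. To prove it, observe that the definition of $\Xi$ only pins down its restriction to $\bdy([0,1]\times\DD^m)$, that this restriction is invariant under the orientation-preserving involution $\Theta(t,v)=(1-t,\rmap(v))$ of $[0,1]\times\DD^m$, and that $([0,1]\times\DD^m)/\Theta$ is contractible; hence the characteristic map of the $(m+1)$-cell may be chosen $\Theta$-invariant. For such a $\Xi$ one has $\Xi\circ(\Id\times\rmap)=\Xi\circ(\mathrm{flip}\times\Id)$, which is exactly the reverse of the path $\Xi$, so $\Xi\cdot(\Xi\circ(\Id\times\rmap))$ is a path followed by its own reverse and is null-homotopic rel endpoints. (That the truth of the key claim is independent of which admissible $\Xi$ was used is a short separate check, using that $\pi_{m+1}(K_m^{(m+1)})=\ZZ/2$ and that precomposition by the orientation-reversing map $\Id\times\rmap$ acts trivially on it, so the two contributions coming from changing $\Xi$ cancel.) Granting the claim, straighten $\Psi|_{J\times\DD^m}$ to the constant value $\Pi$ rel $(\bdy J)\times\DD^m$ and rel $J\times\bdy\DD^m$, perform this independently over each $\rmap$-region, and extend by the constant homotopy; the result is a homotopy $\Psi\simeq\iota\circ\Psi'$ rel $(K\times\bdy\DD^m)\cup(\{p_1\}\times\DD^m)$.

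The main obstacle, I expect, is the bookkeeping in the second paragraph: checking that the piecewise data defining $\Psi$ glues, across the junction points $p_{i,j}$ and with the correct parametrizations and orientations, into exactly the loop $\Xi\cdot(\Xi\circ(\Id\times\rmap))$ --- this is where the sign assignment, frame assignment, ladybug matching, and the framing conventions of \Subsection{sq2-on-khspace} all have to be reconciled --- together with verifying that the assembled homotopy really is rel $(K\times\bdy\DD^m)\cup(\{p_1\}\times\DD^m)$.
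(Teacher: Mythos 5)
Your overall reduction is the paper's: decompose $\Psi$ along the arcs of $K$ into paths in the based mapping space, observe that it is constant over the Pontrjagin--Thom and boundary-coherent arcs, and cancel the boundary-incoherent contributions in consecutive pairs using \Lemma{even-cyc-top}. The gap is in your key claim: a $\Theta$-invariant admissible $\Xi$ does not exist, and contractibility of $([0,1]\times\DD^m)/\Theta$ is not the relevant criterion. A $\Theta$-invariant map is the same thing as a map on the quotient $Q\defeq([0,1]\times\DD^m)/\Theta$, which is homeomorphic to $\DD^{m+1}$ with boundary sphere the image of $\bdy([0,1]\times\DD^m)$; so an invariant extension of the prescribed boundary data exists if and only if the descended boundary map $\bdy Q\cong S^m\to K_m^{(m+1)}$ is null-homotopic. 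But $\Theta$ identifies the two caps $\{0\}\times\DD^m$ and $\{1\}\times\DD^m$ with each other, so the descended map is $\Pi$ on a single $m$-disk and the basepoint elsewhere: a degree-$\pm 1$ map onto the $m$-skeleton, hence the generator of $\pi_m(K_m^{(m+1)})\cong\Z/2$, which is nonzero. (The unquotiented boundary map extends only because it hits the skeleton with degree $\pm 2$ and $2=0$ in $\pi_m(K_m^{(m+1)})$; ``half'' of it is obstructed. Separately, an invariant map is two-to-one on the interior, so it could never literally be a characteristic map, but the non-existence of any admissible invariant map is the real problem.) Your parenthetical independence-of-$\Xi$ check is correct but therefore moot.

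The failure is essential rather than cosmetic: what you are trying to obtain by symmetry is exactly the second assertion of \Lemma{frame-cancel}, and there the hypothesis $m\geq 3$ and the vanishing of the suspension homomorphism $\Sigma^i\co\pi_2(\RP^2)\to\pi_{2+i}(\Sigma^i\RP^2)$ for $i\geq 2$ do real work: the loop $\ol{\Xi}*[\ol{\Xi}\circ(\Id\times\rmap)]$ is the $(m-1)$-fold suspension of a class carried by the two-cell of $\RP^2$, which is nontrivial before stabilization and dies only after enough suspensions. An argument like yours, which never uses $m\geq 3$ or any stable-range input, would prove the nullhomotopy for all $m$, so it cannot be repaired without importing such input; the simplest fix is to invoke \Lemma{frame-cancel} itself (equivalently, the stated fact about $\Sigma^{m-1}\RP^2$). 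A secondary, fixable point: depending on the orientations of the two boundary-incoherent arcs flanking an $\rmap$-colored region, the composite is not always $\Xi\cdot(\Xi\circ(\Id\times\rmap))$; as in the paper's proof it can be any of the four combinations built from $\Xi$, $\Xi\circ(\mf{t}\times\Id)$, $\Xi\circ(\Id\times\rmap)$ and $\Xi\circ(\mf{t}\times\Id)\circ(\Id\times\rmap)$, and only some of these cancel by the easy path-followed-by-its-reverse argument.
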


The proof of \Proposition{PsiPsiprime} uses a model
computation. Consider the map $\overline{\Xi}\co [0,1]\times S^m\to
K_m^{(m+1)}$.  Concatenation in $[0,1]$ endows $\Hom([0,1]\times S^m,
K_m^{(m+1)})$ with a multiplication, which we denote by $*$. Let
$\mf{t}\from[0,1]\to[0,1]$ be the reflection
$\overline{f}(t,x)=f(1-t,x)$. Using $\mf{t}$, we obtain a map
$\overline{\Xi}\circ (\mf{t}\times \Id)\co [0,1]\times S^m\to
K_m^{(m+1)}$. Finally, using the map $\rmap$ we obtain a
map $\overline{\Xi}\circ (\Id\times \rmap)\co [0,1]\times S^m\to
K_m^{(m+1)}$.

\begin{lemma}\label{lem:frame-cancel}
  Assume that $m\geq 3$. Then both
  $\ol{\Xi}*[\overline{\Xi}\circ(\mf{t}\times \Id)]$ and $\ol{\Xi}*
  [\ol{\Xi}\circ (\Id\times \rmap)]$ are homotopic (\rel boundary) to the map
  $[0,1]\times S^m\to S^m \subset K_m^{(m+1)}$ given by $(t,x)\mapsto
  x$ (i.e., the constant path in $O(m+1)$ with value $\Id$).
\end{lemma}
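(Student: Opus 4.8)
The plan is to read $\ol{\Xi}$ as a based homotopy between the inclusion $\iota\colon S^m\hookrightarrow K_m^{(m+1)}$ of the $m$-skeleton and $\iota\circ\rmap$, so that $*$ becomes concatenation of paths in the based mapping space $\mathrm{Map}_*(S^m,K_m^{(m+1)})$. With this translation the first statement is formal: $\ol{\Xi}\circ(\mf{t}\times\Id)$ is literally the reversed path $(t,x)\mapsto\ol{\Xi}(1-t,x)$, so $\ol{\Xi}*[\ol{\Xi}\circ(\mf{t}\times\Id)]$ is the concatenation of a path with its reverse, which is homotopic rel endpoints to the constant path at $\iota$; and the constant homotopy at $\iota$ is exactly the map $(t,x)\mapsto x$. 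Concretely one takes any homotopy $\phi\colon[0,1]\times[0,1]\to[0,1]$, rel $\{0,1\}$, from the tent map $t\mapsto 1-|2t-1|$ to the constant map $0$, and forms $\ol{\Xi}\circ(\phi\times\Id_{S^m})$. No hypothesis on $m$ enters here.

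For the second statement, observe that $\ol{\Xi}\circ(\Id\times\rmap)$ is also a based homotopy from $\iota\circ\rmap$ to $\iota$ (using $\rmap\circ\rmap=\Id$ and that $\rmap$ fixes the basepoint of $S^m$). By the first statement it is then enough to show that the two based homotopies $\ol{\Xi}\circ(\mf{t}\times\Id)$ and $\ol{\Xi}\circ(\Id\times\rmap)$, both running from $\iota\circ\rmap$ to $\iota$, are homotopic rel endpoints. Set $\theta:=\mf{t}\times\rmap\colon[0,1]\times S^m\to[0,1]\times S^m$, an orientation-preserving involution interchanging $\{0\}\times S^m$ and $\{1\}\times S^m$ via $\rmap$ and preserving the arc $[0,1]\times\{*\}$; one checks directly that $\ol{\Xi}\circ(\mf{t}\times\Id)=\bigl(\ol{\Xi}\circ(\Id\times\rmap)\bigr)\circ\theta$ and that these two maps already agree on $\partial:=(\{0,1\}\times S^m)\cup([0,1]\times\{*\})$. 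Since $K_m^{(m+1)}$ is $(m-1)$-connected and $([0,1]\times S^m)/\partial\simeq S^{m+1}$, a homotopy rel $\partial$ between these two maps exists if and only if a single obstruction class in $H^{m+1}([0,1]\times S^m,\partial;\pi_{m+1}(K_m^{(m+1)}))\cong\pi_{m+1}(K_m^{(m+1)})\cong\Z/2$ vanishes; the last isomorphism is induced by $\iota$ and is an isomorphism exactly because $m\geq 3$, which is immediate from the computation of $\pi_{m+1}(K_m^{(m+1)})$ in \Subsection{sq2-cw-complex}. So the whole statement comes down to showing this $\Z/2$ class is zero.

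To evaluate the obstruction I would lift through the characteristic map $\Xi\colon\DD^{m+1}=[0,1]\times\DD^m\to K_m^{(m+1)}$ of the $(m+1)$-cell. The two maps in question descend from $\Xi\circ(\mf{t}\times\Id)$ and $\Xi\circ(\Id\times\rmap)$ (now viewing $\rmap$ as acting on the $\DD^m$-factor by negating the first coordinate), and $\mf{t}\times\Id$ and $\Id\times\rmap$ are both orientation-reversing involutions of $\DD^{m+1}$, hence isotopic through self-diffeomorphisms of $\DD^{m+1}$. Following the homotopy of $\Xi$ induced by such an isotopy while recording how the collapsed locus $[0,1]\times\partial\DD^m$ sweeps across $\partial\DD^{m+1}$ exhibits the obstruction as the class, in $H_1(\SO(m+1);\Z)\cong\Z/2$, of an explicit loop built from $\rmap$ and the standard frame paths of \Subsection{frames-R3}; evaluating that loop by the same kind of bookkeeping as in the proof of \Lemma{extend-framing}, together with \Lemma{r-of-framing}'s account of how $\rmap$ exchanges standard and non-standard frame paths, gives the vanishing.

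The genuine work is in the last paragraph: the first two steps are formal, while verifying that the $\Z/2$-valued obstruction actually vanishes requires a short but real computation with the reparametrizing isotopy of $\DD^{m+1}$ and the cell structure of $K_m^{(m+1)}$, and this is the only place where $m\geq 3$ is essential --- it is used both for $\iota_*\colon\pi_{m+1}(S^m)\xrightarrow{\sim}\pi_{m+1}(K_m^{(m+1)})$ and for the framed-path/$\SO(m+1)$-path dictionary of \Subsection{frames-R3}. An equivalent but more hands-on route would compute the difference class of the two maps directly from their gluing along $\partial$, rather than via the lift to $\DD^{m+1}$.
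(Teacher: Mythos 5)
Your first paragraph is fine and matches the paper (the paper dismisses the $\ol{\Xi}*[\ol{\Xi}\circ(\mf{t}\times\Id)]$ case as obvious for exactly this reason), and your second paragraph is a correct, if somewhat elaborate, reformulation: the loop $\ol{\Xi}*[\ol{\Xi}\circ(\Id\times\rmap)]$ determines a class in $\pi_{m+1}(K_m^{(m+1)})\cong\Z/2$ (using $m\geq 3$ and the computation in \Subsection{sq2-cw-complex}), and the lemma is equivalent to that class vanishing. But that reformulation is where the content of the lemma begins, not where it ends, and your third paragraph does not actually evaluate the class. Saying that $\mf{t}\times\Id$ and $\Id\times\rmap$ are isotopic orientation-reversing diffeomorphisms of $\DD^{m+1}$, that one should ``record how the collapsed locus sweeps across $\partial\DD^{m+1}$,'' and that bookkeeping ``as in the proof of \Lemma{extend-framing}'' together with \Lemma{r-of-framing} ``gives the vanishing'' is an announcement of the answer, not a derivation of it: the isotopy-induced homotopy is not rel the data that descends to $[0,1]\times S^m$, the translation of the resulting difference class into a loop in $\SO(m+1)$ goes through the Pontrjagin--Thom dictionary where the constant loop corresponds to the \emph{nontrivial} framed class (a shift the paper has to track explicitly in \Proposition{KPhi}), and nothing in your sketch rules out that the same bookkeeping produces $1$ rather than $0$ --- indeed in \Lemma{extend-framing} the analogous computation produces $1$.

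The paper closes exactly this gap with a non-formal topological input that your argument is missing: it observes that $H_m=\ol{\Xi}*[\ol{\Xi}\circ(\Id\times\rmap)]$, viewed in $\pi_1(\Omega^mK_m^{(m+1)})\cong\pi_{m+1}(K_m^{(m+1)})$, is the $(m-1)$-fold suspension of the corresponding model map $H_1\co[0,1]\times S^1\to K_1^{(2)}=\RP^2$, and then invokes the fact that the suspension homomorphism $\Sigma^i\co\pi_2(\RP^2)\to\pi_{i+2}(\Sigma^i\RP^2)$ vanishes for $i\geq 2$ (citing Wu); this is where $m\geq 3$ really enters. To complete your route you would need either this suspension argument, an explicit geometric null-homotopy (in the spirit of the hand-motion verification in \Figure{handmotion}), or a genuinely carried-out computation of your $\Z/2$ obstruction; as written, the decisive step is asserted rather than proved, so the proposal has a real gap.
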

\begin{proof}
  The statement about $\ol{\Xi}*[\overline{\Xi}\circ(\mf{t}\times
  \Id)]$ is obvious.  For the statement about $\ol{\Xi}*
  [\ol{\Xi}\circ (\Id\times \rmap)]$, let $H_m=\ol{\Xi}*
  [\ol{\Xi}\circ (\Id\times \rmap)]\co [0,1]\times S^m\to
  K_m^{(m+1)}$. We can view $H_m$ as an element of $\pi_1(\Omega^m
  K_m^{(m+1)})\cong \pi_{m+1}(K_m^{(m+1)})$.  Moreover, the map $H_m$
  is the $(m-1)$-fold suspension of the map $H_1\co [0,1]\times S^1\to
  K_1^{(2)}=\RR P^2$. But the suspension map
  \[
  \Sigma^i \co \pi_2(\RR P^2)\to \pi_{i+2}(\Sigma^i\RR P^2)
  \]
  is nullhomotopic for $i\geq 2$; see, for instance,~\cite[Proposition
  6.5 and discussion before Proposition 6.11]{Wu-top-proj-plane}. So,
  it follows from our assumption on $m$ that $H_1\in \pi_1(\Omega^m
  K_m^{(m+1)})$ is homotopically trivial.  Keeping in mind that our
  loops are based at the identity map $S^m\to S^m\subset K_m^{(m+1)}$,
  this proves the result.
\end{proof}

\begin{proof}[Proof of \Proposition{PsiPsiprime}]
  As in the proof of \Lemma{frame-cancel}, we can view $\Psi$ as an element
  of $\pi_1(\Omega^m K_m^{(m+1)}, \iota)$, i.e., a loop of maps
  $S^m\to K_m^{(m+1)}$ based at the map $\iota\co S^m\to
  K_m^{(m+1)}$. From its definition, $\Psi$ decomposes as a product of
  paths,
  \[
  \Psi=\Psi_{\gamma_1}*\dots*\Psi_{\gamma_{2k}},
  \]
  one for each arc $\gamma_i$ in $K$. Here, $\Psi_{\gamma_i}$ is an
  element of the fundamental groupoid of $\Omega^m K_m^{(m+1)}$, with
  endpoints in $\{\iota, \iota\circ \rmap\}$. The path $\Psi_{\gamma_i}$
  is:
  \begin{itemize}
  \item The constant path based at either $\iota$ or $\iota\circ \rmap$ if
    $\gamma_i$ is one of the Pontrjagin-Thom arcs or is a
    boundary-coherent topological boundary matching arc.
  \item One of the paths $\Xi$, $\Xi\circ (\Id\times \rmap)$,
    $\Xi\circ(\mf{t}\times\Id)$ or $\Xi\circ(\mf{t}\times\Id)\circ
    (\Id\times \rmap)$ if $\gamma_i$ is a boundary-incoherent
    topological boundary matching arc.
  \end{itemize}
  Contracting the constant paths, $\Psi$ can be expressed as
  \[
  \Psi = \Psi_{\eta_{i_1}}*\Psi_{\eta_{i_2}}*\dots*\Psi_{\eta_{i_A}}
  \]
  where the $\eta_{i_{\al}}$ are boundary-incoherent. By
  \Lemma{even-cyc-top}, $A$ is even. Moreover, $\Psi_{\eta_{i_{\al}}}$
  is either $\Xi$ or $\Xi\circ(\mf{t}\times\Id)\circ (\Id\times
  \rmap)$ if $\al$ is odd, and is either $\Xi\circ(\mf{t}\times\Id)$ or $\Xi\circ
  (\Id\times \rmap)$ if $\al$ is even. So, by \Lemma{frame-cancel},
  the concatenation $\Psi_{\eta_{i_{2\al-1}}}*\Psi_{\eta_{i_{2\al}}}$
  is homotopic to the constant path $\iota$. The result follows.
\end{proof}

The pair $(K,\Phi)$ specifies a framed cobordism class $[K,\Phi]\in
\Omega_1^{\fr}=\ZZ/2$.
\begin{prop}\label{prop:KPhi}
  The element $[K,\Phi]\in\ZZ/2$ is given by the sum of:
  \begin{enumerate}
  \item\label{item:one} $1$.
  \item\label{item:framing} The number of Pontrjagin-Thom arcs in $K$
    with the non-standard framing.
  \item\label{item:arrows} The number of arrows on $K$ which point in
    a given direction.
  \end{enumerate}
\end{prop}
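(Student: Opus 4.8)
The plan is to compute $[K,\Phi]\in\Omega_1^{\fr}=\ZZ/2$ by reducing it to a loop in $\SO(m+1)$ and evaluating that loop arc by arc, using the explicit framings from Step~4 together with \Lemma{r-of-framing} and \Lemma{even-cyc-top}. First I would straighten the corner of $\bdy f_k^{m+2}$ as in \Subsection{sq2-on-khspace}, so that $K$ becomes an unknotted circle (recall $m\geq 3$) in $\RR^{m+1}$ with normal framing $\Phi$; recording at each point of $K$ the element of $\SO(m+1)$ whose last $m$ columns are the framing vectors and whose first column is their cross product yields a loop $L\co K\to\SO(m+1)$, and $[K,\Phi]=[L]+[\text{unknot with its constant framing}]=[L]+1$ in $\ZZ/2$. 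This shift is responsible for summand~\Item{one}, and it remains to identify $[L]\in\pi_1(\SO(m+1))=\ZZ/2$ with the sum of \Item{framing} and \Item{arrows}.

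Next I would set up a nullhomotopic reference loop. Listing the arcs of $K$ cyclically as $\PT_1,\bmat_1,\dots,\PT_k,\bmat_k$ with junctions $p_1,\dots,p_{2k}$, an induction around $K$ shows that each $\Phi(p_i)$ is one of the four standard frames: by construction the framings of the arc--endpoints are standard frames, and the transport defining $\Phi$ carries standard frames to standard frames. Concatenating the \emph{standard} frame paths $\Path{\Phi(p_i)\Phi(p_{i+1})}$ then gives a loop $L_0$, which is nullhomotopic by the cocycle conditions on a coherent system of paths (\Subsection{frames-R3}). Hence $[L]=\sum_i\delta_i$, where $\delta_i=1$ precisely when the actual path traced by $\Phi$ along $\gamma_i$ is the \emph{non}-standard path between its two (standard) endpoints.

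Then I would evaluate each $\delta_i$. Write $g_i\in\{0,1\}$ for the $\rmap$-colour of $\gamma_i$, so that $g_i$ is the parity of the number of boundary-incoherent arcs preceding $\gamma_i$, and $\Phi$ restricted to $\gamma_i$ is the prescribed framing when $g_i=0$ and its composite with $\rmap$ when $g_i=1$. The prescribed framing of a boundary matching arc is a standard path between two standard frames differing only in the first coordinate, while that of a Pontrjagin--Thom arc is such a path which is standard if and only if $f(\Cube_{k,i})=0$. Since $\rmap$ sends a standard frame path between oppositely-oriented frames to a non-standard one (\Lemma{r-of-framing}), this gives $\delta_i=g_i$ on boundary matching arcs and $\delta_i=f(\Cube_{k,i})\oplus g_i$ on Pontrjagin--Thom arcs, so $[L]=\sum_{\text{PT }\gamma_i}f(\Cube_{k,i})+\sum_i g_i$. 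The first sum is summand~\Item{framing}. For the second, I would use that the boundary-incoherent arcs are all boundary matching arcs, hence occupy the even positions of the cyclic list, while by \Lemma{even-cyc-top} their number is even; a direct mod~$2$ computation, of the same flavour as the closing computation in the proof of \Lemma{extend-framing}, then rewrites $\sum_i g_i$ as the number of boundary-incoherent arcs whose induced orientation agrees with a chosen orientation of $K$, which is summand~\Item{arrows}. Adding up yields the stated formula.

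The step I expect to be the main obstacle is the bookkeeping in the first and last paragraphs: correctly pinning down the $+1$ (the fact that the unknotted circle with its constant/coordinate framing represents the generator of $\Omega_1^{\fr}$, so that passing from $[K,\Phi]$ to $[L]$ introduces exactly this shift), and then tracking the $\rmap$-colouring --- which a priori propagates globally around $K$ --- through the per-arc contributions and seeing that it collapses, via \Lemma{even-cyc-top} and the alternation of the two kinds of arcs, to the parity counted by the arrows. Once the sign conventions for the corner-straightening and for $\rmap$ are fixed consistently, the remaining verification is routine but delicate $\ZZ/2$-accounting.
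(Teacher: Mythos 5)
Your overall strategy is the same as the paper's (complete the normal framing of $K$ to a loop in $\SO(m+1)$, compare with the nullhomotopic concatenation of standard frame paths coming from the cocycle conditions, evaluate arc by arc, with the extra $1$ coming from the fact that the constant framing on the unknot generates $\Omega_1^{\fr}$), but the colour bookkeeping in your third paragraph has a genuine error. The $\rmap$-colour of an arc is \emph{not} the parity of the number of boundary-incoherent arcs preceding it: that is correct for Pontrjagin--Thom arcs and boundary-coherent arcs, but for a boundary-incoherent arc the colour also depends on its orientation. Transporting the framing of $K$ into such an arc, the discrepancy with the arc's own framing is unchanged if one enters at the endpoint where the arc's framing agrees with the inherited framing of the junction, and is $\rmap$ if one enters at the other endpoint; so an incoherent arc is coloured like the arcs before it exactly when its arrow points in the direction of traversal, and like the arcs after it otherwise. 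This orientation-dependence is precisely the mechanism that produces summand \Item{arrows}. With your orientation-blind $g_i$ the count collapses: the incoherent arcs sit at even positions of the alternating cyclic list $\PT_1,\bmat_1,\PT_2,\dots$, so the set of arcs with $g_i=1$ (a union of runs of the form ``positions $q_\al+1,\dots,q_{\al+1}$'' between consecutive incoherent arcs) has even cardinality and $\sum_i g_i\equiv 0\pmod 2$ identically. Hence your claimed identity $\sum_i g_i\equiv\#\{\text{arrows in a given direction}\}$ fails whenever that number is odd (e.g.\ two incoherent arcs whose arrows point opposite ways around $K$, a case that does occur), and your derivation would output $1+\#\{\text{non-standard Pontrjagin--Thom arcs}\}$, losing item \Item{arrows} entirely. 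The repair is exactly the case analysis the paper carries out with the correct colours: consecutive incoherent arcs $\bmat_{i_\al},\bmat_{i_{\al+1}}$ have exactly one $\rmap$-coloured member when oriented the same way and zero or two when oriented oppositely, the odd-length gaps between them alternate colour, and together with \Lemma{even-cyc-top} this gives $\#\{\rmap\text{-coloured arcs}\}\equiv\#\{\text{arrows in a given direction}\}\pmod 2$.

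A second, smaller gap: you assert that the prescribed framing of a boundary-matching arc, viewed in $\bdy f^{m+2}$, is a standard frame path. This is true but not by definition: when the corresponding $(m+1)$-cell occurs with negative sign, the pushforward under Formula~\eqref{eq:fk-fj} composes the framing with $\smap$, and one needs the second half of \Lemma{r-of-framing} (that $\smap$ carries the relevant standard frame paths to standard frame paths) to conclude standardness; you invoke only the $\rmap$ half of that lemma. Once these two points are fixed, the rest of your outline (the $+1$ normalization, the reference loop $L_0$, and the per-arc evaluation $\delta_i=c_i$ on matching arcs and $\delta_i=f(\Cube_{k,i})\oplus c_i$ on Pontrjagin--Thom arcs, with $c_i$ the true colour) does line up with the paper's argument.
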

\begin{proof}
  First, exchanging the standard and non-standard framings on an arc
  changes the overall framing of $K$ by $1$. So, it suffices to prove
  the proposition in the case that all of the Pontrjagin-Thom arcs in
  $K$ have the standard framing.  

  Second, the framing on each boundary-matching arc is standard if the
  corresponding $(m+1)$-cell occurs positively in $\bdy f^{m+2}$, and
  differs from the standard framing by the map $\smap$
  of \Subsection{frames-R3} if the $(m+1)$-cell occurs negatively in
  $\bdy f^{m+2}$. In the notation
  of \Subsection{frames-R3}, the framings of the boundary matching
  arcs are among $\bigl\{\ol{\topright\bottomright},
  \ol{\topleft\bottomleft}, \ol{\bottomright\topright},
  \ol{\bottomleft\topleft}\bigr\}$. So, by \Lemma{r-of-framing}, $\smap$
  takes the standard frame path on a boundary-matching arc to a
  standard frame path. In sum, each of the arcs $\bmat_i$ is framed by
  a standard frame path.

  So, by \Lemma{r-of-framing}, the framing of $K$ at each arc
  $\gamma_i$ is standard if $\gamma_i$ is $\Id$-colored, and
  non-standard if $\gamma_i$ is $\rmap$-colored. Thus, it suffices to show
  that the number of $\rmap$-colored arcs agrees modulo $2$ with the
  number of arrows on $K$ which point in a given direction. 

  Let $\bmat_{i_1},\dots, \bmat_{i_A}$ be the boundary-incoherent
  boundary matching arcs in $K$. Then:
  \begin{itemize}
  \item There are an odd number of arcs strictly between $\bmat_{i_{\al}}$ and
    $\bmat_{i_{\al+1}}$. Moreover:
    \begin{itemize}
    \item These arcs are all $\rmap$-colored if $\al$ is odd.
    \item These arcs are all $\Id$-colored if $\al$ is even.
    \end{itemize}
  \item If $\bmat_{i_\al}$ and $\bmat_{i_{\al+1}}$ are oriented in the same direction
    then exactly one of $\bmat_{i_\al}$ and $\bmat_{i_{\al+1}}$ is $\rmap$-colored.
  \item If $\bmat_{i_\al}$ and $\bmat_{i_{\al+1}}$ are oriented in
    opposite directions then either both of $\bmat_{i_\al}$ and
    $\bmat_{i_{\al+1}}$ are $\rmap$-colored or both of $\bmat_{i_\al}$ and
    $\bmat_{i_{\al+1}}$ are $\Id$-colored.
  \end{itemize}
  It follows that there are an even (respectively odd) number of
  $\rmap$-colored arcs in the interval $[\bmat_{i_{2\al-1}},\bmat_{i_{2\al}}]$
  if $\bmat_{i_{2\al-1}}$ and $\bmat_{i_{2\al}}$ are oriented in the same
  direction (respectively opposite directions); and all of the arcs in
  the interval $(\bmat_{i_{2\al}},\bmat_{i_{2\al+1}})$ are
  $\Id$-colored. So, the number of $\rmap$-colored arcs agrees with the
  number of arcs which point in a given direction.

  Finally, the contribution $1$ comes from the fact that the constant
  loop in $\SO(m)$ corresponds to the nontrivial element of $\Omega_1^{\fr}$.
\end{proof}

\begin{proof}[Proof of \Theorem{sq2-agrees}]
  Let $\Phi_K$ and $\Psi_K$ be the maps associated above to each
  component $K$ of $\bigcup_i \PT_i\cup \bigcup_{j,\jmath}
  \bmat_{j,\jmath}$. Then $\Psi_K\circ \Phi_K$ induces an element
  $[\Psi_K\circ \Phi_K]$ of $\pi_{m+1}(K_m^{m+1})\cong\ZZ/2$ (by collapsing
  everything outside a neighborhood of $K$ to the basepoint). Let $\ob{z}$ be
  the generator of the Khovanov complex corresponding to the cell
  $f^{m+2}$ and $\ob{c}$ the element of the Khovanov chain group
  corresponding to the cocycle $c$.  Then it suffices to show that the
  sum
  \[
  \sum_K [\Psi_K\circ \Phi_K]
  \]
  agrees with the expression
  \begin{equation}\label{eq:sq2-term}
  \biggl(\#|\Graph[\ob{c}]{\ob{z}}| +
  \framenumber{\Graph[\ob{c}]{\ob{z}}}+\graphnumber{\Graph[\ob{c}]{\ob{z}}}\biggr)\ob{z},
  \end{equation}
  from Formula~\eqref{eq:sq2}. 

  By \Proposition{PsiPsiprime}, 
  \[
  \xymatrix@R=0ex@C=1ex{
    [K,\Phi] \ar@{|->}[rrrr] &&&&[\Psi_K\circ\Phi_K]\\
\rotatebox[origin=c]{270}{$\in$}&&&&\rotatebox[origin=c]{270}{$\in$}\\
    \Omega_1^{\fr} \ar@{=}[r]& \pi_{m+1}(S^m) \ar[rrr]_-{\cong}^-{\iota_*} &&& \pi_{m+1}(K_m^{(m+1)}).
  }
  \]
  The
  element $[K,\Phi_K]\in\Z/2$ is computed in \Proposition{KPhi}, and it remains
  to match the terms in that proposition with the terms in
  Formula~\eqref{eq:sq2-term}.

  By construction, the graph $\Graph[\ob{c}]{\ob{z}}$ is exactly
  $\bigcup_i \PT_i\cup \bigcup_{j,\jmath} \bmat_{j,\jmath}$, and the
  orientations of the oriented edges of $\Graph[\ob{c}]{\ob{z}}$ match
  up with the orientations of the boundary-incoherent $\bmat_i$.  So,
  the first term in Formula~\eqref{eq:sq2-term} corresponds to
  part~(\ref{item:one}) of \Proposition{KPhi}, and the third term in
  Formula~\eqref{eq:sq2-term} corresponds to part~(\ref{item:arrows})
  of \Proposition{KPhi}. Finally, since the framings of the Pontrjagin-Thom
  arcs differ from the standard frame paths by the standard frame
  assignment $f$, the second term of Formula~\eqref{eq:sq2-term}
  corresponds to part~(\ref{item:framing}) of \Proposition{KPhi}. This
  completes the proof.
\end{proof}

\section{The Khovanov homotopy type of width three knots}\label{sec:width-two}
It is immediate from Whitehead's theorem that if $\widetilde{H}_i(X)$
is trivial for all $i\neq m$ then $X$ is a Moore space; in particular,
the homotopy type of $X$ is determined by the homology of $X$ in this
case. This result can be extended to spaces with nontrivial homology
in several gradings, if one also keeps track of the action of the
Steenrod algebra. To determine the Khovanov homotopy types of links up
to $11$ crossings we will use such an extension due to
Whitehead~\cite{Whitehead-top-exact} and
Chang~\cite{Chang-top-homotopy}, which we review here. (For further
discussion along these lines, as well as the next larger case,
see~\cite[Section 11]{Baues-top-handbook}.)

\begin{proposition}\label{prop:characterise-quiver}
  Consider quivers of the form
  \[
  \xymatrix{
    A\ar[r]_{f}\ar@/^{1.5pc}/[rr]^{s} & B \ar[r]_{g}& C
  }
  \]
  where $A$, $B$ and $C$ are $\F_2$-vector spaces, and $gf=0$. Such a
  quiver uniquely decomposes as a direct sum of the following quivers:
  \[ 
  \xymatrix@C=1ex{
    \text{(S-1)}& \F_2&& 0 && 0&&
    \text{(S-2)}& 0 && \F_2 && 0&&
    \text{(S-3)}& 0 && 0 && \F_2\\
    \text{(P-1)}& \F_2\ar[rr]_-{\Id} && \F_2 && 0&&
    \text{(P-2)}& 0 && \F_2 \ar[rr]_-{\Id}&& \F_2&&
    \text{(X-1)}& \F_2\ar@/^{1.5pc}/[rrrr]^{\Id} && 0&& \F_2\\ 
    \text{(X-2)}& \F_2\ar[rr]_-{\Id}\ar@/^{1.5pc}/[rrrr]^{\Id} && \F_2 && \F_2&&
    \text{(X-3)}& \F_2\ar@/^{1.5pc}/[rrrr]^{\Id} && \F_2 \ar[rr]_-{\Id}&& \F_2&&
    \text{(X-4)}& \F_2\ar[rr]_-{\left(\begin{smallmatrix}\Id\\0\end{smallmatrix}\right)}\ar@/^{1.5pc}/[rrrr]^{\Id} && \F_2\oplus\F_2 \ar[rr]_-{(0,\Id)}&& \F_2.
  }
  \]
\end{proposition}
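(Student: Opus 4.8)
The plan is to view a quiver of the stated form as a module over a finite-dimensional algebra and to decompose it explicitly. Let $\Lambda$ be the path algebra over $\F_2$ of the quiver with vertices $1,2,3$ and arrows $\hat{f}\co 1\to 2$, $\hat{g}\co 2\to 3$, $\hat{s}\co 1\to 3$, modulo the two-sided ideal $(\hat{g}\hat{f})$; then a quiver as in \Proposition{characterise-quiver} is precisely a finite-dimensional left $\Lambda$-module, and a direct sum decomposition of the quiver is precisely one of the module. Since $\Lambda$ is finite-dimensional, the category of finite-dimensional $\Lambda$-modules is Krull--Schmidt, so a decomposition into indecomposables exists and is unique up to reordering and isomorphism. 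Hence it suffices to prove: (i) each of the nine displayed quivers is indecomposable; and (ii) up to isomorphism these are the only indecomposable $\Lambda$-modules.

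For (i), I would compute endomorphism rings. For (S-1), (S-2), (S-3), (P-1), (P-2), (X-1), (X-2) and (X-3) the endomorphism ring is visibly $\F_2$. For (X-4), a chase shows that an endomorphism $(\phi_A,\phi_B,\phi_C)$ must satisfy $\phi_A=\phi_C$ (from $s=\Id$) and $\phi_B=\left(\begin{smallmatrix}\phi_A & \ast\\ 0 & \phi_A\end{smallmatrix}\right)$ (from the shapes of $f$ and $g$), so $\operatorname{End}(\text{X-4})\cong \F_2[\ep]/(\ep^2)$. All nine rings are local, so all nine quivers are indecomposable.

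For (ii) I would use that $\Lambda$ is a string (special biserial) algebra: every vertex has at most two arrows in and at most two out, and $\hat{g}\hat{f}$ is the only relation, so the notions of string and band are defined. There are no bands: the only reduced cyclic walk on the underlying triangle, up to rotation and inversion, traverses the subpath $\hat{g}\hat{f}$ or its formal inverse $\hat{f}^{-1}\hat{g}^{-1}$, hence is not a valid string. Therefore $\Lambda$ has finite representation type and every indecomposable is a string module, so it remains to enumerate the reduced walks avoiding $\hat{g}\hat{f}$ and $\hat{f}^{-1}\hat{g}^{-1}$. These are: the three trivial walks (yielding (S-1), (S-2), (S-3)); the three length-one walks $\hat{f}$, $\hat{g}$, $\hat{s}$ (yielding (P-1), (P-2), (X-1)); the two length-two walks $\hat{s}\hat{f}^{-1}$ and $\hat{g}^{-1}\hat{s}$ (yielding (X-2), (X-3)); and the single length-three walk $\hat{g}^{-1}\hat{s}\hat{f}^{-1}$ (yielding (X-4)). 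Prepending or appending any letter to the last walk produces either a backtrack $\gamma\gamma^{-1}$ or a copy of $\hat{g}\hat{f}$/$\hat{f}^{-1}\hat{g}^{-1}$, so there are no longer strings and the list of nine is complete. (The ground field plays no role here; the same list works over any field.)

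I expect step (ii) to carry essentially all the content, and within it the real point is ruling out any further indecomposable — equivalently, that the walk enumeration terminates at length three. If one prefers not to invoke the structure theory of special biserial algebras, the same conclusion can be reached by an elementary, if fiddly, argument: peel off copies of (S-1), (S-2), (S-3) by choosing complements to $\ker f\cap\ker s\subseteq A$, to a chosen complement of $\image f$ inside $\ker g\subseteq B$, and to $\image g+\image s\subseteq C$; then, in the reduced situation where $f\oplus s$ is injective, $\ker g=\image f$, and $C=\image g+\image s$, put $f$, then $g$, then $s$ into a simultaneous block normal form by changes of basis in $A$, $B$, $C$, reading off the summands (P-1), (P-2), (X-1)--(X-4). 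The delicate part of this alternative is precisely the bookkeeping that the string-module count performs automatically: ensuring the normal form isolates the four-dimensional piece (X-4) and that no other block survives.
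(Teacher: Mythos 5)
Your proposal is correct, but it takes a genuinely different route from the paper. You recast the quiver as a finite-dimensional module over the path algebra $\Lambda=\F_2[\hat f,\hat g,\hat s]/(\hat g\hat f)$ (in the quiver sense), get existence and uniqueness of the decomposition from Krull--Schmidt, verify indecomposability of the nine listed pieces by checking that their endomorphism rings are local (your computation $\operatorname{End}(\text{X-4})\cong\F_2[\ep]/(\ep^2)$ is right), and then appeal to the Butler--Ringel/Wald--Waschb\"usch classification for string algebras: $\Lambda$ is indeed special biserial with a single monomial relation, your check that the triangle admits no bands is correct, and your enumeration of strings (three trivial, $\hat f,\hat g,\hat s$, then $\hat s\hat f^{-1}$, $\hat g^{-1}\hat s$, and finally $\hat g^{-1}\hat s\hat f^{-1}$, with any extension forced into a backtrack or the relation) does terminate exactly where you say, matching the nine quivers. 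The paper instead proves the statement by hand: uniqueness is obtained by exhibiting nine numerical invariants (dimensions, ranks of $f$ and $g$, and the four ranks involving $s$, $\restrict{s}{\ker f}$, and $\image g$) whose relation to the multiplicities is given by an explicitly inverted $9\times 9$ matrix, and existence by an elementary graph-theoretic change-of-basis algorithm --- essentially the ``fiddly'' alternative you sketch at the end. What your approach buys is brevity and generality (the argument is field-independent and invokes standard structure theory); what the paper's approach buys is self-containedness and, more importantly, the explicit inversion formula \EquationCont{coeff-of-decomposition}, which is reused later (in \Definition{st} and the proof of \Corollary{width-3-determined}) to read off the multiplicities $x_i$ from computable ranks of $\Sq^1$ and $\Sq^2$; your argument establishes the proposition but would need a supplementary computation to recover that formula.
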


\begin{proof}
  We start with uniqueness. In such a decomposition, let $s_i$ be the
  number of (S-i) summands, $p_i$ be the number of (P-i) summands, and
  $x_i$ be the number of (X-i) summands. Consider the following nine
  pieces of data:
  \begin{itemize}
  \item The dimensions of the $\F_2$-vector spaces $A$, $B$ and $C$,
    say $d_1,d_2,d_3$, respectively;
  \item The ranks of the maps $f$ and $g$, say $r_f,r_g$, respectively;
  \item The dimensions of the $\F_2$-vector spaces 
    $\image(s)$,
    $\image(\restrict{s}{\ker(f)})$,
    $\image(g)\cap\image(s)$ and
    $\image(g)\cap\image(\restrict{s}{\ker(f)})$, say
    $r_1,r_2,r_3,r_4$, respectively.
  \end{itemize}
  We have
  \begin{align}
    \begin{pmatrix}
      d_1\\
      d_2\\
      d_3\\
      r_f\\
      r_g\\
      r_1\\
      r_2\\
      r_3\\
      r_4
    \end{pmatrix}
    &=
    \begin{pmatrix}
     1&.&.&1&.&1&1&1&1\\
     .&1&.&1&1&.&1&1&2\\
     .&.&1&.&1&1&1&1&1\\
     .&.&.&1&.&.&1&.&1\\
     .&.&.&.&1&.&.&1&1\\
     .&.&.&.&.&1&1&1&1\\
     .&.&.&.&.&1&.&1&.\\
     .&.&.&.&.&.&.&1&1\\
     .&.&.&.&.&.&.&1&.
    \end{pmatrix}
    \begin{pmatrix}
      s_1\\
      s_2\\
      s_3\\
      p_1\\
      p_2\\
      x_1\\
      x_2\\
      x_3\\
      x_4
    \end{pmatrix}
    \notag\\
    \shortintertext{and therefore, the numbers $s_i$, $p_i$ and $x_i$ are determined as
      follows:}
    \begin{pmatrix}
      s_1\\
      s_2\\
      s_3\\
      p_1\\
      p_2\\
      x_1\\
      x_2\\
      x_3\\
      x_4
    \end{pmatrix}
    &=
    \begin{pmatrix*}[r]
  1& .& .&-1& .& .&-1& .& .\\
  .& 1& .&-1&-1& .& .& .& .\\
  .& .& 1& .&-1&-1& .& 1& .\\
  .& .& .& 1& .&-1& 1& .& .\\
  .& .& .& .& 1& .& .&-1& .\\
  .& .& .& .& .& .& 1& .&-1\\
  .& .& .& .& .& 1&-1&-1& 1\\
  .& .& .& .& .& .& .& .& 1\\
  .& .& .& .& .& .& .& 1&-1\\
    \end{pmatrix*}
    \begin{pmatrix}
      d_1\\
      d_2\\
      d_3\\
      r_f\\
      r_g\\
      r_1\\
      r_2\\
      r_3\\
      r_4
    \end{pmatrix}
    \label{eq:coeff-of-decomposition}.
  \end{align}

  For existence of such a decomposition, we carry out a standard
  change-of-basis argument. Choose generators for $A$, $B$ and $C$,
  and construct the following graph. There are three types of
  vertices, $A$-vertices, $B$-vertices and $C$-vertices, corresponding
  to generators of $A$, $B$ and $C$ respectively. There are three
  types of edges, $f$-edges, $g$-edges and $s$-edges, corresponding to
  the maps $f$, $g$ and $s$ as follows: for $a$ an $A$-vertex and $b$
  a $B$-vertex, if $b$ appears in $f(a)$ then there is an $f$-edge
  joining $a$ and $b$; the $g$-edges and $s$-edges are defined
  similarly.

  We will do a change of basis, which will change the graph, so that
  in the final graph, each vertex is incident to at most one edge of
  each type. This will produce the required decomposition of the
  quiver.

  We carry out the change of basis in the following sequence of
  steps. Each step accomplishes a specific simplification of the
  graph; it can be checked that the later steps do not undo the
  earlier simplifications.

  \begin{enumerate}
  \item\label{step:simplify-1} We ensure that no two $f$-edges share a
    common vertex. Fix an
    $f$-edge joining an $A$-vertex $a$ to a $B$-vertex $b$. Let
    $\{a_i\}$ be the other $A$-vertices that are $f$-adjacent to $b$
    and $\{b_j\}$ be the other $B$-vertices that are $f$-adjacent to
    $a$. Then change basis by replacing each $a_i$ by $a_i+a$, and by
    replacing $b$ with $b+\sum_j b_j$.
  \item By the same procedure as Step~(\ref{step:simplify-1}), we ensure
    that no two $g$-edges share a common
    vertex. 

    Since $gf=0$, this ensures that no $B$-vertex is adjacent
    to both an $f$-edge and a $g$-edge.  Call an $A$-vertex an
    $A_1$-vertex (\respectively $A_2$-vertex) if it is adjacent
    (\respectively non-adjacent) to an $f$-edge; similarly, call a
    $C$-vertex a $C_1$-vertex (\respectively $C_2$-vertex) if it is
    adjacent (\respectively non-adjacent) to a $g$-vertex.
  \item Next, we isolate the $s$-edges that connect $A_2$-vertices to
    $C_2$-vertices. Fix an $s$-edge joining an $A_2$-vertex $a$ to a
    $C_2$-vertex $c$. If $\{a_i\}$ (\respectively $\{c_j\}$) are the
    other $A$-vertices (\respectively $C$-vertices) that are
    $s$-adjacent to $c$ (\respectively $a$), then change basis by
    replacing each $a_i$ by $a_i+a$ and by replacing $c$ with
    $c+\sum_j c_j$.
  \item The next step is to isolate the $s$-edges that connect
    $A_1$-vertices to $C_2$-vertices. Once again, fix an $s$-edge
    joining an $A_1$-vertex $a$ to a $C_2$-vertex $c$. Let $\{a_i\}$
    (\respectively $\{c_j\}$) be the other $A$-vertices
    (\respectively $C$-vertices) that are $s$-adjacent to $c$
    (\respectively $a$). Let $b_i$ be the $B$-vertex that is
    $f$-adjacent to $a_i$ (observe, each $a_i$ is an $A_1$-vertex),
    and let $b$ be the $B$-vertex that is $f$-adjacent to $a$. Then
    change basis by replacing each $a_i$ by $a_i+a$, by replacing each
    $b_i$ by $b_i+b$ and by replacing $c$ with $c+\sum_j c_j$.
  \item Similarly, we can isolate the $s$-edges that connect
    $A_2$-vertices to $C_1$-vertices. As before, fix an $s$-edge
    joining an $A_2$-vertex $a$ to a $C_1$-vertex $c$. Let $\{a_i\}$
    (\respectively $\{c_j\}$) be the other $A$-vertices (\respectively
    $C$-vertices) that are $s$-adjacent to $c$ (\respectively
    $a$). Let $b_j$ be the $B$-vertex that is $g$-adjacent to $c_j$
    and let $b$ be the $B$-vertex that is $g$-adjacent to $c$. Then
    change basis by replacing each $a_i$ by $a_i+a$, by replacing $b$
    with $b+\sum_j b_j$ and by replacing $c$ with $c+\sum_j c_j$.
  \item Finally, we have to isolate the $s$-edges that connect
    $A_1$-vertices to $C_1$-vertices. This can be accomplished by a
    combination of the previous two steps. \qedhere
  \end{enumerate}
\end{proof}

We are interested in stable spaces $X$ satisfying the following
conditions:
\begin{itemize}
\item The only torsion in $H^*(X;\Z)$ is $2$-torsion, and
\item $\wt{H}^i(X;\F_2)=0$ if $i\neq 0,1,2$.
\end{itemize}
Then the quiver
\[
\xymatrix{
  \wt{H}^0(X;\F_2)\ar[r]_{\Sq^1}\ar@/^2pc/[rr]^{\Sq^2} & \wt{H}^{1}(X;\F_2) \ar[r]_{\Sq^1}& \wt{H}^{2}(X;\F_2)
}
\]
is of the form described in \Proposition{characterise-quiver}. In
Examples~\ref{exam:first-subthin}--\ref{exam:last-subthin}, we will
describe nine such spaces whose associated quivers are the nine
irreducible ones of \Proposition{characterise-quiver}.

\begin{example}\label{exam:first-subthin}
The associated quivers of $S^0$, $S^1$ and $S^2$ are (S-1), (S-2) and
(S-3), respectively. The associated quivers of $\Sigma^{-1}\RP^2$ and
$\RP^2$ are (P-1) and (P-2), respectively.
\end{example}

\begin{example}
  The space $\CP^2$ has cohomology
  \[
  \xymatrix{
    \wt{H}^4(\CP^2;\ZZ) & \ZZ &\qquad & \wt{H}^4(\CP^2;\F_2) & \F_2\\
    \wt{H}^3(\CP^2;\ZZ) & 0 & & \wt{H}^3(\CP^2;\F_2) & 0\\
    \wt{H}^2(\CP^2;\ZZ) & \ZZ & & \wt{H}^2(\CP^2;\F_2) & \F_2\ar@/_2pc/[uu]_{\Sq^2}.
  }    
  \]
  (The fact that $\Sq^2$ has this form follows from the fact that
  for $x\in H^n$, $\Sq^n(x)=x\cup x$.) Therefore, the stable space
  $X_1\defeq\Sigma^{-2}\CP^2$ has (X-1) as its associated quiver.
\end{example}

\begin{example}
  The space $\RP^5/\RP^2$ has cohomology
  \[
  \xymatrix{
    \wt{H}^5(\RP^5/\RP^2;\ZZ) & \ZZ &\qquad & \wt{H}^5(\RP^5/\RP^2;\F_2) & \F_2\\
    \wt{H}^4(\RP^5/\RP^2;\ZZ) & \F_2 & & \wt{H}^4(\RP^5/\RP^2;\F_2) & \F_2\\
    \wt{H}^3(\RP^5/\RP^2;\ZZ) & 0 & & \wt{H}^3(\RP^5/\RP^2;\F_2) & \F_2\ar@/_2pc/[uu]_{\Sq^2}\ar[u]^{\Sq^1}.
  }  
  \]
  To see that $\Sq^2$ has the stated form, consider the inclusion map
  $\RP^5/\RP^2\to \RP^6/\RP^2$. The map $\Sq^3\co H^3(\RP^6/\RP^2)\to
  H^6(\RP^6/\RP^2)$ is an isomorphism (since it is just the cup
  square). By the Adem relations, $\Sq^3=\Sq^1\Sq^2$, so $\Sq^2\co
  H^3(\RP^6/\RP^2)\to H^5(\RP^6/\RP^2)$ is nontrivial. So, the
  corresponding statement for $\RP^5/\RP^2$ follows from
  naturality. Therefore, the stable space
  $X_2\defeq\Sigma^{-3}(\RP^5/\RP^2)$ has (X-2) as its associated quiver.
\end{example}

\begin{example}
  The space $\RP^4/\RP^1$ has cohomology
  \[
  \xymatrix{
    \wt{H}^4(\RP^4/\RP^1;\ZZ) & \F_2 &\qquad & \wt{H}^4(\RP^4/\RP^1;\F_2) & \F_2\\
    \wt{H}^3(\RP^4/\RP^1;\ZZ) & 0 & & \wt{H}^3(\RP^4/\RP^1;\F_2) & \F_2\ar[u]^{\Sq^1}\\
    \wt{H}^2(\RP^4/\RP^1;\ZZ) & \ZZ & & \wt{H}^2(\RP^4/\RP^1;\F_2) & \F_2\ar@/_2pc/[uu]_{\Sq^2}.
  }
  \]
  (The answer for $\Sq^2$ again follows from the fact that it is the
  cup square.) Therefore, the stable space
  $X_3\defeq\Sigma^{-2}(\RP^4/\RP^1)$ has (X-3) as its associated quiver.
\end{example}

\begin{example}\label{exam:last-subthin}
  The space $\RP^2\wedge\RP^2$ has cohomology
  \[
  \xymatrix{
    \wt{H}^4(\RP^2\wedge\RP^2;\ZZ) & \F_2 &\qquad & \wt{H}^4(\RP^2\wedge\RP^2;\F_2) & \F_2\ar@{<-}[d]!R(.5)^{\Sq^1}\ar@{<-}[d]!L(.5)_{\Sq^1}\\
    \wt{H}^3(\RP^2\wedge\RP^2;\ZZ) & \F_2 & & \wt{H}^3(\RP^2\wedge\RP^2;\F_2) & \F_2\oplus\F_2\\
    \wt{H}^2(\RP^2\wedge\RP^2;\ZZ) & 0 & & \wt{H}^2(\RP^2\wedge\RP^2;\F_2) & \F_2\ar[u]!R(.5)_{\Sq^1}\ar[u]!L(.5)^{\Sq^1}\ar@/_3pc/[uu]_{\Sq^2}.
  }  
  \]
  (The answer for $\Sq^2$ follows from the product formula:
  $\Sq^2(a\wedge
  b)=a\wedge\Sq^2(b)+\Sq^1(a)\wedge\Sq^1(b)+\Sq^2(a)\wedge b$.)
  Therefore, the quiver associated to the stable space
  $X_4\defeq\Sigma^{-2}(\RP^2\wedge\RP^2)$ is isomorphic to (X-4).
\end{example}

The following is a classification theorem from \cite[Theorems 11.2 and
11.7]{Baues-top-handbook}.
\begin{proposition}\label{prop:space-classify}
  Let $X$ be a simply connected CW complex such that:
  \begin{itemize}
  \item The only torsion in the cohomology of $X$ is $2$-torsion.
  \item There exists $m$ sufficiently large so that the reduced
    cohomology $\widetilde{H}^i(X;\F_2)$ is trivial for $i\neq m,
    m+1,m+2$.
  \end{itemize}
  Then the homotopy type of $X$ is determined by the isomorphism class of the quiver
  \[
    \xymatrix{
      H^m(X;\F_2)\ar[r]_{\Sq^1}\ar@/^2pc/[rr]^{\Sq^2} & H^{m+1}(X;\F_2) \ar[r]_{\Sq^1}& H^{m+2}(X;\F_2)
    }
  \]
  as follows: Decompose the quiver as in
  \Proposition{characterise-quiver}; let $s_i$ be the number of (S-i)
  summands, $1\leq i\leq 3$; let $p_i$ be the number of (P-i)
  summands, $1\leq i\leq 2$; and let $x_i$ be the number of (X-i)
  summands, $1\leq i\leq 4$. Then $X$ is homotopy equivalent to 
  \[
  Y\defeq  (\bigvee_{i=1}^3\bigvee_{j=1}^{s_i}S^{m+i-1})\vee
  (\bigvee_{i=1}^2\bigvee_{j=1}^{p_i}\Sigma^{m+i-2}\RP^2)\vee(\bigvee_{i=1}^4\bigvee_{j=1}^{x_i}\Sigma^mX_i).
  \]
\end{proposition}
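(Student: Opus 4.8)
The plan is to deduce this from the Whitehead--Chang classification of stable three-cell complexes, in the form packaged in~\cite[Theorems 11.2 and 11.7]{Baues-top-handbook} (see also~\cite{Whitehead-top-exact,Chang-top-homotopy}), combined with the quiver bookkeeping of \Proposition{characterise-quiver} and the explicit computations of \Example{first-subthin}--\Example{last-subthin}. First I would observe that, since $m$ is sufficiently large, the hypotheses force $X$ to be (up to homotopy) an $(m-1)$-connected complex of dimension at most $m+2$: the vanishing of $\wt{H}^i(X;\F_2)$ outside $\{m,m+1,m+2\}$ together with the $2$-torsion hypothesis forces $\wt{H}_i(X;\Z)=0$ outside that range by universal coefficients. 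Such an $X$ is a suspension (Freudenthal) once $m\geq 4$, so we are in the stable range, and the only homotopy-theoretic input is $\pi_1^s=\pi_2^s=\Z/2$. The classification theorem then provides two facts: every such $X$ splits, uniquely up to homotopy and reordering of wedge summands, as a finite wedge of indecomposable complexes of the same type; and the homotopy type of $X$ is detected by an algebraic invariant which, in the $2$-primary situation at hand, is exactly the quiver $\wt{H}^m(X;\F_2)\xrightarrow{\Sq^1}\wt{H}^{m+1}(X;\F_2)\xrightarrow{\Sq^1}\wt{H}^{m+2}(X;\F_2)$ together with the overarching operation $\Sq^2\co \wt{H}^m(X;\F_2)\to\wt{H}^{m+2}(X;\F_2)$.

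The second step is to pin down the indecomposables concretely. Since Steenrod operations are natural and additive over wedges, the assignment sending $X$ to its quiver is a homotopy invariant which carries wedge sums to direct sums of quivers. In \Example{first-subthin}--\Example{last-subthin} one exhibits nine spaces---$S^m$, $S^{m+1}$, $S^{m+2}$, the Moore spaces $\Sigma^{m-1}\RP^2$ and $\Sigma^m\RP^2$, and the four suspensions $\Sigma^m X_i$ with $X_1=\Sigma^{-2}\CP^2$, $X_2=\Sigma^{-3}(\RP^5/\RP^2)$, $X_3=\Sigma^{-2}(\RP^4/\RP^1)$, $X_4=\Sigma^{-2}(\RP^2\wedge\RP^2)$---and computes, using the cup-square formula $\Sq^n x=x\cup x$ for $x\in H^n$, the Adem relation $\Sq^3=\Sq^1\Sq^2$, and the Cartan formula, that their quivers are precisely the nine irreducible quivers (S-$i$), (P-$i$), (X-$i$) of \Proposition{characterise-quiver}. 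By the uniqueness half of \Proposition{characterise-quiver} these nine quivers are pairwise non-isomorphic, so the nine spaces are pairwise non-homotopy-equivalent; since the number of indecomposables in the classification agrees with this count, the nine listed spaces form a complete and irredundant set of representatives for the indecomposables appearing in the splitting.

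Finally I would assemble the pieces. Given $X$ as in the hypotheses, write $X\simeq\bigvee_k Z_k$ with each $Z_k$ one of the nine indecomposables. The quiver of $X$ is then the direct sum of the corresponding irreducible quivers, and by the uniqueness statement of \Proposition{characterise-quiver}---which recovers the multiplicities $s_i,p_i,x_i$ from the nine numerical invariants $d_1,d_2,d_3,r_f,r_g,r_1,\dots,r_4$ via Equation~\eqref{eq:coeff-of-decomposition}---the number of copies of each indecomposable in the wedge is forced to equal the $s_i$, $p_i$, $x_i$ read off from the quiver of $X$. Hence $X\simeq Y$; conversely $Y$ manifestly realizes the given quiver, which also yields the claimed statement that the homotopy type is determined by the isomorphism class of the quiver. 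I expect the only genuine subtlety to be the completeness/irredundancy of the concrete list $\{S^\ast,\Sigma^\ast\RP^2,X_i\}$ of indecomposables, but this is exactly what is forced by matching the count of indecomposables in~\cite{Baues-top-handbook} against the nine distinct irreducible quivers of \Proposition{characterise-quiver}, so once \Example{first-subthin}--\Example{last-subthin} have been checked there is nothing further to do.
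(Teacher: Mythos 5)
Your proposal matches the paper's treatment: the paper offers no independent proof of this proposition, presenting it as the Whitehead--Chang classification quoted from \cite[Theorems 11.2 and 11.7]{Baues-top-handbook}, with \Example{first-subthin}--\Example{last-subthin} and \Proposition{characterise-quiver} supplying exactly the identification of the nine indecomposables and the recovery of their multiplicities that you describe. The one point worth making explicit in your sketch is that the hypothesis that all torsion has order $2$ is what removes the remaining indecomposables on Baues's list (Moore spaces for $\Z/2^k$ with $k\geq 2$ and the Chang complexes built from higher powers of $2$), so that your nine-space list is complete; with that said, your argument is the intended one.
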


In light of \Corollary{width-3-determined}, the following seems a
natural link invariant.
\begin{defn}\label{def:st}
  For any link $L$, the function $\St(L)\from\Z^2\to\N^4$ is defined
  as follows: Fix $(i,j)\in\Z^2$; for $k\in\{i,i+1\}$, let
  $\Sq^1_{(k)}$ denote the map
  $\Sq^1\from\Kh^{k,j}(L)\to\Kh^{k+1,j}(L)$.
  
  Let $r_1$ be the rank of the map $\Sq^2\from
  \Kh^{i,j}(L)\to\Kh^{i+2,j}(L)$; let $r_2$ be the rank of the map
  $\restrict{\Sq^2}{\ker \Sq^1_{(i)}}$; let $r_3$ be the dimension of
  the $\F_2$-vector space $\image\Sq^1_{(i+1)}\cap\image\Sq^2$; and
  let $r_4$ be the dimension of the $\F_2$-vector space
  $\image\Sq^1_{(i+1)}\cap\image(\restrict{\Sq^2}{\ker\Sq^1_{(i)}})$.
  Then,
  \[
  \St(i,j)\defeq(r_2-r_4,r_1-r_2-r_3+r_4,r_4,r_3-r_4).
  \]
\end{defn}

\begin{corollary}\label{cor:width-3-determined}
  Suppose that the Khovanov homology $\Kh_{\Z}(L)$ satisfies the following
  properties:
  \begin{itemize}
  \item $\Kh^{i,j}_{\Z}(L)$ lies on three adjacent diagonals, say $2i-j=\si,\si+2,\si+4$.
  \item There is no torsion other than $2$-torsion.
  \item There is no torsion on the diagonal $2i-j=\si$.
  \end{itemize}
  Then the homotopy types of the stable spaces $\KhSpace^j(L)$ are
  determined by $\Kh_{\Z}(L)$ and $\St(L)$ as follows: Fix $j$; let
  $i=\frac{j+\si}{2}$; let $\St(i,j)=(x_1,x_2,x_3,x_4)$; then
  $\KhSpace^j(L)$ is stably homotopy equivalent to the wedge sum of
  \[
  (\bigvee^{x_1}\Si^{i-2}\CP^2)\vee(\bigvee^{x_2}\Si^{i-3}(\RP^5/\RP^2))\vee
  (\bigvee^{x_3}\Si^{i-2}(\RP^4/\RP^1))\vee(\bigvee^{x_4}\Si^{i-2}(\RP^2\wedge\RP^2))
  \]
  and a wedge of Moore spaces. In particular, $\KhSpace^j(L)$ is a
  wedge of Moore spaces if and only if $x_1=x_2=x_3=x_4=0$. 
\end{corollary}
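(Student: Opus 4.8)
The plan is to deduce the corollary from \Proposition{space-classify} and \Proposition{characterise-quiver} applied to a large suspension of $\KhSpace^j(L)$; the only genuine content is to check that the hypotheses of \Proposition{space-classify} are met and to transcribe the linear-algebra bookkeeping of that proposition into the language of $\St(L)$. Fix $j$; if $j+\si$ is odd then all three diagonals miss quantum grading $j$, so $\KhSpace^j(L)$ is contractible and the statement holds vacuously with the empty wedge. So assume $j+\si$ is even and set $i=\frac{j+\si}{2}$, so that $\Kh^{k,j}_\Z(L)=0$ for $k\notin\{i,i+1,i+2\}$, with $\Kh^{i,j}_\Z(L)$ torsion-free and all torsion in $\Kh^{i+1,j}_\Z(L)$, $\Kh^{i+2,j}_\Z(L)$ equal to $2$-torsion. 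Recall from~\cite{RS-khovanov} that $\KhSpace^j(L)=\Si^{-N}Y_j$ for a finite CW complex $Y_j$ with $N$ large, so that $Y_j$ is simply connected and $\wt H^{N+k}(Y_j;\Z)=\Kh^{k,j}_\Z(L)$. The key point --- and the only place the third hypothesis of the corollary is used --- is that $\wt H^*(Y_j;\F_2)$ is then concentrated in the \emph{three consecutive} degrees $N+i,N+i+1,N+i+2$: from the coefficient sequence $0\to\Z\xrightarrow{2}\Z\to\F_2\to 0$ one has $\dim_{\F_2}\wt H^{N+k}(Y_j;\F_2)=\dim_{\F_2}\bigl(\Kh^{k,j}_\Z(L)\otimes\F_2\bigr)+\dim_{\F_2}\bigl(\text{\(2\)-torsion of }\Kh^{k+1,j}_\Z(L)\bigr)$, and for $k=i-1$ both summands vanish (the first since $\Kh^{i-1,j}_\Z(L)=0$, the second since $\Kh^{i,j}_\Z(L)$ is torsion-free), while for $k=i+3$ they vanish trivially. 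Were $\Kh^{i,j}_\Z(L)$ to carry $2$-torsion, the $\F_2$-cohomology of $Y_j$ would spill into degree $N+i-1$ and we would fall outside the scope of \Proposition{space-classify}.

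With the three-degree concentration in hand, \Proposition{space-classify} applies with $m=N+i$: since $Y_j$ is simply connected, has only $2$-torsion, and has $\F_2$-cohomology concentrated in $\{m,m+1,m+2\}$, it is homotopy equivalent to the wedge $Y=(\bigvee^{s_1}S^{m}\vee\cdots)\vee(\bigvee^{p_1}\Si^{m-1}\RP^2\vee\cdots)\vee(\bigvee^{x_1}\Si^m X_1\vee\cdots)$, where $s_\bullet,p_\bullet,x_\bullet$ are the decomposition numbers, in the sense of \Proposition{characterise-quiver}, of the quiver $\wt H^m(Y_j;\F_2)\xrightarrow{\Sq^1}\wt H^{m+1}(Y_j;\F_2)\xrightarrow{\Sq^1}\wt H^{m+2}(Y_j;\F_2)$ with long arrow $\Sq^2$. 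Via the suspension isomorphism this quiver is exactly $\Kh^{i,j}_{\F_2}(L)\xrightarrow{\Sq^1}\Kh^{i+1,j}_{\F_2}(L)\xrightarrow{\Sq^1}\Kh^{i+2,j}_{\F_2}(L)$ with long arrow $\Sq^2$. By \Proposition{characterise-quiver}, reading off the last four rows of~\EquationCont{coeff-of-decomposition} gives $(x_1,x_2,x_3,x_4)=(r_2-r_4,\;r_1-r_2-r_3+r_4,\;r_4,\;r_3-r_4)$, where $r_1,\dots,r_4$ are the ranks $\dim\image\Sq^2$, $\dim\image(\restrict{\Sq^2}{\ker\Sq^1_{(i)}})$, $\dim(\image\Sq^1_{(i+1)}\cap\image\Sq^2)$, $\dim(\image\Sq^1_{(i+1)}\cap\image(\restrict{\Sq^2}{\ker\Sq^1_{(i)}}))$; comparing with \Definition{st}, this reads precisely $(x_1,x_2,x_3,x_4)=\St(i,j)$.

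Now desuspend: $\KhSpace^j(L)=\Si^{-N}Y_j\simeq\Si^{-N}Y$. Using $X_1=\Si^{-2}\CP^2$, $X_2=\Si^{-3}(\RP^5/\RP^2)$, $X_3=\Si^{-2}(\RP^4/\RP^1)$, $X_4=\Si^{-2}(\RP^2\wedge\RP^2)$ together with $m-N=i$, the $x_\bullet$-part of $\Si^{-N}Y$ becomes $(\bigvee^{x_1}\Si^{i-2}\CP^2)\vee(\bigvee^{x_2}\Si^{i-3}(\RP^5/\RP^2))\vee(\bigvee^{x_3}\Si^{i-2}(\RP^4/\RP^1))\vee(\bigvee^{x_4}\Si^{i-2}(\RP^2\wedge\RP^2))$, while the $s_\bullet$- and $p_\bullet$-parts desuspend to a wedge of spheres and (de)suspensions of $\RP^2$, i.e.\ a wedge of Moore spaces; this is the asserted form. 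It is determined by $\Kh_\Z(L)$ and $\St(L)$: the $x_\bullet$ come from $\St(L)$ as just shown, while the $\F_2$-dimensions $d_1,d_2,d_3$ (by universal coefficients) and the ranks $r_f,r_g$ of the mod-$2$ Bockstein $\Sq^1$ (computed from the $2$-torsion of $\Kh_\Z(L)$, the Bockstein spectral sequence degenerating at $E_2$) are determined by $\Kh_\Z(L)$, so by~\EquationCont{coeff-of-decomposition} the numbers $s_\bullet,p_\bullet$ are too. Finally, the "in particular": if $x_1=x_2=x_3=x_4=0$ the wedge above is visibly a wedge of Moore spaces; conversely, a wedge of Moore spaces with only $2$-torsion splits as a wedge of spheres and $\Si^k\RP^2$'s, whose associated quivers are all of type $(\text{S-}\bullet)$ or $(\text{P-}\bullet)$, so by the uniqueness in \Proposition{characterise-quiver} no $(\text{X-}i)$ summand can occur, i.e.\ $x_1=x_2=x_3=x_4=0$.

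Since this is a corollary there is no deep obstruction; the step requiring genuine care is the verification, in the first paragraph, that torsion-freeness on the bottom diagonal forces $\wt H^*(\KhSpace^j(L);\F_2)$ into exactly three consecutive degrees, so that \Proposition{space-classify} is applicable, together with the careful matching of~\EquationCont{coeff-of-decomposition} against \Definition{st}.
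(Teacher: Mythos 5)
Your proposal is correct and follows essentially the same route as the paper: apply \Proposition{space-classify} to the suspension $Y_j$ of $\KhSpace^j(L)$, identify the quiver with the Khovanov $\Sq^1$/$\Sq^2$ quiver, and read off $(x_1,x_2,x_3,x_4)=\St(i,j)$ from \EquationCont{coeff-of-decomposition}; you simply spell out the hypothesis checks (three-degree concentration of $\F_2$-cohomology from torsion-freeness on the bottom diagonal, simple connectivity, determination of the Moore-space part) that the paper leaves implicit. The only cosmetic difference is in the ``only if'' direction, where the paper notes that the rank of $\Sq^2$ equals $x_1+x_2+x_3+x_4$ while you invoke uniqueness of the quiver decomposition -- these are interchangeable.
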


\begin{proof}
  The first part is immediate from \Proposition{space-classify}. To wit,
  if one decomposes the quiver
  \[
  \xymatrix{ \Kh^{i,j}_{\F_2}\ar[r]_{\Sq^1}\ar@/^2pc/[rr]^{\Sq^2} &
    \Kh^{i+1,j}_{\F_2} \ar[r]_{\Sq^1}& \Kh^{i+2,j}_{\F_2} }
  \]
  as a direct sum of the nine quivers of
  \Proposition{characterise-quiver}, \Equation{coeff-of-decomposition}
  implies that the number of (X-i) summands will be $x_i$.

The `if' direction of the second part follows from the first part. For
the `only if' direction, observe that the rank of
$\Sq^2\from\Kh_{\F_2}^{i,j}\to\Kh_{\F_2}^{i+2,j}$ is
$x_1+x_2+x_3+x_4$; therefore, if $\KhSpace^j(L)$ is a wedge of Moore
spaces, $\Sq^2=0$, and hence $x_1=x_2=x_3=x_4=0$.
\end{proof}

\section{Computations}\label{sec:computations}
It can be checked from the databases \cite{KAT-kh-knotatlas} that all
prime links up to $11$ crossings satisfy the conditions of
\Corollary{width-3-determined}. Therefore, their homotopy types are
determined by the Khovanov homology $\Kh_{\Z}$ and the function
$\St$ of \Definition{st}. In \Table{computations}, we present the
values of $\St$. To save space, we only list the links $L$ for which
the function $\St(L)$ is not identically $(0,0,0,0)$; and for such links,
we only list tuples $(i,j)$ for which $\St(i,j)\neq (0,0,0,0)$. For
the same reason, we do not mention $\Kh_{\Z}(L)$ in the table; the
Khovanov homology data can easily be extracted from
\cite{KAT-kh-knotatlas}.

After collecting the data for the PD-presentations from
\cite{KAT-kh-knotatlas}, we used several Sage programs for carrying
out this computation. (To get more information about Sage, visit
\url{http://www.sagemath.org/}.)  All the programs and computations
are available at any of the following locations:

\url{http://math.columbia.edu/~sucharit/programs/KhovanovSteenrod/}

\url{https://github.com/sucharit/KhovanovSteenrod}


\begin{remark}\label{rem:mirror-sum}
  Let $\mirror{L}$ denote the mirror of
  $L$. In~\cite[Conjecture~\ref*{KhSp:conj:mirror}]{RS-khovanov} we
  conjecture that $\KhSpace^j(\mirror{L})$ and $\KhSpace^{-j}(L)$ are
  Spanier-Whitehead dual. In particular, the action of $\Sq^i$ on
  $\Kh^{*,j}(L)$ and $\Kh^{*,-j}(\mirror{L})$ should be
  transposes of each other. This conjecture provides some justification
  for the fact that \Table{computations} does not list the results for
  both mirrors of chiral knots.

  For disjoint unions, we conjecture
  in~\cite[Conjecture~\ref*{KhSp:conj:disjoint-union}]{RS-khovanov}
  that $\KhSpace(L_1\amalg L_2)$ is the smash product of
  $\KhSpace(L_1)$ and $\KhSpace(L_2)$. So, \Table{computations} only
  lists non-split links.

  The expected behavior of $\KhSpace$ under connected sums is more
  complicated:
  in~\cite[Conjecture~\ref*{KhSp:conj:unred-con-sum}]{RS-khovanov} we
  conjecture that $\KhSpace(L_1\# L_2)\simeq
  \KhSpace(L_1)\otimes_{\KhSpace(U)}\KhSpace(L_2)$, where $\otimes$
  denotes the tensor product of module spectra. So, like for Khovanov
  homology itself, the Khovanov homotopy type of a connected sum of
  links is not determined by the Khovanov homotopy types of the
  individual links: the module structures are required. Nonetheless,
  we have restricted \Table{computations} to prime links.
\end{remark}

\begin{proof}[Proof of {\Theorem{not-moore}}]
  From \Table{computations} we see that for $T_{3,4}=8_{19}$,
  $\St(2,11)=(0,1,0,0)$. Therefore, by \Corollary{width-3-determined},
  $\KhSpace^{11}(T_{3,4})$ is not a wedge sum of Moore spaces.
\end{proof}

\begin{example}\label{exam:10_145}
  Consider the knot $K=10_{145}$. From \cite{KAT-kh-knotatlas}, we know
  its Khovanov homology:
  \[
  \begin{array}{c|cccccccccc}
    &-9&-8&-7&-6&-5&-4&-3&-2&-1&0\\
\hline
    -3&.&.&.&.&.&.&.&.&.&\Z\\
    -5&.&.&.&.&.&.&.&.&.&\Z\\
    -7&.&.&.&.&.&.&\Z&\Z&.&.\\
    -9&.&.&.&.&.&.&\F_2&\F_2&.&.\\
    -11&.&.&.&.&\Z&\Z^2&\Z&.&.&.\\
    -13&.&.&.&\Z&\F_2&\F_2&.&.&.&.\\
    -15&.&.&.&\Z\oplus\F_2&\Z&.&.&.&.&.\\
    -17&.&\Z&\Z&.&.&.&.&.&.&.\\
    -19&.&\F_2&.&.&.&.&.&.&.&.\\
    -21&\Z&.&.&.&.&.&.&.&.&.
  \end{array}
  \]
  and from \Table{computations}, we know the function $\St(K)$:
  \begin{align*}
  \St(-4,-9)&=(0,0,0,1)\\
  \St(-6,-13)&=(0,0,1,0)\\
  \St(-7,-15)&=(0,1,0,0).
  \end{align*}
  Therefore, via \Corollary{width-3-determined}, we can compute
  Khovanov homotopy types:
  \begin{align*}
    \KhSpace^{-3}(K)&\sim S^0
    &\KhSpace^{-5}(K)&\sim S^0\\
    \KhSpace^{-7}(K)&\sim \Si^{-3}(S^0\vee S^1)
    &\KhSpace^{-9}(K)&\sim \Si^{-6}(\RP^2\wedge\RP^2)\\
    \KhSpace^{-11}(K)&\sim \Si^{-5}(S^0\vee S^1\vee S^1\vee S^2) 
    &\KhSpace^{-13}(K)&\sim \Si^{-8}(\RP^4/\RP^1\vee\Si\RP^2)\\
    \KhSpace^{-15}(K)&\sim \Si^{-10}(\RP^5/\RP^2\vee S^4)
    &\KhSpace^{-17}(K)&\sim \Si^{-8}(S^0\vee S^1)\\
    \KhSpace^{-19}(K)&\sim \Si^{-10}\RP^2
    &\KhSpace^{-21}(K)&\sim \Si^{-9}S^0.
  \end{align*}
\end{example}

\begin{example}
  The Kinoshita-Terasaka knot $K_1=\text{K}11n42$ and its Conway mutant
  $K_2=\text{K}11n34$ have identical Khovanov homology. From
  \Table{computations}, we see that $\St(K_1)=\St(K_2)$. Therefore, by
  \Corollary{width-3-determined}, they have the same Khovanov homotopy type.
\end{example}

The Kinoshita-Terasaka knot and its Conway mutant is an example of a
pair of links that are not distinguished by their Khovanov
homologies. In an earlier version of the paper, we asked:
\begin{question}\label{ques:is-ours-interesting}
  Does there exist a pair of links $L_1$ and $L_2$ with
  $\Kh_{\Z}(L_1)=\Kh_{\Z}(L_2)$, but $\KhSpace(L_1)\not\sim
  \KhSpace(L_2)$?
\end{question}

We provided the following partial answer:

\begin{example}
  The links $L_1=\text{L}11n383$ and $L_2=\text{L}11n393$ have
  isomorphic Khovanov homology in quantum grading $(-3)$:
  $\Kh^{-2,-3}_{\Z}=\Z^3$, $\Kh^{-1,-3}_{\Z}=\Z^3\oplus\F_2^4$,
  $\Kh^{0,-3}_{\Z}=\Z^2$, \cite{KAT-kh-knotatlas}.  However,
  $\St(L_1)(-2,-3)=(0,2,0,0)$ and $\St(L_2)(-2,-3)=(0,1,0,0)$ (\Table{computations});
  therefore, $\KhSpace^{-3}(L_1)$ is not stably homotopy equivalent to
  $\KhSpace^{-3}(L_2)$.
\end{example}

Since the first version of this paper, C.~Seed has independently
computed the $\Sq^1$ and $\Sq^2$ action on the Khovanov homology of
knots, and has answered \Question{is-ours-interesting} in the
affirmative in \cite{See-kh-squares}.

We conclude with an observation and a question. Since all prime links
up to $11$ crossings satisfy the conditions of
\Corollary{width-3-determined}, their Khovanov homotopy types are
wedges of various suspensions of $S^0$, $\RP^2$, $\CP^2$,
$\RP^5/\RP^2$, $\RP^4/\RP^1$ and $\RP^2\wedge\RP^2$; and this wedge
sum decomposition is unique since it is determined by the Khovanov
homology $\Kh_{\Z}$ and the function $\St$. \Example{10_145} already
exhibits all but one of these summands; it does not have a $\CP^2$
summand. A careful look at \Table{computations} reveals that neither
does any other link up to $11$ crossings. The conspicuous absence of
$\CP^2$ naturally leads to the following question.
\begin{question}
Does there exist a link $L$ for which $\KhSpace^j(L)$ contains
$\Si^m\CP^2$ in some\footnote{Wedge sum decompositions are in general
  not unique.} wedge sum decomposition, for some $j,m$?
\end{question}

\tiny
\begin{center}
\begin{longtable}{p{0.1\textwidth}@{}p{0.85\textwidth}}
\caption{}\label{table:computations}\\*
\toprule
$L$&$\St(L)$\\*
\midrule
$8_{19}$ & $(2, 11)\mapsto (0, 1, 0, 0)$\\
$9_{42}$ & $(-2, -1)\mapsto (0, 1, 0, 0)$\\
$10_{124}$ & $(2, 13)\mapsto (0, 1, 0, 0)$, $(5, 19)\mapsto (0, 0, 1, 0)$\\
$10_{128}$ & $(2, 11)\mapsto (0, 1, 0, 0)$\\
$10_{132}$ & $(-5, -9)\mapsto (0, 1, 0, 0)$, $(-4, -7)\mapsto (0, 0, 1, 0)$, $(-2, -3)\mapsto (0, 1, 0, 0)$\\
$10_{136}$ & $(-2, -1)\mapsto (0, 1, 0, 0)$\\
$10_{139}$ & $(2, 13)\mapsto (0, 1, 0, 0)$, $(5, 19)\mapsto (0, 0, 1, 0)$\\
$10_{145}$ & $(-4, -9)\mapsto (0, 0, 0, 1)$, $(-6, -13)\mapsto (0, 0, 1, 0)$, $(-7, -15)\mapsto (0, 1, 0, 0)$\\
$10_{152}$ & $(-7, -19)\mapsto (0, 1, 0, 0)$, $(-4, -13)\mapsto (0, 0, 1, 0)$\\
$10_{153}$ & $(0, 1)\mapsto (0, 1, 0, 0)$, $(1, 3)\mapsto (0, 0, 1, 0)$, $(-2, -3)\mapsto (0, 0, 1, 0)$, $(-3, -5)\mapsto (0, 1, 0, 0)$\\
$10_{154}$ & $(5, 17)\mapsto (0, 0, 1, 0)$, $(2, 11)\mapsto (0, 1, 0, 0)$\\
$10_{161}$ & $(-4, -11)\mapsto (0, 0, 1, 0)$, $(-7, -17)\mapsto (0, 1, 0, 0)$\\
$\text{K}11n6$ & $(0, 1)\mapsto (0, 0, 1, 0)$, $(-3, -5)\mapsto (0, 0, 1, 0)$, $(-4, -7)\mapsto (0, 1, 0, 0)$, $(-1, -1)\mapsto (0, 1, 0, 0)$\\
$\text{K}11n9$ & $(3, 13)\mapsto (0, 1, 0, 0)$, $(5, 17)\mapsto (0, 0, 1, 0)$, $(4, 15)\mapsto (0, 0, 1, 0)$, $(2, 11)\mapsto (0, 1, 0, 0)$, $(0, 7)\mapsto (0, 1, 0, 0)$, $(1, 9)\mapsto (0, 0, 1, 0)$\\
$\text{K}11n12$ & $(2, 7)\mapsto (0, 1, 0, 0)$, $(0, 3)\mapsto (0, 0, 1, 0)$, $(3, 9)\mapsto (0, 0, 1, 0)$\\
$\text{K}11n19$ & $(0, -1)\mapsto (0, 0, 1, 0)$, $(-3, -7)\mapsto (0, 0, 0, 1)$\\
$\text{K}11n20$ & $(0, 1)\mapsto (0, 0, 1, 0)$\\
$\text{K}11n24$ & $(-2, -1)\mapsto (0, 1, 0, 0)$\\
$\text{K}11n27$ & $(2, 11)\mapsto (0, 1, 0, 0)$\\
$\text{K}11n31$ & $(2, 9)\mapsto (0, 0, 0, 1)$, $(3, 11)\mapsto (0, 1, 0, 0)$, $(4, 13)\mapsto (0, 1, 1, 0)$, $(5, 15)\mapsto (0, 0, 1, 0)$, $(0, 5)\mapsto (0, 1, 0, 0)$, $(1, 7)\mapsto (0, 0, 1, 0)$\\
$\text{K}11n34$ & $(0, 1)\mapsto (0, 1, 1, 0)$, $(1, 3)\mapsto (0, 0, 1, 0)$, $(-2, -3)\mapsto (0, 0, 1, 0)$, $(-1, -1)\mapsto (0, 1, 0, 0)$, $(-4, -7)\mapsto (0, 1, 0, 0)$, $(-3, -5)\mapsto (0, 1, 1, 0)$\\
$\text{K}11n38$ & $(-1, 1)\mapsto (0, 1, 0, 0)$, $(0, 3)\mapsto (0, 0, 1, 0)$, $(-4, -5)\mapsto (0, 1, 0, 0)$, $(-3, -3)\mapsto (0, 0, 1, 0)$\\
$\text{K}11n39$ & $(1, 5)\mapsto (0, 0, 1, 0)$, $(0, 3)\mapsto (0, 1, 0, 0)$, $(-3, -3)\mapsto (0, 1, 0, 0)$, $(-2, -1)\mapsto (0, 0, 1, 0)$\\
$\text{K}11n42$ & $(0, 1)\mapsto (0, 1, 1, 0)$, $(1, 3)\mapsto (0, 0, 1, 0)$, $(-2, -3)\mapsto (0, 0, 1, 0)$, $(-1, -1)\mapsto (0, 1, 0, 0)$, $(-4, -7)\mapsto (0, 1, 0, 0)$, $(-3, -5)\mapsto (0, 1, 1, 0)$\\
$\text{K}11n45$ & $(1, 5)\mapsto (0, 0, 1, 0)$, $(0, 3)\mapsto (0, 1, 0, 0)$, $(-3, -3)\mapsto (0, 1, 0, 0)$, $(-2, -1)\mapsto (0, 0, 1, 0)$\\
$\text{K}11n49$ & $(0, 3)\mapsto (0, 0, 1, 0)$, $(-3, -3)\mapsto (0, 0, 1, 0)$, $(-4, -5)\mapsto (0, 1, 0, 0)$, $(-1, 1)\mapsto (0, 1, 0, 0)$\\
$\text{K}11n57$ & $(4, 15)\mapsto (0, 0, 1, 0)$, $(2, 11)\mapsto (0, 1, 0, 0)$, $(1, 9)\mapsto (0, 0, 1, 0)$, $(0, 7)\mapsto (0, 1, 0, 0)$, $(3, 13)\mapsto (0, 1, 0, 0)$\\
$\text{K}11n61$ & $(-1, 3)\mapsto (0, 1, 0, 0)$, $(0, 5)\mapsto (0, 0, 1, 0)$, $(2, 9)\mapsto (0, 1, 0, 0)$\\
$\text{K}11n67$ & $(2, 7)\mapsto (0, 0, 1, 0)$, $(1, 5)\mapsto (0, 1, 0, 0)$, $(-1, 1)\mapsto (0, 0, 1, 0)$, $(-2, -1)\mapsto (0, 1, 0, 0)$\\
$\text{K}11n70$ & $(-2, 1)\mapsto (0, 1, 0, 0)$, $(0, 5)\mapsto (0, 1, 0, 0)$, $(1, 7)\mapsto (0, 0, 1, 0)$\\
$\text{K}11n73$ & $(1, 5)\mapsto (0, 0, 1, 0)$, $(0, 3)\mapsto (0, 1, 0, 0)$, $(-3, -3)\mapsto (0, 1, 0, 0)$, $(-2, -1)\mapsto (0, 0, 1, 0)$\\
$\text{K}11n74$ & $(1, 5)\mapsto (0, 0, 1, 0)$, $(0, 3)\mapsto (0, 1, 0, 0)$, $(-3, -3)\mapsto (0, 1, 0, 0)$, $(-2, -1)\mapsto (0, 0, 1, 0)$\\
$\text{K}11n77$ & $(2, 13)\mapsto (0, 1, 0, 0)$, $(5, 19)\mapsto (0, 0, 1, 0)$\\
$\text{K}11n79$ & $(-2, -1)\mapsto (0, 1, 0, 0)$\\
$\text{K}11n80$ & $(-3, -7)\mapsto (0, 0, 1, 0)$, $(-4, -9)\mapsto (0, 1, 0, 0)$, $(-1, -3)\mapsto (0, 1, 0, 0)$, $(0, -1)\mapsto (0, 0, 1, 0)$\\
$\text{K}11n81$ & $(2, 11)\mapsto (0, 1, 0, 0)$\\
$\text{K}11n88$ & $(2, 11)\mapsto (0, 1, 0, 0)$\\
$\text{K}11n92$ & $(0, 1)\mapsto (0, 0, 1, 0)$\\
$\text{K}11n96$ & $(1, 5)\mapsto (0, 0, 1, 0)$, $(0, 3)\mapsto (0, 1, 0, 0)$, $(-3, -3)\mapsto (0, 1, 0, 0)$, $(-2, -1)\mapsto (0, 1, 1, 0)$\\
$\text{K}11n97$ & $(2, 5)\mapsto (0, 0, 1, 0)$, $(1, 3)\mapsto (0, 1, 0, 0)$, $(-2, -3)\mapsto (0, 1, 0, 0)$, $(-1, -1)\mapsto (0, 0, 1, 0)$\\
$\text{K}11n102$ & $(-5, -9)\mapsto (0, 0, 1, 0)$, $(-6, -11)\mapsto (0, 1, 0, 0)$, $(-2, -3)\mapsto (0, 0, 1, 0)$, $(-3, -5)\mapsto (0, 1, 0, 0)$\\
$\text{K}11n104$ & $(4, 15)\mapsto (0, 0, 1, 0)$, $(2, 11)\mapsto (0, 1, 0, 0)$, $(1, 9)\mapsto (0, 0, 1, 0)$, $(0, 7)\mapsto (0, 1, 0, 0)$, $(3, 13)\mapsto (0, 1, 0, 0)$\\
$\text{K}11n111$ & $(-1, 3)\mapsto (0, 0, 1, 0)$, $(-2, 1)\mapsto (0, 1, 0, 0)$, $(2, 9)\mapsto (0, 0, 1, 0)$, $(1, 7)\mapsto (0, 1, 0, 0)$\\
$\text{K}11n116$ & $(0, 1)\mapsto (0, 0, 1, 0)$, $(-3, -5)\mapsto (0, 0, 1, 0)$, $(-4, -7)\mapsto (0, 1, 0, 0)$, $(-1, -1)\mapsto (0, 1, 0, 0)$\\
$\text{K}11n126$ & $(2, 11)\mapsto (0, 1, 0, 0)$\\
$\text{K}11n133$ & $(-1, 3)\mapsto (0, 1, 0, 0)$, $(0, 5)\mapsto (0, 0, 1, 0)$, $(2, 9)\mapsto (0, 1, 0, 0)$\\
$\text{K}11n135$ & $(4, 13)\mapsto (0, 0, 1, 0)$, $(0, 5)\mapsto (0, 1, 0, 0)$, $(3, 11)\mapsto (0, 1, 0, 0)$, $(1, 7)\mapsto (0, 0, 1, 0)$\\
$\text{K}11n138$ & $(-2, -1)\mapsto (0, 1, 0, 0)$\\
$\text{K}11n143$ & $(2, 7)\mapsto (0, 0, 1, 0)$, $(1, 5)\mapsto (0, 1, 0, 0)$, $(-1, 1)\mapsto (0, 0, 1, 0)$, $(-2, -1)\mapsto (0, 1, 0, 0)$\\
$\text{K}11n145$ & $(1, 5)\mapsto (0, 0, 1, 0)$, $(0, 3)\mapsto (0, 1, 0, 0)$, $(-3, -3)\mapsto (0, 1, 0, 0)$, $(-2, -1)\mapsto (0, 0, 1, 0)$\\
$\text{K}11n151$ & $(-1, 3)\mapsto (0, 0, 1, 0)$, $(-2, 1)\mapsto (0, 1, 0, 0)$, $(2, 9)\mapsto (0, 0, 1, 0)$, $(1, 7)\mapsto (0, 1, 0, 0)$\\
$\text{K}11n152$ & $(-1, 3)\mapsto (0, 0, 1, 0)$, $(-2, 1)\mapsto (0, 1, 0, 0)$, $(2, 9)\mapsto (0, 0, 1, 0)$, $(1, 7)\mapsto (0, 1, 0, 0)$\\
$\text{K}11n183$ & $(5, 17)\mapsto (0, 0, 1, 0)$, $(2, 11)\mapsto (0, 1, 0, 0)$\\
$\text{L}6n1$ & $(0, 3)\mapsto (0, 0, 1, 0)$\\
$\text{L}7n1$ & $(-4, -10)\mapsto (0, 0, 1, 0)$\\
$\text{L}8n2$ & $(-2, -2)\mapsto (0, 1, 0, 0)$\\
$\text{L}8n3$ & $(-4, -11)\mapsto (0, 0, 1, 0)$, $(-6, -15)\mapsto (0, 0, 1, 0)$\\
$\text{L}8n6$ & $(-6, -13)\mapsto (0, 1, 0, 0)$\\
$\text{L}8n7$ & $(0, 4)\mapsto (0, 0, 1, 0)$\\
$\text{L}8n8$ & $(0, 2)\mapsto (0, 0, 1, 0)$, $(-2, -2)\mapsto (0, 1, 0, 0)$\\
$\text{L}9n1$ & $(-4, -10)\mapsto (0, 0, 1, 0)$\\
$\text{L}9n3$ & $(-2, -4)\mapsto (0, 1, 0, 0)$, $(-5, -10)\mapsto (0, 1, 0, 0)$, $(-4, -8)\mapsto (0, 0, 1, 0)$\\
$\text{L}9n4$ & $(-6, -16)\mapsto (0, 0, 1, 0)$, $(-4, -12)\mapsto (0, 0, 1, 0)$, $(-7, -18)\mapsto (0, 1, 0, 0)$\\
$\text{L}9n9$ & $(-7, -16)\mapsto (0, 1, 0, 0)$, $(-4, -10)\mapsto (0, 0, 1, 0)$, $(-6, -14)\mapsto (0, 0, 1, 0)$\\
$\text{L}9n12$ & $(1, 6)\mapsto (0, 0, 1, 0)$, $(-2, 0)\mapsto (0, 1, 0, 0)$\\
$\text{L}9n15$ & $(-4, -12)\mapsto (0, 0, 1, 0)$\\
$\text{L}9n18$ & $(-4, -12)\mapsto (0, 0, 1, 0)$\\
$\text{L}9n21$ & $(0, 1)\mapsto (0, 0, 1, 0)$, $(-2, -3)\mapsto (0, 0, 1, 0)$, $(-3, -5)\mapsto (0, 1, 0, 0)$\\
$\text{L}9n22$ & $(-2, -3)\mapsto (0, 1, 0, 0)$\\
$\text{L}9n23$ & $(-2, -5)\mapsto (0, 0, 1, 0)$, $(-4, -9)\mapsto (0, 0, 1, 0)$\\
$\text{L}9n25$ & $(-2, -3)\mapsto (0, 1, 0, 0)$\\
$\text{L}9n26$ & $(-2, -3)\mapsto (0, 1, 0, 0)$\\
$\text{L}9n27$ & $(0, 1)\mapsto (0, 0, 1, 0)$\\
$\text{L}10n1$ & $(-2, -4)\mapsto (0, 1, 0, 0)$, $(-4, -8)\mapsto (0, 0, 1, 0)$, $(-1, -2)\mapsto (0, 0, 1, 0)$\\
$\text{L}10n3$ & $(-2, -2)\mapsto (0, 1, 0, 0)$\\
$\text{L}10n5$ & $(1, 6)\mapsto (0, 0, 1, 0)$, $(-2, 0)\mapsto (0, 1, 0, 0)$, $(0, 4)\mapsto (0, 1, 0, 0)$\\
$\text{L}10n8$ & $(-2, -2)\mapsto (0, 1, 0, 0)$\\
$\text{L}10n9$ & $(2, 6)\mapsto (0, 0, 1, 0)$, $(-1, 0)\mapsto (0, 0, 1, 0)$, $(0, 2)\mapsto (0, 0, 1, 0)$, $(1, 4)\mapsto (0, 1, 0, 0)$, $(-2, -2)\mapsto (0, 1, 0, 0)$\\
$\text{L}10n10$ & $(-2, -6)\mapsto (0, 0, 1, 0)$, $(-5, -12)\mapsto (0, 0, 1, 0)$, $(-4, -10)\mapsto (0, 0, 1, 0)$, $(-3, -8)\mapsto (0, 1, 0, 0)$, $(-6, -14)\mapsto (0, 1, 0, 0)$\\
$\text{L}10n13$ & $(-4, -10)\mapsto (0, 0, 1, 0)$\\
$\text{L}10n14$ & $(0, 0)\mapsto (0, 0, 1, 0)$, $(-3, -6)\mapsto (0, 0, 1, 0)$, $(-1, -2)\mapsto (0, 1, 0, 0)$, $(-4, -8)\mapsto (0, 1, 0, 0)$\\
$\text{L}10n18$ & $(-3, -4)\mapsto (0, 1, 0, 0)$, $(0, 2)\mapsto (0, 1, 1, 0)$, $(1, 4)\mapsto (0, 0, 1, 0)$, $(-2, -2)\mapsto (0, 0, 1, 0)$\\
$\text{L}10n23$ & $(2, 10)\mapsto (0, 1, 0, 0)$\\
$\text{L}10n24$ & $(0, 2)\mapsto (0, 0, 1, 0)$\\
$\text{L}10n25$ & $(-4, -6)\mapsto (0, 1, 0, 0)$, $(-3, -4)\mapsto (0, 0, 1, 0)$, $(0, 2)\mapsto (0, 0, 1, 0)$, $(-1, 0)\mapsto (0, 1, 0, 0)$, $(-2, -2)\mapsto (0, 1, 0, 0)$\\
$\text{L}10n28$ & $(-2, -4)\mapsto (0, 0, 1, 0)$, $(-3, -6)\mapsto (0, 1, 0, 0)$, $(-6, -12)\mapsto (0, 1, 0, 0)$, $(-5, -10)\mapsto (0, 0, 1, 0)$\\
$\text{L}10n32$ & $(-4, -6)\mapsto (0, 1, 0, 0)$, $(-3, -4)\mapsto (0, 0, 1, 0)$, $(0, 2)\mapsto (0, 0, 1, 0)$, $(-1, 0)\mapsto (0, 1, 0, 0)$\\
$\text{L}10n36$ & $(-3, -4)\mapsto (0, 1, 0, 0)$, $(0, 2)\mapsto (0, 1, 0, 0)$, $(1, 4)\mapsto (0, 0, 1, 0)$, $(-2, -2)\mapsto (0, 0, 1, 0)$\\
$\text{L}10n37$ & $(1, 6)\mapsto (0, 0, 1, 0)$, $(-2, 0)\mapsto (0, 1, 0, 0)$\\
$\text{L}10n39$ & $(2, 10)\mapsto (0, 1, 0, 0)$\\
$\text{L}10n42$ & $(0, 6)\mapsto (0, 1, 0, 0)$, $(2, 10)\mapsto (0, 1, 0, 0)$\\
$\text{L}10n45$ & $(0, 0)\mapsto (0, 0, 1, 0)$\\
$\text{L}10n54$ & $(2, 10)\mapsto (0, 1, 0, 0)$\\
$\text{L}10n56$ & $(0, 0)\mapsto (0, 0, 1, 0)$\\
$\text{L}10n59$ & $(-3, -4)\mapsto (0, 1, 0, 0)$, $(0, 2)\mapsto (0, 1, 0, 0)$, $(1, 4)\mapsto (0, 0, 1, 0)$, $(-2, -2)\mapsto (0, 0, 1, 0)$\\
$\text{L}10n60$ & $(1, 6)\mapsto (0, 0, 1, 0)$, $(-2, 0)\mapsto (0, 1, 0, 0)$\\
$\text{L}10n62$ & $(2, 10)\mapsto (0, 1, 0, 0)$\\
$\text{L}10n66$ & $(0, 1)\mapsto (0, 0, 1, 0)$, $(-3, -5)\mapsto (0, 0, 1, 0)$, $(-4, -7)\mapsto (0, 1, 0, 0)$, $(-2, -3)\mapsto (0, 1, 0, 0)$, $(-1, -1)\mapsto (0, 1, 0, 0)$\\
$\text{L}10n67$ & $(-2, -3)\mapsto (0, 1, 0, 0)$\\
$\text{L}10n68$ & $(-4, -11)\mapsto (0, 0, 1, 0)$, $(-6, -15)\mapsto (0, 0, 1, 0)$\\
$\text{L}10n70$ & $(-2, -5)\mapsto (0, 1, 0, 0)$, $(-4, -9)\mapsto (0, 0, 1, 0)$, $(-6, -13)\mapsto (0, 1, 0, 0)$, $(-5, -11)\mapsto (0, 1, 0, 0)$\\
$\text{L}10n72$ & $(2, 7)\mapsto (0, 0, 1, 0)$, $(0, 3)\mapsto (0, 0, 1, 0)$, $(-1, 1)\mapsto (0, 1, 0, 0)$, $(-2, -1)\mapsto (0, 1, 0, 0)$\\
$\text{L}10n74$ & $(-4, -11)\mapsto (0, 0, 1, 0)$, $(-8, -19)\mapsto (0, 0, 1, 0)$, $(-5, -13)\mapsto (0, 0, 1, 0)$, $(-6, -15)\mapsto (0, 1, 0, 0)$\\
$\text{L}10n77$ & $(-6, -17)\mapsto (0, 0, 1, 0)$, $(-7, -19)\mapsto (0, 1, 0, 0)$, $(-8, -21)\mapsto (0, 0, 1, 0)$, $(-4, -13)\mapsto (0, 0, 1, 0)$\\
$\text{L}10n82$ & $(-6, -11)\mapsto (0, 1, 0, 0)$, $(-3, -5)\mapsto (0, 0, 1, 0)$\\
$\text{L}10n83$ & $(-4, -9)\mapsto (0, 0, 1, 0)$\\
$\text{L}10n84$ & $(-4, -11)\mapsto (0, 0, 1, 0)$, $(-8, -19)\mapsto (0, 1, 0, 0)$, $(-6, -15)\mapsto (0, 0, 1, 0)$, $(-7, -17)\mapsto (0, 1, 0, 0)$\\
$\text{L}10n87$ & $(-2, 1)\mapsto (0, 1, 0, 0)$, $(1, 7)\mapsto (0, 0, 1, 0)$\\
$\text{L}10n88$ & $(0, 1)\mapsto (0, 0, 1, 0)$, $(-2, -3)\mapsto (0, 0, 1, 0)$, $(-3, -5)\mapsto (0, 1, 0, 0)$\\
$\text{L}10n91$ & $(-6, -13)\mapsto (0, 1, 0, 0)$\\
$\text{L}10n93$ & $(-4, -13)\mapsto (0, 0, 1, 0)$\\
$\text{L}10n97$ & $(-2, -4)\mapsto (0, 0, 1, 0)$, $(0, 0)\mapsto (0, 0, 1, 0)$, $(-3, -6)\mapsto (0, 1, 0, 0)$, $(-4, -8)\mapsto (0, 1, 0, 0)$\\
$\text{L}10n98$ & $(0, 2)\mapsto (0, 1, 0, 0)$, $(-2, -2)\mapsto (0, 1, 0, 0)$\\
$\text{L}10n101$ & $(2, 8)\mapsto (0, 0, 1, 0)$, $(4, 12)\mapsto (0, 0, 1, 0)$, $(1, 6)\mapsto (0, 1, 0, 0)$\\
$\text{L}10n102$ & $(-4, -10)\mapsto (0, 0, 1, 0)$, $(-6, -14)\mapsto (0, 3, 0, 0)$\\
$\text{L}10n103$ & $(-4, -8)\mapsto (0, 0, 1, 0)$\\
$\text{L}10n104$ & $(-6, -12)\mapsto (0, 1, 0, 0)$, $(-4, -8)\mapsto (0, 1, 0, 0)$\\
$\text{L}10n106$ & $(-2, -4)\mapsto (0, 0, 1, 0)$, $(-4, -8)\mapsto (0, 0, 1, 0)$\\
$\text{L}10n107$ & $(0, 2)\mapsto (0, 0, 1, 0)$, $(-2, -2)\mapsto (0, 1, 0, 0)$\\
$\text{L}10n108$ & $(-2, -6)\mapsto (0, 0, 1, 0)$, $(-4, -10)\mapsto (0, 0, 2, 0)$, $(-6, -14)\mapsto (0, 0, 1, 0)$\\
$\text{L}10n111$ & $(-4, -6)\mapsto (0, 1, 0, 0)$, $(-2, -2)\mapsto (0, 1, 0, 0)$\\
$\text{L}10n112$ & $(0, 5)\mapsto (0, 0, 1, 0)$\\
$\text{L}10n113$ & $(0, 3)\mapsto (0, 0, 4, 0)$, $(-2, -1)\mapsto (0, 1, 0, 0)$\\
$\text{L}11n1$ & $(-2, -6)\mapsto (0, 0, 1, 0)$, $(-5, -12)\mapsto (0, 0, 1, 0)$, $(-4, -10)\mapsto (0, 0, 1, 0)$, $(-3, -8)\mapsto (0, 1, 0, 0)$, $(-6, -14)\mapsto (0, 1, 0, 0)$\\
$\text{L}11n5$ & $(-4, -6)\mapsto (0, 1, 0, 0)$, $(-3, -4)\mapsto (0, 0, 1, 0)$, $(-1, 0)\mapsto (0, 1, 0, 0)$, $(0, 2)\mapsto (0, 0, 1, 0)$, $(-2, -2)\mapsto (0, 1, 0, 0)$\\
$\text{L}11n6$ & $(0, 2)\mapsto (0, 0, 1, 0)$\\
$\text{L}11n8$ & $(-2, -4)\mapsto (0, 1, 0, 0)$, $(-5, -10)\mapsto (0, 1, 0, 0)$, $(-4, -8)\mapsto (0, 0, 1, 0)$\\
$\text{L}11n9$ & $(0, 0)\mapsto (0, 0, 1, 0)$, $(-2, -4)\mapsto (0, 1, 1, 0)$, $(-1, -2)\mapsto (0, 0, 1, 0)$, $(-3, -6)\mapsto (0, 1, 0, 0)$, $(-5, -10)\mapsto (0, 1, 0, 0)$, $(-4, -8)\mapsto (0, 0, 1, 0)$\\
$\text{L}11n10$ & $(-6, -16)\mapsto (0, 0, 1, 0)$, $(-4, -12)\mapsto (0, 0, 1, 0)$, $(-7, -18)\mapsto (0, 1, 0, 0)$\\
$\text{L}11n12$ & $(-7, -16)\mapsto (0, 1, 0, 0)$, $(-2, -6)\mapsto (0, 1, 0, 0)$, $(-5, -12)\mapsto (0, 1, 0, 0)$, $(-4, -10)\mapsto (0, 0, 1, 0)$, $(-6, -14)\mapsto (0, 0, 1, 0)$\\
$\text{L}11n13$ & $(-4, -10)\mapsto (0, 0, 1, 0)$\\
$\text{L}11n15$ & $(-6, -12)\mapsto (0, 0, 1, 0)$, $(-7, -14)\mapsto (0, 1, 0, 0)$, $(-2, -4)\mapsto (0, 1, 0, 0)$, $(-3, -6)\mapsto (0, 0, 1, 0)$, $(-5, -10)\mapsto (0, 1, 0, 0)$, $(-4, -8)\mapsto (0, 1, 1, 0)$\\
$\text{L}11n16$ & $(-9, -22)\mapsto (0, 1, 0, 0)$, $(-5, -14)\mapsto (0, 0, 1, 0)$, $(-4, -12)\mapsto (0, 0, 1, 0)$, $(-7, -18)\mapsto (0, 1, 0, 0)$, $(-8, -20)\mapsto (0, 0, 1, 0)$, $(-6, -16)\mapsto (0, 1, 1, 0)$\\
$\text{L}11n19$ & $(-9, -22)\mapsto (0, 1, 0, 0)$, $(-5, -14)\mapsto (0, 0, 1, 0)$, $(-4, -12)\mapsto (0, 0, 1, 0)$, $(-7, -18)\mapsto (0, 1, 0, 0)$, $(-8, -20)\mapsto (0, 0, 1, 0)$, $(-6, -16)\mapsto (0, 1, 1, 0)$\\
$\text{L}11n22$ & $(-2, -4)\mapsto (0, 1, 0, 0)$, $(-1, -2)\mapsto (0, 0, 1, 0)$, $(-4, -8)\mapsto (0, 0, 1, 0)$\\
$\text{L}11n24$ & $(2, 6)\mapsto (0, 0, 1, 0)$, $(-1, 0)\mapsto (0, 0, 1, 0)$, $(1, 4)\mapsto (0, 1, 0, 0)$, $(-2, -2)\mapsto (0, 2, 0, 0)$\\
$\text{L}11n27$ & $(-2, -4)\mapsto (0, 1, 0, 0)$, $(-5, -10)\mapsto (0, 1, 0, 0)$, $(-4, -8)\mapsto (0, 0, 1, 0)$\\
$\text{L}11n28$ & $(-7, -14)\mapsto (0, 1, 0, 0)$, $(-3, -6)\mapsto (0, 0, 1, 0)$, $(-6, -12)\mapsto (0, 0, 1, 0)$, $(-4, -8)\mapsto (0, 1, 0, 0)$\\
$\text{L}11n30$ & $(-6, -12)\mapsto (0, 0, 1, 0)$, $(-7, -14)\mapsto (0, 1, 0, 0)$, $(-2, -4)\mapsto (0, 1, 0, 0)$, $(-3, -6)\mapsto (0, 0, 1, 0)$, $(-5, -10)\mapsto (0, 1, 0, 0)$, $(-4, -8)\mapsto (0, 1, 1, 0)$\\
$\text{L}11n33$ & $(-3, -4)\mapsto (0, 1, 0, 0)$, $(2, 6)\mapsto (0, 0, 1, 0)$, $(-1, 0)\mapsto (0, 0, 1, 0)$, $(1, 4)\mapsto (0, 1, 1, 0)$, $(-2, -2)\mapsto (0, 1, 1, 0)$, $(0, 2)\mapsto (0, 1, 1, 0)$\\
$\text{L}11n38$ & $(-4, -10)\mapsto (0, 0, 1, 0)$\\
$\text{L}11n39$ & $(0, 0)\mapsto (0, 0, 1, 0)$, $(-2, -4)\mapsto (0, 1, 0, 0)$, $(-1, -2)\mapsto (0, 1, 1, 0)$, $(-3, -6)\mapsto (0, 0, 1, 0)$, $(-5, -10)\mapsto (0, 1, 0, 0)$, $(-4, -8)\mapsto (0, 1, 1, 0)$\\
$\text{L}11n41$ & $(-4, -10)\mapsto (0, 0, 1, 0)$\\
$\text{L}11n44$ & $(-4, -14)\mapsto (0, 0, 1, 0)$, $(-6, -18)\mapsto (0, 0, 1, 0)$, $(-9, -24)\mapsto (0, 1, 0, 0)$, $(-8, -22)\mapsto (0, 0, 1, 0)$, $(-7, -20)\mapsto (0, 1, 0, 0)$\\
$\text{L}11n48$ & $(-7, -16)\mapsto (0, 1, 0, 0)$, $(-2, -6)\mapsto (0, 1, 0, 0)$, $(-5, -12)\mapsto (0, 1, 0, 0)$, $(-4, -10)\mapsto (0, 0, 1, 0)$, $(-6, -14)\mapsto (0, 0, 1, 0)$\\
$\text{L}11n52$ & $(-2, -2)\mapsto (0, 1, 0, 0)$\\
$\text{L}11n53$ & $(0, 2)\mapsto (0, 0, 1, 0)$\\
$\text{L}11n54$ & $(-2, 0)\mapsto (0, 1, 0, 0)$, $(2, 8)\mapsto (0, 1, 0, 0)$, $(3, 10)\mapsto (0, 0, 1, 0)$, $(-1, 2)\mapsto (0, 1, 0, 0)$, $(1, 6)\mapsto (0, 0, 1, 0)$, $(0, 4)\mapsto (0, 1, 1, 0)$\\
$\text{L}11n57$ & $(-3, -4)\mapsto (0, 1, 0, 0)$, $(0, 2)\mapsto (0, 1, 0, 0)$, $(1, 4)\mapsto (0, 0, 1, 0)$, $(-2, -2)\mapsto (0, 1, 1, 0)$\\
$\text{L}11n59$ & $(1, 6)\mapsto (0, 0, 1, 0)$, $(-2, 0)\mapsto (0, 1, 0, 0)$\\
$\text{L}11n61$ & $(-2, -2)\mapsto (0, 1, 0, 0)$\\
$\text{L}11n62$ & $(-4, -12)\mapsto (0, 0, 1, 0)$, $(-7, -18)\mapsto (0, 1, 0, 0)$\\
$\text{L}11n64$ & $(-7, -16)\mapsto (0, 1, 0, 0)$, $(-4, -10)\mapsto (0, 1, 1, 0)$, $(-3, -8)\mapsto (0, 0, 1, 0)$, $(-6, -14)\mapsto (0, 0, 1, 0)$\\
$\text{L}11n65$ & $(-9, -20)\mapsto (0, 1, 0, 0)$, $(-8, -18)\mapsto (0, 0, 1, 0)$, $(-5, -12)\mapsto (0, 0, 1, 0)$, $(-6, -14)\mapsto (0, 1, 0, 0)$\\
$\text{L}11n68$ & $(-6, -16)\mapsto (0, 0, 1, 0)$, $(-4, -12)\mapsto (0, 0, 1, 0)$, $(-7, -18)\mapsto (0, 1, 0, 0)$, $(-9, -22)\mapsto (0, 1, 0, 0)$, $(-8, -20)\mapsto (0, 0, 1, 0)$\\
$\text{L}11n71$ & $(-7, -16)\mapsto (0, 1, 0, 0)$, $(-5, -12)\mapsto (0, 0, 1, 0)$, $(-4, -10)\mapsto (0, 0, 1, 0)$, $(-8, -18)\mapsto (0, 0, 1, 0)$, $(-9, -20)\mapsto (0, 1, 0, 0)$, $(-6, -14)\mapsto (0, 1, 1, 0)$\\
$\text{L}11n74$ & $(1, 8)\mapsto (0, 0, 1, 0)$, $(-2, 2)\mapsto (0, 1, 0, 0)$\\
$\text{L}11n75$ & $(-2, 0)\mapsto (0, 1, 0, 0)$, $(2, 8)\mapsto (0, 1, 0, 0)$, $(3, 10)\mapsto (0, 0, 1, 0)$, $(-1, 2)\mapsto (0, 1, 0, 0)$, $(1, 6)\mapsto (0, 0, 1, 0)$, $(0, 4)\mapsto (0, 0, 1, 0)$\\
$\text{L}11n87$ & $(-4, -8)\mapsto (0, 0, 1, 0)$, $(-2, -4)\mapsto (0, 1, 0, 0)$, $(0, 0)\mapsto (0, 0, 1, 0)$, $(-1, -2)\mapsto (0, 0, 1, 0)$\\
$\text{L}11n89$ & $(-6, -12)\mapsto (0, 1, 0, 0)$, $(-2, -4)\mapsto (0, 1, 1, 0)$, $(-1, -2)\mapsto (0, 0, 1, 0)$, $(-3, -6)\mapsto (0, 1, 0, 0)$, $(-5, -10)\mapsto (0, 1, 1, 0)$, $(-4, -8)\mapsto (0, 0, 1, 0)$\\
$\text{L}11n91$ & $(-7, -14)\mapsto (0, 1, 0, 0)$, $(-3, -6)\mapsto (0, 0, 1, 0)$, $(-6, -12)\mapsto (0, 0, 1, 0)$, $(-4, -8)\mapsto (0, 1, 0, 0)$\\
$\text{L}11n94$ & $(-2, 0)\mapsto (0, 1, 0, 0)$, $(2, 8)\mapsto (0, 1, 0, 0)$, $(3, 10)\mapsto (0, 0, 1, 0)$, $(-1, 2)\mapsto (0, 1, 0, 0)$, $(1, 6)\mapsto (0, 0, 1, 0)$, $(0, 4)\mapsto (0, 1, 1, 0)$\\
$\text{L}11n95$ & $(-6, -16)\mapsto (0, 0, 1, 0)$, $(-4, -12)\mapsto (0, 0, 1, 0)$, $(-7, -18)\mapsto (0, 1, 0, 0)$, $(-9, -22)\mapsto (0, 1, 0, 0)$, $(-8, -20)\mapsto (0, 0, 1, 0)$\\
$\text{L}11n98$ & $(1, 8)\mapsto (0, 0, 1, 0)$, $(-2, 2)\mapsto (0, 1, 0, 0)$\\
$\text{L}11n99$ & $(-2, 0)\mapsto (0, 1, 0, 0)$, $(2, 8)\mapsto (0, 1, 0, 0)$, $(3, 10)\mapsto (0, 0, 1, 0)$, $(-1, 2)\mapsto (0, 1, 0, 0)$, $(1, 6)\mapsto (0, 0, 1, 0)$, $(0, 4)\mapsto (0, 1, 1, 0)$\\
$\text{L}11n103$ & $(-9, -20)\mapsto (0, 1, 0, 0)$, $(-8, -18)\mapsto (0, 0, 1, 0)$, $(-5, -12)\mapsto (0, 0, 1, 0)$, $(-6, -14)\mapsto (0, 1, 0, 0)$\\
$\text{L}11n106$ & $(-7, -16)\mapsto (0, 1, 0, 0)$, $(-2, -6)\mapsto (0, 0, 1, 0)$, $(-5, -12)\mapsto (0, 0, 1, 0)$, $(-4, -10)\mapsto (0, 0, 1, 0)$, $(-3, -8)\mapsto (0, 1, 0, 0)$, $(-6, -14)\mapsto (0, 1, 1, 0)$\\
$\text{L}11n108$ & $(-2, 0)\mapsto (0, 1, 0, 0)$, $(2, 8)\mapsto (0, 1, 0, 0)$, $(3, 10)\mapsto (0, 0, 1, 0)$, $(-1, 2)\mapsto (0, 1, 0, 0)$, $(1, 6)\mapsto (0, 0, 1, 0)$, $(0, 4)\mapsto (0, 0, 1, 0)$\\
$\text{L}11n109$ & $(-9, -20)\mapsto (0, 1, 0, 0)$, $(-8, -18)\mapsto (0, 0, 1, 0)$, $(-5, -12)\mapsto (0, 0, 1, 0)$, $(-4, -10)\mapsto (0, 0, 1, 0)$, $(-6, -14)\mapsto (0, 1, 0, 0)$\\
$\text{L}11n111$ & $(-7, -16)\mapsto (0, 1, 0, 0)$, $(-2, -6)\mapsto (0, 0, 1, 0)$, $(-5, -12)\mapsto (0, 0, 1, 0)$, $(-4, -10)\mapsto (0, 0, 1, 0)$, $(-3, -8)\mapsto (0, 1, 0, 0)$, $(-6, -14)\mapsto (0, 1, 1, 0)$\\
$\text{L}11n112$ & $(2, 8)\mapsto (0, 0, 1, 0)$, $(-1, 2)\mapsto (0, 0, 1, 0)$, $(1, 6)\mapsto (0, 1, 0, 0)$, $(-2, 0)\mapsto (0, 1, 0, 0)$\\
$\text{L}11n115$ & $(-2, -2)\mapsto (0, 1, 0, 0)$\\
$\text{L}11n120$ & $(-3, -4)\mapsto (0, 1, 1, 0)$, $(-1, 0)\mapsto (0, 1, 0, 0)$, $(1, 4)\mapsto (0, 0, 1, 0)$, $(-2, -2)\mapsto (0, 1, 1, 0)$, $(-4, -6)\mapsto (0, 1, 0, 0)$, $(0, 2)\mapsto (0, 1, 1, 0)$\\
$\text{L}11n121$ & $(1, 6)\mapsto (0, 0, 1, 0)$, $(-2, 0)\mapsto (0, 1, 0, 0)$\\
$\text{L}11n122$ & $(0, 2)\mapsto (0, 1, 0, 0)$, $(1, 4)\mapsto (0, 0, 1, 0)$, $(-2, -2)\mapsto (0, 1, 0, 0)$\\
$\text{L}11n124$ & $(0, 2)\mapsto (0, 0, 1, 0)$\\
$\text{L}11n125$ & $(-4, -8)\mapsto (0, 1, 0, 0)$, $(0, 0)\mapsto (0, 0, 1, 0)$, $(-3, -6)\mapsto (0, 0, 1, 0)$, $(-1, -2)\mapsto (0, 1, 0, 0)$\\
$\text{L}11n127$ & $(-4, -6)\mapsto (0, 1, 0, 0)$, $(-2, -2)\mapsto (0, 1, 0, 0)$\\
$\text{L}11n132$ & $(-7, -16)\mapsto (0, 1, 0, 0)$, $(-4, -10)\mapsto (0, 0, 1, 0)$\\
$\text{L}11n133$ & $(-4, -14)\mapsto (0, 0, 1, 0)$, $(-7, -20)\mapsto (0, 1, 0, 0)$\\
$\text{L}11n139$ & $(3, 8)\mapsto (0, 0, 1, 0)$, $(2, 6)\mapsto (0, 1, 0, 0)$, $(0, 2)\mapsto (0, 0, 1, 0)$\\
$\text{L}11n141$ & $(-4, -12)\mapsto (0, 0, 1, 0)$\\
$\text{L}11n148$ & $(-4, -6)\mapsto (0, 1, 0, 0)$, $(-1, 0)\mapsto (0, 0, 1, 0)$\\
$\text{L}11n159$ & $(0, 6)\mapsto (0, 1, 0, 0)$, $(2, 10)\mapsto (0, 1, 0, 0)$\\
$\text{L}11n162$ & $(-4, -6)\mapsto (0, 1, 0, 0)$, $(-1, 0)\mapsto (0, 0, 1, 0)$\\
$\text{L}11n165$ & $(-6, -16)\mapsto (0, 1, 0, 0)$, $(-5, -14)\mapsto (0, 0, 1, 0)$, $(-4, -12)\mapsto (0, 0, 1, 0)$, $(-9, -22)\mapsto (0, 1, 0, 0)$, $(-8, -20)\mapsto (0, 0, 1, 0)$\\
$\text{L}11n166$ & $(1, 2)\mapsto (0, 0, 1, 0)$, $(-2, -4)\mapsto (0, 0, 1, 0)$, $(0, 0)\mapsto (0, 1, 0, 0)$, $(-3, -6)\mapsto (0, 1, 0, 0)$\\
$\text{L}11n169$ & $(-4, -14)\mapsto (0, 0, 1, 0)$, $(-7, -20)\mapsto (0, 1, 0, 0)$\\
$\text{L}11n176$ & $(-6, -16)\mapsto (0, 0, 1, 0)$, $(-4, -12)\mapsto (0, 0, 1, 0)$\\
$\text{L}11n180$ & $(-2, -4)\mapsto (0, 1, 0, 0)$, $(0, 0)\mapsto (0, 0, 1, 0)$, $(-1, -2)\mapsto (0, 0, 1, 0)$, $(-4, -8)\mapsto (0, 0, 0, 1)$\\
$\text{L}11n181$ & $(-4, -6)\mapsto (0, 1, 0, 0)$, $(-1, 0)\mapsto (0, 0, 1, 0)$\\
$\text{L}11n189$ & $(-4, -6)\mapsto (0, 1, 0, 0)$, $(-1, 0)\mapsto (0, 0, 1, 0)$\\
$\text{L}11n197$ & $(-4, -8)\mapsto (0, 0, 1, 0)$, $(-2, -4)\mapsto (0, 1, 0, 0)$, $(0, 0)\mapsto (0, 0, 1, 0)$, $(-5, -10)\mapsto (0, 1, 0, 0)$, $(-1, -2)\mapsto (0, 0, 1, 0)$\\
$\text{L}11n199$ & $(-6, -16)\mapsto (0, 1, 0, 0)$, $(-5, -14)\mapsto (0, 0, 1, 0)$, $(-4, -12)\mapsto (0, 0, 1, 0)$, $(-8, -20)\mapsto (0, 0, 1, 0)$\\
$\text{L}11n204$ & $(-4, -14)\mapsto (0, 0, 1, 0)$, $(-7, -20)\mapsto (0, 1, 0, 0)$\\
$\text{L}11n218$ & $(3, 8)\mapsto (0, 0, 1, 0)$, $(2, 6)\mapsto (0, 1, 0, 0)$, $(0, 2)\mapsto (0, 0, 1, 0)$\\
$\text{L}11n222$ & $(-4, -14)\mapsto (0, 0, 1, 0)$, $(-7, -20)\mapsto (0, 1, 0, 0)$\\
$\text{L}11n224$ & $(-4, -12)\mapsto (0, 0, 1, 0)$, $(-7, -18)\mapsto (0, 1, 0, 0)$\\
$\text{L}11n228$ & $(1, 2)\mapsto (0, 0, 1, 0)$, $(-2, -4)\mapsto (0, 0, 1, 0)$, $(0, 0)\mapsto (0, 1, 0, 0)$, $(-3, -6)\mapsto (0, 1, 0, 0)$\\
$\text{L}11n232$ & $(0, 2)\mapsto (0, 0, 1, 0)$\\
$\text{L}11n233$ & $(-4, -14)\mapsto (0, 0, 1, 0)$, $(-7, -20)\mapsto (0, 1, 0, 0)$\\
$\text{L}11n235$ & $(-4, -10)\mapsto (0, 1, 0, 0)$, $(-3, -8)\mapsto (0, 0, 1, 0)$, $(-6, -14)\mapsto (0, 0, 0, 1)$\\
$\text{L}11n236$ & $(-4, -12)\mapsto (0, 0, 1, 0)$\\
$\text{L}11n239$ & $(-3, -4)\mapsto (0, 0, 1, 0)$, $(-1, 0)\mapsto (0, 1, 0, 0)$, $(1, 4)\mapsto (0, 0, 1, 0)$, $(-2, -2)\mapsto (0, 1, 0, 0)$, $(-4, -6)\mapsto (0, 1, 0, 0)$, $(0, 2)\mapsto (0, 1, 1, 0)$\\
$\text{L}11n240$ & $(2, 8)\mapsto (0, 0, 1, 0)$, $(-1, 2)\mapsto (0, 0, 1, 0)$, $(1, 6)\mapsto (0, 1, 0, 0)$, $(-2, 0)\mapsto (0, 1, 0, 0)$\\
$\text{L}11n243$ & $(-2, -2)\mapsto (0, 1, 0, 0)$\\
$\text{L}11n244$ & $(0, 0)\mapsto (0, 0, 1, 0)$\\
$\text{L}11n252$ & $(-8, -18)\mapsto (0, 1, 0, 0)$, $(-5, -12)\mapsto (0, 0, 1, 0)$\\
$\text{L}11n253$ & $(-6, -16)\mapsto (0, 1, 0, 0)$, $(-5, -14)\mapsto (0, 0, 1, 0)$, $(-4, -12)\mapsto (0, 0, 1, 0)$, $(-8, -20)\mapsto (0, 0, 0, 1)$\\
$\text{L}11n254$ & $(-4, -14)\mapsto (0, 0, 1, 0)$, $(-7, -20)\mapsto (0, 1, 0, 0)$\\
$\text{L}11n256$ & $(2, 7)\mapsto (0, 1, 0, 0)$, $(-1, 1)\mapsto (0, 1, 0, 0)$, $(1, 5)\mapsto (0, 0, 1, 0)$, $(3, 9)\mapsto (0, 0, 1, 0)$, $(-2, -1)\mapsto (0, 1, 0, 0)$, $(0, 3)\mapsto (0, 1, 1, 0)$\\
$\text{L}11n257$ & $(-2, -5)\mapsto (0, 1, 0, 0)$, $(-4, -9)\mapsto (0, 0, 1, 0)$, $(-5, -11)\mapsto (0, 1, 0, 0)$\\
$\text{L}11n261$ & $(4, 13)\mapsto (0, 1, 0, 0)$, $(5, 15)\mapsto (0, 0, 1, 0)$, $(2, 9)\mapsto (0, 1, 1, 0)$, $(1, 7)\mapsto (0, 1, 0, 0)$\\
$\text{L}11n262$ & $(-4, -11)\mapsto (0, 0, 1, 0)$, $(-6, -15)\mapsto (0, 0, 1, 0)$, $(-7, -17)\mapsto (0, 1, 0, 0)$\\
$\text{L}11n264$ & $(-4, -9)\mapsto (0, 0, 1, 0)$\\
$\text{L}11n266$ & $(-3, -7)\mapsto (0, 1, 0, 0)$, $(0, -1)\mapsto (0, 0, 1, 0)$\\
$\text{L}11n267$ & $(1, 5)\mapsto (0, 0, 1, 0)$, $(0, 3)\mapsto (0, 1, 0, 0)$, $(-2, -1)\mapsto (0, 3, 0, 0)$\\
$\text{L}11n269$ & $(-2, -5)\mapsto (0, 0, 1, 0)$, $(-3, -7)\mapsto (0, 1, 0, 0)$, $(-4, -9)\mapsto (0, 0, 1, 0)$, $(0, -1)\mapsto (0, 0, 1, 0)$, $(-5, -11)\mapsto (0, 1, 0, 0)$\\
$\text{L}11n270$ & $(1, 5)\mapsto (0, 0, 1, 0)$, $(0, 3)\mapsto (0, 1, 0, 0)$, $(-2, -1)\mapsto (0, 1, 0, 0)$\\
$\text{L}11n272$ & $(0, 1)\mapsto (0, 0, 1, 0)$, $(-5, -9)\mapsto (0, 1, 0, 0)$, $(-2, -3)\mapsto (0, 1, 1, 0)$, $(-1, -1)\mapsto (0, 0, 1, 0)$, $(-4, -7)\mapsto (0, 0, 1, 0)$, $(-3, -5)\mapsto (0, 1, 0, 0)$\\
$\text{L}11n273$ & $(-2, -3)\mapsto (0, 1, 0, 0)$\\
$\text{L}11n274$ & $(-2, -5)\mapsto (0, 1, 0, 0)$, $(-3, -7)\mapsto (0, 0, 1, 0)$, $(-4, -9)\mapsto (0, 1, 1, 0)$, $(-6, -13)\mapsto (0, 0, 1, 0)$, $(-1, -3)\mapsto (0, 0, 1, 0)$\\
$\text{L}11n276$ & $(-7, -19)\mapsto (0, 1, 0, 0)$, $(-4, -13)\mapsto (0, 0, 1, 0)$\\
$\text{L}11n278$ & $(0, 1)\mapsto (0, 1, 0, 0)$, $(1, 3)\mapsto (0, 0, 1, 0)$, $(-2, -3)\mapsto (0, 1, 1, 0)$\\
$\text{L}11n280$ & $(-2, -3)\mapsto (0, 1, 0, 0)$\\
$\text{L}11n284$ & $(0, 5)\mapsto (0, 1, 0, 0)$, $(2, 9)\mapsto (0, 1, 0, 0)$\\
$\text{L}11n285$ & $(-2, -5)\mapsto (0, 0, 1, 0)$, $(-4, -9)\mapsto (0, 0, 1, 0)$\\
$\text{L}11n287$ & $(0, 1)\mapsto (0, 0, 1, 0)$, $(-5, -9)\mapsto (0, 1, 0, 0)$, $(-2, -3)\mapsto (0, 2, 0, 0)$, $(-1, -1)\mapsto (0, 1, 0, 0)$, $(-4, -7)\mapsto (0, 1, 1, 0)$, $(-3, -5)\mapsto (0, 0, 1, 0)$\\
$\text{L}11n288$ & $(-4, -11)\mapsto (0, 0, 2, 0)$, $(-2, -7)\mapsto (0, 0, 1, 0)$, $(-5, -13)\mapsto (0, 1, 0, 0)$, $(-6, -15)\mapsto (0, 0, 1, 0)$, $(-7, -17)\mapsto (0, 1, 0, 0)$\\
$\text{L}11n292$ & $(0, 3)\mapsto (0, 1, 0, 0)$, $(-2, -1)\mapsto (0, 1, 0, 0)$\\
$\text{L}11n293$ & $(2, 7)\mapsto (0, 0, 1, 0)$, $(-1, 1)\mapsto (0, 0, 1, 0)$, $(-3, -3)\mapsto (0, 1, 0, 0)$, $(1, 5)\mapsto (0, 1, 1, 0)$, $(-2, -1)\mapsto (0, 1, 1, 0)$, $(0, 3)\mapsto (0, 1, 1, 0)$\\
$\text{L}11n294$ & $(2, 11)\mapsto (0, 2, 0, 0)$, $(0, 7)\mapsto (0, 1, 0, 0)$\\
$\text{L}11n299$ & $(0, 1)\mapsto (0, 0, 1, 0)$, $(-5, -9)\mapsto (0, 1, 0, 0)$, $(-2, -3)\mapsto (0, 2, 0, 0)$, $(-1, -1)\mapsto (0, 1, 1, 0)$, $(-4, -7)\mapsto (0, 2, 1, 0)$, $(-3, -5)\mapsto (0, 0, 1, 0)$\\
$\text{L}11n302$ & $(2, 7)\mapsto (0, 0, 1, 0)$, $(1, 5)\mapsto (0, 0, 1, 0)$, $(0, 3)\mapsto (0, 1, 1, 0)$, $(-3, -3)\mapsto (0, 1, 0, 0)$, $(-2, -1)\mapsto (0, 0, 1, 0)$\\
$\text{L}11n303$ & $(-2, 1)\mapsto (0, 1, 0, 0)$, $(1, 7)\mapsto (0, 0, 1, 0)$\\
$\text{L}11n306$ & $(-4, -11)\mapsto (0, 0, 1, 0)$, $(-6, -15)\mapsto (0, 0, 1, 0)$\\
$\text{L}11n308$ & $(2, 7)\mapsto (0, 1, 0, 0)$, $(-1, 1)\mapsto (0, 1, 0, 0)$, $(3, 9)\mapsto (0, 0, 1, 0)$, $(0, 3)\mapsto (0, 1, 1, 0)$, $(-2, -1)\mapsto (0, 1, 0, 0)$\\
$\text{L}11n311$ & $(-1, 1)\mapsto (0, 1, 0, 0)$, $(-3, -3)\mapsto (0, 0, 1, 0)$, $(1, 5)\mapsto (0, 0, 1, 0)$, $(-4, -5)\mapsto (0, 1, 0, 0)$, $(-2, -1)\mapsto (0, 1, 0, 0)$, $(0, 3)\mapsto (0, 0, 1, 0)$\\
$\text{L}11n313$ & $(-9, -21)\mapsto (0, 1, 0, 0)$, $(-5, -13)\mapsto (0, 0, 1, 0)$, $(-4, -11)\mapsto (0, 0, 1, 0)$, $(-8, -19)\mapsto (0, 1, 1, 0)$, $(-6, -15)\mapsto (0, 1, 1, 0)$, $(-7, -17)\mapsto (0, 1, 0, 0)$\\
$\text{L}11n315$ & $(0, 1)\mapsto (0, 0, 1, 0)$, $(-5, -9)\mapsto (0, 1, 0, 0)$, $(-2, -3)\mapsto (0, 1, 1, 0)$, $(-1, -1)\mapsto (0, 1, 0, 0)$, $(-4, -7)\mapsto (0, 1, 1, 0)$, $(-3, -5)\mapsto (0, 0, 1, 0)$\\
$\text{L}11n320$ & $(-2, -5)\mapsto (0, 1, 1, 0)$, $(-5, -11)\mapsto (0, 0, 1, 0)$, $(-4, -9)\mapsto (0, 0, 2, 0)$, $(-1, -3)\mapsto (0, 0, 1, 0)$, $(-3, -7)\mapsto (0, 1, 0, 0)$, $(-6, -13)\mapsto (0, 1, 0, 0)$\\
$\text{L}11n323$ & $(-5, -13)\mapsto (0, 0, 1, 0)$, $(-4, -11)\mapsto (0, 0, 2, 0)$, $(-3, -9)\mapsto (0, 1, 0, 0)$, $(-6, -15)\mapsto (0, 1, 1, 0)$, $(-7, -17)\mapsto (0, 1, 0, 0)$, $(-2, -7)\mapsto (0, 0, 1, 0)$\\
$\text{L}11n326$ & $(0, 1)\mapsto (0, 0, 1, 0)$, $(-4, -7)\mapsto (0, 0, 1, 0)$, $(-2, -3)\mapsto (0, 1, 0, 0)$, $(-1, -1)\mapsto (0, 0, 1, 0)$\\
$\text{L}11n328$ & $(0, 3)\mapsto (0, 1, 0, 0)$, $(-2, -1)\mapsto (0, 1, 0, 0)$\\
$\text{L}11n329$ & $(-2, -5)\mapsto (0, 0, 1, 0)$, $(-3, -7)\mapsto (0, 1, 0, 0)$, $(-4, -9)\mapsto (0, 0, 1, 0)$, $(0, -1)\mapsto (0, 0, 1, 0)$\\
$\text{L}11n332$ & $(-4, -9)\mapsto (0, 0, 1, 0)$\\
$\text{L}11n333$ & $(-2, -3)\mapsto (0, 1, 0, 0)$\\
$\text{L}11n334$ & $(0, 1)\mapsto (0, 0, 1, 0)$, $(-5, -9)\mapsto (0, 1, 0, 0)$, $(-2, -3)\mapsto (0, 1, 0, 0)$, $(-1, -1)\mapsto (0, 1, 1, 0)$, $(-4, -7)\mapsto (0, 1, 1, 0)$, $(-3, -5)\mapsto (0, 0, 1, 0)$\\
$\text{L}11n336$ & $(0, 3)\mapsto (0, 0, 1, 0)$\\
$\text{L}11n337$ & $(-4, -11)\mapsto (0, 0, 1, 0)$, $(-6, -15)\mapsto (0, 0, 1, 0)$, $(-7, -17)\mapsto (0, 1, 0, 0)$\\
$\text{L}11n338$ & $(-6, -11)\mapsto (0, 1, 0, 0)$, $(-3, -5)\mapsto (0, 0, 1, 0)$\\
$\text{L}11n339$ & $(-3, -7)\mapsto (0, 0, 1, 0)$, $(-4, -9)\mapsto (0, 1, 1, 0)$, $(-6, -13)\mapsto (0, 0, 1, 0)$, $(-7, -15)\mapsto (0, 1, 0, 0)$\\
$\text{L}11n342$ & $(1, 5)\mapsto (0, 0, 1, 0)$, $(-2, -1)\mapsto (0, 3, 0, 0)$\\
$\text{L}11n345$ & $(4, 15)\mapsto (0, 0, 0, 1)$, $(2, 11)\mapsto (0, 2, 0, 0)$\\
$\text{L}11n347$ & $(-3, -7)\mapsto (0, 0, 1, 0)$, $(-4, -9)\mapsto (0, 1, 0, 0)$, $(-6, -13)\mapsto (0, 1, 0, 0)$\\
$\text{L}11n348$ & $(-2, -5)\mapsto (0, 0, 1, 0)$, $(-7, -15)\mapsto (0, 1, 0, 0)$, $(-5, -11)\mapsto (0, 0, 1, 0)$, $(-4, -9)\mapsto (0, 1, 1, 0)$, $(-3, -7)\mapsto (0, 1, 1, 0)$, $(-6, -13)\mapsto (0, 1, 1, 0)$\\
$\text{L}11n350$ & $(-1, 1)\mapsto (0, 1, 0, 0)$, $(-3, -3)\mapsto (0, 0, 1, 0)$, $(1, 5)\mapsto (0, 0, 1, 0)$, $(-4, -5)\mapsto (0, 1, 0, 0)$, $(-2, -1)\mapsto (0, 1, 0, 0)$, $(0, 3)\mapsto (0, 1, 1, 0)$\\
$\text{L}11n352$ & $(-5, -9)\mapsto (0, 0, 1, 0)$, $(-6, -11)\mapsto (0, 1, 0, 0)$, $(-2, -3)\mapsto (0, 0, 1, 0)$, $(-3, -5)\mapsto (0, 1, 0, 0)$\\
$\text{L}11n354$ & $(-2, -5)\mapsto (0, 0, 1, 0)$, $(-7, -15)\mapsto (0, 1, 0, 0)$, $(-5, -11)\mapsto (0, 0, 1, 0)$, $(-4, -9)\mapsto (0, 1, 1, 0)$, $(-3, -7)\mapsto (0, 1, 0, 0)$, $(-6, -13)\mapsto (0, 1, 1, 0)$\\
$\text{L}11n357$ & $(0, 3)\mapsto (0, 1, 0, 0)$, $(-2, -1)\mapsto (0, 1, 0, 0)$\\
$\text{L}11n359$ & $(-1, 1)\mapsto (0, 1, 0, 0)$, $(-3, -3)\mapsto (0, 0, 1, 0)$, $(1, 5)\mapsto (0, 0, 1, 0)$, $(-4, -5)\mapsto (0, 1, 0, 0)$, $(-2, -1)\mapsto (0, 1, 0, 0)$, $(0, 3)\mapsto (0, 0, 1, 0)$\\
$\text{L}11n360$ & $(0, 1)\mapsto (0, 0, 1, 0)$, $(-5, -9)\mapsto (0, 1, 0, 0)$, $(-4, -7)\mapsto (0, 0, 1, 0)$, $(-2, -3)\mapsto (0, 1, 1, 0)$, $(-1, -1)\mapsto (0, 0, 1, 0)$\\
$\text{L}11n363$ & $(-3, -7)\mapsto (0, 0, 1, 0)$, $(-4, -9)\mapsto (0, 1, 0, 0)$, $(-1, -3)\mapsto (0, 1, 0, 0)$, $(0, -1)\mapsto (0, 0, 1, 0)$\\
$\text{L}11n366$ & $(2, 11)\mapsto (0, 1, 0, 0)$\\
$\text{L}11n367$ & $(0, 1)\mapsto (0, 0, 1, 0)$\\
$\text{L}11n368$ & $(-2, -5)\mapsto (0, 0, 1, 0)$, $(-7, -15)\mapsto (0, 1, 0, 0)$, $(-5, -11)\mapsto (0, 0, 1, 0)$, $(-4, -9)\mapsto (0, 0, 1, 0)$, $(-3, -7)\mapsto (0, 1, 0, 0)$, $(-6, -13)\mapsto (0, 1, 1, 0)$\\
$\text{L}11n374$ & $(-4, -9)\mapsto (0, 0, 1, 0)$\\
$\text{L}11n375$ & $(-4, -11)\mapsto (0, 0, 1, 0)$, $(-8, -19)\mapsto (0, 0, 1, 0)$, $(-5, -13)\mapsto (0, 0, 1, 0)$, $(-6, -15)\mapsto (0, 1, 0, 0)$\\
$\text{L}11n376$ & $(-4, -9)\mapsto (0, 0, 1, 0)$\\
$\text{L}11n379$ & $(-2, -5)\mapsto (0, 0, 1, 0)$, $(0, -1)\mapsto (0, 0, 1, 0)$, $(-3, -7)\mapsto (0, 1, 0, 0)$\\
$\text{L}11n380$ & $(-2, -5)\mapsto (0, 1, 0, 0)$, $(-4, -9)\mapsto (0, 0, 1, 0)$, $(-6, -13)\mapsto (0, 1, 0, 0)$, $(-5, -11)\mapsto (0, 1, 0, 0)$\\
$\text{L}11n381$ & $(-5, -9)\mapsto (0, 1, 0, 0)$, $(-4, -7)\mapsto (0, 0, 1, 0)$, $(-2, -3)\mapsto (0, 1, 0, 0)$\\
$\text{L}11n383$ & $(0, 1)\mapsto (0, 0, 1, 0)$, $(-5, -9)\mapsto (0, 1, 0, 0)$, $(-2, -3)\mapsto (0, 2, 0, 0)$, $(-1, -1)\mapsto (0, 1, 0, 0)$, $(-4, -7)\mapsto (0, 1, 1, 0)$, $(-3, -5)\mapsto (0, 0, 1, 0)$\\
$\text{L}11n385$ & $(-5, -13)\mapsto (0, 0, 1, 0)$, $(-4, -11)\mapsto (0, 0, 2, 0)$, $(-3, -9)\mapsto (0, 1, 0, 0)$, $(-6, -15)\mapsto (0, 1, 1, 0)$, $(-7, -17)\mapsto (0, 1, 0, 0)$, $(-2, -7)\mapsto (0, 0, 1, 0)$\\
$\text{L}11n387$ & $(-2, -3)\mapsto (0, 1, 0, 0)$\\
$\text{L}11n390$ & $(0, -1)\mapsto (0, 0, 1, 0)$, $(-3, -7)\mapsto (0, 1, 0, 0)$\\
$\text{L}11n391$ & $(-3, -7)\mapsto (0, 0, 1, 0)$, $(-4, -9)\mapsto (0, 1, 1, 0)$, $(-6, -13)\mapsto (0, 0, 1, 0)$, $(-7, -15)\mapsto (0, 1, 0, 0)$\\
$\text{L}11n393$ & $(0, 1)\mapsto (0, 0, 1, 0)$, $(-3, -5)\mapsto (0, 0, 1, 0)$, $(-4, -7)\mapsto (0, 1, 0, 0)$, $(-2, -3)\mapsto (0, 1, 0, 0)$, $(-1, -1)\mapsto (0, 1, 0, 0)$\\
$\text{L}11n394$ & $(-4, -7)\mapsto (0, 0, 1, 0)$, $(-2, -3)\mapsto (0, 1, 0, 0)$, $(-1, -1)\mapsto (0, 0, 1, 0)$\\
$\text{L}11n395$ & $(1, 5)\mapsto (0, 0, 1, 0)$, $(0, 3)\mapsto (0, 1, 0, 0)$, $(-2, -1)\mapsto (0, 1, 0, 0)$\\
$\text{L}11n396$ & $(-2, -1)\mapsto (0, 1, 0, 0)$\\
$\text{L}11n397$ & $(-1, 1)\mapsto (0, 1, 0, 0)$, $(-3, -3)\mapsto (0, 0, 1, 0)$, $(1, 5)\mapsto (0, 0, 1, 0)$, $(-4, -5)\mapsto (0, 1, 0, 0)$, $(-2, -1)\mapsto (0, 2, 0, 0)$, $(0, 3)\mapsto (0, 0, 1, 0)$\\
$\text{L}11n398$ & $(-4, -11)\mapsto (0, 0, 1, 0)$, $(-6, -15)\mapsto (0, 0, 1, 0)$, $(-7, -17)\mapsto (0, 1, 0, 0)$\\
$\text{L}11n399$ & $(-3, -7)\mapsto (0, 0, 1, 0)$, $(-4, -9)\mapsto (0, 1, 1, 0)$, $(-6, -13)\mapsto (0, 0, 1, 0)$, $(-7, -15)\mapsto (0, 1, 0, 0)$\\
$\text{L}11n400$ & $(-6, -11)\mapsto (0, 1, 0, 0)$, $(-3, -5)\mapsto (0, 0, 1, 0)$\\
$\text{L}11n403$ & $(1, 5)\mapsto (0, 0, 1, 0)$, $(-2, -1)\mapsto (0, 3, 0, 0)$\\
$\text{L}11n404$ & $(-2, -5)\mapsto (0, 0, 1, 0)$, $(-3, -7)\mapsto (0, 1, 0, 0)$, $(0, -1)\mapsto (0, 0, 1, 0)$\\
$\text{L}11n406$ & $(0, 1)\mapsto (0, 0, 1, 0)$\\
$\text{L}11n408$ & $(0, 1)\mapsto (0, 0, 1, 0)$, $(-3, -5)\mapsto (0, 0, 1, 0)$, $(-4, -7)\mapsto (0, 1, 0, 0)$, $(-1, -1)\mapsto (0, 1, 0, 0)$\\
$\text{L}11n411$ & $(-9, -21)\mapsto (0, 1, 0, 0)$, $(-8, -19)\mapsto (0, 0, 1, 0)$, $(-6, -15)\mapsto (0, 1, 0, 0)$\\
$\text{L}11n413$ & $(-1, 1)\mapsto (0, 0, 1, 0)$, $(-4, -5)\mapsto (0, 1, 0, 0)$\\
$\text{L}11n414$ & $(0, 1)\mapsto (0, 0, 1, 0)$, $(-5, -9)\mapsto (0, 1, 0, 0)$, $(-4, -7)\mapsto (0, 0, 1, 0)$, $(-2, -3)\mapsto (0, 1, 1, 0)$, $(-1, -1)\mapsto (0, 0, 1, 0)$\\
$\text{L}11n417$ & $(0, 5)\mapsto (0, 1, 0, 0)$, $(2, 9)\mapsto (0, 1, 0, 0)$\\
$\text{L}11n420$ & $(-4, -11)\mapsto (0, 0, 1, 0)$, $(-2, -7)\mapsto (0, 0, 1, 0)$\\
$\text{L}11n423$ & $(0, 3)\mapsto (0, 1, 0, 0)$, $(-2, -1)\mapsto (0, 1, 0, 0)$\\
$\text{L}11n426$ & $(-4, -5)\mapsto (0, 1, 0, 0)$\\
$\text{L}11n433$ & $(2, 11)\mapsto (0, 1, 0, 0)$, $(0, 7)\mapsto (0, 1, 0, 0)$\\
$\text{L}11n436$ & $(0, 1)\mapsto (0, 0, 1, 0)$\\
$\text{L}11n439$ & $(-3, -4)\mapsto (0, 1, 0, 0)$, $(0, 2)\mapsto (0, 0, 4, 0)$, $(-2, -2)\mapsto (0, 0, 1, 0)$\\
$\text{L}11n440$ & $(1, 6)\mapsto (0, 0, 1, 0)$, $(-2, 0)\mapsto (0, 1, 0, 0)$, $(0, 4)\mapsto (0, 1, 0, 0)$\\
$\text{L}11n441$ & $(-2, -4)\mapsto (0, 1, 0, 0)$\\
$\text{L}11n443$ & $(-3, -4)\mapsto (0, 1, 0, 0)$, $(2, 6)\mapsto (0, 0, 1, 0)$, $(-1, 0)\mapsto (0, 0, 1, 0)$, $(1, 4)\mapsto (0, 1, 0, 0)$, $(-2, -2)\mapsto (0, 1, 1, 0)$, $(0, 2)\mapsto (0, 1, 1, 0)$\\
$\text{L}11n444$ & $(-2, -4)\mapsto (0, 1, 0, 0)$, $(-4, -8)\mapsto (0, 1, 0, 0)$\\
$\text{L}11n445$ & $(0, 4)\mapsto (0, 0, 1, 0)$\\
$\text{L}11n446$ & $(-3, -4)\mapsto (0, 1, 0, 0)$, $(2, 6)\mapsto (0, 0, 1, 0)$, $(-1, 0)\mapsto (0, 0, 1, 0)$, $(1, 4)\mapsto (0, 1, 0, 0)$, $(-2, -2)\mapsto (0, 1, 1, 0)$, $(0, 2)\mapsto (0, 0, 2, 0)$\\
$\text{L}11n447$ & $(-2, -4)\mapsto (0, 1, 0, 0)$\\
$\text{L}11n448$ & $(-4, -6)\mapsto (0, 0, 1, 0)$, $(-1, 0)\mapsto (0, 0, 1, 0)$, $(0, 2)\mapsto (0, 0, 2, 0)$, $(-2, -2)\mapsto (0, 1, 0, 0)$\\
$\text{L}11n449$ & $(-2, -4)\mapsto (0, 1, 0, 0)$\\
$\text{L}11n451$ & $(-2, -4)\mapsto (0, 1, 0, 0)$, $(0, 0)\mapsto (0, 0, 1, 0)$, $(-3, -6)\mapsto (0, 0, 1, 0)$, $(-1, -2)\mapsto (0, 1, 0, 0)$, $(-4, -8)\mapsto (0, 1, 0, 0)$\\
$\text{L}11n452$ & $(0, 2)\mapsto (0, 1, 0, 0)$, $(1, 4)\mapsto (0, 0, 1, 0)$, $(-2, -2)\mapsto (0, 4, 0, 0)$\\
$\text{L}11n453$ & $(-6, -16)\mapsto (0, 0, 2, 0)$, $(-4, -12)\mapsto (0, 0, 1, 0)$, $(-8, -20)\mapsto (0, 0, 1, 0)$\\
$\text{L}11n455$ & $(0, 2)\mapsto (0, 0, 1, 0)$, $(-2, -2)\mapsto (0, 0, 1, 0)$\\
$\text{L}11n456$ & $(-2, -4)\mapsto (0, 0, 1, 0)$, $(0, 0)\mapsto (0, 0, 1, 0)$, $(-3, -6)\mapsto (0, 1, 0, 0)$, $(-4, -8)\mapsto (0, 1, 0, 0)$\\
$\text{L}11n459$ & $(-2, 0)\mapsto (0, 1, 0, 0)$, $(0, 4)\mapsto (0, 1, 0, 0)$\\

\bottomrule
\end{longtable}
\end{center}
\normalsize

\bibliography{Squares}

\end{document}